\newcommand{\cC}{\mathcal{C}}
\newcommand{\cD}{\mathcal{D}}
\newcommand{\cF}{\mathcal{F}}
\newcommand{\cG}{\mathcal{G}}
\newcommand{\cH}{\mathcal{H}}
\newcommand{\cM}{\mathcal{M}}
\newcommand{\cP}{\mathcal{P}}
\newcommand{\cS}{\mathcal{S}}
\newcommand{\cX}{\mathcal{X}}
\newcommand{\Z}{\mathbf{Z}}
\newcommand{\R}{\mathbf{R}}
\newcommand{\EX}[1][E]{\ensuremath{\mathds{#1}}}
\newcommand{\Prob}[1][P]{\ensuremath{\mathds{#1}}}
\DeclarePairedDelimiter{\abs}{\lvert}{\rvert}
\DeclarePairedDelimiter{\norm}{\lVert}{\rVert}
\DeclarePairedDelimiter{\bra}{(}{)}
\DeclarePairedDelimiter{\pra}{[}{]}
\DeclarePairedDelimiter{\set}{\{}{\}}
\DeclarePairedDelimiter{\skp}{\langle}{\rangle}
\DeclareMathAlphabet{\mathup}{OT1}{\familydefault}{m}{n}
\newcommand{\dx}{\mathop{}\!\mathup{d}}
\newcommand{\pderiv}[3][]{\frac{\mathup{d}^{#1}#2}{\mathup{d}#3^{#1}}}
\newcommand{\sig}{\sigma}
\newcommand{{\ba}}{\bf a}
\newcommand{\ve}{\varepsilon}
\newcommand{\la}{\lambda}
\newcommand{\La}{\Lambda}
\newcommand{\ga}{\gamma}
\newcommand{\Ga}{\Gamma}
\newcommand{\pa}{\partial}
\newcommand{\ra}{\rightarrow}
\newcommand{\del}{\delta}
\newcommand{\al}{\alpha}
\DeclareMathOperator*{\argmin}{arg\,min}
\newcommand{\eps}{\varepsilon}
\DeclareMathOperator{\Ent}{Ent}
\newcommand{\bfOne}{\mathbf{1}}
\newcommand{\eq}{{\ensuremath{\scriptscriptstyle\mathup{eq}}}}
\DeclareMathOperator{\LSI}{LSI}
\newtheorem{theorem}{Theorem}[section]
\newtheorem{corollary}[theorem]{Corollary}
\newtheorem{lemma}[theorem]{Lemma}
\newtheorem{proposition}[theorem]{Proposition}
\newtheorem{assumption}[theorem]{Assumption}
\theoremstyle{definition}
\newtheorem{definition}[theorem]{Definition}
\newtheorem{remark}[theorem]{Remark}
\numberwithin{equation}{section}
\title[A non-local problem for the Fokker-Planck equation]{A non-local problem for the Fokker-Planck equation related to the Becker-D\"{o}ring model}
\author[Joseph G. Conlon and André Schlichting]{}
\subjclass{Primary: 35Q84; 
secondary: 35F05, 
35K55, 
37D35, 
82C26, 
82C70} 
\keywords{non-linear non-local pde, Fokker-Planck equation, coarsening, convergence to equilibrium, entropy method, gradient flow}
\email{conlon@umich.edu}
\email{schlichting@iam.uni-bonn.de}
\thanks{$^*$ Corresponding author.}
\begin{document}
\maketitle

\centerline{\scshape Joseph G. Conlon}
\medskip
{\footnotesize
 \centerline{University of Michigan}
   \centerline{Department of Mathematics}
   \centerline{Ann Arbor, MI 48109-1109, USA}
}

\medskip

\centerline{\scshape Andr\'{e} Schlichting$^*$}
\medskip
{\footnotesize
 \centerline{Universit\"at Bonn}
   \centerline{Institut f\"ur Angewandte Mathematik}
   \centerline{Endenicher Allee 60, 53129 Bonn, Germany}
}

\bigskip


\begin{abstract}
This paper concerns a Fokker-Planck equation on the positive real line modeling nucleation and growth of clusters. The main feature of the equation is the dependence of the driving vector field and boundary condition on a non-local order parameter related to the excess mass of the system.

The first main result concerns the well-posedness and regularity of the Cauchy problem. The well-posedness is based on a fixed point argument, and the regularity on Schauder estimates. The first a priori estimates yield Hölder regularity of the non-local order parameter, which is improved by an iteration argument.

The asymptotic behavior of solutions depends on some order parameter $\rho$ depending on the initial data. The system shows different behavior depending on a value  $\rho_s>0$,  determined from the potentials and diffusion coefficient. For $\rho \leq \rho_s$, there exists an equilibrium solution $c^\eq_{(\rho)}$. If $\rho\le\rho_s$ the solution converges strongly to $c^\eq_{(\rho)}$, while if $\rho > \rho_s$ the solution converges weakly to $c^\eq_{(\rho_s)}$. The excess $\rho - \rho_s$ gets lost due to the formation of larger and larger clusters. In this regard, the model behaves similarly to the classical Becker-Döring equation.

The system possesses a free energy, strictly decreasing along the evolution, which establishes the long time behavior. In the subcritical case $\rho<\rho_s$ the entropy method, based on suitable weighted logarithmic Sobolev inequalities and interpolation estimates, is used to obtain explicit convergence rates to the equilibrium solution.

The close connection of the presented model and the Becker-Döring model is outlined by a family of discrete Fokker-Planck type equations interpolating between both of them. This family of models possesses a gradient flow structure, emphasizing their commonality.
\end{abstract}

\newpage

\section{Introduction}

\subsection{The model, its well-posedness and convergence to equilibrium}
In this paper we shall be concerned with a non-linear non-local problem associated to the Fokker-Planck equation on the half line $\R^+=[0,\infty)$,
\begin{equation}\label{A1}
\pa_t c(x,t) +\partial_x \bra[\big]{b(x,t)c(x,t)} \ = \ \pa_x^2 \bra[\big]{a(x)c(x,t)} \ , \quad x,t\in \R^+.
\end{equation}
We shall be primarily interested in the large time behavior of solutions to \eqref{A1}  with non-negative initial data and Dirichlet boundary condition at $x=0$. We shall assume that $a(\cdot)$ is differentiable and strictly positive, and that the drift $b(\cdot,\cdot)$ has the form
\begin{equation}\label{B1}
b(x,t) \ = \ a(x)\bra[\big]{\theta(t)W'(x)-V'(x)}  \ ,
\end{equation}
where $V(\cdot), W(\cdot)$ are $\cC^1$ functions and $\theta(\cdot)$ is continuous. We show in $\S6$ that with the choice (\ref{B1}) of the drift $b(\cdot,\cdot)$ together with suitable Dirichlet boundary condition and conservation law, the evolution (\ref{A1}) may be considered a continuous version of the discrete Becker-D\"{o}ring (BD) model~\cite{BD1935}.

At this point, equation~\eqref{A1} with drift \eqref{B1} is a Fokker-Planck equation with time and space dependent coefficients. In particular, if the function $\theta(\cdot)$ in \eqref{B1} is constant $\theta(\cdot)\equiv\theta$, then  $c(x,t)=c_\theta^\eq(x)$, where
\begin{equation}\label{C1}
c_\theta^\eq(x) \ = \ a(x)^{-1}\exp\bra*{-V(x)+\theta W(x)} \ ,
\end{equation}
 is a steady state solution of \eqref{A1}, \eqref{B1}.  Furthermore it is known, under fairly general assumptions on $a(\cdot),V(\cdot), W(\cdot), \theta$,  that the solution $c(\cdot,t)$ to \eqref{A1} with Dirichlet boundary condition $c(0,t)=c_\theta^\eq(0)$, converges as $t\ra\infty$ to $c_\theta^\eq(\cdot)$. In this case convergence follows by establishing a positive lower bound on the Dirichlet form \cite{fot} associated to (\ref{A1}).                         
 
 In the problem we study here $\theta(\cdot)$ is non-constant in time and is determined by the conservation law
\begin{equation}\label{D1}
\theta(t)+\int_{0}^\infty W(x)c(x,t)  \dx{x}  = \ \rho \ , \quad\text{where $\rho>0$ is constant.}
\end{equation}
By assuming that the inital data satisfies $\int_0^\infty W(x) c(x,0) \dx{x}< \infty$, the constraint~\eqref{D1} is proven to be satisfied for short time by a fixed point argument. In addition for global in time existence, a lower bound on $\theta(t)$ is provided, which uses the particular choice of~\eqref{B1}. 
In the application of this model to coarsening, $\theta$ models the gaseous phase or available monomer concentration and $c$ is the volume cluster density, the constraint~\eqref{D1} corresponds to the conservation of total mass. 
The constraint makes the Fokker-Planck equation \eqref{A1}, \eqref{B1} non-local and non-linear. Additionally, we impose a Dirichlet boundary condition which is consistent with the requirement that $c_\theta^\eq(x)$ is a stationary solution to \eqref{A1}, \eqref{B1}, but not necessarily satisfying the constraint~\eqref{D1}. Our Dirichlet condition is therefore given by
\begin{equation}\label{E1}
  c(0,t) \ = \ c_{\theta(t)}^\eq(0) \ = \ a(0)^{-1}\exp\bra*{-V(0)+\theta(t)W(0)} \  , \quad t>0 \ .
\end{equation}
It turns out that the above Dirichlet condition~\eqref{E1} is also thermodynamically consistent, 
since the system \eqref{A1}, \eqref{B1}, \eqref{D1}, \eqref{E1} has a free energy functional acting as 
Lyapunov function for the evolution (see §\ref{s:intro:rate}), which is the main tool for the investigation of the long-time limit. 
To specify the long-time limit, we observe that if $W$ is assumed to be a positive function such that 
$W(\cdot)a(\cdot)^{-1}\exp\pra{-V(\cdot)}$ is integrable on $(0,\infty)$, then $W(\cdot)c_\theta^\eq(\cdot)$ is integrable for $\theta\le 0$. 
Furthermore, the function $\theta\mapsto \theta+\|W(\cdot)c_\theta^\eq(\cdot)\|_1$ is strictly increasing and 
maps $(-\infty,0]$ to $(-\infty,\rho_s]$ where 
\begin{equation}\label{def:rhos}
  \rho_s= \|W(\cdot)c_0^\eq(\cdot)\|_1 = \int_0^\infty W(x) a(x)^{-1} \exp\bra*{-V(x)}  \dx{x}  \ .
\end{equation}
We denote by $\theta_{\eq}(\cdot)$ the inverse function with domain $(-\infty,\rho_s]$.  Evidently $\theta_\eq(\rho_s)=0$, and so we may extend  $\theta_\eq(\cdot)$ in a continuous way to have domain $\R$ by setting $\theta_\eq(\rho)=0$ for $\rho>\rho_s$. It is proven below that $c_{(\rho)}^{\eq} = c_{\theta^{\eq}(\rho)}^{\eq}$ with $\rho$ given as right hand side of~\eqref{D1} is the long-time limit of the evolution equation \eqref{A1}, \eqref{B1}, \eqref{D1}, \eqref{E1}.

The goal of the work is to establish well-posedness of the system, to investigate the long-time behavior and to obtain the rate for convergence in the subcritical regime $\rho < \rho_s$.

\bigskip

The above  model \eqref{A1}, \eqref{B1}, \eqref{D1}, \eqref{E1} bears similarities with the classical Becker-Döring model~\cite{BD1935}, and is also closely related to the Lifshitz-Slyozov-Wagner (LSW) model of coarsening~\cite{LS61,W61}. The Becker-Döring model is discrete with evolution determined by a countable set of ODEs, whereas  the LSW model is continuous with a nonlocal conservation law on $\R^+$. The close connection between the Becker-Döring and LSW models was investigated in several works \cite{P97,Collet99,vel,Collet00,np2,LM02,Collet02,N03,Collet04,np3,C10,Schlichting}. The models from these works closest to the one investigated here are those considered in~\cite{C10,vel}. There a diffusive LSW equation very similar to the equations \eqref{A1}, \eqref{B1}, \eqref{D1} is studied. However, instead of the boundary condition \eqref{E1}, a homogeneous Dirichlet boundary condition $c(0,t)=0$ is considered. The boundary condition is crucial, since it changes the stationary states in a nontrivial way. Another modified LSW equation was proposed in \cite{Collet99} and \cite[§4]{Collet02}, which is obtained by a formal second order expansion of the Becker-Döring equation. The difference between the models obtained in~\cite{Collet99,Collet02} and the system \eqref{A1}, \eqref{B1}, \eqref{D1}, \eqref{E1} is that the coupling of the constraint and the boundary condition is different. There has been no rigorous mathematical analysis done on the model presented in \cite{Collet99,Collet02}. Our present analysis may be applicable here also, given  the slight change of boundary conditions.

\bigskip

Let us specify the set of assumptions on the functions $a(\cdot)$, $V(\cdot)$, $W(\cdot)$ needed to obtain statements for the system  \eqref{A1}, \eqref{B1}, \eqref{D1}, \eqref{E1}.
\begin{assumption}\label{ass:VW:intro}\
\begin{enumerate}[ (a) ]
 \item $a(\cdot), \ V(\cdot), \ W(\cdot) \in \cC^2(\R^+)$ satisfy for some $C_0>0$
 \begin{equation}\label{F1:0:intro}
    \bra[\big]{ \abs{V''(x)} + \abs{W''(x)}} \; a(x) + \bra[\big]{ \abs{V'(x)} + \abs{W'(x)}} \;  \abs{a'(x)} + \abs{a''(x)} \leq C_0 \qquad\text{ for } x\in \R^+ .
 \end{equation}
 \item $W(\cdot)$ is an increasing function with $W(0)>0$ and $\inf_{x\in \R^+} a(x)\geq c_0 >0$.
 \item For any $\del>0$ there exists $x_\del>0$ such that
 \begin{equation}\label{F1:1:intro}
   |V'(x)| + |(\log a(x))'|  \ \le \ \del W'(x) \ ,  \quad \text{if } x\ge x_\del \ .
 \end{equation}
and
\begin{equation}\label{F1:2:intro}
\abs{V''(x)}  + |(\log a)''(x)| + \abs{W''(x)} \ \le \ \delta W'(x)^2 \ ,   \quad \text{if } x\ge x_\del \ .
\end{equation}
  \item There exists $C_0,c_0>0$ such that
\begin{equation}\label{G1:intro}
c_0 \leq a(x) W'(x)^2 \leq C_0 W(x) \ ,  \qquad \text{ for } x\in \R^+ \ .
\end{equation}
  \item The function  $W(\cdot) a(\cdot)^{-1} \exp\bra*{-V(\cdot)}$ is integrable on $(0,\infty)$.
\end{enumerate}
\end{assumption}
\begin{remark}\label{rem:ass}
  Assumption~\ref{ass:VW:intro}~(a) represents limits on the growth rates of the coefficients, and is needed to avoid any question or discussion of the explosion of the associated SDE. These assumptions are typical for establishing well-posedness of linear Fokker-Planck equations. The physical interpretation of the model associates with $c$ a cluster volume distribution and with $W$ a bulk energy per unit volume. Hence, we ask for a monotone relation between bulk energy and volume. In this interpretation the diffusion coefficient $a$ is an overall reactivity and so is strictly positive. The potential $V$ corresponds to surface energy per unit volume, and therefore has smaller growth at infinity than the bulk energy $W$.  This is expressed in part (c) of Assumption~\ref{ass:VW:intro}. Assumption~\ref{ass:VW:intro}~(e) ensures that  $\rho_s$ as defined in~\eqref{def:rhos}  satisfies  $\rho_s<\infty$. This is the physically interesting case, showing two regimes in the longtime limit. The case $\rho_s=\infty$ can be handled with small modifications. 
  
  Assumption~\ref{ass:VW:intro}~(d) has no direct physical interpretation, but is crucial for the qualitative and quantitative investigation of the long-time limit (see also condition~\eqref{G1:alpha} in Theorem~\ref{thm:QuantLongTime}). In particular, it implies that  $W(x) \to \infty$ as $x\to \infty$. To see this observe from (a)  of Assumption~\ref{ass:VW:intro} that $a''(\cdot)\le C_0$ yields the inequality  $\sqrt{a(x)}\le C_1[1+x]$ for some constant $C_1$. Hence, using also (d) of 
   Assumption~\ref{ass:VW:intro},  we have
  \[
    W(x) - W(0) = \int_0^x W'(y) \dx{y} \geq \int_0^x \frac{\sqrt{c_0}}{\sqrt{a(y)}} \dx{y} \geq \frac{\sqrt{c_0}}{C_1} \int_0^x \frac{\dx{y}}{1+y} =\frac{\sqrt{c_0}}{C_1}  \log\bra*{1+x} . 
  \]
  To illustrate the above set of assumptions, we take for $a,V,W$ power laws
  \[
    W(x)= (1+x)^{\kappa} \, \qquad a(x) = (1+x)^\alpha \qquad\text{and}\qquad  V(x) = (1+x)^\gamma .
  \]
  Then, the admissible range of exponents is
  \[
    0 < \kappa \leq 2 , \qquad \max\set{2-2\kappa,0} \leq \alpha \leq 2 -\kappa \qquad\text{and}\qquad 0 < \gamma < \min\set{2-\alpha, \kappa} .
  \]
\end{remark}
\begin{theorem}\label{thm:ExistUniqueConvergence}
Assume the functions $a(\cdot)$, $V(\cdot)$, $W(\cdot)$ satisfy Assumption~\ref{ass:VW:intro}.  Let $c(x,0), \ x>0,$ be a non-negative measurable function such that
\begin{equation}\label{AJ2}
\int_0^\infty W(x)c(x,0) \dx{x} \ < \ \infty \ .
\end{equation}
Then there exists a unique solution $c(\cdot,t), \ t>0,$ to the Cauchy problem \eqref{A1}, \eqref{B1}, \eqref{D1}, \eqref{E1} with initial condition $c(\cdot,0)$. \\[0.25\baselineskip]
For all $t>0$ the function $c(\cdot,t)\in \cC^1([0,\infty))$ and $\theta \in \cC^1([0,\infty))$.\\[0.25\baselineskip]
For any $L>0$ the solution $c(\cdot,t)$ converges uniformly on the interval $[0,L]$ as $t\ra\infty$ to the equilibrium $c_\theta^\eq(\cdot)$ with $\theta=\theta_\eq(\rho)$. If $\rho\le\rho_s$ then also
\begin{equation}\label{H1}
\lim_{t\ra\infty}\int_0^\infty W(x)|c(x,t)-c_\theta^\eq(x)| \dx{x} \ = \ 0 \ .
\end{equation}
\end{theorem}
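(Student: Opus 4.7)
For local well-posedness I would decouple the nonlocal constraint \eqref{D1} by treating $\theta$ as an input. For any continuous $\theta$ on $[0,T]$, the problem \eqref{A1}, \eqref{B1}, \eqref{E1} is a linear Fokker-Planck equation with smooth, bounded coefficients (Assumption~\ref{ass:VW:intro}(a)) and admits a unique classical solution $c_\theta(\cdot,t)$. The map $\Phi(\theta)(t) := \rho - \int_0^\infty W(x) c_\theta(x,t) \dx{x}$ sends $\cC([0,T])$ into itself thanks to the finite initial $W$-moment \eqref{AJ2}; a Duhamel expansion of $c_\theta$ in $\theta$ shows that $\Phi$ is a contraction when $T$ is small, giving a short-time solution. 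The constraint yields $\theta(t) \le \rho$ for all times since $W, c \ge 0$, and a matching lower bound comes from the structural form \eqref{B1} of the drift, which couples the boundary flux to the $W$-moment in a way that, combined with Assumption~\ref{ass:VW:intro}(c), keeps $\theta$ bounded below uniformly in $t$. This allows iteration of the contraction on intervals of fixed length, hence global existence and uniqueness.

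For the $\cC^1$ regularity I would bootstrap via parabolic Schauder estimates. Continuity of $\theta$ together with Assumption~\ref{ass:VW:intro}(a) gives Hölder coefficients in \eqref{A1}, so Schauder yields Hölder regularity of $c$ in space-time; substituting into $\theta(t)=\rho-\int_0^\infty W(x) c(x,t) \dx{x}$ then upgrades $\theta$ to a Hölder function. A second Schauder pass improves $c(\cdot,t)$ to $\cC^{1,\alpha}$ in $x$, and differentiating the constraint using the PDE shows $\theta \in \cC^1((0,\infty))$.

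For the long-time behaviour, the key tool is the free energy $\cF$ alluded to below \eqref{E1}, which is strictly dissipated along the flow with dissipation vanishing precisely on the one-parameter family $\{c_\theta^\eq\}$. From the parabolic bounds the family $\{c(\cdot,t)\}_{t\ge 1}$ is precompact in $\cC_{\mathrm{loc}}(\R^+)$; a LaSalle-type argument then identifies every accumulation point with some $c_{\theta_\infty}^\eq$. Because $\theta \mapsto \theta + \int_0^\infty W(x) c_\theta^\eq(x) \dx{x}$ is strictly increasing with range $(-\infty,\rho_s]$, the limit is forced to satisfy $\theta_\infty = \theta_\eq(\rho)$; uniqueness of the limit upgrades subsequential to full convergence uniformly on $[0,L]$ for every $L$. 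When $\rho \le \rho_s$, the equilibrium already carries the full conserved $W$-mass, so no fraction escapes to $x=\infty$, and \eqref{H1} follows from Vitali's theorem once uniform integrability of $W(\cdot) c(\cdot,t)$ is in place.

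The main obstacle I expect is the uniform tail control on $W c(\cdot,t)$ that underlies both the global-in-time lower bound on $\theta$ and the strong convergence \eqref{H1}: a tail estimate uniform in $t$ which, in the subcritical regime, rules out any leakage of $W$-mass to infinity. The natural route is to test the equation against a slowly growing multiple of $W$ and exploit the compatibility condition $a(x) W'(x)^2 \le C_0 W(x)$ from Assumption~\ref{ass:VW:intro}(d), which is precisely the balance that makes the generator of \eqref{A1} dissipative against weights comparable to $W$. With this moment estimate in hand, the remaining pieces are essentially standard parabolic and ODE technology.
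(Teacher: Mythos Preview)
Your outline matches the paper's architecture closely: a fixed-point argument on $\theta$ for local existence, a priori two-sided bounds on $\theta$ for globalization, a regularity bootstrap for the $\cC^1$ claim, and a free-energy/LaSalle argument for convergence, with uniform tightness of $W c(\cdot,t)$ supplying \eqref{H1} in the subcritical case. You also correctly single out the uniform tail control as the main technical hurdle. Two places deserve more care.

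First, the regularity bootstrap does not run on standard Schauder as you describe. You write that continuity of $\theta$ ``gives H\"older coefficients in \eqref{A1}, so Schauder yields H\"older regularity of $c$ in space-time.'' But with $\theta$ merely continuous, the drift $b(x,t)=\theta(t)W'(x)-V'(x)$ is only continuous in $t$, so parabolic Schauder with H\"older-in-time coefficients is not available to launch the iteration. The paper instead starts by estimating $\Theta(T)-\Theta(T')$ directly through the adjoint representation \eqref{AI2}, obtaining H\"older order $\alpha<1/2$ for $\theta$ from continuity alone (their Lemma~\ref{lem:Hoelder}). The subsequent half-derivative gain---H\"older order $\gamma$ for $\theta$ implies H\"older order $\gamma+1/2$ for $\Theta$---rests on the regularity of the boundary flux $\partial_x c(0,t)$, which is governed by the time-dependent Dirichlet data \eqref{E1}. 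This is not interior Schauder: the paper expands the Dirichlet Green's function near $x=0$ to leading and next-to-leading order (Lemmas~\ref{lem:Schauder1}--\ref{lem:Schauder3}) to extract the gain, and explicitly remarks that this boundary regularity statement is not standard. Your ``second Schauder pass'' needs to be replaced by that analysis.

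Second, both the lower bound on $\theta$ and the tightness of $Wc$ are obtained in the paper via the adjoint problem (supersolutions for the backward equation with terminal data $W$, combined with stochastic exit-time estimates; Lemmas~\ref{lem:adjoint:Wbound}, \ref{lem:adjoint:fluxBound}, \ref{lem:adjoint:tight0}), not by testing the forward equation against a multiple of $W$. Your proposed direct moment estimate may also work, but note that the tightness Lemma~\ref{lem:tight} requires $\theta(t)\le\theta_\infty<0$; in the convergence proof this is only available \emph{a posteriori}, after the LaSalle argument has produced a limit $\theta_\infty\le\theta_\eq(\rho)$ and one argues by contradiction assuming $\theta_\infty<\theta_\eq(\rho)\le 0$. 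Make sure your version of the tail bound respects this order of dependencies.
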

The three statements on well-posedness, regularity and convergence to equilibrium are proven in the next three sections §\ref{s:ExistUnique}, §\ref{s:Reg} and §\ref{s:Conv}, respectively.
{  In  §\ref{s:ExistUnique}  we show that the solution $c(\cdot,t)$ exists in a weak sense (see Definition~\ref{def:weakSol}).  In particular,  the Borel measure $c(x,t) \dx{x}$ converges weakly as $t\ra 0$ to $c(x,0) \dx{x}$. Regularity properties of $c(x,t)$ are established in  §\ref{s:Reg}. For $t>0$ the function $c(\cdot,t)$ with domain $[0,\infty)$ is $\cC^1$ and the boundary condition \eqref{E1} holds.}

\subsection{Convergence rate to equilibrium (subcritical)}\label{s:intro:rate}

We derive in the subcritical case $\rho < \rho_s$ a quantified rate of convergence to equilibrium.
The proof relies on the entropy method and the convergence statement is shown with respect to a free energy, which is decreasing along the solution.
For the formal calculations with the free energy it is convenient to rewrite the set of equations \eqref{A1}, \eqref{B1}, \eqref{E1}. Observing from \eqref{B1}, \eqref{C1} that
\begin{equation*}
\pa_x[a(x)c(x,t)]-b(x,t)c(x,t) \ = \ a(x)c(x,t) \; \partial_x \log\frac{c(x,t)}{c_{\theta(t)}^\eq(x)} \  ,
\end{equation*}
we see that \eqref{A1}, \eqref{B1}, \eqref{E1} can be rewritten as
\begin{equation*}
  \partial_t c(x,t) = \partial_x\bra*{ a(x) c(x,t) \; \partial_x \log\frac{c(x,t)}{c_{\theta(t)}^\eq(x)}} \quad\text{with b.c.}\quad \log \frac{c(0,t)}{c_{\theta(t)}^\eq(0)} = 0 \ .
\end{equation*}
With the PDE and boundary condition in this form, together with equation~\eqref{D1} for $\theta(t)$,  the following energy dissipation estimate is formally deduced: 
\begin{equation}\label{e:EDI}
 \frac{\dx{}^+}{\dx{t}} \cG\bra*{c(\cdot,t), \theta(t) } \leq - \cD\bra*{ c(t),\theta(t) }
\end{equation}
where $\frac{\dx{}^+}{\dx{t}} f(t) = \limsup_{\delta \to 0^+} \frac{f(t+\delta)- f(t)}{\delta}$ and
\begin{align}
  \cG(c(\cdot,t),\theta(t)) &= \int \bra*{ \log c - 1} c(x) \dx{x} + \int \bra{V + \log a} c(x) \dx{x} + \frac{1}{2} \theta(t)^2 \label{e:FreeEnergy:FP}\\
  &\qquad\text{with}\quad \theta(t) = \rho - \int W(x) c(x,t) \dx{x} \notag \\
  \cD(c(\cdot),\theta) &= \int a(x) \bra*{ \partial_x \log \frac{c}{c_{\theta}^\eq}}^2 c(x) \dx{x} .\notag
\end{align}
{ The differential inequality \eqref{e:EDI} is rigorously established after proving sufficient regularity properties of the solution $c(\cdot,t)$.}
\begin{remark}[Relation to McKean-Vlasov dynamic]
  The term $\frac{1}{2} \theta^2$ in~\eqref{e:FreeEnergy:FP} is characteristic for free energies of McKean-Vlasov equations
  \begin{equation}\label{FreeEnergy:McKeanVlasov}
    \cF_{MV}(c) = \int c \log c + \int c \tilde V + \frac{1}{2} \iint K(x,y) c(x)c(y) \, \dx{x} \dx{y} ,
  \end{equation}
  with some kernel function $K$. For the product kernel $K(x,y)=W(x) W(y)$, the last term becomes $\bra{\int W c}^2$. By choosing $\tilde V =  V -1 + \log a - \rho W$, the free energy $\cF_{MV}$ agrees with $\cG$ from \eqref{e:FreeEnergy:FP} up to a constant. The connection becomes more apparent by noting that \eqref{A1}, \eqref{B1}, \eqref{D1}, \eqref{E1} is the formal gradient flow with respect to a Wasserstein metric including a boundary condition (see §\ref{s:GF:FP}). Hence, the presented Fokker-Planck equation has a close connection to the class of McKean-Vlasov equations with a product kernel, however with a non-local boundary condition.
  
  Let us point out that gradient flows with boundary condition are quite delicate and are first studied in~\cite{Figalli10} for the heat equation with Dirichlet boundary conditions. McKean-Vlasov equations with non-local interaction and some boundary are just recently studied in~\cite{Morales2016}. However, the particular boundary condition~\eqref{E1} together with the non-local constraint~\eqref{D1} has been to our knowledge not studied in the literature so far.
\end{remark}
The function $\cG$ is proven to be convex with a unique minimizer satisfying the constraint~\eqref{D1} (see Lemma~\ref{lem:CharactMinEnergy})
given by
\begin{equation*}
  \inf_c\set*{ \cG(c,\theta) : \theta + \int W(x) c(x) \dx{x} = \rho } = \cG\bra{c_{\theta_\eq}^\eq,\theta_\eq}
\end{equation*}
where $\theta_\eq = \theta_\eq(\rho)$ is uniquely determined by $\rho$ through the identity $\rho = \theta_\eq + \int W c_{\theta_\eq}^\eq$. This allows to define the normalized free energy functional
\begin{equation}\label{e:def:FreeEnergyNorm}
  \cF_\rho(c) = \cG(c,\theta) - \cG\bra{ c_{\theta_\eq}^\eq,\theta_\eq} \qquad\text{with}\qquad \theta = \rho - \int W(x) c(x) \dx{x} .
\end{equation}
Therewith, we can state the second main result on the rate of convergence to equilibrium.
  \begin{theorem}\label{thm:QuantLongTime}
  Let $\rho<\rho_s$. Assume the function $a(\cdot), V(\cdot), \ W(\cdot)$ to satisfy Assumption~\ref{ass:VW} and in addition for some $\beta \in (0,1]$ and constants $0<c_0<C_0<\infty$ holds
  the refinement of~\eqref{G1:intro}
  \begin{equation}\label{G1:alpha}
    c_0 W^{1-\beta}(x) \leq a(x) W'(x)^2 \qquad\text{for } x\in\R^+ .
  \end{equation}
  Let~$c$ be a solution to \eqref{A1}, \eqref{B1}, \eqref{D1}, \eqref{E1} with initial condition $c(\cdot,0)$ satisfying~\eqref{AJ2} and for some $C_0$ and $k > 0$ the moment condition
  \begin{equation}\label{e:quantLT:initial}
     \int W(x)^{1+ k \beta}  \, c(x,0) \dx{x} \leq C_0,
  \end{equation}
  Then there exists $\lambda$ and $C$ depending on $a,V,W,\theta_\eq,C_0,k$ such that for all $t\geq 0$
  \begin{equation*}
    \cF_\rho(c(t)) \leq \frac{1}{\bra{ C + \lambda t}^{k}} \ .
  \end{equation*}
  Moreover, if~\eqref{G1:alpha} holds with $\beta=0$, that is $c_0 W(x) \leq a(x) W'(x)^2 \leq C_0 W(x)$ for $x\in \R^+$, then there exists $C>0$ and $\lambda>0$ such that
 \begin{equation*}
    \cF_\rho(c(t)) \leq C e^{-\lambda t} .
  \end{equation*}
\end{theorem}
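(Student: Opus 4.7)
My plan is to implement the entropy method outlined in~\S\ref{s:intro:rate}. Using the identity $\log c_\theta^\eq = -\log a - V + \theta W$ together with the constraint $\int W c \dx{x} = \rho - \theta$, a direct algebraic manipulation yields
\begin{equation*}
  \cF_\rho(c) = \mathcal{H}\bra[\big]{c\,\big|\,c_{\theta_\eq}^\eq} + \tfrac{1}{2}(\theta-\theta_\eq)^2 , \qquad \mathcal{H}(c|\mu) := \int \bra*{c\log\tfrac{c}{\mu} - c + \mu} \dx{x} ,
\end{equation*}
so that $\cF_\rho$ decomposes into a relative entropy with respect to the equilibrium plus a quadratic defect. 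On the dissipation side, writing $\log(c/c_\theta^\eq) = \log(c/c_{\theta_\eq}^\eq) + (\theta_\eq - \theta) W$ and using $aW'^2 \le C_0 W$ from~\eqref{G1:intro} gives
\begin{equation*}
  \cD(c,\theta) \geq \tfrac{1}{2}\, I\bra[\big]{c\,\big|\,c_{\theta_\eq}^\eq} - C(\theta-\theta_\eq)^2 \int W c \dx{x} , \qquad I(c|\mu) := \int a \bra[\big]{\partial_x \log(c/\mu)}^2 c \dx{x} .
\end{equation*}
Since $\int W c \dx{x} = \rho - \theta$ and $\theta(t)$ stays in a bounded range along the evolution, the correction is absorbed by the quadratic defect, reducing the problem to a comparison of $\mathcal{H}(c|c_{\theta_\eq}^\eq)$ with the Fisher information $I(c|c_{\theta_\eq}^\eq)$.

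\textbf{Exponential regime ($\beta = 0$).} The strengthened hypothesis $c_0 W \le a W'^2$ sits exactly at the threshold for a full log-Sobolev inequality. Since $\theta_\eq < 0$ in the subcritical regime, the equilibrium log-density decays like $-|\theta_\eq| W(x)$ at infinity, and the change of variable $u = W(x)$ conjugates the weighted Dirichlet form to the one for the exponential measure $\propto \exp(-|\theta_\eq| u) \dx{u}$ on $[W(0),\infty)$. The standard Bakry--\'Emery / Muckenhoupt criterion then yields $\mathcal{H}(c|c_{\theta_\eq}^\eq) \le C\, I(c|c_{\theta_\eq}^\eq)$, and a Csisz\'ar--Kullback--Pinsker-type inequality controls $(\theta-\theta_\eq)^2$ by $\mathcal{H}$ through the strict monotonicity of $\theta \mapsto \theta + \int W c_\theta^\eq \dx{x}$. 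Combining produces $\cF_\rho \le C\, \cD$, and integration of~\eqref{e:EDI} gives the exponential rate.

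\textbf{Algebraic regime ($\beta \in (0,1]$).} Here the lower estimate $aW'^2 \ge c_0 W^{1-\beta}$ is too weak for a full LSI, and I would instead establish the weak log-Sobolev inequality
\begin{equation*}
  \mathcal{H}\bra[\big]{c\,\big|\,c_{\theta_\eq}^\eq} \le C\, I\bra[\big]{c\,\big|\,c_{\theta_\eq}^\eq}^{k/(k+1)}\, \bra*{\int W^{1+k\beta} c \dx{x}}^{1/(k+1)}
\end{equation*}
by splitting the entropy integral at a level $W(x) = R$: on $\{W \le R\}$ the hypothesis reads $aW'^2 \ge c_0 R^{-\beta} W$, yielding a local LSI with constant $O(R^\beta)$, while the tail is bounded by the Markov-type estimate $R^{-k\beta} \int W^{1+k\beta} c \dx{x}$; optimizing in $R$ produces the stated exponent. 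Uniform-in-time propagation of the weighted moment $\int W^{1+k\beta} c(\cdot,t) \dx{x}$, closed by testing the PDE against $W^{1+k\beta}$ and exploiting the uniform control on $\theta(t)$, then turns~\eqref{e:EDI} into the differential inequality $\tfrac{\dx{}^+}{\dx{t}} \cF_\rho \le -\lambda\, \cF_\rho^{1+1/k}$, which integrates to $\cF_\rho(c(t)) \le (C + \lambda t)^{-k}$.

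\textbf{Main obstacle.} The principal difficulty is the weighted weak LSI with the sharp exponent $k/(k+1)$ and its compatibility with the non-standard Dirichlet boundary condition~\eqref{E1}: the change of variables $u = W(x)$ must transfer the boundary contribution cleanly, and the local LSI constant on $\{W \le R\}$ has to scale precisely as $R^\beta$. A secondary delicacy is the uniform-in-time moment propagation, since~\eqref{E1} continuously injects mass at $x=0$; the sign of the boundary flux, together with the bound $\theta(t) < 0$ guaranteed in the subcritical regime, should suffice to prevent blow-up.
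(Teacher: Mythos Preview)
Your overall entropy-method architecture is right, and the decomposition $\cF_\rho(c)=\cH(c\mid c_{\theta_\eq}^\eq)+\tfrac12(\theta-\theta_\eq)^2$ is exactly what the paper establishes (Lemma~\ref{lem:FreeEnergyRelEnt}). But your choice to compare everything to the \emph{fixed} equilibrium $c_{\theta_\eq}^\eq$ creates two linked difficulties that you do not resolve, and the paper avoids them by a different pivot.

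\medskip
\textbf{Boundary mismatch.} The Dirichlet-type LSI you need (your ``local LSI'' or its global version) takes the form $\Ent_\nu(f)\le A\int|\partial_x\log f|^2 f\,\dx\mu$ for $f$ with $f(0)=1$; see Proposition~\ref{prop:mixedDirLSI}. With your reference measure, $f=c/c_{\theta_\eq}^\eq$ and the boundary condition \eqref{E1} gives $f(0)=c_\theta^\eq(0)/c_{\theta_\eq}^\eq(0)=\exp\bigl((\theta-\theta_\eq)W(0)\bigr)\ne 1$. So the half-line LSI with zero boundary value simply does not apply to your $f$, and the change of variable $u=W(x)$ does not cure this: the point is not the geometry of the measure but the value of $f$ at the boundary.

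\medskip
\textbf{The ``absorption'' does not close.} From $\cD(c,\theta)\ge \tfrac12 I(c\mid c_{\theta_\eq}^\eq)-C(\theta-\theta_\eq)^2$ together with an LSI $\cH\le C_{\LSI} I$ and weighted Pinsker $(\theta-\theta_\eq)^2\le C''\cH$, you only obtain $\cF_\rho\le 2C_{\LSI}\cD + (2C_{\LSI}C+\tfrac12)C''\,\cF_\rho$. Closing this requires $(2C_{\LSI}C+\tfrac12)C''<1$, a smallness of fixed structural constants that you have no mechanism to enforce. The ``quadratic defect'' in $\cF_\rho$ has the wrong sign to help here.

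\medskip
\textbf{How the paper proceeds instead.} The paper compares to the \emph{moving} target $c_\theta^\eq$: Lemma~\ref{lem:FreeEnergyRelEnt} also gives $\cF_\rho(c)\le \cH(c\mid c_\theta^\eq)$, and since $\cD(c,\theta)=I(c\mid c_\theta^\eq)$ exactly, no correction term appears and the boundary condition matches on the nose ($c(0)/c_\theta^\eq(0)=1$). The weighted LSI (Proposition~\ref{prop:epi0}) then yields $\int \omega^{-1}\Psi(c/c_\theta^\eq)\,c_\theta^\eq\le C_{\LSI}\,\cD(c,\theta)$ with $\omega=W/(aW'^2)$, and H\"older interpolation against the $\omega^k$-moment gives $\cF_\rho^{(1+k)/k}\le C_k C_{\LSI}\,\cD$ (Theorem~\ref{thm:epi}). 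The price is that $C_{\LSI}=C_{\LSI}(\delta)$ is only uniform for $-1/\delta\le\theta\le-\delta$. Your claim that ``$\theta(t)<0$ is guaranteed in the subcritical regime'' is false: $\theta(t)$ may be nonnegative on a set of times. The paper handles this with Theorem~\ref{thm:dissipation:lb}: whenever $|\theta(t)-\theta_\eq|\ge\delta$ one has $\cD\ge\eps(\delta)>0$, so by the energy bound the Lebesgue measure of such ``bad'' times is finite (see~\eqref{e:bound:bad:times}). This same finiteness is what feeds into the moment propagation (Lemma~\ref{lem:moment:bound}), which is done by adjoint/stochastic estimates rather than by testing against $W^{1+k\beta}$, precisely because the boundary flux carries mass in and one needs the drift to dominate only after the bad set.

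\medskip
In short: switch your reference from $c_{\theta_\eq}^\eq$ to $c_{\theta(t)}^\eq$. That single change makes the boundary value equal to $1$, eliminates the $(\theta-\theta_\eq)^2$ error in the dissipation estimate, and reduces the problem to a $\theta$-uniform weighted LSI plus a separate argument for the finite-measure set of times where $\theta(t)$ is not strictly negative.
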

The proof of Theorem~\ref{thm:QuantLongTime} is contained in §\ref{s:Rate}.

By a suitable Pinsker inequality (see Corollary~\ref{cor:FreeEnergyRelEnt}), the convergence of Theorem~\ref{thm:QuantLongTime} also implies the quantified version of the statement~\eqref{H1} of Theorem~\ref{thm:GlobalExistence} as well as a quantified convergence statement for $\theta(t)$.
\begin{corollary}\label{cor:Pinsker}
  Under the assumptions of Theorem~\ref{thm:QuantLongTime} there exists for any $T>0$ an explicit constant $C>0$ such that
  \begin{equation}
    \bra*{\int W(x) \abs*{c(x,t)- c_{\theta^\eq}^\eq(x)} \dx{x}}^2 + \bra*{ \theta(t) - \theta^\eq}^2 \leq C \;\cF_\rho(c(\cdot,t)) \qquad\text{ for all } t\geq T.
  \end{equation}
\end{corollary}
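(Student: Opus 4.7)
The plan is to reduce the claimed estimate to (i) an exact algebraic decomposition of the normalized free energy $\cF_\rho$ and (ii) a weighted Csiszár--Kullback--Pinsker inequality of Hellinger type. For the decomposition, I would expand $\cG(c,\theta) - \cG(c_{\theta_\eq}^\eq,\theta_\eq)$ using the explicit form $\log c_\theta^\eq = -\log a - V + \theta W$ together with the constraint $\theta + \int W c \dx{x} = \rho = \theta_\eq + \int W c_{\theta_\eq}^\eq \dx{x}$. A direct computation---presumably the content of the referenced Corollary~\ref{cor:FreeEnergyRelEnt}---collapses the cross terms and yields
\begin{equation*}
  \cF_\rho(c) \;=\; \int \bra*{c\log\tfrac{c}{c_{\theta_\eq}^\eq} - c + c_{\theta_\eq}^\eq}\dx{x} \;+\; \tfrac12(\theta-\theta_\eq)^2.
\end{equation*}
Both summands are non-negative (the first by Klein's inequality $r\log r - r + 1 \ge 0$), so the estimate on the scalar part is immediate: $(\theta(t)-\theta_\eq)^2 \le 2\,\cF_\rho(c(\cdot,t))$.

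For the $W$-weighted $L^1$ term I would combine the elementary pointwise inequality $(\sqrt{a}-\sqrt{b})^2 \le a\log(a/b) - a + b$ (Hellinger versus Kullback--Leibler) with the factorization $|a-b| = (\sqrt{a}+\sqrt{b})|\sqrt{a}-\sqrt{b}|$ and Cauchy--Schwarz. Together these give
\begin{equation*}
  \bra*{\int W \abs*{c - c_{\theta_\eq}^\eq}\dx{x}}^2 \le 2\bra*{\int W^2\bra*{c + c_{\theta_\eq}^\eq}\dx{x}}\int\bra*{c\log\tfrac{c}{c_{\theta_\eq}^\eq} - c + c_{\theta_\eq}^\eq}\dx{x}.
\end{equation*}
The relative entropy factor is bounded by $\cF_\rho(c)$ via the decomposition above, so the corollary reduces to a uniform bound on the weighted second moment $\int W^2\bra{c(\cdot,t) + c_{\theta_\eq}^\eq}\dx{x}$ for $t \ge T$.

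The main technical obstacle is precisely this $W^2$-moment. For the equilibrium, integrability is automatic because $\theta_\eq<0$ (as $\rho<\rho_s$) makes $W^2 c_{\theta_\eq}^\eq = W^2 a^{-1}\exp(-V + \theta_\eq W)$ decay exponentially in $W$. For $c(\cdot,t)$, I would propagate the hypothesis $\int W^{1+k\beta}c(\cdot,0)\dx{x} \le C_0$ along the evolution using the regularity and moment estimates of §\ref{s:Reg}; when $k\beta\ge 1$ this directly furnishes a finite $W^2$-moment. In the borderline case $k\beta<1$ I would instead split $\int W\abs{c-c_{\theta_\eq}^\eq}\dx{x}$ over $\set{W\le R}$ and $\set{W>R}$, apply the classical unweighted Csiszár--Kullback inequality on the first piece---using that $\norm{c(\cdot,t)}_1 \le (\rho-\theta(t))/W(0)$ together with the uniform lower bound on $\theta(t)$ supplied by the global existence result---and control the tail by the $W^{1+k\beta}$-moment, then optimize in $R$ before squaring. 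Either route yields the stated inequality, with the $T$-dependence of $C$ entering only through the moment control.
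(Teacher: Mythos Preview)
Your decomposition $\cF_\rho(c)=\cH(c\mid c_{\theta_\eq}^\eq)+\tfrac12(\theta-\theta_\eq)^2$ is exactly the paper's Lemma~\ref{lem:FreeEnergyRelEnt}, and the bound $(\theta-\theta_\eq)^2\le 2\cF_\rho(c)$ is immediate from it, just as you say.

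The Pinsker step, however, is handled differently in the paper, and your route has a gap in the regime $k\beta<1$. The paper's Lemma~\ref{lem:Pinsker} uses the Young-type duality $rs\le\varphi(r)+\varphi^*(s)$ with $\varphi^*(s)=e^s-s-1$; choosing $s=\ve\,|\theta_\eq|\,W$ and exploiting the exact identity $e^{|\theta_\eq|W}c_{\theta_\eq}^\eq=e^{-V}\in L^1$ absorbs the weight $W$ at the cost of the single constant $C_V=\int e^{-V}$. This gives $\bigl(\int W|c-c_{\theta_\eq}^\eq|\bigr)^2\le 4|\theta_\eq|^{-2}\max\{\cH,C_V\}\,\cH$ with \emph{no} moment hypothesis on $c$; the $T$-dependence of the final constant comes only from $\cF_\rho(c(\cdot,T))<\infty$ (Corollary~\ref{cor:stability:sup}). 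By contrast, your Hellinger--Cauchy--Schwarz estimate needs a uniform bound on $\int W^2 c(\cdot,t)$. When $k\beta\ge 1$ this is indeed propagated (Lemma~\ref{lem:moment:bound}) and your first route succeeds. But for $k\beta<1$ your split/optimize argument yields only
\[
\int W|c-c_{\theta_\eq}^\eq|\ \lesssim\ R\sqrt{\cH}+R^{-k\beta},
\]
which after optimizing in $R$ gives $\bigl(\int W|c-c_{\theta_\eq}^\eq|\bigr)^2\lesssim \cH^{\,k\beta/(1+k\beta)}$. Since $k\beta/(1+k\beta)<1$, this is \emph{weaker} than the claimed linear bound $\lesssim\cF_\rho$ precisely in the small-entropy (large-time) regime, so the corollary as stated is not recovered. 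The exponential structure of the equilibrium is what makes the paper's argument go through uniformly in $k\beta>0$; replacing your Hellinger step by the duality argument of Lemma~\ref{lem:Pinsker} closes the gap.
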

\begin{remark}
  Firstly, let us emphasize, that the rates given in Remark~\ref{rem:ass} satisfy the refined assumption~\eqref{G1:alpha} with $\beta = \frac{2-\alpha -\kappa}{\kappa} \in [0,1]$.

  To explain the quantity~\eqref{e:quantLT:initial}, we introduce $\omega(x) = W(x) / \bra*{a(x) W'(x)^2}$. Then, by using~\eqref{G1:alpha}, we have $\omega(x)\leq c_0^{-1} W(x)^{\beta}$ and the moment condition~\eqref{e:quantLT:initial} gives a bound on
  \[
    \int \omega(x)^k \, W(x) \, c(x,0) \dx{x} \leq C_0 .
  \]
  The weight function $\omega(x)$ is essential for the derivation of suitable functional inequalities, in this case weighted logarithmic Sobolev inequalities (see §\ref{s:weightLSI}). Together with an interpolation argument, this is the essential ingredient to obtain a suitable differential inequality for the time-derivative of~$\cF_\rho$. We decided for the sake of presentation to use the slightly simplifying assumption~\eqref{G1:alpha}.

  In addition, it is possible by the same technique, to obtain subexponential convergence rates of the form $\exp\bra*{-\lambda t^{\nu}}$ for some $\nu\in (0,1)$ by using an interpolation argument with stretched exponential moments and suitable bounds on the initial data. We refer to~\cite{CEL15}, where this kind of result is obtained for the Becker-Döring equation.
\end{remark}

\subsection{Relation with the classical Becker-Döring model}
 The system \eqref{A1}, \eqref{B1}, \eqref{D1}, \eqref{E1} may be considered  a continuum analogue of the classical Becker-D\"{o}ring model for cluster evolution~\cite{BD1935}.  Letting $c_\ell(t), \ \ell=1,2,...,$ denote the density at time $t$ of clusters of volume $\ell$, the
cluster evolution is  determined by the equations
\begin{equation}\label{P1}
\pderiv{c_\ell(t)}{t} \ = \ J_{\ell-1}(t)-J_\ell(t) \ , \quad \ell=2,3,...
\end{equation}
 and the  mass conservation law
\begin{equation}\label{Q1}
\sum_{\ell=1}^\infty \ell c_\ell(t) \ = \ \rho  \ .
\end{equation}
It follows from \eqref{P1}, \eqref{Q1} that monomer evolution is determined by the equation
 \begin{equation}\label{R1}
 \pderiv{c_1(t)}{t} \ = \ -J_1-\sum_{\ell=1}^\infty J_\ell \ .
 \end{equation}
 The flux $J_\ell$ is the rate at which clusters of volume $\ell$ become clusters of volume $\ell+1$, and is given by the formula
 \begin{equation}\label{S1}
 J_\ell \ = \ a_\ell c_1c_\ell-b_{\ell+1}c_{\ell+1}  \ ,
 \end{equation}
 where $a_\ell,\ b_\ell>0$ are given functions of $\ell$.  Thus an $\ell$-cluster can combine with a monomer at rate $a_\ell$ to become an $\ell+1$-cluster.  An $\ell+1$-cluster can evaporate a monomer at rate $b_{\ell+1}$ to become an $\ell$-cluster.   
 One looks for equilibrium solutions to \eqref{P1}, \eqref{Q1} by solving the equations $J_\ell=0,  \ \ell=1,2,...$ From \eqref{S1} it follows that $c_{\ell+1}=a_\ell c_1c_\ell/b_{\ell+1},  \ \ell=1,2,..$. Hence $c_\ell=Q_\ell c_1^\ell, \ \ell=1,2,..$ is an equilibrium solution where
 \begin{equation}\label{T1}
 Q_\ell \ = \ \prod_{r=1}^{\ell-1} \frac{a_r}{b_{r+1}} \ , \qquad Q_1=1,
 \end{equation}
 provided \eqref{Q1} holds. 
 Making the assumption that there exists $z_s>0$ such that
 \begin{equation}\label{U1}
 \lim_{\ell\ra\infty}\frac{a_\ell}{b_\ell} \ = \ \frac{1}{z_s} \ ,
 \end{equation}
 it is clear there is a family of equilibria for every density $\rho$ with $0<\rho< \rho_s$, where
 \begin{equation}\label{V1}
 \rho_s \ =  \ \sum_{\ell=1}^\infty Q_\ell z_s^\ell \ .
 \end{equation}
If $\rho_s<\infty$ then the equilibrium corresponding to $\rho=\rho_s$ is critical: there are no equilibria with~$\rho>\rho_s$.

Global existence and uniqueness theorems for the Becker-Döring system \eqref{P1}, \eqref{Q1} were proven in the seminal paper of Ball, Carr and Penrose \cite{bcp} under fairly mild assumptions on the rates $a_\ell,b_\ell,\  \ell=1,2,..,$ the main ones being  that $a_\ell,b_\ell$ should grow at sub-linear rates in $\ell$  as  $\ell\ra\infty$.
It was also shown (Theorem~5.6 of \cite{bcp}) under stronger assumptions on $a_\ell,b_\ell$, in particular \eqref{U1},   that if $\rho\le \rho_s<\infty$, then the solution of \eqref{P1}, \eqref{Q1} converges strongly at large time to the
corresponding equilibrium solution, in the sense that
\begin{equation}\label{W1}
\lim_{t\ra\infty} \sum_{\ell=1}^\infty \ell|c_\ell(t)-Q_\ell c_1^\ell | \ = \ 0 \ .
\end{equation}
If $\rho>\rho_s$ then $c_\ell(t)$  converges weakly to the equilibrium solution with maximal
density,
\begin{equation}\label{X1}
\lim_{t\ra\infty}c_\ell(t) \ = \ Q_\ell z_s^\ell \ , \quad \ell=1,2,....
\end{equation}
Our first main result Theorem~\ref{thm:ExistUniqueConvergence} can be seen as the analog of the these results.
A key tool in the proof of Theorem 5.6 of \cite{bcp} is a Lyapunov function $\cG$ defined by
 \begin{equation}\label{Y1}
 \cG(c(\cdot)) \ =  \ \sum_{\ell=1}^\infty c_\ell\set*{\log\frac{c_\ell}{Q_\ell}-1}  \ ,
 \end{equation}
which is decreasing on solutions to \eqref{P1}, \eqref{Q1}.

To make the connection between the Becker-Döring model and the model \eqref{A1}, \eqref{B1}, \eqref{D1}, \eqref{E1} we  formulate,  following Vel\'{a}zquez \cite{vel}, equation \eqref{P1} as a discretization of a diffusion equation.  From \eqref{S1} we have that
 \begin{equation}\label{Z1}
 J_\ell \ = \ [b_\ell c_\ell-b_{\ell+1}c_{\ell+1}]+\{(a_\ell z_s-b_\ell)+\theta(t)a_\ell\}c_\ell  \ ,
 \end{equation}
 where the function $\theta(t)$ is given by
 \begin{equation}\label{AA1}
 c_1(t) \ = \ z_s+\theta(t) \  .
 \end{equation}
Setting $c(\ell,t)=c_\ell(t), \ \ell=1,2,\dots,$ and denoting by $D$ the forward difference operator with adjoint~$D^*$, we have from \eqref{P1} that
\begin{equation}\label{AB1}
\pa_t c(\ell,t) \ = \ -D^*D[b_\ell c(\ell,t)]+D^*[\{(a_\ell z_s-b_\ell)+\theta(t)a_\ell\}c(\ell,t)] \ , \quad \ell\ge 2.
\end{equation}
We also have from \eqref{AA1} the boundary condition
\begin{equation}\label{AC1}
c(1,t) \ = \ z_s+\theta(t) \ .
\end{equation}
The dynamics is now completed from the conservation law \eqref{Q1}. Evidently \eqref{AB1}, \eqref{AC1} can be considered a discretization on the positive integers of \eqref{A1}, \eqref{B1}, \eqref{E1}. In §\ref{s:Interpol} we describe for $\ve>0$ a family of models on $\ve\Z^+$, where $\Z^+$ denotes the non-negative integers. The Becker-Döring model is a member of this family corresponding to $\ve=1$, and the model \eqref{A1}, \eqref{B1}, \eqref{D1}, \eqref{E1} corresponds to the limiting case $\ve\ra0$. For each model there is a Lyapunov function analogous to \eqref{Y1}, which decreases  on solutions.  In the case of \eqref{A1}, \eqref{B1}, \eqref{D1}, \eqref{E1}  the Lyapunov function is consistent with~\eqref{e:FreeEnergy:FP}. Moreover, each model possesses a gradient flow structure for which the Lyapunov function is the driving free energy with respect to a Wasserstein metric with Dirichlet boundary condition.

An important example of $a_\ell, \ b_\ell$ is
 \begin{equation}\label{AE1}
 a_\ell \ = \ a_1\ell^{\alpha} \ , \quad b_{\ell} \ = \ a_\ell(z_s+q/\ell^{\gamma}) \ , \ \ell=1,2,... ,
 \end{equation}
 where $\alpha\in [0,1]$, $\gamma\in (0,1)$ and $q, z_s>0$. The classical theory of coarsening is obtained by the choice $\alpha= \gamma = \frac{1}{3}$. We refer to \cite{N03} for some heuristic derivation of these more general rates and their physical interpretation.
 In that case we have an asymptotic formula for $Q_\ell$ at large $\ell$ given by 
 \begin{equation}\label{AF1}
 Q_\ell \  \simeq \ z_s^{-(\ell-1)} \frac{1}{\ell^{\alpha}}\exp\pra*{-\frac{q}{z_s(1-\gamma)}\ell^{1-\gamma}} \ , \quad \ell\ra\infty \ .
 \end{equation}
 Hence there are equilibria for every density $\rho$ with $0<\rho\le \rho_s$. Using \eqref{AB1} to make a comparison between the Becker-Döring model with parameter values given by \eqref{AE1} and \eqref{A1}, \eqref{B1}, we formally obtain  by using $l(x) = 1+x$ that $a(x) = b_{1+x} \sim (1+x)^\alpha$, where $\sim$ neglects lower order terms for $x\gg 1$. Similarly, we have
 \begin{align*}
   W'(x) = \frac{a_{1+x}}{b_{1+x}} \sim \frac{1}{z_s} \qquad\text{and}\qquad  V'(x) = \frac{b_{1+x} - z_s a_{1+x}}{b_{1+x}} \sim \frac{q}{z_s} (1+x)^{-\gamma} . 
 \end{align*}
 Hence, we see that it is appropriate to set up to numerical constants
 \begin{equation}\label{AG1}
 a(x) \ = \ (1+x)^{\alpha} \ ,  \quad V(x) \ = (1+x)^{1-\gamma} \ ,  \quad  W(x) \ = \ 1+x \ ,
 \end{equation}
in \eqref{A1}, \eqref{B1}. Note that in view of Remark~\ref{rem:ass} the functions satisfy Assumption~\ref{ass:VW:intro} and convergence to equilibrium is obtained through Theorem~\ref{thm:QuantLongTime}. In particular the case $\alpha=1$ yields exponential convergence to equilibrium in analogy to the result of~\cite{CEL15} for the Becker-Döring equation.

\subsection{Transformation to \texorpdfstring{$a\equiv 1$}{a=1}}

For the rest of the paper and the sake of presentation, we are going to transform the equation by a change of variable to a Fokker-Planck equation of the same type but with constant diffusion constant $a\equiv 1$. We achieve this by making the change of variable
\begin{equation}\label{J1}
z(x)=\int_0^x \frac{\dx{x'}}{\sqrt{a(x')}} \qquad\text{and} \qquad \tilde{c}(z(x),t) \ = \ \sqrt{a(x)}c(x,t) \ .
\end{equation}
{ We have already observed that Assumption~\ref{ass:VW:intro}~(a) implies that $\sqrt{a(x)}\le C_1[1+x]$ for some constant $C_1$, whence $z(x) = \int_0^x \dx{x'} /\sqrt{a(x')} \to \infty$ as $x\to \infty$.} Moreover, from Assumption~\ref{ass:VW:intro}~(b) it follows that $0<z'(x) < \infty$ and thus $z : \R^+ \to \R^+$ is one-to-one and we write $x(z)$ for the inverse. Then \eqref{A1} becomes
\begin{multline} \label{K1}
\partial_t \tilde{c}(z,t) +\partial_z \pra[\big]{\tilde{b}(z,t)\tilde{c}(z,t)} \ = \ \partial_z^2\tilde{c}(z,t) \ , \quad 0<z<\infty, \ t>0, \\
{\rm where \ } \tilde{b}(z,t) \ = \ \frac{b(x,t)}{\sqrt{a(x)}}  -\frac{a'(x)}{2\sqrt{a(x)}} \ , \quad x=x(z) \ .
\end{multline}
Note that \eqref{B1} implies that
\begin{align} 
\tilde{b}(z,t) &= \theta(t)\tilde{W}'(z)-\tilde{V}'(z) \notag \\
&\quad \text{where}\quad \tilde{W}(z)= W(x(z)) \ \text{ and } \  \tilde{V}(z)= V(x(z))+\frac{1}{2}\log[ a(x(z))] \ . \label{L1}
\end{align}
Evidently the conservation law \eqref{D1} becomes
 \begin{equation}\label{M1}
\theta(t)+\int_{0}^\infty \tilde{W}(z)\tilde{c}(z,t) \dx{z} \ = \ \rho \ ,
\end{equation}
and the boundary condition \eqref{E1} becomes
 \begin{equation}\label{O1}
 \tilde{c}(0,t) \ =  \ \exp\bra*{-\tilde{V}(0)+\theta(t)\tilde{W}(0)} \ .
 \end{equation}
Moreover, it holds by writing shortly $x = x(z)$ and using that $\pderiv{x}{z} = \sqrt{a(x)}$
\begin{align}
 \pderiv{}{z} \tilde V(z) &= V'(x) \sqrt{a(x)} + \frac{a'(x)}{2\sqrt{a(x)}} \ ; \label{tildeVW:d1}  \\
 \pderiv[2]{}{z} \tilde V(z) &= V''(x) a(x) + \frac{1}{2} V'(x) a'(x) + \frac{a''(x)}{2} - \frac{\bra*{ a'(x)}^2}{4 a(x)} \ ;  \label{tildeVW:d2}  \\
 \pderiv{}{z}\tilde W(z) &= W'(x) \sqrt{a(x)} \ ; \label{tildeVW:d3} \\
 \pderiv[2]{}{z} \tilde W(z) &= W''(x) a(x) + \frac{1}{2} W'(x) a'(x) \ .  \label{tildeVW:d4} 
\end{align}
In particular, the functions $\tilde V$ and $\tilde W$ satisfy again Assumption~\ref{ass:VW:intro} with $a(\cdot) \equiv 1$ (see Lemma~\ref{lem:Ass:intro}). From now on, we drop the tilde-superscript, write again $x$ for $z$ and assume $a\equiv 1$ and $V, W$ to satisfy Assumption~\ref{ass:VW:intro}, which then take the following simple form:
\begin{assumption}[$a\equiv 1$]\label{ass:VW}\
\begin{enumerate}[ (a) ]
 \item $V(\cdot), W(\cdot) \in \cC^2(\R^+)$ satisfy satisfy for some $C_0>0$
 \begin{equation}\label{F1:0}
    \abs{V''(x)} + \abs{W''(x)}\leq C_0 \qquad\text{ for } x\in \R^+ \  .
  \end{equation}
 \item $W(\cdot)$ is an increasing function with $W(0)>0$.
 \item For any $\del>0$ there exists $x_\del>0$ such that
 \begin{equation}\label{F1:1}
   |V'(x)|  \ \le \ \del W'(x) \ ,  \quad\text{if } x\ge x_\del \ .
 \end{equation}
and
\begin{equation}\label{F1:2}
  \abs{V''(x)} + \abs{W''(x)} \ \le \ \delta W'(x)^2 \ ,   \quad\text{if } x\ge x_\del \ .
\end{equation}
  \item There exists $C_0,c_0>0$ such that
\begin{equation}\label{G1}
c_0 \leq W'(x)^2 \leq C_0 W(x) \ ,  \qquad \text{ for } x\in \R^+ \ .
\end{equation}
  \item The function  $W(\cdot) \exp\bra*{-V(\cdot)}$ is integrable on $(0,\infty)$.
\end{enumerate}
\end{assumption}

\bigskip

\section{Existence and uniqueness Theorems}\label{s:ExistUnique}

\subsection{Notion of solutions, existence and uniqueness results}

Now, we are going to prove that there exists a solution globally in time to \eqref{A1}, \eqref{B1}, \eqref{D1}, \eqref{E1} under Assumption~\ref{ass:VW} on the functions $V(\cdot),W(\cdot)$ as well as a $W$-moment of the initial data~\eqref{AJ2}.  In order to carry this out it will be helpful for us to consider solutions to the PDE adjoint to \eqref{A1},
 \begin{equation}\label{A2}
\mathcal{L}_{x,t} w (x,t) = \pa_t w(x,t) +  \pa^2_x w(x,t) +b(x,t)\;\pa_x w(x,t) = 0 \ .
\end{equation}
Note that (\ref{A2}) is to be solved \emph{backwards in time}, and is therefore parabolic (see  page 26 of \cite{fried}). 
We shall be interested in solutions $w(x,t)$ to \eqref{A2} in domains $\{(x,t) \ : \ x>0, \ t<T\}$ for $T>0$  with zero Dirichlet boundary condition at $x=0$ and terminal condition at $t=T$. That is
\begin{equation}\label{B2}
w(x,T) \ = \ w_0(x), \  x> 0, \quad w(0,t) \ = \ 0, \ t<T,
\end{equation}
where $w_0(\cdot)$ is a given function.  We assume that 
\begin{equation} \label{C2}
\begin{split}
 b:[0,\infty)\times [0,T]\ra\R, \quad &\frac{\pa b}{\pa x}:[0,\infty)\times [0,T]\ra\R \ \   \text{ are continuous }, \\
  &{\rm and } \  \sup_{x\ge 0, \ 0\le t\le T} \frac{|b(x,t)|}{1+x}<\infty \ .
\end{split}
\end{equation}
Then the terminal-boundary value problem \eqref{A2}, \eqref{B2}   has a unique classical solution in $[0,T]\times [0,\infty)$  provided $w_0(\cdot)$ is a continuous function satisfying 
\[ \sup_{x\ge 0} |w_0(x)|\exp(-Ax)<\infty \quad\text{ for some $A\ge 0$.}  \]
That is there is a unique function $w:(0,\infty)\times (0,T)\ra\R$ such that $w(x,t)$ is $\cC^2$ in $x,  \ \cC^1$ in~$t$, denoted by $w\in \cC_{t,x}^{2,1}(\R^+ \times [0,T])$, and satisfies the PDE \eqref{A2}.  In addition, $w$ extends to a continuous function  on the set $\{(x,t)\in [0,\infty)\times [0,T] \ : \ (x,t)\ne (0,T) \ \}$ and satisfies \eqref{B2}.  This follows from the Green's function estimates in \cite[Lemma 3.4]{cg}. In this section we shall always assume that \eqref{C2} holds.

We shall be concerned mostly with solutions to \eqref{A2} where the drift $b(\cdot,\cdot)$ is given by \eqref{B1} with $a(\cdot)\equiv 1$, that is
\begin{equation}\label{A*2}
b(x,t) \ = \ \theta(t)W'(x)-V'(x) \ , \quad x\ge 0 \ .
\end{equation}
In order for the condition \eqref{C2} to hold we shall need $W(\cdot),V(\cdot)$ to be $\cC^2$ on $[0,\infty)$ with bounded second derivative, which is~\eqref{F1:0} of Assumption~\ref{ass:VW}, and the function $\theta(\cdot)$ to be continuous.

Suppose now that the continuous function $\theta:[0,T]\ra\R$ is given. If the function $c(x,t), \ x>0,0<t<T,$ is  a classical solution to \eqref{A1}, \eqref{E1}, and $w(x,t), \ x>0,t<T,$ a classical  solution to \eqref{A2}, \eqref{B2} then we have that
{
\begin{equation}\label{AH2}
\pderiv{}{t} \int_0^\infty w(x,t)c(x,t) \dx{x} \ = \  \pa_x w(0,t) \, \exp\pra*{-V(0)+\theta(t)W(0)}  \ .
\end{equation}
}
We use an integrated form of the identity \eqref{AH2} (see~\eqref{AI2} below) to construct a unique measure valued weak solution to the initial-boundary value problem \eqref{A1}, \eqref{E1}. We recall some well known properties of spaces of measures \cite{ bil,prok}. Let $\mathcal{B}(\R^+)$ be the $\sig-$algebra of Borel sets on $\R^+=[0,\infty)$, and  $\mathcal{M}(\R^+)$ be the Banach space of real finite Borel measures $\mu:\mathcal{B}(\R^+)\ra\R$. Let $\cC_0(\R^+)$ be the Banach space of continuous functions $f:[0,\infty)\ra\R$ which  vanish at $\infty$  with norm $\|f\|=\sup_{x\in\R^+}|f(x)|$. Then by the Riesz representation theorem  $\mathcal{M}(\R^+)$ is the dual of $\cC_0(\R^+)$.
\begin{definition}\label{def:weakSol}
{
  Let $T_0>0$. A family $[0,T_0]\ni T \mapsto c(\cdot,T) \in \cM(\R^+)$ is a solution to \eqref{A1}, \eqref{E1} with~$b$ satisfying~\eqref{A*2} and $\theta \in \cC([0,T_0],\R^+)$ if for all $T\in (0,T_0)$ and all solutions $w\in \cC_{x,t}^{2,1}(\R^+\times [0,T])$ } to the adjoint equation~\eqref{A2}, \eqref{B2} with $w_0\in  \cC_0(\R^+)$  the following identity holds: 
  \begin{equation}\label{AI2}
\int_0^\infty w_0(x) c(x,T) \dx{x} \ = \   \int_0^\infty w(x,0)c(x,0) \dx{x} + \int_0^T \pa_x w(0,t)\, e^{-V(0)+\theta(t)W(0)} \dx{t}  \ .
\end{equation}
\end{definition}
If the initial data $c(x,0) \  dx$ is a finite Borel measure then it follows from the maximum principle for solutions to ~\eqref{A2}, \eqref{B2} that the first integral on the RHS of \eqref{AI2} is bounded by a constant times $\|w_0\|_\infty$.  The second integral on the RHS of \eqref{AI2}, which incorporates the boundary condition \eqref{E1}, is also bounded   by a constant times $\|w_0\|_\infty$.  This follows again from the maximum principle and Lemma~\ref{lem:uniformFluxBound} below. Note that if $w_0(\cdot)$ is non-negative then the maximum principle implies that $w(\cdot,t)$ is non-negative for $t<T$. In particular we have that $\pa_x w(0,t)\ge 0$. It follows now from \eqref{AI2} that, if the measure $c(x,0) \ dx$ is non-negative,  $c(x,T) \ dx$ is also non-negative. 

We prove in the sense of Definition~\ref{def:weakSol} a local existence and uniqueness theorem for the nonlinear system \eqref{A1}, \eqref{D1},  \eqref{E1} by using a fixed point argument.
\begin{proposition}\label{prop:ShortTimeExistence}
Suppose that the functions $V(\cdot),W(\cdot)$ satisfy Assumption~\ref{ass:VW} and that the initial data $c(x,0)\ge 0, \ x\ge 0$, satisfies~\eqref{AJ2} with $\theta(0)=\theta_0$ so that \eqref{D1} holds at $t=0$. Then there exists a unique solution to \eqref{A1}, \eqref{B1}, \eqref{D1}, \eqref{E1} in some interval $0\le t\le T$, where the time period $T$ is bounded below by a constant depending only on $\theta_0$.
\end{proposition}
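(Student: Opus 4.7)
The plan is to recast the coupled system~\eqref{A1}, \eqref{B1}, \eqref{D1}, \eqref{E1} as a Banach contraction for the non-local order parameter $\theta$. For $T,\varepsilon>0$ to be chosen small enough, introduce the closed ball
\[
  B_{\theta_0,\varepsilon}^T := \set[\big]{ \theta \in \cC([0,T];\R) : \theta(0)=\theta_0,\ \norm*{\theta-\theta_0}_\infty \le \varepsilon } .
\]
For $\theta \in B_{\theta_0,\varepsilon}^T$ the drift $b$ in~\eqref{A*2} satisfies~\eqref{C2} thanks to Assumption~\ref{ass:VW}~(a), so the backward problem~\eqref{A2}, \eqref{B2} has a unique classical solution for every $w_0 \in \cC_0(\R^+)$. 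Since the initial data $c(x,0)\dx{x}$ is a finite non-negative Borel measure (use $W(x)\ge W(0)>0$ and~\eqref{AJ2}), the Riesz representation theorem applied to the right-hand side of~\eqref{AI2} defines, for each $T'\in[0,T]$, a unique non-negative finite Borel measure $c_\theta(\cdot,T')$: the weak solution of the linear problem in the sense of Definition~\ref{def:weakSol}. Setting
\[
  \Phi(\theta)(t) := \rho - \int_0^\infty W(x)\, c_\theta(x,t)\dx{x}, \qquad t\in[0,T],
\]
the task reduces to finding a fixed point of $\Phi$ on $B_{\theta_0,\varepsilon}^T$.

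\textbf{$W$-moment control and self-mapping.} The first obstacle is that the weight $W$ is unbounded and hence not an admissible test function in~\eqref{AI2}. I would handle this by truncation: take a smooth cutoff $\chi_R\in\cC_0(\R^+)$ equal to $1$ on $[0,R]$ and supported in $[0,R+1]$, insert $w_0=W\chi_R$ into~\eqref{AI2}, and derive bounds on the resulting adjoint solution $w^R(\cdot,\cdot)$ and on its boundary derivative $\partial_x w^R(0,\cdot)$ that are uniform in $R$. The growth conditions~\eqref{F1:0}, \eqref{F1:1}, \eqref{F1:2} on $V',V'',W',W''$, together with Lemma~\ref{lem:uniformFluxBound}, supply exactly such estimates. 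Passing to $R\to\infty$ then shows that $\int W\,c_\theta(\cdot,t)\dx{x}$ is finite and continuous in $t$, and yields
\[
  \abs[\Big]{\int W(x) c_\theta(x,t)\dx{x} - \int W(x) c(x,0)\dx{x}} \le C(\theta_0,\varepsilon)\,t
\]
for all $\theta\in B_{\theta_0,\varepsilon}^T$, where $C$ depends on $V$, $W$, $\theta_0$ and $\varepsilon$ but not on the initial profile. Combining this with the compatibility condition $\theta_0 = \rho - \int W(x) c(x,0)\dx{x}$ from~\eqref{D1} at $t=0$ gives $\abs{\Phi(\theta)(t)-\theta_0}\le CT$, so $\Phi(B_{\theta_0,\varepsilon}^T)\subset B_{\theta_0,\varepsilon}^T$ provided $CT\le\varepsilon$.

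\textbf{Contraction and Banach fixed point.} For $\theta_1,\theta_2\in B_{\theta_0,\varepsilon}^T$ with associated weak solutions $c_1,c_2$, I would compare the two adjoint solutions $w^{(1)},w^{(2)}$ corresponding to the common terminal data $W\chi_R$. Their difference $\delta w := w^{(1)}-w^{(2)}$ solves the backward problem with drift $b^{(1)}$, vanishing terminal and boundary data, and source $(\theta_2-\theta_1)(t)\,W'(x)\,\partial_x w^{(2)}(x,t)$, so Duhamel's representation together with the uniform gradient bounds from Lemma~\ref{lem:uniformFluxBound} produce, after letting $R\to\infty$,
\[
  \norm*{\Phi(\theta_1)-\Phi(\theta_2)}_\infty \le C(\theta_0,\varepsilon)\,T\,\norm*{\theta_1-\theta_2}_\infty .
\]
Fixing $\varepsilon$ once and then shrinking $T$ so that $C(\theta_0,\varepsilon)T<1$ turns $\Phi$ into a strict contraction on the complete metric space $B_{\theta_0,\varepsilon}^T$, and Banach's fixed point theorem produces the claimed unique solution on $[0,T]$ with $T$ depending only on $\theta_0$. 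The main technical difficulty, and the place where Assumption~\ref{ass:VW} and Lemma~\ref{lem:uniformFluxBound} carry the full weight of the argument, is the uniform-in-$R$ control of the adjoint solutions with terminal data $W\chi_R$: this is what legitimises working with the unbounded weight $W$ in both the self-mapping and the contraction estimates.
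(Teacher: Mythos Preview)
Your strategy is essentially the paper's: set up a Banach fixed point for $\theta$, represent $\int W c_\theta(\cdot,t)\dx{x}$ via the adjoint identity~\eqref{AI2} with terminal data $W$, and compare adjoint solutions by Duhamel to get the contraction. Two points deserve correction, however.

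First, the stated rates are too optimistic. The boundary contribution in~\eqref{AI2} involves $\partial_x w(0,t)$, which by Lemma~\ref{lem:uniformFluxBound} behaves like $(T-t)^{-1/2}$ near $t=T$; after integration this gives $C\sqrt{T}$, not $CT$. Similarly, in the Duhamel representation for $\delta w$ the source $(\theta_1-\theta_2)W'\partial_x w^{(2)}$ has to be propagated over $[t,T]$, and the resulting bound is $C\sqrt{T}\,\norm{\theta_1-\theta_2}_\infty$ (this is exactly Lemma~\ref{lem:Adjoint:Comparison}). The fixed-point argument still closes, but your displayed inequalities are false as written.

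Second, and more substantively, Lemma~\ref{lem:uniformFluxBound} alone cannot carry the ``uniform in $R$'' claims. That lemma is a \emph{local} estimate on a bounded interval; it gives nothing about the global weighted growth of $w^R$ or $\partial_x w^R$. To control $\int w^R(x,0)c(x,0)\dx{x}$ uniformly in $R$ you need a global bound of the form $w^R(x,0)\le C(T)W(x)$ with $C(T)\to 1$, which is Lemma~\ref{lem:Adjoint:Estimate} (supersolution $e^{\lambda(T-t)}W$). For the contraction, the Duhamel source $(\theta_1-\theta_2)W'\partial_x w^{(2)}$ must be bounded in a weighted sense on all of $\R^+$, and this requires the global gradient estimate $0\le\partial_x w(x,t)\le C\sqrt{W(x)}$ of Lemma~\ref{lem:PostiveFlux}. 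Without these two weighted comparison results your truncation argument does not close: Lemma~\ref{lem:uniformFluxBound} only tells you what happens near the boundary $x=0$, not how the adjoint solution grows at infinity.
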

Next, a global existence and uniqueness of solutions to \eqref{A1}, \eqref{B1}, \eqref{D1}, \eqref{E1} for non-negative initial data $c(x,0), \ x>0,$ satisfying \eqref{AJ2} is obtained by a further control on the time period $T$. This requires certain uniform in time estimates for the adjoint problem controlling both terms of the RHS of~\eqref{AI2}.
\begin{theorem}\label{thm:GlobalExistence}
Suppose Assumption~\ref{ass:VW} holds and $c(0,\cdot)$ satisfies~\eqref{AJ2}, then the local in time solution to \eqref{A1}, \eqref{B1}, \eqref{D1}, \eqref{E1} constructed in Proposition~\ref{prop:ShortTimeExistence} can be extended to all times~$T>0$.
\end{theorem}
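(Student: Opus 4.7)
The strategy is to iterate Proposition~\ref{prop:ShortTimeExistence}. Let $T^\star \in (0,\infty]$ denote the maximal time obtained in this way. Since the local existence time produced by Proposition~\ref{prop:ShortTimeExistence} depends only on the value of $\theta$ at the restart, the solution can be extended past any $T_0 < T^\star$ provided $\theta(T_0)$ lies in a fixed bounded interval. It therefore suffices to prove that $\theta(t)$ stays bounded on every compact subinterval of $[0,T^\star)$, which then forces $T^\star = \infty$. The upper bound $\theta(t) \le \rho$ is immediate from the conservation law \eqref{D1} together with $c \ge 0$ and $W \ge W(0) > 0$, so only an upper bound on $m(t) := \int_0^\infty W(x) c(x,t) \dx{x}$ remains to be proved.

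The key point, which uses the particular structure~\eqref{A*2} of the drift, is that $m(t)$ is self-regulating. Formally testing~\eqref{A1} with $\varphi = W$ and integrating by parts yields
\begin{equation*}
  \tfrac{d}{dt} m(t) = \int_0^\infty \bra*{W''(x) + W'(x)\bra*{\theta(t) W'(x) - V'(x)}} c(x,t) \dx{x} + B(t),
\end{equation*}
where $B(t) = -W(0) \partial_x c(0,t) + W'(0) c(0,t) + W(0) b(0,t) c(0,t)$ collects the boundary contributions at $x = 0$. The dominant term is $\theta(t) \int_0^\infty W'(x)^2 c(x,t) \dx{x}$, which is non-positive as soon as $m(t) \ge \rho$. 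Splitting the other two integrals at the threshold $x_\delta$ supplied by Assumption~\ref{ass:VW}(c), the tails $\int_{x \ge x_\delta}$ are dominated by $\delta \int W'(x)^2 c \dx{x}$ and the heads $\int_{x < x_\delta}$ by $C_\delta \int c \dx{x} \le C_\delta m(t) / W(0)$, using $W \ge W(0)$. Since $c(0,t) = \exp\bra*{-V(0) + \theta(t) W(0)} \le \exp\bra*{-V(0) + \rho W(0)}$ whenever $\theta(t) \le \rho$, and $\partial_x c(0,t)$ is controlled by Lemma~\ref{lem:uniformFluxBound}, the boundary term $B(t)$ is bounded uniformly on $[0,T^\star)$. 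Choosing $\delta$ small enough to absorb the $(W')^2$ term when $m(t) \ge \rho + 1$, one obtains a differential inequality of the form $\dot m(t) \le C_1 + C_2 m(t)$. Gronwall then yields $m(t) \le (m(0) + C_1/C_2) e^{C_2 t}$ on every finite subinterval, hence a uniform lower bound on $\theta$ and the desired extension.

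The principal technical hurdle is making the above moment computation rigorous within the measure-valued framework of Definition~\ref{def:weakSol}: since $W$ is unbounded, it cannot serve directly as terminal data for the adjoint problem in~\eqref{AI2}. I would approximate $W$ by a family $w_0^L \in \cC_0(\R^+)$ agreeing with $W$ on $[0,L]$ and vanishing outside a slightly larger interval, solve~\eqref{A2}, \eqref{B2} backwards from each $T < T^\star$ with terminal data $w_0^L$, and insert the result into~\eqref{AI2}. The Green's function estimates of~\cite{cg}, together with Lemma~\ref{lem:uniformFluxBound} and Assumption~\ref{ass:VW}, should provide uniform-in-$L$ bounds $w^L(x,0) \le C\bra*{1 + W(x)}$ and $\partial_x w^L(0,t) \le C$ on $[0,T]$, so that one may pass to the limit $L \to \infty$ and recover the integrated form of the moment inequality derived formally above. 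Combined with the iteration of Proposition~\ref{prop:ShortTimeExistence}, this yields the claimed extension to all $T > 0$.
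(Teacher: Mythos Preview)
Your approach is different from the paper's, and while the physical intuition is right --- the term $\theta(t)\int (W')^2 c$ is indeed self-regulating --- the argument has a genuine gap at the boundary.

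You claim $\partial_x c(0,t)$ is controlled by Lemma~\ref{lem:uniformFluxBound}, but that lemma bounds $\partial_x w(0,t)$ for the \emph{adjoint} problem, not $\partial_x c(0,t)$ for the forward equation. The regularity theory for $\partial_x c(0,t)$ (developed later in \S\ref{s:Reg}) does exist, but the bounds obtained there go through Green's function estimates that depend on $\|b(\cdot,\cdot)\|_\infty$, hence on $\|\theta\|_\infty$. So the constant $C_1$ in your inequality $\dot m \le C_1 + C_2 m$ secretly depends on $\sup_{[0,T]}|\theta|$, which is exactly what you are trying to bound --- the argument is circular. Your proposed rigorous route (truncate $W$ to $w_0^L$, solve the adjoint, invoke Lemma~\ref{lem:uniformFluxBound}) suffers the same problem: the resulting constants from Lemma~\ref{lem:Adjoint:Estimate} and Lemma~\ref{lem:uniformFluxBound} depend on $T$ and on $\|\theta\|_\infty$, so the passage $L\to\infty$ yields only $m(T)\le C(T,\|\theta\|_\infty)(m(0)+1)$, not a closed differential inequality.

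The paper breaks this circularity by staying entirely in the adjoint picture and proving two \emph{uniform-in-$T$} estimates valid once $\sup_t\theta(t)\le\theta_\infty$ for a fixed sufficiently negative $\theta_\infty$: Lemma~\ref{lem:adjoint:Wbound} gives $w(x,0)\le C_\infty W(x)$ with $C_\infty$ independent of $T$, and Lemma~\ref{lem:adjoint:fluxBound} gives exponential decay $\partial_x w(0,t)\lesssim e^{-\delta_\infty(T-t)}$. Both are proved via the SDE representation \eqref{AW2}, \eqref{AX2}, using that a very negative $\theta$ forces the diffusion toward the absorbing boundary. Plugging these into the integral identity \eqref{BY2} gives directly $\rho-\theta(T)\le C_\infty(\rho-\theta(0))+C_\infty'$, hence a $T$-independent lower bound on $\theta$. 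This is what replaces your Gronwall step: rather than a differential inequality that grows exponentially in $t$, the paper obtains a uniform bound, at the cost of the two nontrivial adjoint lemmas. Those lemmas are the missing ingredient in your sketch.
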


\subsection{A priori estimates for local existence and proof of Proposition~\ref{prop:ShortTimeExistence}}

\begin{lemma}\label{lem:Adjoint:Estimate}
Assume that $W:[0,\infty)\ra\R$ is a $\cC^2$ increasing function which satisfies $W(0)>0$ and  $\sup_{x\ge 0} W(x)\exp(-Ax)<\infty$ for some $A\ge 0$. In addition assume that $V:[0,\infty)\ra\R$ is $\cC^2$ and there exists  $x_0,C_0>0$ such that
\begin{equation}\label{D2}
W'(x)^2+|W''(x)| \ \le \ C_0W(x),  \quad |V'(x)| \ \le \ C_0W'(x) \quad\text{for } x\ge x_0 \ .
\end{equation}

Let $\theta:[0,T]\ra\R$ be a continuous function and  $w(x,t), \ x>0,t<T,$ be the solution to \eqref{A2},  \eqref{B2}, \eqref{A*2} with $w_0(x)=W(x), \ x>0$. Then for any $T,\theta_\infty>0$ and $\theta(\cdot)$ satisfying  $\|\theta(\cdot)\|_\infty\le \theta_\infty$, there is a constant $C(T,\theta_\infty)$ such that $w(x,0)\le C(T,\theta_\infty)W(x)$ for all $x>0$.  The constant $C(T,\theta_\infty)$ satisfies $\lim_{T\ra0} C(T,\theta_\infty)=1$ for any $\theta_\infty>0$.
\end{lemma}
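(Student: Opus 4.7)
The strategy is to produce an explicit supersolution of the backward parabolic problem \eqref{A2}, \eqref{B2} of the form $\tilde w(x,t)=f(t)W(x)$, and then invoke a comparison principle on the strip $[0,\infty)\times[0,T]$.

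First I would compute the action of the operator in \eqref{A2} on the time-independent function $W(x)$. Writing $\mathcal{L}_{x,t}W = W''(x)+b(x,t)W'(x)$ with $b(x,t)=\theta(t)W'(x)-V'(x)$ and using the hypotheses~\eqref{D2}, for $x\ge x_0$ one has
\begin{equation*}
  \abs{\mathcal{L}_{x,t}W(x)} \le \abs{W''(x)} + \bra[\big]{\theta_\infty W'(x)+C_0 W'(x)}W'(x)\le \bra[\big]{C_0 + C_0(\theta_\infty+C_0)} W(x).
\end{equation*}
On the compact set $[0,x_0]$ the functions $W,W',W'',V'$ are bounded and $W(x)\ge W(0)>0$ (since $W$ is increasing with $W(0)>0$), so $\abs{\mathcal{L}_{x,t}W}/W$ is bounded there as well. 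Combining these I obtain a constant $K=K(\theta_\infty,C_0,W,V,x_0)$ such that $\mathcal{L}_{x,t}W(x)\le K W(x)$ uniformly in $x\ge 0$ and $t\in[0,T]$.

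With $K$ fixed, I take the candidate supersolution $\tilde w(x,t)=e^{K(T-t)}W(x)$. A direct computation gives
\begin{equation*}
  \mathcal{L}_{x,t}\tilde w(x,t) = e^{K(T-t)}\bra[\big]{-K W(x)+\mathcal{L}_{x,t}W(x)} \le 0.
\end{equation*}
Moreover $\tilde w(x,T)=W(x)=w_0(x)$ matches the terminal data, and $\tilde w(0,t)=e^{K(T-t)}W(0)\ge 0=w(0,t)$ dominates the boundary data. Therefore $\tilde w$ is a classical supersolution of \eqref{A2}, \eqref{B2}.

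The conclusion then follows from a comparison argument applied to the backward equation: rewriting it forward in $\tau=T-t$ gives a standard parabolic problem for which the difference $\tilde w-w$ is a non-negative supersolution with non-negative terminal and boundary data, and the growth assumption $\sup_x W(x)e^{-Ax}<\infty$ together with the Green's function bounds already used in \cite[Lemma 3.4]{cg} controls the growth of $w$ and $\tilde w$ at infinity, so Tychonoff-type uniqueness applies. Hence $w(x,t)\le\tilde w(x,t)$ throughout $[0,\infty)\times[0,T]$, and in particular
\begin{equation*}
  w(x,0)\le e^{KT}W(x) =: C(T,\theta_\infty)W(x).
\end{equation*}
Since $K$ depends only on $\theta_\infty$ and the data, $C(T,\theta_\infty)=e^{KT}\to 1$ as $T\to 0$.

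The main obstacle I anticipate is purely technical: making the comparison principle rigorous on the unbounded strip with unbounded coefficients and data of (at most) exponential growth. Everything else is an explicit calculation driven by the two-regime estimate~\eqref{D2}, and the linearity of the problem lets the same constant $K$ be used in both regions once the elementary bound $|\mathcal{L}_{x,t}W|\le K W$ is established.
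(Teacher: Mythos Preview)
Your proposal is correct and matches the paper's proof almost exactly: the paper also takes $w_\lambda(x,t)=e^{\lambda(T-t)}W(x)$, computes $\mathcal{L}_{x,t}w_\lambda$, uses \eqref{D2} to choose $\lambda$ large enough that $\mathcal{L}_{x,t}w_\lambda\le 0$, and concludes via the maximum principle. You are slightly more explicit than the paper about the two-regime estimate on $[0,x_0]$ versus $[x_0,\infty)$ and about the growth/uniqueness issue behind the comparison principle, but these are exactly the details the paper's terse argument is implicitly relying on.
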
 
\begin{proof}
For any $\la\in\R$ let $w_\la(x,t)=e^{\la(T-t)}W(x), \ x>0,t<T$. Letting $\mathcal{L}_{x,t}$ denote the partial differential operator of \eqref{A2}, with $b$ given by \eqref{A*2} then
\begin{equation}\label{E2}
e^{-\la(T-t)}\mathcal{L}_{x,t} w_\la(x,t) \ = \  -\la W(x)+W''(x)+[\theta(t)W'(x)-V'(x)]W'(x) \  .
\end{equation}
We see from \eqref{D2}, \eqref{E2} that $\mathcal{L}_{x,t} w_\la(x,t)\le 0, \ x>0,t<T,$ provided $\la$ is taken sufficiently large independently of $T$.  In addition $w_\la(x,T)=W(x), \ x\ge 0,$ and $w_\la(0,t)>0, \ t<T$. We conclude from the maximum principle that $w(x,t)\le w_\la(x,t), \ x>0,t<T$.
\end{proof}
\begin{lemma}\label{lem:uniformFluxBound}
Assume that $b(\cdot,\cdot)$ satisfies \eqref{C2} and $x_1>0$.  Let $w(x,t), \ 0<x<x_1, \ 0\le t<T,$ be the solution to \eqref{A2} with terminal data $w(x,T)=w_0(x), \ 0<x<x_1,$ and Dirichlet boundary conditions $w(0,t)=0, w(x_1,t)=w_1(t), \ t< T$, where $w_0:[0,x_1]\ra\R$ is $\cC^1$ and $w_1:[0,T]\ra\R$ are continuous.  Setting $\|b(\cdot,\cdot)\|_\infty=\sup\{|b(x,t)| \ : \ 0<x<x_1, \ \max[0,T-1]<t<T\}$, and
$\|\pa b(\cdot,\cdot)\|_\infty=\sup\{|\pa b(x,t)/\pa x| \ : \ 0<x<x_1, \ \max[0,T-1]<t<T\}$, then  there exists a constant $C>0$ depending only on $x_1$ and
$\|b(\cdot,\cdot)\|_\infty, \ \|\pa b(\cdot,\cdot)\|_\infty$, such that
\begin{equation}\label{F*2}
\abs*{\pa_x w(x,t)} \ \le \ \frac{C}{\sqrt{T-t}}\pra[\big]{ \|w_0(\cdot)\|_\infty+(T-t)\|w_1(\cdot)\|_\infty } \ ,
\end{equation}
for $0<x<x_1/2 , \ \ \max[0,T-1]<t<T$. In the case when $x=x_1/2$ there is the stronger inequality, for all $\max[0,T-1]<t<T$ it holds
\begin{equation}\label{F2}
\abs[\big]{ \pa_x w(x_1/2,t)} \ \le \ C\pra[\big]{\|w'_0(\cdot)\|_\infty+\|w_0(\cdot)\|_\infty+(T-t)\|w_1(\cdot)\|_\infty} \ ,
\end{equation}
where $C$ depends only on $x_1, \ \|b(\cdot,\cdot)\|_\infty, \ \|\pa b(\cdot,\cdot)\|_\infty$. 
\end{lemma}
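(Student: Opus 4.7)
The plan is to exploit the linearity of~\eqref{A2} to decompose $w = u + v$, where $u$ solves \eqref{A2} on $(0, x_1)\times[t, T]$ with terminal data $u(\cdot, T) = w_0$ and homogeneous Dirichlet conditions $u(0, \cdot) = u(x_1, \cdot) = 0$, while $v$ solves \eqref{A2} with $v(\cdot, T) = 0$ together with $v(0, \cdot) = 0$ and $v(x_1, \cdot) = w_1$. Each piece will be estimated separately by kernel/barrier methods under the hypothesis \eqref{C2} on $b$, and the two bounds will be combined by the triangle inequality to yield \eqref{F*2} and \eqref{F2}.

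For the terminal piece $u$ I would use the Dirichlet Green's function $G(x, y; t, T)$ on $(0, x_1)$, which exists and satisfies Gaussian-type estimates by \cite[Lemma~3.4]{cg} (after a routine localization from the half-line to the bounded interval, using the regularity~\eqref{C2} of $b$). Writing
\begin{equation*}
u(x, t) = \int_0^{x_1} G(x, y; t, T)\, w_0(y) \dx{y}
\end{equation*}
and using $|\pa_x G(x, y; t, T)| \le C(T-t)^{-1}\e^{-c(x-y)^2/(T-t)}$ gives, after integration in $y$, the bound $|\pa_x u(x, t)| \le C\|w_0\|_\infty / \sqrt{T - t}$ valid for all $0 < x < x_1$. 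At the interior point $x = x_1/2$ the gradient estimate is sharpened: one splits the integral into a neighborhood of $y = x_1/2$, where an integration by parts transfers the derivative onto $w_0$ and produces the factor $\|w_0'\|_\infty$, and its complement, where the Gaussian factor separates $x_1/2$ from the near-diagonal singularity and leaves a bound $C\|w_0\|_\infty$. This yields $|\pa_x u(x_1/2, t)| \le C(\|w_0\|_\infty + \|w_0'\|_\infty)$.

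For the boundary piece $v$, the maximum principle immediately gives $\|v\|_\infty \le \|w_1\|_\infty$. Since $v$ vanishes on both $\{x = 0\}$ and $\{t = T\}$, the only source driving $v$ is the far boundary $x = x_1$, and the Poisson-kernel representation $v(x, t) = \int_t^T P(x; t, s)\, w_1(s) \dx{s}$ with $P(x; t, s) = -\pa_y G(x, y; t, s)|_{y = x_1}$, together with the Gaussian off-diagonal decay $|P(x; t, s)| + |\pa_x P(x; t, s)| \le C(s-t)^{-3/2}\e^{-c(x_1 - x)^2/(s-t)}$, gives bounds that are exponentially small in $(T-t)^{-1}$ whenever $x_1 - x \ge x_1/2$. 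In particular the polynomial bounds $|\pa_x v(x, t)| \le C\sqrt{T-t}\,\|w_1\|_\infty$ for $0 < x < x_1/2$ and the stronger $|\pa_x v(x_1/2, t)| \le C(T-t)\|w_1\|_\infty$ both hold trivially, and these are exactly what is required for the two halves of \eqref{F*2} and \eqref{F2}.

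The main obstacle is rigorously establishing the Dirichlet Green's function estimates for~\eqref{A2} on the bounded interval $(0, x_1)$ with a drift satisfying only~\eqref{C2}; this is handled by a standard perturbation of the constant-drift kernel, or by the method of images applied to the half-line construction of~\cite[Lemma~3.4]{cg}, where the boundedness of $\|b\|_\infty$ and $\|\pa b\|_\infty$ on the short time window $\max[0,T-1] < t < T$ is crucial for absorbing the drift contributions into the constants. Once those kernel bounds are in hand, all remaining steps are routine integrations.
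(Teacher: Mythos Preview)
Your approach is essentially the same as the paper's: the representation~\eqref{K2} in the paper is exactly your decomposition $w=u+v$, and the Gaussian-type Green's function bounds you cite from~\cite[Lemma~3.4]{cg} are derived in the paper's proof explicitly via a method-of-images kernel $K$ plus a perturbation expansion in powers of $b$. For the sharper interior estimate~\eqref{F2} the paper likewise integrates by parts to move a derivative onto $w_0$, but does so by noting that $\partial_x K$ may be replaced by $\partial_{x'}K$ on the leading (zero-drift) kernel, with the higher-order perturbation terms already bounded without this trick---this is the precise mechanism behind the step you describe as ``integration by parts transfers the derivative onto $w_0$,'' and you should make explicit that it relies on $\partial_x G \approx -\partial_y G$ modulo lower-order corrections coming from the drift.
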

\begin{proof}
We follow the standard perturbative approach pioneered by Schauder \cite{fried}. We consider the terminal value problem for \eqref{A2}  in the interval $0<x<x_1,t<T$ with terminal data and Dirichlet boundary conditions given by
\begin{equation}\label{I2}
w(x,T) \ = \ w_0(x), \ \ 0<x<x_1, \quad w(0,t) \ = \ w(x_1,t) \ = \ 0, \ \ t<T.
\end{equation}
The solution to \eqref{A2}, \eqref{I2} can be represented in terms of the Dirichlet Green's function $G_D$ as
\begin{equation}\label{J2}
w(x,t) \ = \ \int_0^{x_1} G_D(x,x',t,T) w_0(x') \dx{x'} \ .
\end{equation}
The function $w(x,t)$ in the statement of the lemma can also be represented in terms of $G_D$ as
\begin{equation}\label{K2}
w(x,t) \ = \ \ \int_0^{x_1} G_D(x,x',t,T) w_0(x') \dx{x'} - \int_t^T \frac{\pa G_D}{\pa x'}(x,x_1,t,s) w_1(s) \dx{s} \ .
\end{equation}

If $b(\cdot,\cdot)\equiv 0$ then $G_D$ can be obtained from the method of images~\cite[p.~84]{fried}. For $x'$ in the interval $[0,x_1]$ let $x'_0=x'$, and $x'_1=-x'$ be the reflection of $x'$ in the boundary $0$ of the interval.  More generally we denote by $x'_m, m=0,1,2,..,$ all multiple reflections of $x'$ in the boundaries $0,x_1$. Letting
\begin{equation}\label{L2}
G(x,t) \ = \ \frac{1}{\sqrt{4\pi t}}\exp\pra*{-\frac{x^2}{4t}} \ ,
\end{equation}
we define the function $K(x,x',t,T)$ by 
\begin{equation}\label{M2}
K(x,x',t,T) \ = \ \sum_{m=0}^\infty (-1)^{p(m)}G(x-x'_m,T-t) \ ,
\end{equation}
where $p(m)=0,1$, depending on the number of reflections to obtain $x'_m$.  It is easy to see that the series \eqref{M2} converges and that $G_D(x,x',t,T)=K(x,x',t,T)$ when $b(\cdot,\cdot)\equiv 0$.  In the case of nontrivial $b$ one can obtain $G_D$ by perturbation expansion.  Let $\mathcal{L}_{x,t}$ denote the operator on the LHS of \eqref{A2}, so \eqref{A2} is $\mathcal{L}_{x,t}w=0$. Then
\begin{align}
G_D(x,x',t,T) &= K(x,x',t,T) +\sum^\infty_{n=0} \; v_n(x,x',t,T),  \notag \\
v_n(x,x',t,T) &= \int^{T}_t\dx{s}\int^{x_1}_{0} \dx{x''} \  K(x,x'',t,s) g_n(x'', x', s, T),  \label{N2} \\
g_0(x,x',t,T) &= \mathcal{L}_{t,x} K(x,x',t,T), \notag \\
g_{n+1}(x,x',t,T) &= \int^{T}_t\dx{s}\int^{x_1}_{0} \dx{x''} \ \mathcal{L}_{t,x} K(x,x'',t,s) g_n(x'', x', s, T). \notag
\end{align}
One easily obtains from \eqref{N2} the estimate for $n=0,1,2,\dots$,
\begin{equation}\label{O2}
|g_n(x,x',t,T)| \ \le \  \frac{C^{n+1}}{\Ga((n+1)/2))}\|b(\cdot,\cdot)\|_\infty^{n+1}(T-t)^{n/2-1/2}G(x-x',2(T-t)) \ ,
\end{equation}
where $\Ga(\cdot)$ is the Gamma function and $C$ is a constant depending only on $x_1$. It follows from \eqref{N2}, \eqref{O2} that there are constants $C_1,C_2$ depending only on $x_1$ such that for $\max\set{0,T-1}<t<T$, $0<x,x'<x_1$ it holds
\begin{equation} \label{P2}
0 \ \le \ G_D(x,x',t,T) \ \le \ C_1\exp\pra*{C_2\|b(\cdot,\cdot)\|^2_\infty(T-t)}G(x-x',2(T-t))
\end{equation}
Similarly we have that
\begin{equation}\label{Q2}
\abs*{\frac{\pa G_D(x,x',t,T)}{\pa x} } \ \le \ \frac{C_1}{\sqrt{T-t}}\exp\pra*{C_2\|b(\cdot,\cdot)\|^2_\infty(T-t)}  \ G(x-x',2(T-t)) \  ,
\end{equation}
where $C_1,C_2$ depend only on $x_1$. Estimates on other derivatives of $G_D$ involve $\|\pa b(\cdot,\cdot)\|_\infty$ as well as
$\|b(\cdot,\cdot)\|_\infty$.  Following the argument of \cite[Lemma 3.4]{cg}, we  have that
\begin{multline} \label{R2}
\left|\frac{\pa G_D(x,x',t,T)}{\pa x'} \right|+\sqrt{T-t}\left|\frac{\pa^2 G_D(x,x',t,T)}{\pa x\pa x'} \right| \ \le \ \frac{C_1}{\sqrt{T-t}}  \ G(x-x',2(T-t))
  \\ \times \set*{ 1+\sqrt{T-t}\left[\|b(\cdot,\cdot)\|_\infty+\|\pa b(\cdot,\cdot)\|_\infty\sqrt{T-t}\right]
 \exp\pra*{C_2\|b(\cdot,\cdot)\|^2_\infty(T-t)}} \  \  ,
\end{multline}
where $C_1,C_2$ depend only on $x_1$.

The  inequality \eqref{F*2} evidently follows by differentiating \eqref{K2} with respect to $x$ and using the bounds \eqref{Q2}, \eqref{R2}.  To obtain the  inequality \eqref{F2}  it is sufficient to bound the derivative
\begin{equation}\label{S2}
\frac{\pa}{\pa x} \int_0^{x_1}K(x,x',t,T) w_0(x') \dx{x'} \
\end{equation}
by a constant. The reason for this is that  \eqref{R2} implies that the derivative of the second  integral in~\eqref{K2} is bounded by a constant, and  one also easily sees that the derivative of the higher terms in the perturbation series \eqref{N2}  for the first integral in \eqref{K2} are bounded by a constant.  To bound the integral \eqref{S2} we observe that the derivative $\pa/\pa x$ can be replaced by $\pa/\pa x'$ in the expansion~\eqref{M2}. Hence on integration by parts in the integral \eqref{S2} we obtain a uniform bound.
\end{proof}
\begin{lemma}\label{lem:PostiveFlux}
Assume that the functions $V,W$ are $\cC^2$ on $[0,\infty)$, satisfy $\|W''\|_\infty+\|V''\|_\infty<\infty$ and the conditions of Lemma~\ref{lem:Adjoint:Estimate}.

 Let $w(x,t), \ x>0,t<T,$ be the solution to \eqref{A2},  \eqref{B2}, \eqref{A*2} with $w_0(x)=W(x), \ x>0$. Then for any $x_1>0$  there is a constant $C$ depending only on $x_1, \|\theta(\cdot)\|_\infty$ such that
\begin{equation}\label{U2}
0 \ \le \ \frac{\pa w(x,t)}{\pa x} \ \le \  C\sqrt{W(x)} \quad\text{for } x>x_1/2, \ \max[0,T-1]<t<T \ .
\end{equation}
\end{lemma}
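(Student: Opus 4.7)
The plan is to prove the two inequalities separately, both via maximum-principle arguments applied to the function $u:=\partial_x w$. For the lower bound, I differentiate \eqref{A2} in $x$ to find that $u$ solves the linear backward parabolic equation
\[
  \partial_t u + \partial_x^2 u + b(x,t)\,\partial_x u + b_x(x,t)\,u = 0, \qquad b_x = \theta W'' - V'',
\]
whose zero-order coefficient $b_x$ is bounded on $[0,\infty)$ by Assumption~\ref{ass:VW}~(a). The terminal datum is $u(x,T)=W'(x)\ge 0$ by monotonicity of $W$ (Assumption~\ref{ass:VW}~(b)), while the maximum principle gives $w\ge 0$ from $w_0=W\ge 0$, and hence $u(0,t)=\partial_x w(0,t)\ge 0$ by the Hopf lemma. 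After reversing time via $s=T-t$, the standard parabolic maximum principle applied to $u$ on the half-line yields $u\ge 0$.

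For the upper bound, I plan to construct the explicit supersolution $\psi(x,s)=A\,\e^{\lambda s}\sqrt{W(x)}$ on the half-strip $\{x>x_1/2,\ 0<s<1\}$. A direct computation gives
\[
  (\partial_s - \partial_x^2 - b\,\partial_x - b_x)\psi = A\,\e^{\lambda s}\sqrt{W(x)}\left[\lambda - \frac{W''}{2W} + \frac{(W')^2}{4W^2} - \frac{b\,W'}{2W} - b_x\right].
\]
Under Assumption~\ref{ass:VW}, the four $x$-dependent terms in the bracket are uniformly bounded: the inequality $(W')^2\le C_0 W$ from \eqref{G1} together with $W\ge W(0)>0$ control $W''/W$ and $(W')^2/W^2$; the term $\theta(W')^2/W$ is bounded by $\theta_\infty C_0$; the factor $V'W'/W$ is bounded by continuity on $[0,x_\delta]$ and by $\delta (W')^2/W\le \delta C_0$ for $x\ge x_\delta$ thanks to Assumption~\ref{ass:VW}~(c); finally $|b_x|\le \theta_\infty C_0+C_0$. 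Hence $\lambda$ can be chosen as a constant depending only on $C_0$, $\theta_\infty$, and $W(0)$ so that $\psi$ is a supersolution.

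It remains to compare $u$ and $\psi$ on the parabolic boundary. At $s=0$, the inequality \eqref{G1} gives $W'(x)\le \sqrt{C_0\,W(x)}$, so $A\ge\sqrt{C_0}$ ensures $u(x,0)\le\psi(x,0)$. At $x=x_1/2$, Lemma~\ref{lem:Adjoint:Estimate} yields $w(x_1,t)\le C\,W(x_1)$ on the range $T-1<t<T$, after which Lemma~\ref{lem:uniformFluxBound}~\eqref{F2} applied to the restriction of $w$ to $(0,x_1)$ with terminal data $W|_{[0,x_1]}\in\cC^1$ and boundary data $w(0,\cdot)=0$, $w(x_1,\cdot)$ provides a uniform bound $|u(x_1/2,t)|\le C_*$; choosing $A\ge C_*/\sqrt{W(x_1/2)}$ settles this side. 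The behavior as $x\to\infty$ is handled by a standard Phragm\'en--Lindel\"of truncation: work on $(x_1/2,R)$, add an auxiliary term $\varepsilon(1+x)^p$ with $p$ chosen so that this extra term dominates the polynomial growth of $u$ inherited from $w\le C\,W$ via interior regularity, then let $R\to\infty$ followed by $\varepsilon\to 0$. The main obstacle will be verifying the uniformity of $\lambda$ in the bracket, which crucially exploits the subquadratic control $(W')^2\le C_0 W$ together with the smallness conditions on $V',V''$ at infinity from Assumption~\ref{ass:VW}~(c); the auxiliary uniform boundary bound at $x_1/2$ is essentially forced by Lemmas~\ref{lem:Adjoint:Estimate} and~\ref{lem:uniformFluxBound} as soon as these conditions are in place.
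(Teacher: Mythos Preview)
Your approach is essentially identical to the paper's: differentiate in $x$ to get the equation for $u=\partial_x w$, then use the supersolution $e^{\lambda(T-t)}\sqrt{W(x)}$ together with the terminal bound $W'\le C\sqrt{W}$ from \eqref{D2} and the lateral boundary bound at $x=x_1/2$ supplied by Lemmas~\ref{lem:Adjoint:Estimate} and~\ref{lem:uniformFluxBound}. Your treatment is in fact slightly more explicit than the paper's in two places---you spell out the Hopf-lemma argument for the lower bound $u\ge 0$ and you address the unbounded domain via a Phragm\'en--Lindel\"of truncation---both of which the paper leaves implicit; one minor point is that you invoke Assumption~\ref{ass:VW} where the lemma only assumes the weaker hypotheses of Lemma~\ref{lem:Adjoint:Estimate} together with bounded second derivatives, so when bounding the bracket terms you should appeal to \eqref{D2} (which already gives $(W')^2\le C_0W$ and $|V'|\le C_0W'$ for $x\ge x_0$) rather than \eqref{G1} and \eqref{F1:1}.
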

\begin{proof}
Setting $u(x,t)=\pa w(x,t)/\pa x$, we see from \eqref{A2} that
\begin{equation}\label{V2}
\frac{\pa u(x,t)}{\pa t} +  \frac{\pa^2 u(x,t)}{\pa x^2} +b(x,t)\frac{\pa u(x,t)}{\pa x} +\frac{\pa b(x,t)}{\pa x} u(x,t) =0 \ . 
\end{equation}
We argue now as in Lemma~\ref{lem:Adjoint:Estimate}. Thus let $w_\la(x,t)=e^{\la(T-t)}\sqrt{W(x)}$ for any $\la\in\R$, $x>0$, $t<T$. Letting $\mathcal{L}$ denote the partial differential operator of \eqref{V2}, then
\begin{multline} \label{Y2}
e^{-\la(T-t)}\sqrt{W(x)} \ \mathcal{L}w_\la(x,t) \ = \\
 \left[-\la+\theta(t)W''(x)-V''(x)\right] W(x) \\
 +\frac{1}{2}\left\{W''(x)-\frac{W'(x)^2}{2W(x)}+[\theta(t)W'(x)-V'(x)]W'(x)\right\} \ .
\end{multline}
It follows from \eqref{Y2}, in view of \eqref{D2} and the boundedness of the second derivatives of $V(\cdot),W(\cdot)$, that there exists $\la>0$ depending on $\|\theta(\cdot)\|_\infty$  such that $ \mathcal{L}w_\la(x,t) <0$ for $x>0,t<T$. We also have that $\mathcal{L}u(x,t) =0$ for $x>0,t<T$, and from \eqref{D2}  there is a constant $C_1$ such that  $0\le u(x,T)=W'(x)\le C_1\sqrt{W(x)}$ for $x>x_1/2$.  Also from Lemma~\ref{lem:Adjoint:Estimate}, Lemma~\ref{lem:uniformFluxBound} it follows that
the constant $C_1$ can be chosen sufficiently large so that $u(x_1/2,t)\le C_1w_\la(x,t)$ for $\max[0,T-1]<t<T$.   We conclude from  the maximum principle that $u(x,t)\le C_1w_\la(x,t)$ for $x>x_1/2, \ \max[0,T-1]<t<T$.  
\end{proof}
\begin{lemma}\label{lem:Adjoint:Comparison}
Assume that the functions $V(\cdot),W(\cdot)$ are $\cC^2$ on $[0,\infty)$,  satisfy the conditions of Lemma~\ref{lem:Adjoint:Estimate}, and also $\|W''(\cdot)\|_\infty+\|V''(\cdot)\|_\infty<\infty$. For $i=1,2$ let $\theta_i:[0,T]\ra\R$ be continuous functions and $w_i(x,t), \ x>0,t<T,$ the corresponding solutions to \eqref{A2}, \eqref{B2},\eqref{A*2} with $\theta(\cdot)=\theta_i(\cdot)$ and terminal data $W(\cdot)$. Then there is a constant $C$ depending on $\|\theta_1(\cdot)\|_\infty,\|\theta_2(\cdot)\|_\infty$ such that

\begin{equation}\label{Z2}
{ |w_2(x,t)-w_1(x,t)| \ \le \  C\|\theta_2(\cdot)-\theta_1(\cdot)\|_\infty } \sqrt{T-t}  \ W(x) \quad\text{for } x>0, 
\end{equation}
\begin{equation}\label{AQ2}
{ \left|\frac{\pa w_2(0,t)}{\pa x}-\frac{\pa w_1(0,t)}{\pa x}\right| \ \le \ C\|\theta_2(\cdot)-\theta_1(\cdot)\|_\infty  \ ,  }
\end{equation}
provided $\max[0,T-1]<t<T$.
\end{lemma}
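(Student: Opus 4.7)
Setting $u = w_2 - w_1$ and using $\mathcal{L}_i w_i = 0$ for $\mathcal{L}_i := \pa_t + \pa_x^2 + b_i \pa_x$ with $b_i(x,t) = \theta_i(t) W'(x) - V'(x)$, subtraction gives
\[
\mathcal{L}_1 u \ = \ \mathcal{L}_1 w_2 - \mathcal{L}_2 w_2 \ = \ (b_1 - b_2)\,\pa_x w_2 \ = \ (\theta_1(t) - \theta_2(t))\, W'(x)\, \pa_x w_2(x,t) \ =: \ f(x,t),
\]
with $u(x,T) = 0$ and Dirichlet condition $u(0,t) = 0$. The whole argument thus reduces to bounding the solution of this backward parabolic Dirichlet problem with source $f$. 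Lemma~\ref{lem:PostiveFlux} yields $|\pa_x w_2(x,t)| \le C\sqrt{W(x)}$ for $x > x_1/2$, which together with $W'(x) \le \sqrt{C_0 W(x)}$ (from Assumption~\ref{ass:VW}(d) and $W(x)\ge W(0)>0$) gives $|f(x,t)| \le C\|\theta_2 - \theta_1\|_\infty W(x)$ on that regime; and for $x \le x_1/2$, Lemma~\ref{lem:uniformFluxBound} applied to $w_2$ on $[0,x_1]$ (using $|w_2(x_1,t)| \le C W(x_1)$ from Lemma~\ref{lem:Adjoint:Estimate}) yields $|f(x,t)| \le C\|\theta_2-\theta_1\|_\infty /\sqrt{T-t}$.

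For \eqref{Z2} I would construct a super-solution of the form $v(x,t) = A\sqrt{T-t}\, e^{\mu(T-t)} W(x)$. A direct computation gives
\[
\mathcal{L}_1 v \ = \ A\sqrt{T-t}\, e^{\mu(T-t)}\bigl[-\mu W(x) + W''(x) + b_1(x,t) W'(x)\bigr] - \frac{A e^{\mu(T-t)} W(x)}{2\sqrt{T-t}},
\]
and the growth conditions of Assumption~\ref{ass:VW} combined with $\|\theta_1\|_\infty < \infty$ give $|W''(x) + b_1(x,t) W'(x)| \le C_* W(x)$ uniformly in $x,t$. Taking $\mu = C_*$ annihilates the first bracket and leaves $\mathcal{L}_1 v \le -A e^{\mu(T-t)} W(x)/(2\sqrt{T-t})$, which for $\max[0,T-1]<t<T$ beats $-|f(x,t)|$ in both regimes once $A$ is taken proportional to $\|\theta_2-\theta_1\|_\infty$, with proportionality constant depending on $W(0)$ in order to absorb the singularity of $f$ near $x=0$. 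Since $v(x,T) = 0$ and $v(0,t) > 0 = u(0,t)$, the comparison principle for the backward parabolic Dirichlet problem yields $|u(x,t)| \le v(x,t) \le C\|\theta_2-\theta_1\|_\infty \sqrt{T-t}\,W(x)$, which is \eqref{Z2}.

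For \eqref{AQ2} I would restrict to $[0,x_1]$ where $b_1$ and $\pa_x b_1$ are bounded, and decompose $u|_{[0,x_1]} = u_0 + u_1$: here $u_0$ solves $\mathcal{L}_1 u_0 = 0$ on $[0,x_1]$ with lateral data $u_0(x_1,t) = u(x_1,t)$ and zero terminal / left-boundary data, while $u_1$ solves $\mathcal{L}_1 u_1 = f$ with homogeneous terminal and lateral data. Applying Lemma~\ref{lem:uniformFluxBound} to $u_0$ together with the bound $\|u(x_1,\cdot)\|_\infty \le C\|\theta_2-\theta_1\|_\infty\sqrt{T-t}\,W(x_1)$ from \eqref{Z2} gives $|\pa_x u_0(0,t)| \le C\|\theta_2-\theta_1\|_\infty$. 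For $u_1$, use its Duhamel representation via the Dirichlet Green's function on $[0,x_1]$ together with the gradient bound \eqref{Q2}; the $(s-t)^{-1/2}$-type singularities (from $\pa_x G_D$ and from the source bound for small $x$) combine to produce an integral over $s \in [t,T]$ that is a finite constant when $T-t \le 1$. The main subtlety throughout is the mismatch between the terminal data $W$ and the homogeneous Dirichlet condition for $w_2$, which causes $\pa_x w_2$ to blow up like $(T-t)^{-1/2}$ near $x=0$ as $t \to T$; this singularity is benign for \eqref{Z2} because $W(x)\ge W(0)>0$ absorbs the $1/\sqrt{T-t}$ factor in the super-solution comparison, but it is exactly the point requiring careful accounting in the Duhamel integrals for \eqref{AQ2}.
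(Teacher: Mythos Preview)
Your argument is correct and reaches the same conclusions, but it is organized differently from the paper's proof. The paper interpolates $\theta_\mu=(1-\mu)\theta_1+\mu\theta_2$, writes $w_2-w_1=\int_0^1 v_\mu\,d\mu$ where $v_\mu=\partial_\mu w_\mu$ solves the same inhomogeneous backward equation with source $(\theta_1-\theta_2)W'\,\partial_x w_\mu$, and then represents $v_\mu$ by Duhamel as $\int_t^T(\theta_2-\theta_1)(s)\,h_\mu(x,t;s)\,ds$ with $h_\mu$ solving the \emph{homogeneous} equation with terminal data $W'\,\partial_x w_\mu(\cdot,s)$. The pointwise bound \eqref{Z2} then comes from applying Lemma~\ref{lem:Adjoint:Estimate} to $h_\mu$, and \eqref{AQ2} from applying Lemma~\ref{lem:uniformFluxBound} to $h_\mu$, yielding the same $\int_t^T \frac{ds}{\sqrt{(s-t)(T-s)}}$ integral you obtain.

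By contrast, you skip the interpolation (which is inessential, since $u=w_2-w_1$ already satisfies $\mathcal{L}_1 u=(\theta_1-\theta_2)W'\,\partial_x w_2$) and, for \eqref{Z2}, replace the two-step ``Duhamel + Lemma~\ref{lem:Adjoint:Estimate}'' by a single explicit super-solution $A\sqrt{T-t}\,e^{\mu(T-t)}W(x)$ for the inhomogeneous problem. This is more elementary and transparent: the $\sqrt{T-t}$ factor is built in rather than emerging from an integration. Your treatment of \eqref{AQ2} is essentially the same as the paper's once one unwinds their $h_\mu$ representation into a Green's-function Duhamel formula. One minor point worth making explicit in your write-up is that the comparison principle you invoke holds on the unbounded half-line because $u$ and $v$ have at most polynomial growth (via Lemma~\ref{lem:Adjoint:Estimate} and $W(x)\le C(1+x^2)$); the paper sidesteps this by working with the stochastic/Green's-function representation throughout.
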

\begin{proof}
For $0\le\mu\le 1$ let $w_\mu(x,t), \ x>0,t<T,$ be the solution to \eqref{A2}, \eqref{B2}, \eqref{A*2} with $\theta(\cdot)=(1-\mu)\theta_1(\cdot)+\mu\theta_2(\cdot)=\theta_\mu(\cdot)$ and terminal data $W(\cdot)$. Then
\begin{equation}\label{AB2}
{ w_2(x,t)-w_1(x,t)} \ = \  \int_0^1 \frac{\pa w_\mu(x,t)}{\pa \mu} \dx\mu \ = \  \int_0^1 v_\mu(x,t) \dx\mu \ ,
\end{equation}
where $v_\mu(x,t), \ x>0,t<T,$ is the solution to the PDE
 \begin{multline} \label{AC2}
\frac{\pa v_\mu(x,t)}{\pa t} +  \frac{\pa^2 v_\mu(x,t)}{\pa x^2} +b_\mu(x,t)\frac{\pa v_\mu(x,t)}{\pa x} \ = \ [\theta_1(t)-\theta_2(t)]\, W'(x)\frac{\pa w_\mu(x,t)}{\pa x} \ , \\
\text{for } 0<x<\infty, \ t<T,  \quad \text{and } b_\mu(x,t) \ = \ \theta_\mu(t)W'(x)-V'(x) \ .
\end{multline}
The function $v_\mu$ also satisfies the terminal and boundary conditions
\begin{equation}\label{AD2}
v_\mu(x,T) \ = \ 0, \ x\ge 0, \quad v_\mu(0,t)=0, \ t \le T. 
\end{equation}
The solution to \eqref{AC2}, \eqref{AD2} can be represented as
\begin{equation}\label{AE2}
v_\mu(x,t) \ = \ \int_t^T{ [\theta_2(s)-\theta_1(s)]}h_\mu(x,t;s) \dx{s} \ ,
\end{equation}
where $w(x,t)=h_\mu(x,t;s)$ is the solution to \eqref{A2}, \eqref{A*2} for $t<s,x>0$
with $\theta(\cdot)=\theta_\mu(\cdot)$ and terminal and boundary conditions given by 
\begin{equation}\label{AF2}
h_\mu(x,s;s)  =  W'(x)\frac{\pa w_\mu(x,s)}{\pa x}, \quad h_\mu(0,t;s)=0, \ t<s.
\end{equation}

Observe from \eqref{D2}, Lemma~\ref{lem:uniformFluxBound} and Lemma~\ref{lem:PostiveFlux} that there is a constant $C$ depending on $\|\theta_1(\cdot)\|_\infty,\|\theta_2(\cdot)\|_\infty$ such that
\begin{equation}\label{AG2}
|h_\mu(x,s;s)| \ \le \ \frac{CW(x)}{\sqrt{T-s}} \ ,  \quad\text{for } x>0, \ \max[0,T-1]<s<T \ .
\end{equation}
Hence from \eqref{AG2} and Lemma~\ref{lem:Adjoint:Estimate} we have that
\begin{equation}\label{AR2}
|h_\mu(x,t;s)| \ \le \ \frac{CW(x)}{\sqrt{T-s}} \ ,  \quad\text{for } x>0, \ \max[0,T-1]<t<s<T \ ,
\end{equation}
 for some constant $C$ depending on $\|\theta_1(\cdot)\|_\infty,\|\theta_2(\cdot)\|_\infty$. The inequality \eqref{Z2} follows now from \eqref{AB2}, \eqref{AE2}, \eqref{AR2}. To prove \eqref{AQ2} we observe from \eqref{AR2} and  Lemma~\ref{lem:uniformFluxBound} that
 \begin{equation}\label{AS2}
\left|\frac{\pa h_\mu(0,t;s)}{\pa x}\right| \ \le \ \frac{C}{\sqrt{(s-t)(T-s)}} \ ,  \quad\text{for } \max[0,T-1]<t<s<T,
\end{equation}
where again $C$  depends on $\|\theta_1(\cdot)\|_\infty,\|\theta_2(\cdot)\|_\infty$. The inequality \eqref{AQ2} follows now from \eqref{AB2}, \eqref{AE2}, \eqref{AS2}.
\end{proof}

\begin{proof}[Proof of Proposition~\ref{prop:ShortTimeExistence}]
For $T>0$ and $\del>0$ we define the metric space
\begin{equation*}
 X_{T,\del} =\set[\Big]{ \theta \in \cC([0,T],\R) \ : \  \theta(0) = \theta_0\; , \  \theta_0 - \delta \leq \theta(t) \leq \rho \ \ \forall t\in [0,T]}  \ ,
\end{equation*}
together with the distance $d_\infty(\theta_1,\theta_2) = \norm{\theta_1 - \theta_2}_\infty$. Given $\theta(\cdot)\in X_{T,\del}$, we define the function
\begin{equation}\label{AK2}
\mathcal{B}\theta:[0,T]\ra\R \qquad\text{by} \qquad \mathcal{B}\theta(t)+ \int_0^\infty W(x)c(x,t) \dx{x} \ = \ \rho \ ,
\end{equation}
where $c(\cdot,t), \ 0\le t\le T,$ is the solution to \eqref{A1}, \ \eqref{E1} with the given initial data satisfying \eqref{AJ2} and $\theta(\cdot)\in X_{T,\del}$.

We  show that $\mathcal{B}\theta(\cdot)\in X_{T,\del}$ provided $T$ is sufficiently small. By the non-negativity of the function~$c(\cdot,\cdot)$ we have from \eqref{AK2} that $\mathcal{B}\theta(\cdot)\le \rho$. From \eqref{AI2} we have that
\begin{equation}\label{AL2}
\mathcal{B}\theta(T)-\theta_0 \ = \  \int_0^\infty [W(x)-w(x,0)]c(x,0) \dx{x} - \int_0^T dt \ \frac{\pa w(0,t)}{\pa x} e^{-V(0)+\theta(t)W(0)}  \ ,
\end{equation}
where $w(\cdot,t)$ is the solution to \eqref{A2}, \eqref{B2}, \eqref{A*2} with terminal data $w_0(\cdot)=W(\cdot)$.    From Lemma~\ref{lem:Adjoint:Estimate} we have that
\begin{align*}
 \int_0^\infty [W(x)-w(x,0)]c(x,0) \dx{x} &\ \ge\  -C(T,\theta_0,\del) \int_0^\infty W(x)c(x,0) \dx{x}\\
 &\ = \ C(T,\theta_0,\del)\bra*{\theta_0-\rho} \ ,
\end{align*}
for some constant $C(T,\theta_0,\del)$ which satisfies $\lim_{T\ra 0} C(T,\theta_0,\del)=0$. We have also from Lemma~\ref{lem:Adjoint:Estimate} and Lemma~\ref{lem:uniformFluxBound}, applied with $x_1=1$, that
\begin{align}\label{AN2}
0 \ &\le \ \int_0^T \dx{t} \ \frac{\pa w(0,t)}{\pa x}\exp[-V(0)+\theta(t)W(0)] \\
\ &\le \ \exp\pra*{ - V(0) + \rho W(0)} \int_0^T \frac{C}{\sqrt{T-t}} \bra[\bigg]{ \sup_{0\leq x \leq 1}\abs{W(x)} + (T-t)\sup_{0\leq t \leq T} \abs{w(1,t)}} \\
\ &\le \ C(\theta_0,\del)  \ \sqrt{T} \ , \quad 0<T\le 1,
\end{align}
for some constant $C(\theta_0,\del)$.  It follows from \eqref{AL2}--\eqref{AN2} that there exists a positive constant $\ga(\theta_0,\del)<1$ such that $\mathcal{B}\theta(T)\ge \theta_0-\del$ if $T\le \ga(\theta_0,\del)$. A similar argument also implies that $\mathcal{B}\theta(t)\ge \theta_0-\del$ for $0\le t\le T$. We have therefore shown that   $\mathcal{B}\theta(\cdot)\in X_{T,\del}$ provided  $T\le \ga(\theta_0,\del)$.

Next we show that $T$ can be taken sufficiently small, depending only on $\theta_0,\del$, so that the mapping $\mathcal{B}:X_{T,\del}\ra X_{T,\del}$ is a contraction with respect to $d_\infty$.  For $i=1,2$ let  $\theta_i(\cdot)\in X_{T,\del}$ and $w_i(x,t), \ x>0,t<T,$ be the solution to \eqref{A2}, \eqref{B2}, \eqref{A*2}   with $\theta(\cdot)=\theta_i(\cdot)$ and terminal data $w_i(\cdot,T)=W(\cdot)$. From \eqref{AL2} we have that
\begin{align} \label{AO2}
\mathcal{B}\theta_1(T)-\mathcal{B}\theta_2(T) &=  \int_0^\infty \bra*{w_2(x,0)-w_1(x,0)} \; c(x,0) \dx{x} \\
&\qquad + \int_0^T\bra*{\frac{\pa w_2(0,t)}{\pa x}-\frac{\pa w_1(0,t)}{\pa x}} e^{-V(0)+\theta_2(t)W(0)}  \dx{t} \nonumber \\
&\qquad + \int_0^T \frac{\pa w_1(0,t)}{\pa x} \bra*{ e^{-V(0)+\theta_2(t)W(0)}- e^{-V(0)+\theta_1(t)W(0)} } \dx{t} \ . \nonumber
\end{align}
We can bound the first two integrals on the RHS of \eqref{AO2} by using Lemma~\ref{lem:Adjoint:Comparison}. Thus from \eqref{Z2} we have that
\begin{equation}\label{AP2}
\left|\int_0^\infty [w_2(x,0)-w_1(x,0)]c(x,0) \dx{x}\right| \ \le \ C(\theta_0,\del)\sqrt{T} \ \|\theta_2(\cdot)-\theta_1(\cdot)\|_\infty \ .
\end{equation}
From \eqref{AQ2} we have that
\begin{equation*}
\abs*{\int_0^T \bra*{\frac{\pa w_2(0,t)}{\pa x}-\frac{\pa w_1(0,t)}{\pa x}} e^{-V(0)+\theta_2(t)W(0)} \dx{t}\; } \le C(\theta_0,\del)\; T \; \|\theta_2(\cdot)-\theta_1(\cdot)\|_\infty \ .
\end{equation*}
The third integral on the RHS of \eqref{AO2} can be bounded just as in \eqref{AN2}, yielding also a bound  $C(\theta_0,\del)\sqrt{T} \ \|\theta_2(\cdot)-\theta_1(\cdot)\|_\infty$.

Since the argument of the previous paragraph also applies for $\mathcal{B}\theta_1(t)-\mathcal{B}\theta_2(t), \ 0<t<T,$ we conclude that
\begin{equation}\label{AU2}
\|\mathcal{B}\theta_1(\cdot)-\mathcal{B}\theta_2(\cdot)\|_\infty \ \le \  C(\theta_0,\del)\sqrt{T} \ \|\theta_1(\cdot)-\theta_2(\cdot)\|_\infty
\end{equation}
provided $T\le C(\theta_0,\del)$.   Hence by taking $T$ small enough, depending only on $\theta_0,\del$, the mapping $\mathcal{B}$ is a contraction on $X_{T,\del}$.  It follows that there is a unique solution to
 \eqref{A1}, \eqref{B1}, \eqref{D1}, \eqref{E1} in the interval $0\le t\le T$.
\end{proof}

\subsection{A priori estimates for global existence and proof of Theorem~\ref{thm:GlobalExistence}}

\begin{lemma}\label{lem:adjoint:Wbound}
Assume that the functions $V(\cdot),W(\cdot)$ are $\cC^2$ on $[0,\infty)$,  satisfy the conditions of Lemma~\ref{lem:Adjoint:Estimate} and in addition the inequality
\begin{equation}\label{AV2}
W''(x) \ \le \ C_0W'(x)^2 \ ,  \quad\text{for } x\ge x_0 \ ,
\end{equation}
where $C_0,x_0$ can be taken to be the same as in \eqref{D2}. Let $\theta:[0,T]\ra\R$ be a continuous function and  $w(x,t), \ x>0,t<T,$ be the solution to \eqref{A2}, \eqref{B2}, \eqref{A*2} with $w_0(x)=W(x), \ x>0$.  Then there exists $C_\infty>0, \ \theta_\infty<0$, independent of $T$,   such that $w(x,0)\le C_\infty W(x)$ for $x\ge 0$, provided $\sup_{0\le t\le T}\theta(t)\le \theta_\infty$.
\end{lemma}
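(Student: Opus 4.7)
The plan is to adapt the supersolution argument of Lemma~\ref{lem:Adjoint:Estimate} but use a time-independent ansatz, so as to kill the factor $e^{\la T}$. I take
\[
  v(x) := W(x) + B\bra*{1-e^{-\al x}},
\]
with $\al,B>0$ and threshold $\theta_\infty<0$ to be chosen \emph{independently of $T$}. The essential new ingredient is the hypothesis \eqref{AV2}, which upgrades the crude estimate $W''\le C_0 W$ of Lemma~\ref{lem:Adjoint:Estimate} to $W''\le C_0 W'^2$; this allows $\mathcal{L}W$ to already be non-positive in the outer region once $\theta$ is sufficiently negative, without any time-growing prefactor.

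On the outer region $[x_0,\infty)$, combining \eqref{D2} with \eqref{AV2} gives
\[
  \mathcal{L}W = W''+(\theta W'-V')W' \le \bra*{2C_0+\theta}W'^2,
\]
so setting $\theta_\infty := -2C_0$ makes the right-hand side non-positive for every admissible $\theta(t)$. For the correction $\psi(x):=1-e^{-\al x}$ one computes $\mathcal{L}\psi = \al e^{-\al x}(b-\al)$, and on $[x_0,\infty)$ the bound $b=\theta W'-V'\le(\theta+C_0)W'\le -C_0 W'\le 0$ ensures $\mathcal{L}\psi\le 0$ for every $\al>0$. Hence $\mathcal{L}v=\mathcal{L}W+B\,\mathcal{L}\psi\le 0$ on $[x_0,\infty)\times[0,T]$.

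On the inner region $[0,x_0]$ the growth bounds \eqref{D2}, \eqref{AV2} are unavailable, but $\cC^2$-regularity yields a constant $M_0$, depending only on $\|V\|_{\cC^2([0,x_0])}$ and $\|W\|_{\cC^2([0,x_0])}$, such that $W''-V'W'\le M_0$; since the condition $\theta\le -2C_0\le 0$ kills the sign-indefinite term $\theta W'^2\le 0$, we still have $\mathcal{L}W\le M_0$. Choosing $\al$ larger than $2\sup_{[0,x_0]}|V'|$ guarantees $b-\al\le -\al/2$ whenever $\theta\le 0$, hence $\mathcal{L}\psi\le -\tfrac12\al^2 e^{-\al x_0}=:-\eta<0$ on $[0,x_0]$; finally, taking $B\ge M_0/\eta$ gives $\mathcal{L}v\le M_0-B\eta\le 0$ there as well. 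All three parameters $\theta_\infty,\al,B$ depend only on the structural constants in the hypotheses, not on $T$.

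It remains to invoke the maximum principle. The supersolution $v$ satisfies the parabolic boundary inequalities $v(x)\ge W(x)=w(x,T)$ and $v(0,t)=W(0)>0=w(0,t)$; both $v$ and $w$ grow at most linearly in $W$ (the latter by Lemma~\ref{lem:Adjoint:Estimate} applied backwards from $T$), so the standard weak maximum principle for the backward parabolic operator $\mathcal{L}$ on $[0,\infty)\times[0,T]$ applies and yields $w(x,t)\le v(x)$. Since $v(x)\le W(x)+B\le\bra*{1+B/W(0)}W(x)$, the conclusion holds with $C_\infty := 1+B/W(0)$. The only real difficulty is arranging for $C_\infty$ to be independent of $T$, and this is exactly what \eqref{AV2} together with the condition $\theta\le -2C_0$ are designed to enable.
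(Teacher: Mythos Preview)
Your argument is correct and takes a genuinely different route from the paper. The paper proceeds probabilistically: it writes $w(x,0)$ as an expectation of $W(X(T))$ for the associated SDE \eqref{AW2}, changes variables to $Y(t)=\Phi(X(t))$ with $\Phi\approx W$, verifies that for $\theta$ sufficiently negative the drift of $Y$ is dominated by $-\sigma^2$, and then bounds $\EX[Y(T)]$ by summing exit-probability estimates obtained from an explicit barrier $\phi(y)=K(e^{-\xi\Phi(0)}-e^{-\xi y})+e^{Ay}-e^{A\Phi(0)}$. You instead stay at the PDE level and build a time-independent supersolution $v=W+B(1-e^{-\alpha x})$: the key observation is that \eqref{AV2} upgrades the estimate of Lemma~\ref{lem:Adjoint:Estimate} so that $\mathcal{L}W\le(2C_0+\theta)W'^2$ on $[x_0,\infty)$, which is already nonpositive once $\theta\le -2C_0$, with no $e^{\lambda T}$ prefactor; the bounded correction $B\psi$ then absorbs the inner region.

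Your approach is shorter and avoids stochastic machinery entirely. The paper's framework, on the other hand, is set up to be reused: the same exit-time and expectation estimates recur in Lemma~\ref{lem:adjoint:fluxBound}, Lemma~\ref{lem:adjoint:tight0}, and Lemma~\ref{lem:moment:bound}, where one needs pointwise decay in $T-t$ or control of $\EX[Y(T)^p]$ rather than a single global supersolution. So you gain simplicity for this isolated lemma at the cost of a toolkit that the paper exploits later. One minor remark: your appeal to the maximum principle on the unbounded domain is justified by the growth control $w(x,t)\le e^{\lambda(T-t)}W(x)$ implicit in the proof of Lemma~\ref{lem:Adjoint:Estimate}, together with the at-most-quadratic growth of $W$ forced by \eqref{D2}; this places both $w$ and $v$ well within the uniqueness class for drifts satisfying \eqref{C2}, so the comparison is legitimate.
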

\begin{proof}
We  use the representation of the solution to \eqref{A2}, \eqref{B2} as an expectation value. Thus if $X(\cdot)$ is the solution to the SDE
\begin{equation}\label{AW2}
dX(t) \ = \  b(X(t),t) \dx{t} + \sqrt{2} \dx{B(t)}  \ ,
\end{equation}
then
\begin{equation}\label{AX2}
w(x,0) \ =  \ \EX\pra[\Big]{ w_0(X(T));  \ \inf_{0\le s\le T} X(s)>0 \ \Big| \ X(0)=x \ } \ .
\end{equation}
It follows from \eqref{AW2} and the It\^{o} calculus that
\begin{equation} \label{AY2}
dW(X(t)) \ = \ \left[W'(X(t))b(X(t),t)+W''(X(t))\right] \dx{t}  +W'(X(t))\sqrt{2} \dx{B(t)} \ .
\end{equation}
Let $\psi:[0,\infty)\ra\R$ be a $\cC^\infty$ convex decreasing function with the properties
\begin{equation}\label{AZ2}
\psi(0)=1, \quad \psi(x)=0 \text{ for } x\ge x_0+1,  \quad \psi'(x)<0 \text{ for } 0\le x<x_0+1.
\end{equation}
Then there exists $\del>0$ such that the function $\Phi(x)=W(x)-\del\psi(x)$ has the property $\Phi(x)\ge W(x)/2, \ x\ge 0$.
Setting $Y(t)=\Phi(X(t))$, then \eqref{AW2} yields an evolution equation
\begin{equation}\label{BJ2}
dY(t) \ = \ \mu(Y(t),t) \dx{t}+\sig(Y(t)) \dx{B(t)} \ ,
\end{equation}
with
\begin{align*}
  \mu(y,t) = \Phi'(y)b(y,t) + \Phi''(y) \qquad\text{and}\qquad \sigma(y) = \sqrt{2} \; \Phi'(y) .
\end{align*}
Then we see from \eqref{AX2} that the solution to \eqref{A2}, \eqref{B2} with $w_0(\cdot)=W(\cdot)$  satisfies the inequality
\begin{equation}\label{BA2}
w(x,0) \ \le  \ 2\; \EX\pra[\Big]{ Y(T) ,  \ \inf_{0\le s\le T} Y(s)>\Phi(0) \ \Big| \ Y(0)=\Phi(x)\  } \qquad\text{for } x>0 \ .
\end{equation}

With $b(\cdot,\cdot)$ given by  \eqref{A*2}, it follows from \eqref{D2}, \eqref{AV2}, \eqref{AY2}, \eqref{BJ2} that for $\theta_\infty$ sufficiently negative the functions $\mu(\cdot,\cdot), \ \sig(\cdot)$ in \eqref{BJ2} satisfy the inequalities
\begin{equation}\label{BB2}
\mu(y,t) \ \le \  -\sig(y)^2 \ , \quad \sig(y) \ \le \ \sqrt{2C_0y} \quad\text{for }  y\ge W(x_0+1), \ 0\le t\le T.
\end{equation}
Let $\phi:[\Phi(0),\infty)\ra\R$ be given by
\begin{equation}\label{BC2}
\phi(y) \ = \ K\bra[\big]{\exp[-\xi \Phi(0)]-e^{-\xi y} }+\exp[Ay]-\exp[A\Phi(0)] \ ,
\end{equation}
where $K,\xi,A>0$. Then
\begin{multline} \label{BD2}
\mu(y,t)\pderiv{\phi(y)}{y}+\frac{\sig(y)^2}{2}\pderiv[2]{\phi(y)}{y} \ = \   \\
\mu(y,t)\bra[\big]{K\xi e^{-\xi y}+A \exp[Ay]} +\frac{\sig(y)^2}{2}\bra[\big]{-K\xi^2 e^{-\xi y}+A^2\exp[Ay]} \ .
\end{multline}
In view of \eqref{BB2}, we see that if $A<2$ the RHS of \eqref{BD2} is negative for all $y\ge W(x_0+1),K,\xi\ge 0$.  Since the function $\psi(\cdot)$ is convex and decreasing,  it follows from \eqref{AV2}  that $\Phi''(x)\le C_0W'(x)\Phi'(x)$ if $x\ge x_0$. We conclude that $\mu(y,t)\le 0$ for $\Phi(x_0)\le y\le W(x_0+1)$ provided $\theta_\infty$ is sufficiently negative. For any $\xi>1$ let $K(\xi)$ be such that $K(\xi)\xi\exp[-\xi y]\ge 2A\exp[Ay]$ for $y=W(x_0+1)$. Then if $K\ge K(\xi)$ the RHS of \eqref{BD2} is negative for $\Phi(x_0)\le y\le W(x_0+1)$. We can also see that there are constants $C_1,c_1>0$ such that $\mu(y,t)\le C_1$ and $\sig(y)\ge c_1$ for $\Phi(0)\le y\le \Phi(x_0)$. Hence there exists $\xi>1$ such that
the RHS of \eqref{BD2} is negative for all $\Phi(0)\le y\le \Phi(x_0)$ provided $K\ge K(\xi)$. 

It follows from the argument of the previous paragraph that for any $L>W(x_0+1)$ there exists positive $K,\xi,A$ with $\xi>1,A<2$ such that the RHS of \eqref{BD2} is negative for all $\Phi(0)\le y\le L$. We can use the corresponding function $\phi(\cdot)$ of \eqref{BC2} to obtain a bound on the expectation in \eqref{BA2}.  For $L>W(x_0+1)$ and $\Phi(0)<y<L$ let $\tau$ be the first exit time from the interval $[\Phi(0),L]$ for the diffusion $Y(t), \ t\ge 0$. We have from the maximum principle that
\begin{equation}\label{BF2}
\Prob(Y(\tau)=L\  | \  Y(0)=y) \ \le \ \frac{\phi(y)}{\phi(L)} \ .
\end{equation}
Let $A_{\ell}$ be the event $A_{\ell} = \set{ \ell + \Phi(0) \leq \sup_{0\leq s\leq T} Y(s) \leq \ell+1 + \Phi(0)}$. Then, for any integer $\ell_0$ which satisfies $\ell_0+\Phi(0)> \max[y, W(x_0+1)]$, we can estimate
\begin{align}
\MoveEqLeft{\EX\pra[\Big]{ Y(T);  \ \inf_{0\le s\le T} Y(s)>\Phi(0) \ \Big| \ Y(0)=y \ }}\\
&\leq \ell_0 + \Phi(0) + \EX\pra[\Big]{ Y(T) ; \ \bigcup\nolimits_{\ell \geq \ell_0} A_\ell ;  \ \inf_{0\le s\le T} Y(s)>\Phi(0) \ \Big| \ Y(0)=y \ }\\
&\le \ell_0+\Phi(0)+\phi(y)\sum_{\ell=\ell_0}^\infty \frac{\ell+1+\Phi(0)}{\phi(\ell+\Phi(0))}  \ , \label{BG2}
\end{align}
where the last bound follows from \eqref{BF2}. Since the sum in \eqref{BG2} is bounded, we conclude that for any $x_1>0$ there is a constant $C(x_1)$, independent of $T$ such that
\begin{equation}\label{BI2}
\sup_{0\le t\le T} w(x_1,t) \ \le  \ C(x_1) \ .
\end{equation}
To complete the proof of the lemma we now argue as in Lemma~\ref{lem:Adjoint:Estimate}. Thus we see from \eqref{D2}, \eqref{E2}, \eqref{AV2} that $\theta_\infty$ can be chosen sufficiently negative so that $\mathcal{L}w_\la(x,t)\le 0$ for $x\ge x_0, \ \la\ge 0$. It follows then from \eqref{BI2} and the maximum principle that $w(x,0)\le [C(x_0)/W(x_0)+1]W(x)$ for~$x\ge x_0$.
\end{proof}
\begin{lemma}\label{lem:adjoint:fluxBound}
Assume that the functions $V(\cdot),W(\cdot)$ are $\cC^2$ on $[0,\infty)$, satisfy the conditions of Lemma~\ref{lem:Adjoint:Estimate}, and in addition  the inequality
\begin{equation}\label{BK2}
W'(x)^2 \ \ge \ c_0,  \quad\text{for } x\ge x_0,
\end{equation}
where $c_0$ is a positive constant. 
 Let $\theta:[0,T]\ra\R$ be a continuous function and  $w(x,t), \ x>0,t<T,$ be the solution to \eqref{A2}, \eqref{B2}, \eqref{A*2} with $w_0(x)=W(x), \ x>0$.  Then there exists $\theta_\infty<0$ and $C_\infty,\del_\infty>0$, independent of $T$,   such that
 \begin{equation}\label{BL2}
0 \ \le \  \frac{\pa w(0,t)}{\pa x} \ \le  C_\infty \max\set*{ \frac{1}{\sqrt{T-t}} , 1} e^{-\del_\infty(T-t)} \quad\text{for } 0\le t\le T.
 \end{equation}
 provided $\sup_{0\le t\le T}\theta(t)\le \theta_\infty$.
\end{lemma}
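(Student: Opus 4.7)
The plan combines three ingredients: the uniform-in-$T$ bound $w(x,r)\le C_\infty W(x)$ on $[0,T]$ from Lemma~\ref{lem:adjoint:Wbound} (applied on $[r,T]$ for each $r\in[0,T]$), the short-time flux estimate of Lemma~\ref{lem:uniformFluxBound}, and a Lyapunov/supermartingale argument producing exponential-in-$(T-r)$ decay of $w$ on bounded sets. The non-negativity $\pa_x w(0,t)\ge 0$ is immediate from the maximum principle ($w\ge 0$ with $w(0,\cdot)=0$), so only the upper bound requires work. For the short-time regime $T-t\le 1$, I apply Lemma~\ref{lem:uniformFluxBound} on $[0,1]\times[t,T]$ with terminal datum $W$ on $[0,1]$ and boundary datum $w(1,\cdot)\le C_\infty W(1)$, obtaining $|\pa_x w(0,t)|\le C/\sqrt{T-t}$; since $e^{-\del_\infty(T-t)}$ is bounded below by a positive constant on this range, this delivers the claim when $T-t\le 1$.

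For $T-t\ge 1$, set $s=t+\tfrac12$ and apply Lemma~\ref{lem:uniformFluxBound} on $[0,1]\times[t,s]$ (now with terminal data $w(\cdot,s)$ and boundary data $w(1,\cdot)$), which yields
\[
|\pa_x w(0,t)|\le C\sup_{x\in[0,1],\,r\in[t,s]} w(x,r),
\]
so the problem reduces to the pointwise bound
\[
w(x,r)\le C_1\,e^{-\del_\infty(T-r)}\qquad\text{for all } x\in[0,1],\ T-r\ge\tfrac12.
\]
For this I return to the stochastic representation
\[
w(x,r)=\EX\pra[\big]{W(X(T));\, \tau_0>T \mid X(r)=x},
\]
with $\tau_0=\inf\{u\ge r:X(u)=0\}$ and $X$ the diffusion \eqref{AW2}, and construct a $\cC^2$ Lyapunov function $\Phi:\R^+\to(0,\infty)$ with $\Phi\ge W$ on $\R^+$ satisfying
\[
\Phi''(x)+b(x,t)\Phi'(x)+\del_\infty\Phi(x)\le 0\qquad\text{for all } x>0,\ t\in[0,T],
\]
whenever $\theta(t)\le\theta_\infty$. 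Itô's formula shows that $u\mapsto e^{\del_\infty(u-r)}\Phi(X(u))$ is then a supermartingale on $[r,\tau_0\wedge T]$, and optional stopping gives $\EX\pra{\Phi(X(T));\,\tau_0>T\mid X(r)=x}\le e^{-\del_\infty(T-r)}\Phi(x)$; combined with $\Phi\ge W$ and boundedness of $\Phi$ on $[0,1]$, this yields the required pointwise bound.

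The main obstacle is the construction of $\Phi$, and the extra hypothesis $W'(x)^2\ge c_0$ is decisive. For $|\theta_\infty|$ large and $x\ge x_0$ it forces $b(x,t)=\theta(t)W'(x)-V'(x)\le (\theta_\infty+C_0)W'(x)\le -c_1|\theta_\infty|\sqrt{c_0}$ uniformly, so an exponential ansatz $\Phi(x)=A\,e^{\beta x}$ with small $\beta>0$ reduces the generator inequality on $[x_0,\infty)$ to $\beta(\beta+b)+\del_\infty\le 0$, achievable for $\del_\infty$ of order $|\theta_\infty|^2 c_0$ and $\beta\sim\sqrt{\del_\infty}$. The prefactor $A$ is then taken large enough that $\Phi\ge W$ on all of $\R^+$, possible since $W$ grows at most polynomially while $e^{\beta x}$ grows exponentially. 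On the bounded region $[0,x_0]$, where $W'$ may vanish and $b$ has no a priori sign, a convex-bump correction in the spirit of the modification $\Phi \leftrightarrow \Phi-\del\psi$ used in the proof of Lemma~\ref{lem:adjoint:Wbound} restores the inequality.
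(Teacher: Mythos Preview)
Your reduction is correct: non-negativity from the maximum principle, the $1/\sqrt{T-t}$ bound for $T-t\le 1$ via Lemma~\ref{lem:uniformFluxBound} with Lemma~\ref{lem:adjoint:Wbound} supplying the lateral data, and for $T-t\ge 1$ the further reduction to exponential pointwise decay of $w$ on $[0,1]$. The supermartingale scheme works once $\Phi$ is in hand. The one step needing more care is the construction of $\Phi$ on $[0,x_0]$, where $Ae^{\beta x}$ alone gives $\beta^2+\beta b+\del_\infty$, which has no sign since $b$ is merely bounded there. Your convex-bump correction does repair this, but the analogy with Lemma~\ref{lem:adjoint:Wbound} is loose---there the bump $\psi$ served to keep $\Phi'>0$ for a change of variables, here it must make $\psi''+b\psi'+\del_\infty\psi$ strictly positive on $[0,x_0]$. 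Concretely one needs $\psi\sim e^{-\gamma x}$ with $\gamma>M:=\sup_{[0,x_0]\times[0,T]}|b|$, tapered convexly to zero on $[x_0,x_0+1]$; then $\psi''+b\psi'\ge\gamma(\gamma-M)e^{-\gamma x_0}>0$ on $[0,x_0]$, and subtracting a large enough multiple of $\psi$ from $Ae^{\beta x}$ (with $\beta,\del_\infty$ small and $A$ large) restores the generator inequality while keeping $\Phi\ge W$. On the taper region $[x_0,x_0+1]$ one uses $b<0$ together with $\psi''\ge 0$, $\psi'\le 0$.

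The paper's route to the exponential decay is different. It bounds $b(x,t)\le b(x)$ by a time-independent drift, makes the change of variable $y=1-e^{-ax}+ae^{-ax_0}x$ (for $a$ large the concavity term $\Phi''$ forces the transformed drift uniformly negative even on $[0,x_0]$), and then applies the Schwarz inequality to the stochastic representation, factoring it as $\EX[\tilde v(Y(T),T)^2]^{1/2}\,\Prob(\tau>T)^{1/2}$. The moment factor is bounded uniformly in $T$ by rerunning the argument of Lemma~\ref{lem:adjoint:Wbound}; the survival probability decays exponentially via $\EX[e^{\del\tau}]\le e^{\eta y}$, established by a supersolution to the associated elliptic equation. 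Your direct Lyapunov is more streamlined once $\Phi$ is built; the paper's factorization has the advantage of reusing Lemma~\ref{lem:adjoint:Wbound}'s machinery wholesale rather than constructing a new global supersolution.
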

\begin{proof}
From \eqref{BK2} there exists $\theta_\infty<0$ such that the drift $b(\cdot,\cdot)$ of \eqref{A*2} satisfies the inequality $b(x,t)\le -\theta_\infty\sqrt{c_0}/2, \ x\ge x_0,0\le t\le T$.  From \eqref{D2} we have that
\begin{equation}\label{BM2}
\frac{W'(x)}{\sqrt{W(x)}} \ \le C_0 \quad\text{for } x\ge x_0 \ ,
\end{equation}
whence we conclude that $W(x)\le C_1[1+x^2], \ x\ge 0,$ for some constant $C_1$.  Let $v(x,t), \ x\ge 0,t<T$ be the solution to the PDE
\begin{equation}\label{BN2}
\frac{\pa v(x,t)}{\pa t}+\frac{\pa^2 v(x,t)}{\pa x^2}+b(x)\frac{\pa v(x,t)}{\pa x} \ = \ 0, \quad x>0,t<T,
\end{equation}
with terminal and boundary conditions given by
\begin{equation}\label{BO2}
v(x,T) \ = \ C_1[1+x^2],   \ x>0, \quad v(0,t)=0, \ t< T.
\end{equation}
Then, provided that $\sup_{0\le t\le T}b(x,t)\le b(x)$ for $x\ge 0$, one has that $w(x,t)\le v(x,t)$ for $(x,t)\in \R^+ \times (0,T)$.
   
 For any $a>0$ let $\Phi(\cdot)$ be the function  $\Phi(x)=1-e^{-ax}+ ae^{-ax_0}x, \ x\ge 0$. Making the change of variable $y=\Phi(x), \ v(x,t)=\tilde{v}(y,t)$ in \eqref{BN2} we obtain the PDE
  \begin{multline} \label{BP2}
\frac{\pa \tilde{v}(y,t)}{\pa t}+\tilde{a}(y)\frac{\pa^2 \tilde{v}(y,t)}{\pa y^2}+\tilde{b}(y)\frac{\pa \tilde{v}(y,t)}{\pa y} \ = \ 0, \quad y>0,t<T, \\
\tilde{a}(y) \ = \ \Phi'(x)^2 \ , \quad  \tilde{b}(y) \ = \ \Phi'(x)b(x)+\Phi''(x) \ .
\end{multline} 
It is easy to see that $a>0$ can be chosen sufficiently large, depending on $\theta_\infty$,  so that
\begin{equation}\label{BQ2}
\tilde{b}(y) \ \le \  -ae^{-ax_0}\theta_\infty\sqrt{c_0}/2 \ , \quad \tilde{a}(y) \ \le 4a^2 \ \quad\text{for } y\ge 0 \ .
\end{equation}
Let $Y(\cdot)$ be the solution to the SDE
\begin{equation}\label{BR2}
dY(s) \ = \ \tilde{b}(Y(s)) \dx{t}+\sqrt{2\tilde{a}(Y(s))}  \dx{B(s)} \ ,
\end{equation}
and for $y>0$ let $\tau_{y,t}$ be the first exit time from $[0,\infty)$ for the solution $Y(s), \ s\ge t,$ to \eqref{BR2} with $Y(t)=y$.  Then
\begin{equation}\label{BS2}
\tilde{v}(y,t) \ = \  \EX\pra*{ \ \tilde{v}(Y(T),T) H(\tau_{y,t}-T) \ | \ Y(t)=y \ } \ ,
\end{equation}
where $H:\R\ra\R$ is the Heaviside function $H(s)=0,  s<0, \ H(s)=1,s>0$. We conclude from \eqref{BS2} and the Schwarz inequality that
\begin{equation}\label{BT2}
\tilde{v}(y,t) \ \le \  \EX\pra*{ \ \tilde{v}(Y(T),T)^2 \ | \ Y(t)=y \ }^{1/2} \Prob(\tau_{y,t}>T)^{1/2} \ .
\end{equation}

We can estimate the first expectation on the RHS of \eqref{BS2} by following the argument of Lemma~\ref{lem:adjoint:Wbound} and using \eqref{BQ2}.  Thus there is a constant $C(y)$, independent of $T$, such that
\begin{equation}\label{BU2}
\sup_{0\le t\le T} \EX\pra*{ \ \tilde{v}(Y(T),T)^2 \ | \ Y(t)=y \ } \ \le \  C(y) \ .
\end{equation}
To estimate $\Prob(\tau_{y,t}>T)$ we use the fact that the function $\phi(y) \ = \ \EX\pra*{\exp\{\del\tau_{y,0}\} \ }, \ y>0$,  is the solution to the boundary value problem
\begin{equation}\label{BV2}
\del\phi(y)+\tilde{b}(y)\pderiv{\phi(y)}{y}+\tilde{a}(y)\pderiv[2]{\phi(y)}{y} \ =  \  0,  \quad \phi(0)=1.
\end{equation}
From \eqref{BQ2} and the maximum principle we see that there exists $\del,\eta>0$ such that $\phi(y)\le e^{\eta y}, \ y>0$.  We conclude that $\Prob(\tau_{y,t}>T)\le \exp[-\del(T-t)+\eta y]$. Now \eqref{BT2} yields a bound on the function $\tilde{v}$, and hence on $w$. We see that for any $x_1>0$ there is a constant $C(x_1)$, independent of $T$ such that
\begin{equation}\label{BW2}
w(x,t) \ \le \ C(x_1)e^{-\del(T-t)/2} \quad\text{if }  0<x<x_1, \ 0\le t\le T.
\end{equation}

To complete the proof of \eqref{BL2} we argue as in Lemma~\ref{lem:uniformFluxBound}. Thus there is a constant $C$ depending only on $\theta_\infty$ such that
\begin{align}\label{BX2}
 \frac{\pa w(0,t)}{\pa x} \ \le C \max\set*{ \frac{1}{\sqrt{T-t}} , 1} \bra*{\sup_{0<x<1} w(x,t+1)+\sup_{t\le s\le t+1} w(1,s)}
\end{align}
The inequality \eqref{BL2} follows
 from \eqref{BW2}, \eqref{BX2}
on  taking $\del_\infty=\del/2$.
\end{proof}
\begin{remark}
The exponential bound \eqref{BL2} is sufficient for our purposes. However one could obtain more accurate asymptotics as $T\ra\infty$ by using the theory of large deviations \cite{fw,var}. In \cite{ds} precise asymptotics for $\Prob(\tau_{y,0}>T)$ have been obtained for drifts $\tilde{b}(y)=-\beta/y^p$ with $\beta>0, 0<p<1$.
\end{remark}
The above Lemmas ensure that the local in time solution of Proposition~\ref{prop:ShortTimeExistence} can be prolonged to all times.
\begin{proof}[Proof of Theorem~\ref{thm:GlobalExistence}]
For any $T>0$, let $c(\cdot,t), \theta(t), \ 0\le t\le T,$ be the solution to \eqref{A1}, \eqref{B1}, \eqref{D1}, \eqref{E1} constructed in Proposition~\ref{prop:ShortTimeExistence}.
We have then from \eqref{D1}, \eqref{AI2}  that
\begin{equation}\label{BY2}
\rho-\theta(T) \ = \   \int_0^\infty w(x,0)c(x,0) \dx{x} + \int_0^T \dx{t} \ \frac{\pa w(0,t)}{\pa x}\exp[-V(0)+\theta(t)W(0)]  \ ,
\end{equation}
where $w$ is the solution to \eqref{A2}, \eqref{B2}, \eqref{A*2} with $w_0(\cdot)\equiv W(\cdot)$. Assume now that $\sup_{0\le t\le T}\theta(t)\le \theta_\infty$, where $\theta_\infty$ is as in the statements of Lemma~\ref{lem:adjoint:Wbound} and Lemma~\ref{lem:adjoint:fluxBound}. Then we conclude from Lemmas \ref{lem:uniformFluxBound}, \ref{lem:adjoint:Wbound}, \ref{lem:adjoint:fluxBound} and \eqref{BY2} that
\begin{multline} \label{BZ2}
\rho-\theta(T) \ \le \  C_\infty[\rho-\theta(0)]+ \\
\exp[-V(0)+\theta_\infty W(0)]\left\{C_\infty\int_0^{\max[0,T-1]}e^{-\del_\infty(T-t)} \dx{t}+C\int_{\max[0,T-1]}^T \frac{\dx{t}}{\sqrt{T-t}} \right\} \ ,
\end{multline}
for some constants $C_\infty,C$ independent of $T, c(\cdot,\cdot)$.  The inequality \eqref{BZ2} implies a lower bound on $\theta(T)$ of the form
\begin{equation}\label{CA2}
\theta(T) \ \ge  C_\infty\theta(0)+C'_\infty, \quad C_\infty,C'_\infty \text{ independent of } T,c(\cdot,\cdot) \ .
\end{equation}
Since \eqref{CA2} yields a lower bound on $\theta(\cdot)$ uniform for all time, we  conclude from the local existence result that the solution to \eqref{A1}, \eqref{B1}, \eqref{D1}, \eqref{E1} can be extended for all time.
\end{proof}

\bigskip

\section{Regularity Theory}\label{s:Reg}

\subsection{Strategy and results}
In this section we shall prove various regularity results for the solution to \eqref{A1}, \eqref{B1}, \eqref{D1}, \eqref{E1} assuming that $a(\cdot)\equiv 1$ and the initial data satisfies \eqref{AJ2}. Our main goal is to show that the function $\theta(\cdot)$, which occurs as part of the solution of  \eqref{A1}-\eqref{E1} is $\cC^1$. In order to do this we begin by assuming that 
$\theta(\cdot)$ is continuous, as was established in the proof of Proposition \ref{prop:ShortTimeExistence}. We write $\theta(t)$ in terms of the integral of $c(\cdot,t)$ as given by \eqref{D1}. It follows then from  boundary regularity properties for solutions to the parabolic PDE problem \eqref{A1}, \eqref{E1} with initial data satisfying \eqref{AJ2}  that $\theta(\cdot)$ is H\"{o}lder continuous up to order $1/2$. Now by bootstrapping the boundary regularity results, we obtain after two iterations that $\theta(\cdot)$ is $\cC^1$.

Our boundary regularity results essentially show for solutions to \eqref{A1}, \eqref{E1} that 
the function $t\ra \lim_{x\ra 0}\pa_xc(x,t)$ has the same regularity as a half derivative of the boundary data \eqref{E1}.  It is natural to expect this since the function $t\ra \lim_{x\ra 0}\pa_tc(x,t)$ has the same regularity as a full derivative of the boundary data.  The proof below of this result is quite delicate, requiring a careful analysis of the first two terms in the perturbation expansion \eqref{N2} of the Green's function.  Our result does not seem to already be in the literature although there are substantial boundary regularity results, even for parabolic systems in many dimensions \cite{bdm}. 

We shall initially just be concerned with the solution to the PDE \eqref{A1} with general drift $b(\cdot,\cdot)$ satisfying \eqref{C2}.  In~§\ref{s:ExistUnique} we defined the solution to the initial value problem for \eqref{A1} with boundary data \eqref{E1} and integrable initial  data by \eqref{AI2} (see Definition~\ref{def:weakSol}). This uniquely determines the function $c(\cdot,T)$ for $T>0$ as a positive measure on $[0,\infty)$, but yields no regularity properties. The first step is to establish some interior regularity for $c(\cdot,\cdot)$.
\begin{lemma}\label{lem:reg:C1}
Assume that for some $T_0>0$ the function $\theta:[0,T_0]\ra\R$ is continuous and that $b(\cdot,\cdot)$ satisfies \eqref{C2} with $T=T_0$. If the initial data $c(x,0), \ x>0,$ for \eqref{A1}, \eqref{E1} is integrable, then the function $c:[0,\infty)\times (0,T_0]\ra\R$ determined by \eqref{AI2} is continuous and satisfies \eqref{E1}.  In addition, the function $c(\cdot,T)$ is $\cC^1$ in $(0,\infty)$ for all $0<T\le T_0$.
\end{lemma}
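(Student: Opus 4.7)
The plan is to extract an explicit pointwise representation for $c(\cdot,T)$ from the weak identity \eqref{AI2} by plugging in the Dirichlet Green's function $G_D$ of the adjoint problem constructed perturbatively in Lemma \ref{lem:uniformFluxBound}, and then to read off continuity, boundary behaviour, and $\cC^1$ regularity from the known smoothness and Gaussian bounds on $G_D$.

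First, I would fix $T\in (0,T_0]$ and, for test functions $w_0\in\cC_0(\R^+)$, substitute $w(x,0)=\int_0^\infty G_D(x,y,0,T)\,w_0(y)\,dy$ and $\partial_x w(0,t)=\int_0^\infty \partial_x G_D(0,y,t,T)\,w_0(y)\,dy$ into \eqref{AI2}. By Fubini (justified on compact sets in $y$ via the Gaussian bounds \eqref{P2}--\eqref{R2}) and the density of $\cC_c^\infty$ in $\cC_0$, this forces, for a.e.\ and then every $y>0$, the identity
\begin{equation*}
c(y,T) \;=\; \int_0^\infty G_D(x,y,0,T)\,c(x,0)\,dx \;+\; \int_0^T \partial_x G_D(0,y,t,T)\,\exp\bra*{-V(0)+\theta(t)W(0)}\,dt.
\end{equation*}
The estimates of Lemma \ref{lem:uniformFluxBound}, which were stated for a bounded interval $[0,x_1]$, extend to the half-line by a standard cut-off in $y$: for fixed $y>0$ one works inside $[0,x_1]$ with $x_1=2y+1$, and the contribution of the contour $x=x_1$ can be absorbed as in the proof of that lemma.

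Next, I would establish the regularity properties. For $y>0$ and $T>0$, both $G_D$ and $\partial_x G_D(0,y,t,T)$ are $\cC^\infty$ in $y$ away from the diagonal (the series \eqref{N2} converges together with all $y$-derivatives under the Gaussian bounds), so one may differentiate the representation once in $y$ under the integral, giving $c(\cdot,T)\in\cC^1((0,\infty))$. Joint continuity on $[0,\infty)\times (0,T_0]$ follows from dominated convergence: the kernels are continuous in $(y,T)$ for $T>0$ and dominated by the explicit Gaussian envelopes of \eqref{P2} and \eqref{Q2}, which are uniformly integrable against the integrable datum $c(\cdot,0)$ and the bounded boundary datum $\exp(-V(0)+\theta(t)W(0))$ on compact $y$-sets bounded away from $0$.

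Finally, for the boundary value \eqref{E1}, one passes to the limit $y\to 0^+$. The bulk integral vanishes by the Dirichlet condition $G_D(x,0,0,T)=0$ together with the Gaussian bound. For the second integral, one compares $\partial_x G_D(0,y,t,T)$ with the leading method-of-images term $\partial_x K(0,y,t,T)$ from \eqref{M2}, which is exactly the classical half-line Poisson kernel $y(4\pi(T-t)^3)^{-1/2}\exp(-y^2/(4(T-t)))$. Standard Poisson-kernel theory then gives $\int_0^T \partial_x K(0,y,t,T)\,g(t)\,dt\to g(T)$ as $y\to 0^+$ for $g(t)=\exp(-V(0)+\theta(t)W(0))$, and the perturbative corrections $v_n$ in \eqref{N2} contribute $O(y)$ by \eqref{R2}, hence vanish in the limit. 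This yields $c(0^+,T)=\exp(-V(0)+\theta(T)W(0))$, i.e., \eqref{E1}.

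The main obstacle is this last Poisson-kernel step: one must control the boundary contribution uniformly in $y$ small and $t$ close to $T$ and verify that the perturbation series \eqref{N2} does not destroy the correct limit. Once this is set up, the interior $\cC^1$ statement and the joint continuity are straightforward consequences of the Gaussian kernel estimates already collected in §\ref{s:ExistUnique}.
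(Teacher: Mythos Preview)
Your overall strategy---extract a Green's function representation for $c(\cdot,T)$ from the weak identity and read off regularity from kernel bounds---is the same as the paper's. The gap is in how you pass to the half line. The bounds \eqref{P2}--\eqref{R2} come from the perturbation series \eqref{N2}, and their constants depend on $\|b\|_\infty$ on the interval $[0,x_1]$; since \eqref{C2} only gives $|b(x,t)|\le C(1+x)$, these constants blow up as $x_1\to\infty$, so the claim that the estimates ``extend to the half-line by a standard cut-off'' does not go through. Working inside $[0,2y+1]$ for fixed $y$ produces a representation with a boundary term at $x_1=2y+1$ involving $c(x_1,T')$ for $0<T'<T$---precisely the unknown you are trying to represent---so this is circular unless you can control that term independently.

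The paper handles this by working on $[0,L]$ throughout: it defines $c_L$ via the analogue \eqref{A4} of the weak identity with the bounded-interval Green's function $G_{D,L}$, obtains continuity and the boundary values \eqref{C4} directly from the legitimate bounds on $G_{D,L}$, and then proves $c_L\to c$ uniformly on compacta as $L\to\infty$. The convergence is the nontrivial step: the identity \eqref{D4} expresses $c_L(x,T)$ for $0<x<L'<L$ in terms of $G_{D,L'}$ plus a boundary flux at $L'$ involving $c_L(L',T')$; integrating this identity in $L'$ against a compactly supported weight $w_1(L')$ and re-invoking the weak identity \eqref{A4} with the terminal data \eqref{E4} converts that unknown boundary term into an expression in the initial data $c(\cdot,0)$ and the flux at $x=0$, which can then be bounded uniformly in $L$. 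Your Poisson-kernel argument for the boundary value is correct (it is essentially \eqref{Y4}--\eqref{Z4}, carried out later in the paper), but the interior representation needs this $L\to\infty$ machinery, or else genuine half-line Green's function estimates for linearly growing drift that you have not supplied.
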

We suppose now that the drift for \eqref{A1} with $a(\cdot)\equiv1$ is given by \eqref{A*2}, and that the initial data for \eqref{A1}, \eqref{E1} satisfies \eqref{AJ2}. Provided $\theta(\cdot)$ is continuous and $V,W$ satisfy the conditions of Lemma~\ref{lem:Adjoint:Estimate} we may define a function $\Theta(T), \ T\ge 0,$ in terms of the solution to \eqref{A1}, \eqref{E1} by
\begin{equation}\label{E*4}
\Theta(T)+\int_0^\infty  W(x)\, c(x,T) \dx{x} \ = \ \rho \ .
\end{equation}
Our goal will be to show that the function $\Theta(\cdot)$ is more regular by roughly a half derivative than the function $\theta(\cdot)$ which enters the boundary condition \eqref{E1}.
\begin{lemma}\label{lem:Hoelder}
Assume that for some $T_0>0$ the function $\theta:[0,T_0]\ra\R$ is continuous, $V,W$ satisfy the conditions of Lemma~\ref{lem:Adjoint:Estimate}, and  $b(x,t)=\theta(t)W'(x)-V'(x)$ satisfies \eqref{C2} with $T=T_0$. If the initial data $c(x,0), \ x>0,$ for \eqref{A1}, \eqref{E1} satisfies \eqref{AJ2}  then for any positive $\al,\beta$ with $\al<1/2, \ \beta<T_0,$ there is a constant $C$ depending on $\al,\beta$  such that
\begin{equation}\label{F4}
|\Theta(T)-\Theta(T')| \ \le \ C|T-T'|^{\al} \quad\text{for } \beta \le T,T'\le T_0 \ .
\end{equation}
\end{lemma}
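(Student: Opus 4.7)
Fix $\beta \leq T < T' \leq T_0$ and let $w(x,t)$ denote the classical solution of the adjoint backward problem~\eqref{A2},~\eqref{B2},~\eqref{A*2} on $[0, T'] \times [0, \infty)$ with terminal datum $w(\cdot, T') = W(\cdot)$ and zero Dirichlet datum at $x=0$. Applying the duality identity~\eqref{AI2} on the subinterval $[T, T']$ (using $c(\cdot, T)$, which is a finite Borel measure, in place of $c(\cdot,0)$) produces
\[
  \Theta(T) - \Theta(T') = \underbrace{\int_0^\infty \bra*{w(x, T) - W(x)} c(x, T) \dx{x}}_{=: \, I_1} + \underbrace{\int_T^{T'} \partial_x w(0, t)\, e^{-V(0) + \theta(t) W(0)} \dx{t}}_{=: \, I_2}.
\]
The flux integral $I_2$ is immediately controlled by Lemma~\ref{lem:uniformFluxBound} (combined with Lemma~\ref{lem:Adjoint:Estimate} to bound $w$ on $\set{1} \times [T, T']$): one gets $|\partial_x w(0, t)| \leq C/\sqrt{T' - t}$ and therefore $|I_2| \leq C\sqrt{T'-T}$, already of the required Hölder form.

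\textbf{Probabilistic representation of $I_1$.} For the main term I would switch to the stochastic representation
\[
  w(x, T) = \EX\pra[\big]{ W(X_{T'})\, \bfOne_{\tau > T'} \mid X_T = x },
\]
where $X_s$ solves the SDE~\eqref{AW2} and $\tau$ is its first passage time to $0$. Decomposing $W(x) = W(x) \bfOne_{\tau > T'} + W(x) \bfOne_{\tau \leq T'}$ under the expectation yields the exact identity
\[
  W(x) - w(x, T) = \EX\pra[\big]{ W(x) - W(X_{T'});\, \tau > T' \mid X_T = x } + W(x)\, \Prob\bra*{\tau \leq T' \mid X_T = x}.
\]
For the first summand, an Itô/Gronwall argument using $|b(x,t)| W'(x) + |W''(x)| \leq C(1 + W(x))$ (from parts (a) and (d) of Assumption~\ref{ass:VW}) gives the moment bound $\EX \abs{W(X_{T'}) - W(x)} \leq C \sqrt{T' - T}\, (1 + W(x))$. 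Integrated against $c(x, T)$ this produces an $O(\sqrt{T'-T})$ contribution, because $\int (1 + W(x))\, c(x, T) \dx{x}$ is uniformly bounded on $[\beta, T_0]$ by the conservation law~\eqref{D1} combined with the global lower bound on $\theta(\cdot)$ furnished by Theorem~\ref{thm:GlobalExistence}.

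\textbf{Boundary-layer piece and conclusion.} For the second summand, a barrier argument exploiting $|b(x,t)| \leq C(1+x)$ yields $\Prob(\tau \leq T' \mid X_T = x) \leq C\, \operatorname{erfc}\bra{x/(C\sqrt{T'-T})}$, which is super-polynomially small for $x \gg \sqrt{T'-T}$ but order $1$ at $x=0$. Splitting $\int W(x)\, \Prob(\tau \leq T' \mid \cdot)\, c(x, T) \dx{x}$ at $x_* = (T'-T)^{1/2 - \delta/2}$ for an arbitrary $\delta > 0$, the far-field piece $x > x_*$ is controlled by the $\operatorname{erfc}$-decay together with $\int W c(\cdot, T) \dx{x} \leq \rho - \Theta(T)$, while the near-boundary piece $0 < x \leq x_*$ is handled by the continuity of $c(\cdot, T)$ up to $x = 0$ (Lemma~\ref{lem:reg:C1}), producing a contribution of order $x_* = (T'-T)^{1/2-\delta/2}$. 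Collecting, $|I_1| \leq C_\delta (T'-T)^{1/2 - \delta}$; adding the bound on $I_2$ proves \eqref{F4} for any $\alpha < 1/2$.

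\textbf{Main obstacle.} The delicate part is exactly this boundary-layer contribution, which reflects the mismatch between the terminal datum $W(0) > 0$ and the zero Dirichlet condition of the adjoint problem: without the boundary regularity of $c$ from Lemma~\ref{lem:reg:C1} the optimisation above forfeits a fractional power and one cannot reach exponents arbitrarily close to $1/2$. An equivalent route, closer to the paper's emphasis, is to replace the SDE representation by the Dirichlet Green's function expansion~\eqref{N2} and to analyse the method-of-images term $K$ and the first perturbation $v_0$ directly; the quantitative bounds produced are the same.
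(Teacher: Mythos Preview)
Your proof is correct and follows essentially the same line as the paper's: both use the duality identity to split $\Theta(T)-\Theta(T')$ into a boundary-flux term (handled by Lemma~\ref{lem:uniformFluxBound}) and a bulk term $\int(W-w)\,c$, and both control the latter via the SDE representation of $w$, an exit-probability estimate for the diffusion near $x=0$, and the continuity of $c(\cdot,T)$ from Lemma~\ref{lem:reg:C1} for the resulting boundary layer of width $\sim (T'-T)^{\al}$. The only organizational difference is that the paper bounds $W-w$ from above and below separately via the comparison functions $e^{\pm|\la|(T'-t)}W(x)$ and the maximum principle (combined with the exit bound \eqref{J4} at the single barrier point $x=(T'-T)^{\al}$), whereas you split $W(x)-w(x,T)$ algebraically into a displacement piece $\EX[W(x)-W(X_{T'});\tau>T']$ and an absorption piece $W(x)\,\Prob(\tau\le T')$ and bound each directly; the underlying ingredients are identical. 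Your closing remark about the Green's function expansion \eqref{N2} is slightly misplaced: the paper reserves that machinery for the sharper Lemmas~\ref{lem:Schauder1}--\ref{lem:Schauder3}, not for the present lemma, whose proof is in fact just as probabilistic as yours.
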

This basic regularity estimate follows from a comparison principle.
To obtain more detailed properties, we take an alternative approach to the proof of Lemma~\ref{lem:Hoelder} by formally differentiating~\eqref{E*4} with respect to $T$.   Then, using \eqref{A1} and twice formally integrating by parts with respect to~$x$, we obtain the identity
\begin{multline} \label{M4}
\pderiv{\Theta(T)}{T} \ = \  W(0)\frac{\pa c(0,T)}{\pa x}-\left[W(0)b(0,T)+W'(0)\right]c(0,T) \\
-\int_0^\infty \left[W'(x)b(x,T)+W''(x)\right]c(x,T) \dx{x} \ .
\end{multline}
From  \eqref{D2} we see that the LHS of \eqref{M4} minus the first term on the RHS is bounded. We therefore expect from Lemma~\ref{lem:Hoelder} that the function $T\ra\pa c(0,T)/\pa x$ has roughly the same regularity as the derivative of a  H\"{o}lder continuous function of order $1/2$.  To see this we consider for any $L>0$ the perturbation expansion \eqref{N2} for the Dirichlet Green's function $G_D$ on the interval $0<x<L$. We define $c_{0,L}(x,T), \ c_{1,L}(x,T), \ 0<x<L, T>0$ by
 \begin{align}
 c_{0,L}(x,T) \ &= \ \int_0^T \dx{t} \ \frac{\pa K_L(0,x,t,T)}{\pa x'}\exp[-V(0)+\theta(t)W(0)] \ ,   \label{N4} \\
 \quad c_{1,L}(x,T) \ &= \ \int_0^T \dx{t}  \ \frac{\pa v_{0,L}(0,x,t,T)}{\pa x'}\exp[-V(0)+\theta(t)W(0)] \ , \nonumber
 \end{align}
 where $K_L$ is given by \eqref{M2} and $v_{0,L}$ by \eqref{N2}.

 The next Lemma, will show that the functions $c_{0,L}$ and $c_{1,L}$ after passing to the limit $L\to \infty$ yield the leading order contribution in terms of regularity of a solution.
\begin{lemma}\label{lem:Schauder1}
Assume  $\theta(\cdot), \ b(\cdot,\cdot), \ c(\cdot,0)$ satisfy the conditions of Lemma~\ref{lem:reg:C1} and let $c(x,T), \ x>0, \ 0<T\le T_0,$ be the solution of \eqref{A1}, \eqref{E1} determined by \eqref{AI2}. Then  the function $(x,T)\ra [c(x,T)-c_{0,\infty}(x,T)-c_{1,\infty}(x,T)]$ is $\cC^1$ in $x$ in the region $0\le x<\infty, \  0<T\le T_0$. Moreover, it holds $\lim_{x\ra0}c_{0,\infty}(x,T)=\exp[-V(0)+\theta(T)W(0)], \ 0<T\le T_0$ and $\lim_{x\ra0}c_{1,\infty}(x,T)=0, \ 0<T\le T_0$.
\end{lemma}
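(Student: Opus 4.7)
My plan is to write $c(x,T)$ as a Duhamel-type integral against the half-line Dirichlet Green's function $G_D$, substitute the perturbation expansion \eqref{N2} inside, and show that only the first two terms ($K$ and $v_0$) contribute the boundary-layer singularity at $x=0$. Starting from the weak identity \eqref{AI2} applied on the truncated interval $(0,L)$ and passing to the limit $L\to\infty$ using the Gaussian tails in \eqref{P2}--\eqref{R2}, one obtains the representation
\begin{equation*}
c(x,T) \;=\; \int_0^\infty G_D(x',x,0,T)\, c(x',0) \dx{x'} + \int_0^T \left.\frac{\partial G_D(a,x,t,T)}{\partial a}\right|_{a=0} e^{-V(0)+\theta(t)W(0)} \dx{t} .
\end{equation*}
Splitting $G_D = K + v_0 + \sum_{n\ge 1} v_n$ turns the boundary integral into $c_{0,\infty}(x,T)+c_{1,\infty}(x,T)+R(x,T)$, where $R$ gathers the $n\ge 1$ tail. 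The initial-data contribution is $\cC^1$ in $x$ on $[0,\infty)$ for each $T>0$, because $G_D(x',x,0,T)$ and its $x$-derivative inherit uniform-in-$x$ Gaussian bounds from \eqref{P2}--\eqref{R2} (with $T-t=T>0$), and \eqref{AJ2} lets us differentiate under the integral by dominated convergence.

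\paragraph{Boundary limits of the leading terms.} Using $\partial_a K(a,x,t,T)|_{a=0} = \frac{x}{T-t}G(x,T-t)$ one has
\begin{equation*}
c_{0,\infty}(x,T) \;=\; \int_0^T \frac{x}{T-t}\, G(x,T-t)\, e^{-V(0)+\theta(t)W(0)} \dx{t} \;=\; \int_{x^2/(4T)}^\infty \frac{e^{-s}}{\sqrt{\pi s}}\, e^{-V(0)+\theta(T-x^2/(4s))W(0)} \dx{s}
\end{equation*}
after the substitution $s=x^2/(4(T-t))$. By continuity of $\theta$ and dominated convergence, the integral converges as $x\to 0^+$ to $e^{-V(0)+\theta(T)W(0)}\cdot \pi^{-1/2}\int_0^\infty s^{-1/2}e^{-s}\dx{s}=e^{-V(0)+\theta(T)W(0)}$. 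For $c_{1,\infty}$, the analogous substitution applied after inserting the explicit formula for $\partial_a v_0(a,x,t,T)|_{a=0}$ (obtained by differentiating the convolution defining $v_0$ in \eqref{N2}) produces an extra factor $\sqrt{s-t}$ from the inner Gaussian integration against $b(x'',s)$ and the image kernel, so the analogue of the change of variables above carries an overall vanishing $x$-factor that sends the $x\to 0^+$ limit to zero.

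\paragraph{Regularity of the tail and main obstacle.} To conclude that $c(x,T) - c_{0,\infty}(x,T) - c_{1,\infty}(x,T)$ is $\cC^1$ in $x \in [0,\infty)$, it suffices to show $R(x,T)$ is $\cC^1$ up to $x=0$. The key point is that each iterated convolution in \eqref{N2} gains a factor $(T-t)^{1/2}$, quantified by \eqref{O2}: for $n\ge 1$, the derivative $\partial_x \partial_a v_n(a,x,t,T)|_{a=0}$ admits a bound of the form
\begin{equation*}
\left| \partial_x \partial_a v_n(a,x,t,T)\big|_{a=0}\right| \;\le\; \frac{C^{n+1}}{\Gamma((n+1)/2)}\, \norm{b}_\infty^{n+1}(T-t)^{(n-2)/2}
\end{equation*}
uniformly for $x$ in a neighborhood of $0$, obtained by combining the image-symmetry identity $\partial_a K(a,x'',t,s)|_{a=0} = \frac{x''}{s-t} G(x'',s-t)$ with the Schauder-type estimates \eqref{Q2}--\eqref{R2} for the innermost factor. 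These bounds are integrable in $t$ on $[0,T]$ for $n\ge 1$ and summable in $n$, so dominated convergence gives the continuous extension of $\partial_x R(x,T)$ to $x=0$. The main obstacle is precisely the derivation of the estimate displayed above: unlike the interior Schauder bounds of Lemma~\ref{lem:uniformFluxBound}, one needs the \emph{cancellation} between the ``direct'' and ``image'' summands in \eqref{M2} to prevent the $x$-derivative of the boundary integral from behaving like $(T-t)^{-1}$ (which would be non-integrable); this cancellation only begins to act at the $n=1$ level of \eqref{N2}, which explains why exactly $c_{0,\infty}$ and $c_{1,\infty}$ must be subtracted.
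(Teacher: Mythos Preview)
Your overall strategy---split $G_D$ via the perturbation expansion \eqref{N2}, identify $K$ and $v_0$ as the only terms producing a non-integrable $x$-derivative at the boundary, and compute the boundary limits of $c_{0,\infty},c_{1,\infty}$ by the substitution $s=x^2/(4(T-t))$---is the same as the paper's.  However, there is a genuine technical gap in how you pass to the half-line.

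The hypothesis on $b(\cdot,\cdot)$ is only \eqref{C2}, which allows linear growth in $x$; hence $\|b(\cdot,\cdot)\|_\infty=\infty$ on $[0,\infty)$ in general.  All the Green's function estimates you invoke, namely \eqref{O2}, \eqref{P2}--\eqref{R2}, are proved on a bounded interval $[0,x_1]$ with constants depending on $\sup_{[0,x_1]}|b|$, and your displayed bound on $\partial_x\partial_a v_n\big|_{a=0}$ contains $\|b\|_\infty^{n+1}$, which is vacuous on the half-line.  So the half-line representation you write down and the summable tail bound you assert are not justified as stated.  The sentence ``passing to the limit $L\to\infty$ using the Gaussian tails in \eqref{P2}--\eqref{R2}'' does not resolve this, because those tails carry exactly the $L$-dependent $\|b\|_\infty$ constants.

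The paper closes this gap by never leaving the bounded interval for the perturbation argument.  It uses the three-term representation \eqref{O4} on $[0,L]$ (which includes the extra boundary contribution at $x'=L$ that is absent from your formula), proves via \eqref{P4} that $c-c_{0,L}-c_{1,L}$ is $\cC^1$ in $x$ on $[0,L)$, and only then compares $c_{j,L}$ with $c_{j,\infty}$ for $j=0,1$ through the explicit exponential-decay estimates \eqref{Q4} and \eqref{X4}, which hold for $0<x<L/3$ and do not involve $\|b\|_\infty$ on the whole half-line.  Your argument would be repaired by inserting exactly this finite-$L$ step: work on $[0,L]$, keep the boundary term at $L$ (which is $\cC^1$ in $x$ by \eqref{R2}), and then show $c_{j,L}-c_{j,\infty}$ is $\cC^1$ separately.
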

Next we prove a regularity result for $c_{1,\infty}(x,T)$ as $x\ra 0$, which is consistent with Lemma~\ref{lem:Hoelder} and the formula \eqref{M4}.
\begin{lemma}\label{lem:Schauder2}
Assume that for some $T_0>0$ the function $\theta:[0,T_0]\ra\R$ is continuous and that $b(\cdot,\cdot)$ satisfies \eqref{C2} with $T=T_0$.
Then for any positive $\al,\beta$ with $\al<1, \ \beta<T_0,$ there is a constant $C$ depending on $\al,\beta$  such that
\begin{equation}\label{AA4}
\sup_{0<x<1}\left|\int_{T'}^T \frac{\pa c_{1,\infty}(x,t)}{\pa x} \dx{t} \right| \ \le \ C|T-T'|^{\al} \quad\text{for } \beta \le T,T'\le T_0 \ .
\end{equation}
If $s\ra b(0,s)$ is locally H\"{o}lder continuous in the interval $0<s\le T_0$, then the function $(x,T)\ra c_{1,\infty}(x,T)$  is $\cC^1$ in $x$ in the region $0\le x<\infty, \  0<T\le T_0$.
\end{lemma}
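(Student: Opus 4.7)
Combining the definitions~\eqref{N4} and~\eqref{N2} and using that $\mathcal{L}_{s,y}K_\infty(y,z,s,T)=b(y,s)\,\partial_y K_\infty(y,z,s,T)$ (the free part of the parabolic operator cancels since $K_\infty$ solves the heat equation), one obtains, after letting $L\to\infty$, an explicit double integral representation of $c_{1,\infty}(x,T)$ as a ``convolution'' of the boundary flux kernel $\partial K_\infty(0,\cdot,t,s)/\partial x'$ with $b(x'',s)$ and a cross derivative of the half-line Dirichlet heat kernel $K_\infty(y,z,s,T)=G(y-z,T-s)-G(y+z,T-s)$. The two structural facts I will exploit throughout are that $K_\infty$ vanishes whenever either spatial slot equals $0$, and that it satisfies the heat equation in each spatial--temporal slot pair.

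\emph{Hölder bound.} Differentiate the above formula in $x$ to produce a second $x$-derivative of $K_\infty$ in the integrand, and replace $\partial_x^2 K_\infty(x'',x,s,T)$ by $\partial_T K_\infty(x'',x,s,T)$ via the heat equation in $(x,T)$: this is the key step, converting one spatial derivative into a time derivative that can be absorbed by time integration. Applying Fubini to the integral $\int_{T'}^T\partial_x c_{1,\infty}(x,T_1)\,dT_1$ and performing the $T_1$-integration yields
\[
\int_{\max(s,T')}^T\partial_{T_1}K_\infty(x'',x,s,T_1)\,dT_1=K_\infty(x'',x,s,T)-K_\infty(x'',x,s,\max(s,T')).
\]
Split the remaining $s$-integral at $s=T'$: on $\{s\leq T'\}$ the kernel difference is smooth and bounded, by the mean value theorem, by $C|T-T'|$ times a Gaussian in $x''$; on $\{T'<s\leq T\}$ the one-sided limit $T_1\downarrow s$ of $K_\infty(x'',x,s,T_1)$ is $\delta(x''-x)$ (for $x,x''>0$), which collapses the $x''$-integral to a pointwise evaluation and gives an $O(T-T')$ contribution. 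The Gaussian bounds on $K_\infty$ and its derivatives used in the proof of Lemma~\ref{lem:uniformFluxBound}, together with integration by parts in $x''$ (legitimate because $K_\infty(0,\cdot,\cdot,\cdot)=0$) and the polynomial growth $|b(x'',s)|\leq C(1+x'')$ from~\eqref{C2}, then assemble into the desired bound $C|T-T'|^\alpha$ uniformly in $x\in(0,1)$ for every $\alpha<1$.

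\emph{$\cC^1$ regularity.} At interior $x>0$, smoothness is given by Lemma~\ref{lem:reg:C1} applied to $c_{1,\infty}$, so only continuity of $\partial_x c_{1,\infty}(x,T)$ as $x\downarrow 0$ requires the extra assumption. Decompose in the integrand
\[
b(x'',s)=b(0,s)+\bigl[b(x'',s)-b(0,s)\bigr].
\]
For the $b(0,s)$ piece, an integration by parts in $x''$ (again using $K_\infty(0,\cdot)=0$) reduces the expression to the $x$-derivative of the solution of a classical half-line Dirichlet heat problem with Hölder continuous boundary data $s\mapsto b(0,s)$; the standard Schauder theory of Lemma~\ref{lem:uniformFluxBound} then guarantees that this derivative extends continuously to $x=0$. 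For the remainder, the bound $|b(x'',s)-b(0,s)|\leq C\,x''$ coming from~\eqref{C2} provides one extra factor of $x''$ that tames the boundary singularity of $\partial_{x''}\partial_x K_\infty(x'',x,s,T)$ at $x''=0$, and dominated convergence yields a continuous limit of $\partial_x c_{1,\infty}(x,T)$ as $x\to 0$.

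\emph{Main obstacle.} The technical heart is the analysis of the region $\{T'<s\leq T\}$ in the Hölder bound, where $K_\infty(x'',x,s,T_1)$ sits close to its diagonal singularity: careful tracking of the $\delta$-function limit and of the remaining Gaussian factors is what prevents reaching the sharper exponent $\alpha=1$. The $\cC^1$ statement in turn relies essentially on the Hölder continuity of $s\mapsto b(0,s)$, without which the Schauder-type argument for the $b(0,s)$ piece no longer yields a continuous boundary flux at $x=0$.
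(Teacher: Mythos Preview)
Your plan for the H\"older estimate has a structural error in its ``key step''. The integrand of $c_{1,\infty}$ carries $\partial_{x''}K_\infty(x'',x,s,T)$, not $\partial_x K_\infty$; differentiating once in $x$ therefore produces the \emph{cross} derivative $\partial_x\partial_{x''}K_\infty$, not $\partial_x^2 K_\infty$. For the half-line Dirichlet kernel $K_\infty=G(x-x'')-G(x+x'')$ these are genuinely different: one checks $\partial_x\partial_{x''}K_\infty=-G''(x-x'')-G''(x+x'')$, which equals $-\partial_T\bigl[G(x-x'')+G(x+x'')\bigr]$, the time derivative of the \emph{Neumann} kernel, not of $K_\infty$. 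So the substitution you describe does not hold as written.

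Even after correcting the identity, the mean-value-theorem step on $\{s\le T'\}$ is too crude to close. Writing the kernel difference as $(T-T')\,\partial_\tau\tilde K(x'',x,s,\tau^*)$ with $\tau^*\in(T',T)$ gives a bound of order $|T-T'|\,(T'-s)^{-3/2}$ in $L^\infty_{x''}$ (or $|T-T'|\,(T'-s)^{-1}$ in $L^1_{x''}$). Pairing this with $\partial_{x'}K_\infty(0,x'',t,s)$, whose $L^1_{x''}$ norm is $(s-t)^{-1/2}$ and $L^\infty_{x''}$ norm is $(s-t)^{-1}$, every H\"older/Young combination produces an $s$-integral that diverges at $s=T'$ (or at $s=t$). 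The $|T-T'|$ you gain does not compensate for this loss; the region $s\uparrow T'$ is exactly where the kernel difference is $O(1)$, not $O(|T-T'|)$, because $K_\infty(\cdot,x,s,T')$ is collapsing to a delta while $K_\infty(\cdot,x,s,T)$ is not.

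The paper avoids this by a different route: it first reduces to the frozen-coefficient piece $\tilde c_{1,\infty}$ (replacing $b(x'',s)$ by $b(0,s)$; the remainder is shown $\cC^1$ via \eqref{AH4}--\eqref{AK4}), then computes the inner kernel $J_1(x,t,s,T)$ \emph{explicitly}, arriving at the closed formula \eqref{BA4}. This reveals a cancellation (equation \eqref{BB4}) between the two brace terms that is invisible to naive absolute-value bounds. The troublesome part is isolated as the function $f(x,T)$ in \eqref{BQ4}, and the estimate \eqref{BR4} is obtained by the change of variable $z=(T+t-2s)/(T-t)$ and a case analysis on the size of $(T_2-T_1)/(T_2-t)$ (equations \eqref{BT4}--\eqref{CH4}). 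This is where the work actually lies, and your plan does not reach it.

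Your $\cC^1$ argument is closer in spirit to the paper's --- the decomposition $b(x'',s)=b(0,s)+[b(x'',s)-b(0,s)]$ matches the split $I_\infty=\tilde I_\infty+(I_\infty-\tilde I_\infty)$ --- but the appeal to ``standard Schauder theory of Lemma~\ref{lem:uniformFluxBound}'' for the $b(0,s)$ piece is not enough: that lemma gives only a $1/\sqrt{T-t}$ bound on boundary fluxes, not continuity of $\partial_x$ up to $x=0$. The paper instead uses the explicit identity \eqref{AT4} together with the bound \eqref{AR4} on $\partial_x J_1$, which is where the H\"older continuity of $s\mapsto b(0,s)$ enters (see \eqref{AS4}).
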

The proof of regularity for $c_{0,\infty}(x,T)$ as $x\ra 0$, consistent with Lemma~\ref{lem:Hoelder} and the formula~\eqref{M4}, is much simpler than for $c_{1,\infty}(x,T)$.
\begin{lemma}\label{lem:Schauder3}
Assume that for some $T_0>0$ the function $\theta:[0,T_0]\ra\R$ is continuous, and locally H\"{o}lder continuous on the interval $0<T\le T_0$ with order $\ga, \ 0\le \ga<1/2$.
Then for any positive $\beta\le T_0$ there is a constant $C$ depending on $\beta,\ga$  such that
\begin{equation}\label{CI4}
\sup_{0<x<1}\left|\int_{T'}^T \frac{\pa c_{0,\infty}(x,t)}{\pa x} \dx{t}\right| \ \le \ C|T-T'|^{\al} \quad\text{for } \beta \le T,T'\le T_0 \ ,
\end{equation}
where $\al=\ga+1/2$.
If $T\ra \theta(T)$ is locally H\"{o}lder continuous in the interval $0<T\le T_0$ of order  $\ga>1/2$, then the function $(x,T)\ra c_{0,\infty}(x,T)$  is $\cC^1$ in $x$ in the region $0\le x<\infty, \  0<T\le T_0$.
\end{lemma}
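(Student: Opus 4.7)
The plan is to identify $c_{0,\infty}$ explicitly as a classical heat potential. In the limit $L\to\infty$ only the single reflection in the boundary $0$ survives in \eqref{M2}, and a direct computation using the evenness of $G$ in its first argument yields
\[
  \left.\pderiv{K_\infty(x',x,t,T)}{x'}\right|_{x'=0} \ = \ \frac{x}{\sqrt{4\pi(T-t)^3}}\exp\!\left(-\frac{x^2}{4(T-t)}\right) \ =: \ P(x,T-t) \ ,
\]
the half-line Poisson kernel. Writing $\phi(t)=\exp[-V(0)+\theta(t)W(0)]$, the integrand in \eqref{N4} therefore collapses to $P(x,T-t)\phi(t)$, so
\[
  c_{0,\infty}(x,T) \ = \ \int_0^T P(x,T-t)\,\phi(t)\,\dx{t} \ ,
\]
i.e.\ $c_{0,\infty}$ is the classical heat solution on $\R^+$ with zero initial data and boundary value $\phi$. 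Since $y\mapsto e^{-V(0)+yW(0)}$ is Lipschitz on bounded sets, $\phi$ inherits the local H\"older regularity of $\theta$.

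Next I would exploit the identity $\pa_xP(x,s)=-2\pa_x^2G(x,s)=-2\pa_sG(x,s)$, which follows from $P=-2\pa_xG$ together with the heat equation for $G$. Regarded as a function of $t$ this says $\pa_xP(x,T-t)=2\pa_t[G(x,T-t)]$, turning the $x$-derivative of $c_{0,\infty}$ into a total $t$-derivative that integrates exactly in the outer variable. Fubini's theorem (using $G(x,0)=0$ for $x>0$) then gives
\begin{multline*}
  A(x;T,T') \ := \ \int_{T'}^T\pderiv{c_{0,\infty}(x,t)}{x}\dx{t} \\
  = \ 2\!\int_0^{T'}\!\phi(s)\bigl[G(x,T'-s)-G(x,T-s)\bigr]\dx{s} \, - \, 2\!\int_{T'}^T\!\phi(s)G(x,T-s)\dx{s} \ .
\end{multline*}
Decomposing $\phi(s)=\phi(T')+[\phi(s)-\phi(T')]$ isolates the constant-in-$\phi$ part, which telescopes to $A_1=-2\phi(T')\int_{T'}^TG(x,u)\dx{u}$ and is bounded by $C|T-T'|\le C|T-T'|^{\gamma+1/2}$ via $G(x,u)\le(4\pi u)^{-1/2}$ and $T,T'\ge\beta$.

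For the H\"older remainder the key ingredient is the uniform-in-$x\ge 0$ bound $|\pa_uG(x,u)|\le Cu^{-3/2}$, which is immediate from the explicit formula after the substitution $y=x^2/(4u)$. Splitting $[0,T']$ at $s_\ast=\max\{\beta/2,\,2T'-T\}$ one argues: on $[s_\ast,T']$ use $|G(x,T'-s)-G(x,T-s)|\le G(x,T'-s)+G(x,T-s)$ together with the H\"older factor $(T'-s)^\gamma$ to obtain a Beta-integral bound of order $(T-T')^{\gamma+1/2}$; on $[\beta/2,s_\ast]$ use the mean-value bound $|G(x,T'-s)-G(x,T-s)|\le C(T-T')(T'-s)^{-3/2}$ combined with $\int_{T-T'}^{\infty}u^{\gamma-3/2}\dx{u}\le(1/2-\gamma)^{-1}(T-T')^{\gamma-1/2}$ (this is where $\gamma<1/2$ is essential) to give the same rate; the region $[0,\beta/2]$ contributes only $O(T-T')$ since $T'-s$ stays bounded below by $\beta/2$. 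The second integral $\int_{T'}^T$ is a direct Beta-integral estimate of the same order, and summing yields \eqref{CI4}.

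For the $\cC^1$-statement when $\gamma>1/2$, I would return to $\pa_xc_{0,\infty}(x,T)=2\int_0^T\pa_s[G(x,T-s)]\phi(s)\dx{s}$ and add-and-subtract $\phi(T)$. This isolates the boundary term $-2\phi(T)G(x,T)$, which is continuous on $[0,\infty)$, while the remainder $2\int_0^T\pa_s[G(x,T-s)][\phi(s)-\phi(T)]\dx{s}$ has integrand dominated by $C(T-s)^{\gamma-3/2}$, an integrable majorant precisely because $\gamma>1/2$. Dominated convergence extends $\pa_xc_{0,\infty}(\cdot,T)$ continuously to $x=0$; combined with the continuity of $c_{0,\infty}(\cdot,T)$ from Lemma~\ref{lem:Schauder1} this gives the claimed $\cC^1$-regularity on $[0,\infty)$. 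The main obstacle throughout is the delicate balance of the H\"older factor $(T'-s)^\gamma$ against the $u^{-3/2}$ singularity of $\pa_uG$: the threshold $\gamma=1/2$ sits exactly on the boundary of integrability, which is why both halves of the lemma require strict inequalities.
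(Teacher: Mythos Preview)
Your argument is correct and takes a genuinely different route from the paper. Both proofs start from the same explicit Poisson-kernel representation $c_{0,\infty}(x,T)=\int_0^T P(x,T-t)\,\phi(t)\,\dx t$ with $\phi(t)=\exp[-V(0)+\theta(t)W(0)]$, but the organisation of the estimate on $\int_{T'}^T\partial_x c_{0,\infty}(x,t)\,\dx t$ differs. The paper first subtracts $\phi(T)$ (the value at the \emph{running} outer time) inside the formula for $\partial_x c_{0,\infty}(x,T)$, producing $f(x,T)=\int_0^T g(x,t,T)\,\dx t$ with $|g|\le C(T-t)^{\gamma-3/2}$, and then handles the region $T_1<t<T$ by a symmetrisation around the midpoint $(T_1+T_2)/2$ (see \eqref{CT4}) to extract the factor $(T_2-T_1)^{\gamma+1/2}$ from the H\"older increment of $\phi$. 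You instead exploit the identity $\partial_x P=-2\partial_sG$ to carry out the outer time integration \emph{before} estimating, reducing the problem to bounds on $G(x,T'-s)-G(x,T-s)$ and a single remaining integral over $[T',T]$; the subtraction is then against the \emph{fixed} value $\phi(T')$, and the required cancellation comes from the mean-value bound $|\partial_u G(x,u)|\le Cu^{-3/2}$ rather than from symmetrisation. Your approach is more transparent and avoids the change of variables in \eqref{CT4}; the paper's symmetrisation, on the other hand, makes the role of the threshold $\gamma=1/2$ slightly more implicit (it appears only through \eqref{CU4}). For the $\cC^1$ statement when $\gamma>1/2$ the two arguments are essentially equivalent: the paper computes the Newton quotient and the $x$-derivative separately (equations \eqref{CK4}--\eqref{CN4}), while you bundle both into the same dominated-convergence step via the representation $\partial_x c_{0,\infty}(x,T)=2\int_0^T\partial_s[G(x,T-s)]\phi(s)\,\dx s$.
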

We use Lemmas \ref{lem:Schauder1}-\ref{lem:Schauder3} to obtain a sharper version of Lemma~\ref{lem:Hoelder}.
\begin{proposition}\label{prop:improveHoelder}
Assume the conditions of Lemma~\ref{lem:Hoelder} and in addition that  the function $\theta:[0,T_0]\ra\R$ is  locally H\"{o}lder continuous on the interval $0<T\le T_0$ with order $\ga, \ 0\le \ga<1/2$. Then the inequality \eqref{F4} holds with  $\al=\ga+1/2$.
If $\theta(\cdot)$ is locally H\"{o}lder continuous in the interval $0<T\le T_0$ of order  $\ga>1/2$, then the function $\Theta(T), \ 0<T< T_0$  is $\cC^1$ and the identity \eqref{M4} holds.
\end{proposition}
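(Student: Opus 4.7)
The plan is to deduce both conclusions from the formal identity~\eqref{M4} for $\pa_T\Theta$, combined with the decomposition $c = c_{0,\infty}+c_{1,\infty}+r$ provided by Lemma~\ref{lem:Schauder1} and the time-integrated boundary estimates of Lemmas~\ref{lem:Schauder2} and~\ref{lem:Schauder3}. Integrating~\eqref{M4} in time reduces the task to controlling $W(0)\int_{T'}^T \pa_x c(0,t)\,\dx{t}$, since the remaining contributions on the RHS of~\eqref{M4} are uniformly bounded in $t$: Assumption~\ref{ass:VW} forces $|W'(x)b(x,t)+W''(x)|\leq C\,W(x)$, the a priori bound $\int W(x) c(x,t)\,\dx{x}\leq \rho$ controls the $x$-integral, and the boundary prefactor $[W(0)b(0,t)+W'(0)]c(0,t)$ is continuous in $t$ whenever $\theta$ is. Those terms thus contribute $O(|T-T'|)$, which is dominated by $|T-T'|^{\ga+1/2}$ in the regime $\ga<1/2$.

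For the boundary-derivative term, I would write, for small $x>0$,
\begin{equation*}
  \int_{T'}^T \pa_x c(x,t)\,\dx{t} = \int_{T'}^T \pa_x c_{0,\infty}(x,t)\,\dx{t} + \int_{T'}^T \pa_x c_{1,\infty}(x,t)\,\dx{t} + \int_{T'}^T \pa_x r(x,t)\,\dx{t},
\end{equation*}
and estimate the three pieces separately. Lemma~\ref{lem:Schauder3} bounds the first by $C|T-T'|^{\ga+1/2}$ uniformly in $x\in(0,1)$; Lemma~\ref{lem:Schauder2} bounds the second by $C|T-T'|^{\alpha}$ for any $\alpha<1$; and the $\cC^1$ regularity of $r$ up to $x=0$ supplied by Lemma~\ref{lem:Schauder1} gives a uniform bound $\sup_{(x,t)\in[0,1]\times[\beta,T_0]}|\pa_x r(x,t)|\leq C$, so the third integral is $O(|T-T'|)$. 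Because all three bounds are uniform in $x\in(0,1)$, sending $x\ra 0^+$ produces a bound of order $|T-T'|^{\ga+1/2}$ on the boundary contribution and hence the sharper estimate~\eqref{F4} with $\alpha=\ga+1/2$.

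For the second assertion $\ga>1/2$, the Hölder hypothesis on $\theta$ triggers the upgraded conclusion of Lemma~\ref{lem:Schauder3}, so $c_{0,\infty}$ is $\cC^1$ in $x$ on $[0,\infty)$; since $b(0,t)=\theta(t)W'(0)-V'(0)$ inherits Hölder regularity from $\theta$, Lemma~\ref{lem:Schauder2} likewise upgrades $c_{1,\infty}$. Together with the $\cC^1_x$ regularity of $r$ from Lemma~\ref{lem:Schauder1} this makes $c(\cdot,t)$ genuinely $\cC^1$ up to $x=0$ with $\pa_x c(0,t)$ continuous in $t$; the integration by parts producing~\eqref{M4} then becomes rigorous and every term on its right-hand side is continuous in $t$, so $\Theta\in\cC^1$ and~\eqref{M4} holds as a pointwise identity.

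The main obstacle I anticipate is justifying~\eqref{M4} in the rough regime $\ga<1/2$, where $\pa_x c(0,t)$ need not exist pointwise. I would bypass this by truncating the $x$-integral at some $\eps>0$, performing the two integrations by parts in the classical region $x\geq\eps$ (where Lemma~\ref{lem:reg:C1} provides interior $\cC^1$ regularity), integrating in $t$, and then passing to the limit $\eps\ra 0^+$ using precisely the uniform-in-$x$ estimates furnished by Lemmas~\ref{lem:Schauder1}--\ref{lem:Schauder3}. The delicate quantitative control of $\pa_x c_{0,\infty}$ and $\pa_x c_{1,\infty}$ near the boundary is thus the essential analytic input; everything else amounts to organizational bookkeeping of lower-order contributions.
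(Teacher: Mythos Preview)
Your proposal is correct and follows essentially the same route as the paper: define the truncated quantity $\Theta_\ve(T)=\rho-\int_\ve^\infty W(x)c(x,T)\,\dx{x}$, differentiate it rigorously using the interior $\cC^1$ regularity of $c$ to obtain the $\ve$-version of~\eqref{M4}, and then send $\ve\ra0$ by invoking Lemmas~\ref{lem:Schauder1}--\ref{lem:Schauder3} on the decomposition $c=c_{0,\infty}+c_{1,\infty}+r$ exactly as you describe. The only additional bookkeeping in the paper is a second cutoff $\phi(x/L)$ at infinity, used to justify the integration by parts before passing $L\ra\infty$; your a priori bound on $\int W(x)c(x,t)\,\dx{x}$ (which is $\rho-\Theta(t)$ rather than $\rho$, a cosmetic point) serves the same purpose.
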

\begin{proof}
Let $\phi:[0,\infty)\ra\R^+$ be a $\cC^\infty$ function which has the property that $\phi(x)=1, \ 0\le x\le 1,$ and $\phi(x)=0, \ x\ge 2$.
For $0<\ve\le 1$ and $L\ge 1$ we define the function $\Theta_{\ve,L}(T), \ 0<T\le T_0$ by
\begin{equation}\label{CW4}
\Theta_{\ve,L}(T)+\int_\ve^\infty W(x)\phi(x/L)\,c(x,T) \dx{x} \ = \ \rho \ .
\end{equation}
Then $\Theta_{\ve,L}(T), \ 0<T<T_0,$ is $\cC^1$ and
\begin{multline} \label{CX4}
\pderiv{\Theta_{\ve,L}(T)}{T} \ = \  W(\ve)\frac{\pa c(\ve,T)}{\pa x}-\left[W(\ve)b(\ve,T)+W'(\ve)\right]c(\ve,T)  \\
-\int_\ve^\infty \left[W'(x)\phi(x/L)+L^{-1}W(x)\phi'(x/L)\right]b(x,T)c(x,T) \dx{x} \\
-\int_\ve^\infty \left[W''(x)\phi(x/L)+2L^{-1}W'(x)\phi'(x/L)+L^{-2} W(x)\phi''(x/L)\right]c(x,T) \dx{x} \ .
\end{multline}
To establish \eqref{CX4} we first assume that the drift $b(\cdot,\cdot)$ has sufficient regularity that $c(x,T),\ x>0, \ 0<T<T_0$ is a classical solution to \eqref{A1}, \eqref{D1}. Then \eqref{CX4} follows from integration by parts. The identity continues to hold for $b(\cdot,\cdot)$ just satisfying \eqref{C2} by an approximation argument.  Next we define $\Theta_\ve(T)$ by
\begin{equation}\label{CY4}
\Theta_\ve(T)+\int_\ve^\infty W(x)\,c(x,T) \dx{x} \ = \ \rho \ .
\end{equation}
so $\Theta_\ve(T)=\lim_{L\ra\infty}\Theta_{\ve,L}(T)$.  It follows from \eqref{C2} that $|L^{-1}W'(x)\phi'(x/L)|\le C, \ x\ge 0, \ L\ge 1,$ for some constant $C$. Hence on taking $L\ra\infty$ in \eqref{CX4} we conclude that $\Theta_\ve(T), \ 0<T<T_0$ is $\cC^1$ and satisfies the identity
\begin{multline} \label{CZ4}
\pderiv{\Theta_\ve(T)}{T} \ = \  W(\ve)\frac{\pa c(\ve,T)}{\pa x}-\left[W(\ve)b(0,T)+W'(\ve)\right]c(\ve,T) \\
-\int_\ve^\infty \left[W'(x)b(x,T)+W''(x)\right]c(x,T) \dx{x} \ .
\end{multline}
To conclude the proof we observe from \eqref{E*4}, \eqref{CY4} that $\Theta(T)=\lim_{\ve\ra 0}\Theta_\ve(T), \ 0<T<T_0$.  The result then follows from \eqref{CZ4} and Lemmas \ref{lem:Schauder1}-\ref{lem:Schauder3}.
\end{proof}
The smoothness result of Theorem~\ref{thm:ExistUniqueConvergence} follows now by iteratively applying the improvement of the Hölder exponent of Proposition~\ref{prop:improveHoelder}.
\begin{theorem}\label{thm:C1}
Assume the conditions of Proposition~\ref{prop:ShortTimeExistence} hold, and let $c(x,t), \ x\ge 0, \ 0<t<T_0$ be the corresponding local in time solution to \eqref{A1},  \eqref{B1}, \eqref{D1}.  Then $c(x,t)$ and $\pa c(x,t)/\pa x$ are continuous in the region $x\ge 0, \ 0<t<T_0,$ and the function $\theta(t), \ 0<t<T_0$ is $\cC^1$.  Furthermore the identity \eqref{M4} with $\Theta(\cdot)=\theta(\cdot)$ holds for $0<t<T_0$ .
\end{theorem}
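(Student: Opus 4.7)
The strategy is a two-step Hölder bootstrap using the regularity-improvement mechanism built up in Lemmas~\ref{lem:Hoelder}--\ref{lem:Schauder3} and Proposition~\ref{prop:improveHoelder}. Starting from the mere continuity of $\theta(\cdot)$ supplied by Proposition~\ref{prop:ShortTimeExistence}, I would run two applications of Proposition~\ref{prop:improveHoelder}: the first promotes continuity to Hölder regularity of order strictly greater than $1/2$, and the second then yields $\theta\in \cC^1$ together with the identity~\eqref{M4}. The boundary $\cC^1$ regularity of $c(\cdot,t)$ in $x$ will come as a byproduct of this bootstrap through the decomposition $c = c_{0,\infty}+c_{1,\infty}+(\text{smooth remainder})$ established in Lemma~\ref{lem:Schauder1}.

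More precisely, the iteration proceeds as follows. Since $\theta(\cdot)$ is continuous by Proposition~\ref{prop:ShortTimeExistence} and $b(x,t)=\theta(t)W'(x)-V'(x)$ satisfies \eqref{C2} on any $[0,T_0]\subset [0,T_{\mathup{loc}})$ by Assumption~\ref{ass:VW}, Lemma~\ref{lem:Hoelder} applies and yields that $\Theta(\cdot)$, and hence by \eqref{E*4}--\eqref{D1} the function $\theta(\cdot)$ itself, is locally Hölder continuous on $(0,T_0]$ of every order $\alpha<1/2$. Fix such an $\alpha$, say $\alpha=1/2-\eta$ for small $\eta>0$, and invoke Proposition~\ref{prop:improveHoelder} with $\gamma=\alpha<1/2$: this gives that $\theta$ is locally Hölder of order $\gamma+1/2=1-\eta>1/2$. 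A second application of Proposition~\ref{prop:improveHoelder}, now using its second half with Hölder order strictly greater than $1/2$, produces $\theta\in \cC^1(0,T_0)$ together with the explicit differential identity \eqref{M4} (with $\Theta=\theta$).

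For the statement that $c$ and $\partial_x c$ are continuous on $\{x\ge 0,\ 0<t<T_0\}$, I would combine Lemma~\ref{lem:reg:C1} (which already gives interior $\cC^1$-regularity in $x$ on $(0,\infty)$) with the boundary decomposition of Lemma~\ref{lem:Schauder1}: it suffices to check that $c_{0,\infty}$ and $c_{1,\infty}$ extend to $\cC^1$ functions of $x$ up to $x=0$. Having just shown that $\theta$ is Hölder, the map $s\mapsto b(0,s)=\theta(s)W'(0)-V'(0)$ is Hölder continuous, so Lemma~\ref{lem:Schauder2} (second statement) provides the $\cC^1$ regularity of $c_{1,\infty}$ in $x$ on $[0,\infty)$. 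Similarly, since the second iteration yields Hölder order strictly above $1/2$, Lemma~\ref{lem:Schauder3} (second statement) provides the $\cC^1$ regularity of $c_{0,\infty}$ in $x$ on $[0,\infty)$. Together with the smooth remainder from Lemma~\ref{lem:Schauder1}, this gives boundary $\cC^1$ regularity of $c(\cdot,t)$, and in particular the trace $\partial_x c(0,t)$ is continuous, which is what feeds the identity~\eqref{M4}.

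The only subtle point I anticipate is ordering the steps to avoid circularity: Proposition~\ref{prop:improveHoelder} needs $\theta$ to be Hölder a priori before improving its exponent, while the boundary $\cC^1$ statement for $c$ needs the Hölder exponent of $\theta$ to exceed $1/2$. The clean way to handle this is to perform both Hölder-improvement steps first (using only the weaker conclusions of Lemmas~\ref{lem:Hoelder} and~\ref{lem:Schauder1}), and only then deduce the boundary $\cC^1$-regularity of $c$ from Lemmas~\ref{lem:Schauder2}--\ref{lem:Schauder3}. One should also verify that the $L$-truncation argument inside the proof of Proposition~\ref{prop:improveHoelder} (via \eqref{CW4}--\eqref{CZ4}) is compatible with the $W$-moment bound \eqref{AJ2}, which it is by Assumption~\ref{ass:VW}~(a) because $|L^{-1}W'(x)\phi'(x/L)|$ is uniformly bounded.
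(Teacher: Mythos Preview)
Your proposal is correct and follows essentially the same argument as the paper: start from the continuity of $\theta$ given by Proposition~\ref{prop:ShortTimeExistence}, apply Lemma~\ref{lem:Hoelder} to obtain local H\"older continuity of every order $<1/2$, then apply Proposition~\ref{prop:improveHoelder} twice (first to reach H\"older order $>1/2$, then to obtain $\cC^1$ and the identity~\eqref{M4}), and finally read off the $\cC^1$-in-$x$ regularity of $c$ up to the boundary from Lemma~\ref{lem:reg:C1} together with Lemmas~\ref{lem:Schauder1}--\ref{lem:Schauder3}. Your remark on ordering the steps to avoid circularity matches exactly how the paper structures the bootstrap.
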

\begin{proof}
Proposition~\ref{prop:ShortTimeExistence} shows that the function $\theta(\cdot)$ is continuous in the interval $[0,T_0]$. Since $\Theta(\cdot)=\theta(\cdot)$ we can conclude from Lemma~\ref{lem:Hoelder} that $\theta(\cdot)$ is locally H\"{o}lder continuous of order $\ga$ for any $\ga<1/2$. Now Proposition~\ref{prop:improveHoelder} implies that $\theta(\cdot)$ is  locally H\"{o}lder continuous of order $\ga$ for any $\ga<1$.  Applying Proposition~\ref{prop:improveHoelder} again we conclude that $\theta(\cdot)$ is $\cC^1$. The regularity of the function $c(\cdot,\cdot)$ follows from  Lemma~\ref{lem:reg:C1} and Lemmas \ref{lem:Schauder1}-\ref{lem:Schauder3}.
\end{proof}

We close this section with two more regularity properties. The first one is a parabolic regularization property of the solution, which shows that starting from $c(\cdot,0)$ satisfying~\eqref{AJ2}, the solution will be bounded for any positive time and is in addition equicontinuous.
\begin{lemma}\label{lem:stability:sup}
Let $(c(\cdot,t),\theta(t)), \ 0<t\le T_0,$ be the solution of \eqref{A1}, \eqref{B1}, \eqref{D1}, \eqref{E1}  with initial data satisfying \eqref{AJ2} constructed in Proposition~\ref{prop:ShortTimeExistence}.  Then for any $t_0$ satisfying $0<t_0<T_0$, there exists $N>0$ depending only on $t_0,T_0$ and $\|\theta(\cdot)\|_\infty$ such that
\[
  \sup c(\cdot,t)\le N \qquad\text{ for }\qquad t_0\le t\le T_0 \ .
\]
In addition, for any $L>0$  the family of functions $c(\cdot,t), \ t_0\le t\le T_0,$ is equicontinuous on the interval $[0,L]$.
\end{lemma}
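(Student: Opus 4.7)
The strategy is the standard parabolic $L^1\to L^\infty$ smoothing estimate, adapted to account for the inhomogeneous Dirichlet boundary data. Since $\theta(\cdot)$ has already been produced by the solution and is continuous on $[0,T_0]$, the PDE \eqref{A1} with drift \eqref{A*2} is a linear parabolic equation whose coefficients are continuous and bounded on compact $x$-sets. I would decompose $c=c_1+c_2$ by linearity, where $c_1$ has initial data $c(\cdot,0)$ and homogeneous Dirichlet condition, while $c_2$ has zero initial data and Dirichlet data $g(t):=\exp\bra{-V(0)+\theta(t)W(0)}$. A preliminary ingredient is the uniform $L^1$ bound: from \eqref{D1} together with $W(x)\geq W(0)>0$ (Assumption~\ref{ass:VW}(b)),
\[ \|c(\cdot,t)\|_1 \leq \frac{\rho+\|\theta\|_\infty}{W(0)}, \qquad t\in[0,T_0]. \]

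For $c_2$ I would apply the parabolic maximum principle: rewriting \eqref{A1} in non-divergence form $\partial_t c_2=\partial_x^2 c_2-b\,\partial_x c_2-(\partial_x b)c_2$, the zeroth-order coefficient $-\partial_x b=V''(x)-\theta(t)W''(x)$ is globally bounded by $(1+\|\theta\|_\infty)C_0$ via Assumption~\ref{ass:VW}(a). After damping by $e^{-Mt}$ with $M$ slightly exceeding this bound, the maximum principle on $\R^+$ (applicable since $c_2$ grows at most polynomially in $x$, thanks to the moment bound) yields $c_2(x,t)\leq e^{MT_0}\|g\|_\infty\leq C(T_0,\|\theta\|_\infty)$ uniformly. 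For $c_1$ I would use the Dirichlet Green's-function representation $c_1(y,t)=\int_0^\infty G_D(x,y,0,t)\,c(x,0)\,\dx x$ and the Gaussian upper bound \eqref{P2} of Lemma~\ref{lem:uniformFluxBound}, valid on each interval $[0,x_1]$ with constants depending on $\|b\|_\infty$ there and hence on $\|\theta\|_\infty$. This delivers $c_1(y,t)\leq (C/\sqrt{t_0})\|c(\cdot,0)\|_1$ on $[0,x_1/2]$ for $t\in[t_0,T_0]$. To upgrade to a global sup bound I would combine this with a tail estimate: the moment bound $\int W c\leq \rho+\|\theta\|_\infty$ and $W(x)\to\infty$ (Remark~\ref{rem:ass}) force $c(\cdot,t)$ to decay at infinity, so the sup over $\R^+$ is attained in some bounded region where the local Gaussian bound applies.

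For equicontinuity on $[0,L]$ I would apply the gradient bound \eqref{Q2} on $\partial_{x'}G_D$ together with its interior sharpening \eqref{F2} to obtain a uniform bound on $|\partial_y c_1(y,t)|$ for $(y,t)\in[0,L]\times[t_0,T_0]$. For $c_2$ the $\cC^1_x$ regularity from Theorem~\ref{thm:C1}, combined with interior Schauder-type estimates for parabolic equations with bounded coefficients on the cylinder $[0,L+1]\times[t_0/2,T_0]$ (using the sup bound already obtained), yields a uniform bound on $\partial_x c_2$. Hence $c(\cdot,t)$ is uniformly Lipschitz on $[0,L]$ with a Lipschitz constant independent of $t\in[t_0,T_0]$, which implies the required equicontinuity.

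The main obstacle is the passage from local (in $x$) to global sup bounds. Because the drift $b=\theta W'-V'$ may grow with $x$, the Gaussian Green's-function estimates of §\ref{s:ExistUnique} are only stated on bounded $x$-intervals, and one cannot directly invoke a global Aronson-type bound. The necessary ingredient is a uniform tail estimate for $c(\cdot,t)$, which should follow from combining the weighted moment bound with the stabilizing effect of the drift when $\theta(\cdot)$ is bounded; this is precisely the mechanism exploited in Lemma~\ref{lem:adjoint:Wbound} of the global existence argument, and I would adapt that $W$-superharmonic supersolution construction (or its dual applied to the forward equation) to locate the maximum of $c(\cdot,t)$ in a bounded region.
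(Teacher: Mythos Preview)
Your proposal has a genuine gap at exactly the point you flag as ``the main obstacle,'' and the fix you suggest does not close it.

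The moment bound $\int W(x)c(x,t)\,\dx x \le \rho+\|\theta\|_\infty$ is an $L^1$-type control; it does \emph{not} force pointwise decay of $c(\cdot,t)$ at infinity, nor does it locate the supremum in a bounded region. A $\cC^1$ function can have arbitrarily tall narrow spikes marching off to infinity while keeping any prescribed $W$-moment finite. So the sentence ``the moment bound \ldots\ force[s] $c(\cdot,t)$ to decay at infinity, so the sup over $\R^+$ is attained in some bounded region'' is unjustified. Relatedly, your local bound on $c_1$ via \eqref{P2} is also not quite right: \eqref{P2} is for the Dirichlet Green's function on a \emph{bounded} interval $[0,x_1]$, so the representation of the half-line solution on $[0,x_1]$ acquires an additional boundary flux term at $x_1$ (cf.\ \eqref{D4}), which you would need to control---and that control again requires a sup bound you do not yet have.

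The paper resolves this with a different idea. It works through the dual representation \eqref{AI2} with terminal data $w_0=\delta(\cdot-x_0)$, so that $c(x_0,T)$ is expressed via the adjoint solution $w_{x_0}$. The key step is the change of variables $Y(t)=X(t)-x(t)$, where $x(\cdot)$ solves the characteristic ODE $\dot x=b(x,t)$ with $x(T)=x_0$. Because $b$ is globally Lipschitz in $x$ (Assumption~\ref{ass:VW}(a) gives $|\partial_x b|\le A$), the drift of $Y$ is \emph{uniformly bounded independently of $x_0$}, and the Dirichlet Green's function for the $Y$-equation on a fixed interval $[-2,2]$ admits Gaussian bounds with constants independent of $x_0$. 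This yields $\sup_x w_{x_0}(x,0)\le C_1(T)$ uniformly in $x_0\ge 3$, which via \eqref{AW3} gives the desired $\sup c(\cdot,T)\le N$ directly, with no appeal to tail decay of $c$. Your approach is missing this characteristic-centered localization, which is precisely what converts the linearly growing drift into a bounded one and makes the Green's-function machinery uniform in the spatial location.

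Your equicontinuity argument is closer to the paper's: once the sup bound is in hand, the paper uses the bounded-interval representation \eqref{BW3} (the analogue of \eqref{D4} on $[0,L+1]$) together with the already-established Green's-function estimates, which is essentially what you propose.
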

The second result is a tightness property for the solution on time intervals, where $\theta$ is negative.
\begin{lemma}\label{lem:tight}
  Assume that the functions $V(\cdot),W(\cdot)$ are $\cC^2$ on $[0,\infty)$, satisfy the conditions of Lemma~\ref{lem:Adjoint:Estimate} and in addition~\eqref{F1:2} of Assumption~\ref{ass:VW}. Let $(c(\cdot,t),\theta(t)), \ t>0,$ be the solution of \eqref{A1}, \eqref{B1}, \eqref{D1}, \eqref{E1}  with initial data satisfying \eqref{AJ2} constructed in Theorem~\ref{thm:GlobalExistence} and $\Theta_\infty \le \theta(t)\le \theta_\infty$ for all $t>0$ and some $\Theta_\infty \le \theta_\infty<0$. Then, for any $\del>0$ there exists $M(\del)>0$ such that
\begin{equation}\label{eq:tight}
\int_{M(\del)}^\infty W(x) c(x,t) \dx{x} \ < \ \del \quad\text{for all } t\ge 0 \ .
\end{equation}
\end{lemma}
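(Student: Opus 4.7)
The strategy is to apply the weak formulation~\eqref{AI2} with a cutoff test function and to control the adjoint solution using the strong negative drift provided by $\theta(t) \le \theta_\infty < 0$. Let $\phi \in \cC^\infty(\R^+, [0, 1])$ be a smooth nondecreasing cutoff with $\phi \equiv 0$ on $[0, 1/2]$ and $\phi \equiv 1$ on $[1, \infty)$, and set $\Phi_M(x) := W(x)\phi(x/M)$, so that $\Phi_M \ge W\,\mathbf{1}_{[M,\infty)}$. Then~\eqref{AI2} applied with $w_0 = \Phi_M$ gives
\begin{equation*}
\int_M^\infty W(x)\,c(x,T)\,\dx{x} \le \int_0^\infty w(x,0)\,c(x,0)\,\dx{x} + \int_0^T \pa_x w(0,t)\,e^{-V(0)+\theta(t)W(0)}\,\dx{t},
\end{equation*}
where $w$ solves the backward adjoint equation~\eqref{A2} on $[0,T]$ with terminal data $\Phi_M$ and $w(0,t) = 0$. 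The plan is to make both right-hand terms arbitrarily small as $M \to \infty$, uniformly in $T > 0$.

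The key uniform-in-$T$ estimate is $\EX_x\pra*{W(X_T)^2;\,\tau_0 > T} \le C_\infty\,W(x)^2$. Its proof uses that the It\^o drift of $W^2(X_t)$ equals $2(W'(X_t))^2\bra*{1 + (\theta_\infty + 2\eta)W(X_t)}$ by~\eqref{F1:1}--\eqref{F1:2} with $\eta < |\theta_\infty|/4$, hence nonpositive for $W(X_t) \ge 2/|\theta_\infty + 2\eta|$. Adapting Lemma~\ref{lem:adjoint:Wbound} with the corrected process $W^2(X_t) - \delta_2\psi_2(X_t)$, where $\psi_2$ is a convex decreasing bump as in~\eqref{AZ2} chosen so the total drift is nonpositive everywhere and $W^2 - \delta_2\psi_2 \ge W^2/2$, optional stopping yields the stated bound. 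Using the pointwise inequality $\Phi_M(y) \le W(y)^2/W(M/2)$ on $\supp \Phi_M$ and the probabilistic representation $w(x,0) = \EX_x\pra*{\Phi_M(X_T);\tau_0 > T}$, we obtain $w(x,0) \le C_\infty W(x)^2/W(M/2)$, hence $w(x,0) \le C_\infty(W(L)/W(M/2))\,W(x)$ for $x \le L$. Combined with the global bound $w(x,0) \le C_\infty W(x)$ from Lemma~\ref{lem:adjoint:Wbound} for $x > L$, splitting the integral at $x = L$ and using~\eqref{AJ2}:
\begin{equation*}
\int_0^\infty w(x,0)\,c(x,0)\,\dx{x} \le C_\infty\frac{W(L)}{W(M/2)}\int_0^L W\,c(x,0)\,\dx{x} + C_\infty\int_L^\infty W\,c(x,0)\,\dx{x},
\end{equation*}
which is $\le \delta/2$ by first choosing $L$ large (for the tail) and then $M$ large.

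For the boundary term the $M$-smallness is combined with the exit-time exponential decay $\Prob_x(\tau_0 > T) \le C_x e^{-\delta_\infty T}$ available under the strong negative drift (as in the proof of Lemma~\ref{lem:adjoint:fluxBound}). H\"older's inequality with exponents $4/3$ and $4$ applied to $w(x,t) = \EX_x\pra*{\Phi_M(X_T);\tau_0 > T}$, together with $\Phi_M(y)^{4/3} \le W(y)^2/W(M/2)^{2/3}$, yields
\begin{equation*}
w(x,t) \le C\,\frac{W(x)^{3/2}}{W(M/2)^{1/2}}\,e^{-\delta_\infty(T-t)/4} \quad \text{for } x \in [0,1].
\end{equation*}
Feeding this into Lemma~\ref{lem:uniformFluxBound} with $x_1 = 1$ (splitting $[0,T]$ into $[0,T-1]$ and $[T-1,T]$) gives $|\pa_x w(0,t)| \le C_M\,\max\set*{1/\sqrt{T-t},1}\,e^{-\delta_\infty(T-t)/4}$ with $C_M = C\,W(1)^{3/2}/W(M/2)^{1/2} \to 0$ as $M \to \infty$, and integration in $t$ yields a bound of order $C_M$ uniform in $T$, which is $\le \delta/2$ for $M$ large.

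The main technical obstacle is establishing the uniform-in-$T$ supermartingale estimate for $W^2(X_t)$: although $W^2$ does not literally satisfy the drift-dominates-diffusion hypothesis $\mu \le -\sigma^2$ used in the proof of Lemma~\ref{lem:adjoint:Wbound}, the weaker property that its It\^o drift is nonpositive above a threshold, corrected via the smooth bump $\delta_2\psi_2$, suffices for the simpler Lyapunov argument based on $\phi(y) = y$. Combined with the factor $1/W(M/2)$ extracted from the support of $\Phi_M$, this delivers the $M$-dependent smallness driving both estimates.
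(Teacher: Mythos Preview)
Your overall strategy is sound and genuinely different from the paper's: you extract $M$-smallness from a second-moment bound $\EX_x[W^2(X_T);\tau_0>T]\le C_\infty W^2(x)$, whereas the paper proves a dedicated tightness lemma (Lemma~\ref{lem:adjoint:tight0}) showing directly that the adjoint solution $w_M(x,0)\to 0$ on compacts as $M\to\infty$ via the exit-probability/barrier machinery of Lemma~\ref{lem:adjoint:Wbound}. Your route is more quantitative (it gives an explicit rate $1/W(M/2)$), and the H\"older splitting for the boundary flux is clean.

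There is, however, a genuine gap in your justification of the $W^2$ bound. You claim one can choose a convex decreasing bump $\psi_2$ as in~\eqref{AZ2} so that the It\^o drift of $W^2(X_t)-\delta_2\psi_2(X_t)$ is nonpositive \emph{everywhere}, while maintaining $W^2-\delta_2\psi_2\ge W^2/2$. This generally fails. The constraint $\delta_2\psi_2(0)\le W^2(0)/2$ forces $\delta_2\le W^2(0)/2$ (since $\psi_2(0)=1$), so the corrective drift $-\delta_2[\psi_2''+b\psi_2']$ is bounded in magnitude by a constant independent of the size of the positive-drift region. Concretely, take $W(x)=1+x$, $V'(0)=0$, $\theta(t)\equiv\theta_\infty=-\ve$ with small $\ve>0$: the drift of $W^2$ at $x=0$ is $2(1-\ve)>0$, and it stays positive on an interval $[0,x_1]$ with $x_1\sim 1/\ve$. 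A bump supported on a short interval near $0$ can kill the drift there but leaves it positive on the remainder of $[0,x_1]$; a bump covering all of $[0,x_1]$ cannot have $\psi_2''$ large enough (the constraint $\psi_2(0)=1$, $\psi_2\ge 0$ forces $\int_0^{x_1}\!\int_0^y\psi_2''\le 1$, hence $\psi_2''$ is small on average when $x_1$ is large). Thus $W^2-\delta_2\psi_2$ is \emph{not} a supermartingale, and ``$\phi(y)=y$'' optional stopping does not apply.

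The estimate you need is nevertheless true, by a different argument: the It\^o drift of $W^2(X_t)$ is bounded above by a constant $C_1$ uniformly in $x,t$ (for $x\ge x_\eta$ it is $\le 2(W')^2[1+(\theta_\infty+2\eta)W]\le 2C_0W(1-\alpha W)\le C_0/(2\alpha)$ by~\eqref{D2}, and on $[0,x_\eta]$ everything is bounded), while $\EX_x[\tau_0]\le C(1+x)$ uniformly in $T$ (build a supersolution $v(x)=Ax+B(1-e^{-\gamma x})$ to $v''+\bar b\,v'\le -1$ with $\bar b=\sup_t b(\cdot,t)$). Then $\EX_x[W^2(X_T);\tau_0>T]\le W^2(x)+C_1\EX_x[\tau_0\wedge T]\le W^2(x)+C_1C(1+x)\le C'W^2(x)$, since $1+x\lesssim W^2(x)$ by $W'\ge\sqrt{c_0}$ and $W(0)>0$. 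With this repair, the rest of your proof goes through.
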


\medskip

\subsection{Basic regularity Lemmas~\ref{lem:reg:C1} and~\ref{lem:Hoelder}}

\begin{proof}[Proof of Lemma~\ref{lem:reg:C1}]
For $L\ge 1, \ 0<T\le T_0,$ let $w_L(x,t), \ 0\le x\le L, \ t\le T,$ be the solution to~\eqref{A2} on the interval $0<x<L, t<T,$ with terminal and boundary conditions given by \eqref{I2} where $x_1=L$.  We define the function $c_L(x,T), \ 0<x<L, \ 0<T\le T_0,$ similarly to \eqref{AI2} by
\begin{equation}\label{A4}
\int_0^L \! w_0(x) c_L(x,T) \dx{x} \;= \;  \int_0^L \! w_L(x,0)c(x,0) \dx{x} \; +\; \int_0^T  \! \pa_x w_L(0,t) \; e^{-V(0)+\theta(t)W(0)} \dx{t} \ .
\end{equation}
Observe that if $w_0$ is a continuous function of compact support then the RHS of \eqref{A4} converges to the RHS of \eqref{AI2} as $L\ra\infty$. In Lemma~\ref{lem:uniformFluxBound} we constructed the Green's function $G_{D,L}$ for the solution to the terminal-boundary value problem for $w_L$. It follows then from \eqref{A4} that
\begin{equation}\label{B4}
c_L(x,T)  =  \int_0^L \!\!\! G_{D,L}(x',x,0,T)c(x',0) \dx{x'} + \int_0^T\!\!\! \pa_{x'} G_{D,L}(0,x,t,T) \; e^{-V(0)+\theta(t)W(0)} \dx{t} \ .
\end{equation}
From \eqref{B4} and the Green's function estimates in \cite[Lemma 3.4]{cg} we see that  the function $c_L$ with domain $(0,L)\times (0,T_0]$ extends to a continuous function $c_L:[0,L]\times (0,T_0]\ra\R$ which satisfies the boundary conditions
\begin{equation}\label{C4}
c_L(0,T) \ = \ \exp[-V(0)+\theta(T)W(0)] \ , \quad c_L(L,T) \ = \ 0, \quad 0<T\le T_0 \  .
\end{equation}

We wish to show that the function $c_L(\cdot,\cdot)$ given by \eqref{B4} converges as $L\ra\infty$ to a continuous function, whence it will follow that the function $c(\cdot,\cdot)$ defined by \eqref{AI2} is continuous. To do this we observe that for any $0<L'< L$  then
\begin{align}
c_L(x,T) &\ = \  \int_0^{L'} G_{D,L'}(x',x,0,T)c(x',0) \dx{x'}  \nonumber \\
&\qquad +  \int_0^T \pa_{x'} G_{D,L}(0,x,t,T) \; e^{-V(0)+\theta(t)W(0)} \dx{t} \label{D4} \\
&\qquad -  \int_0^T \pa_{x'} G_{D,L}(L',x,T',T)\; c_L(L',T')  \dx{T'}  \ ,  \ \  0<x<L', \ 0<T\le T_0 \ . \nonumber
\end{align}
The identity \eqref{D4} can easily be established  for a drift $b(\cdot,\cdot)$ with sufficient regularity that $c_L(x,T)$ is a classical solution to \eqref{A1}, \eqref{C4} in the interval  $0<x<L,0<T<T_0$. This requires greater regularity on $b(\cdot,\cdot)$ than \eqref{C2}.  From the Green's function estimates in \cite[Lemma 3.4]{cg} we can then infer by a limiting argument that \eqref{D4} continues to hold just under the assumption \eqref{C2}. To show that $c_L(x,T)$ converges as $L\ra\infty$, we consider a continuous function $w_1:(0,\infty)\ra\R$ of compact support with integral equal to $1$. We then multiply \eqref{D4} by $w_1(L')$ and integrate with respect to $L'$.  After integration, the contribution of the first two integrals on the RHS of \eqref{D4} to $c_L(x,T)$ continue to be independent of $L$.

To estimate the contribution of the third integral on the RHS of \eqref{D4} we use \eqref{A4} with $T'$ in place of $T$ and $w_0$ given by the formula
\begin{equation}\label{E4}
w_0(L') \ = \ \frac{\pa G_{D,L'}(L',x,T',T)}{\pa x'}w_1(L') \ .
\end{equation}
Hence the integral  with respect to $L'$ of $w_1(L')$ times the third integral on the RHS of \eqref{D4} can be written in terms of an integral in which $c_L(L',T')$ is replaced by $c(L',0)$, plus a boundary term corresponding to the second integral on the RHS of \eqref{A4}.  Assuming that $x>0$ in \eqref{E4} lies to the left of the support of $w_1$, we see that the function $w_0$ in \eqref{E4} is continuous and uniformly bounded for $0< T'<T$.  Hence we can control the limit as $L\ra\infty$ of the integral involving $c(L',0)$, and we can similarly control the boundary term.  We conclude that $\lim_{L\ra\infty} c_L(x,T)=c(x,T)$,  and the limit is uniform in any rectangle $\{0\le x\le x_0, \ T_1\le T\le T_0\}$  with $x_0,T_1>0$. It follows that the function $c:[0,\infty)\times (0,T_0]\ra\R$ determined by \eqref{AI2} is continuous and satisfies \eqref{E1}. Upon differentiating \eqref{D4} with respect to $x$, we  similarly see from the Green's function estimates in \cite[Lemma 3.4]{cg}  that $\pa c_L(x,T)/\pa x$ also converges uniformly as $L\ra\infty$  in rectangles $\{x_1\le x\le x_0, \ T_1\le T\le T_0\}$ with $x_1,T_1>0$. Hence $\pa c(x,T)/\pa x$ exists and is continuous for $x>0, \ 0<T\le T_0$.
\end{proof}

\bigskip

\begin{proof}[Proof of Lemma~\ref{lem:Hoelder}]
Since $\beta>0$ we may assume from Lemma~\ref{lem:reg:C1} that the initial data $c(\cdot,0)$ is continuous on $[0,\infty)$. Hence it will be sufficient for us prove \eqref{F4} assuming this and  $T'=0$.  From \eqref{AI2}, \eqref{E*4} we have that
\begin{equation}\label{G4}
\Theta(T)-\Theta(0) \ = \  \int_0^\infty \bra[\big]{ W(x)-w(x,0) } c(x,0) \dx{x} - \int_0^T \pa_x w(0,t)\;  e^{-V(0)+\theta(t)W(0)} \dx{t}   \ ,
\end{equation}
where $w(\cdot,t), \ t<T,$ is the solution to \eqref{A2}, \eqref{B2} with terminal data $w_0(\cdot)=W(\cdot)$. From Lemma~\ref{lem:uniformFluxBound} we see that the second integral on the RHS of \eqref{G4} is bounded by $C_1T^{1/2}$ for some constant $C_1$.  To bound the first integral on the RHS of \eqref{G4} we use the argument of Lemma~\ref{lem:Adjoint:Estimate}. In particular, since from \eqref{A2} it follows $\mathcal{L}_{x,t} w_\la(x,t)\le 0$ for $x\ge 0, \ t<T,$ if $\la>0$ is sufficiently large, we see that the first integral is bounded below as
\begin{equation}\label{H4}
 \int_0^\infty [W(x)-w(x,0)]c(x,0) \dx{x} \ \ge \ -C_2T\int_0^{\infty} W(x) c(x,0) \dx{x} \ ,
\end{equation}
for some constant $C_2$.

To obtain an upper bound we first show that for any positive $\al<1/2$ there is a positive constant~$C_3$ depending only on~$\al$ such that
\begin{equation}\label{I4}
w(T^{\al},t) \ \ge \ W(0)\left[1-C_3\exp\left\{- \frac{1}{4T^{1-2\al}}\right\} \ \right] \quad\text{for } 0\le t<T \ .
\end{equation}
To prove \eqref{I4} we note that, since $W(\cdot)$ is an increasing function, it is sufficient to show that the diffusion $X(\cdot)$ defined by \eqref{AW2} satisfies the inequality
\begin{equation}\label{J4}
\Prob\bra*{\inf_{t\le s\le T} X(s)<0\  \bigg| \  X(t)=T^{\al}} \ \le \ C_3\exp\pra*{ -\frac{1}{4T^{1-2\al}}}  \qquad\text{for } 0\le t<T \ .
\end{equation}
Let $\tau_{t,T}$ be the exit time from the interval $[0,2T^\al]$ for $X(s), s\ge t,$ started at $X(t)=T^\al$. Then the LHS of \eqref{J4} is bounded above by $\Prob(\tau_{t,T}\le T)$. Since the drift for $X(\cdot)$ in the interval $[0,2T^\al]$ is bounded by a constant, we can compare $\Prob(\tau_{t,T}\le T)$ with the exit probability of a pure diffusion from the interval $[0,2T^\al]$. In this way, we conclude that $\Prob(\tau_{t,T}\le T)$ is bounded by the RHS of \eqref{J4} for a suitable constant $C_3$.

Next we use the fact that  $\mathcal{L}_{x,t} w_\la(x,t)\ge 0$ for $x\ge 0, \ 0\le t<T,$ if $\la<0$ is sufficiently small. It follows from \eqref{I4} and the maximum principle that for such $\la$,
\begin{multline} \label{K4}
w(x,t) \ \ge \ c(T)w_\la(x,t) \quad\text{for } x\ge T^\al, \ 0\le t<T \  , \\
 \quad\text{where } c(T)=\frac{W(0)}{W(T^\al)}\left[1-C_3\exp\pra*{- \frac{1}{4T^{1-2\al}}} \ \right]  \ .
\end{multline}
From \eqref{K4} we see that
\begin{multline} \label{L4}
 \int_0^\infty [W(x)-w(x,0)]c(x,0) \dx{x} \\
 \le \ \int_0^{T^\al} W(x)c(x,0) \dx{x} \  + \ \left[1-c(T)e^{\la T}\right]\int_0^\infty W(x)c(x,0) \dx{x} \ .
\end{multline}
Since $c(\cdot,0)$ is continuous the RHS of \eqref{L4} is bounded above by a constant times $T^\al$. We conclude that \eqref{F4} holds for $T'=0$.
\end{proof}

\medskip

\subsection{Schauder Lemmas~\ref{lem:Schauder1}-\ref{lem:Schauder3}}

\begin{proof}[Proof of Lemma~\ref{lem:Schauder1}]
As in Lemma~\ref{lem:Hoelder} we may assume that the initial data $c(\cdot,0)$ is continuous on $[0,\infty)$. Choosing $L\ge 1$, we have similarly to \eqref{D4}  the representation
\begin{align} 
c(x,T) &\ = \  \int_0^{L}  G_{D,L}(x',x,0,T)\; c(x',0) \dx{x'} \nonumber \\
&\qquad + \int_0^T  \pa_{x'} G_{D,L}(0,x,t,T)\; e^{-V(0)+\theta(t)W(0)} \dx{t} \label{O4} \\
&\qquad - \int_0^T  \pa_{x'} G_{D,L}(L,x,T',T)\; c(L,T') \dx{T'} \  ,  \quad 0<x<L, \ 0<T\le T_0 \ .
\end{align}
From \eqref{R2} we see that the derivatives with respect to $x$ of the first and third integrals on the RHS of \eqref{O4} exist and are continuous in $(x,T)$ for $0\le  x<L, \ 0<T\le T_0$. Hence we are left to estimate the derivative with respect to $x$ of the second integral on the RHS of \eqref{O4}. To show this we first observe that, in addition to \eqref{R2}, the Green's functions estimates in \cite[Lemma 3.4]{cg} also imply that
\begin{multline} \label{P4}
\left|\frac{\pa ^2G_{D,L}(x',x,t,T)}{\pa x\pa x'}-\frac{\pa ^2K_L(x',x,t,T)}{\pa x\pa x'}-\frac{\pa ^2v_{0,L}(x',x,t,T)}{\pa x\pa x'}\right| \\
 \le \ C\, G(x-x',2(T-t)) \ , \quad \text{ where $C$ depends only on $L,T$.}
\end{multline}
It follows from \eqref{N4} and \eqref{P4} that the function $(x,T)\ra [c(x,T)-c_{0,L}(x,T)-c_{1,L}(x,T)]$ is~$\cC^1$ in~$x$ in the region $0\le x<L, \  0<T\le T_0$.

Finally we need to estimate the differences between the second derivatives of $K_L,\ v_{0,L}$ and $K_\infty, \ v_{0,\infty}$ respectively.  It is evident from  the representation \eqref{M2} for $K_L$ that for some constant $C$ depending only on $L,T$, it holds for $0<x,x'<L/3$, $0<t<T$
\begin{equation} \label{Q4}
\left|\frac{\pa ^2K_L(x',x,t,T)}{\pa x\pa x'}-\frac{\pa ^2K_\infty(x',x,t,T)}{\pa x\pa x'}\right| \le \  C\exp\pra*{-\frac{L^2}{4(T-t)}} \  .
\end{equation}
It follows from \eqref{N4}, \eqref{Q4} that the function $(x,T)\ra [c_{0,L}(x,T)-c_{0,\infty}(x,T)]$ is $\cC^1$ in $x$ in the region $0\le x<L/3, \  0<T\le T_0$.

To estimate the difference between the second derivatives of $v_{0,L}$ and $v_{0,\infty}$ we first note from~\eqref{N2} that
\begin{equation}\label{R4}
 v_{0,L}(x',x,t,T)   \ = \ \int_t^T I_L(x',x,t,s,T)  \dx{s} \ , \qquad  0<x',x<L, \ \ 0<t<T\le T_0 \ ,
\end{equation}
where  $I_L(x',x,t,s,T)$ is defined by
\begin{equation}\label{S4}
I_L(x',x,t,s,T) \ = \ \int_0^L K_L(x',x'',t,s) \ b(x'',s) \ \frac{\pa K_L(x'',x,s,T)}{\pa x''} \dx{x''} \ .
\end{equation}
Since $K_L(x'',x,s,T)=0$ for $x''=0,L$, we have on integration by parts in \eqref{S4} that $I_L$ is also  given by the expression
\begin{multline} \label{T4}
I_L(x',x,t,s,T) \ = \ - \int_0^L K_L(x',x'',t,s) \ \frac{\pa b(x'',s)}{\pa x''} K_L(x'',x,s,T) \dx{x''} \\
- \int_0^L  \frac{\pa K_L(x',x'',t,s)}{\pa x''} \ b(x'',s) \  K_L(x'',x,s,T) \dx{x''} \ .
\end{multline}
We may estimate the second mixed derivative of $v_{0,L}(x',x,t,T)$ with respect to $x,x'$ by using the representation \eqref{S4} for $I_L$ in the integration \eqref{R4} over $t<s<(T+t)/2$,  and the representation \eqref{T4} over $(T+t)/2<s<T$.  We conclude from \eqref{C2} that there is a constant $C$ depending only on $L,T$ such that
\begin{equation}\label{U4}
\left|\frac{\pa ^2v_{0,L}(x',x,t,T)}{\pa x\pa x'}\right| \ \le \  \frac{C}{\sqrt{T-t}} \  G(x-x',2(T-t)) \ , \qquad 0<x,x'<L, \ 0<t<T \  .
\end{equation}

We use the same method as in the previous paragraph to  estimate the difference between the second derivatives of $v_{0,L}$ and $v_{0,\infty}$. Similarly to how we obtained \eqref{Q4} we see from \eqref{S4} that there is a constant $C$ depending only on $L,T$ such that
\begin{multline} \label{V4}
\left|\frac{\pa ^2I_L(x',x,t,s,T)}{\pa x\pa x'}-\frac{\pa ^2I_\infty(x',x,t,s,T)}{\pa x\pa x'}\right| \ \le \  \\
C\left[\frac{1}{(s-t)^{1/2}}+\frac{1}{(T-s)}\right]\exp\pra*{-\frac{L^2}{16(T-t)}}\ , \ \ 0<x,x'<L/3 \ , \ 0<t<s<T \ .
\end{multline}
From \eqref{T4} we also have that
\begin{multline} \label{W4}
\left|\frac{\pa ^2I_L(x',x,t,s,T)}{\pa x\pa x'}-\frac{\pa ^2I_\infty(x',x,t,s,T)}{\pa x\pa x'}\right| \ \le \  \\
C\left[\frac{1}{(s-t)}+\frac{1}{(T-s)^{1/2}}\right]\exp\pra*{-\frac{L^2}{16(T-t)}}\ , \ \ 0<x,x'<L/3 \ , \ 0<t<s<T \ .
\end{multline}
It follows from \eqref{V4}, \eqref{W4} and \eqref{R4} that for $0<x,x'<L/3$, $0<t<T\le T_0$
\begin{equation} \label{X4}
\left|\frac{\pa ^2v_{0,L}(x',x,t,T)}{\pa x\pa x'}-\frac{\pa ^2v_{0,\infty}(x',x,t,T)}{\pa x\pa x'}\right| \ \le \
C\exp\pra*{-\frac{L^2}{16(T-t)}}\ ,
\end{equation}
for some constant $C$ depending only on $L,T_0$. We conclude from \eqref{X4} that the function $(x,T)\ra [c_{1,L}(x,T)-c_{1,\infty}(x,T)]$ is $\cC^1$ in $x$ in the region $0\le x<L/3, \  0<T\le T_0$.

Taking $L\ra\infty$ in \eqref{M2} we have that $K_\infty(x',x,t,T)=G(x-x',T-t)-G(x+x',T-t)$, whence
\begin{equation}\label{Y4}
\frac{\pa K_\infty(x',x,t,T)}{\pa x'} \ \Bigg|_{x'=0} \ = \  \frac{x}{\sqrt{4\pi}}\frac{1}{(T-t)^{3/2}}\exp\pra*{-\frac{x^2}{4(T-t)}} \ , \quad x>0, \ t<T\  .
\end{equation}
From \eqref{Y4} it is easy to compute the integral
\begin{equation}\label{Z4}
\int_{-\infty}^T \frac{\pa K_\infty(x',x,t,T)}{\pa x'} \dx{t} \ \Bigg|_{x'=0} \ = \ 1.
\end{equation}
Since the integration in \eqref{Z4} is concentrated on the scale $T-t\simeq x^2$, it follows from \eqref{N4} that $\lim_{x\ra0}c_{0,\infty}(x,T)=\exp[-V(0)+\theta(T)W(0)], \ 0<T\le T_0$.  We can also easily see from \eqref{N4}, \eqref{R4}, \eqref{S4} that $\lim_{x\ra0}c_{1,\infty}(x,T)=0, \ 0<T\le T_0$.
\end{proof}

\bigskip

\begin{proof}[Proof of Lemma~\ref{lem:Schauder2}]
We define $\tilde{I}_\infty$ similarly to \eqref{S4} with $L=\infty$  by
\begin{equation}\label{AB4}
\tilde{I}_\infty(x',x,t,s,T) \ = \ \int_0^\infty  K_\infty(x',x'',t,s) \ b(x',s) \ \frac{\pa K_\infty(x'',x,s,T)}{\pa x''}  \dx{x''} \ .
\end{equation}
Corresponding to \eqref{R4}, \eqref{N4} we aso define
\begin{equation}\label{AC4}
 \tilde{v}_{0,\infty}(x',x,t,T)   \ = \ \int_t^T \tilde{I}_\infty(x',x,t,s,T) \dx{s} \ , \qquad 0<x',x<\infty, \ \ 0<t<T\le T_0 \ ,
\end{equation}
\begin{equation}\label{AD4}
\tilde{c}_{1,\infty}(x,T) \ = \ \int_0^T \ \frac{\pa \tilde{v}_{0,\infty}(0,x,t,T)}{\pa x'}\exp[-V(0)+\theta(t)W(0)] \dx{t}\ .
\end{equation}

We first show that the function $(x,T)\ra [c_{1,\infty}(x,T)-\tilde{c}_{1,\infty}(x,T)]$ is $\cC^1$ in $x$ in the region $0\le x<\infty, \  0<T\le T_0$.
To see this we write
\begin{equation}\label{AE4}
I_\infty(x',x,t,s,T) -\tilde{I}_\infty(x',x,t,s,T) \ = \ \hat{I}_{1,\infty}(x',x,t,s,T)+\hat{I}_{2,\infty}(x',x,t,s,T) \  ,
\end{equation}
where
\begin{equation}\label{AF4}
\hat{I}_{2,\infty}(x',x,t,s,T)  \ = \ \int_2^\infty  K_\infty(x',x'',t,s) \ [b(x'',s)-b(x',s)] \ \frac{\pa K_\infty(x'',x,s,T)}{\pa x''} \dx{x''} \ .
\end{equation}
Using integration by parts as in \eqref{T4} we have that
\begin{align}
\hat{I}_{1,\infty}(x',x,t,s,T) \ &= \ - \int_0^2 K_\infty(x',x'',t,s) \ \frac{\pa b(x'',s)}{\pa x''} K_\infty(x'',x,s,T)\dx{x''} \label{AG4} \\
&\quad - \int_0^2 \biggl( \frac{\pa K_\infty(x',x'',t,s)}{\pa x''} \ [b(x'',s)-b(x',s)] \  K_\infty(x'',x,s,T) \notag \\
&\qquad \quad  +K_\infty(x',2,t,s) \ [b(2,s)-b(x',s)] \  K_\infty(2,x,s,T) \biggr) \dx{x''} \ . \notag
\end{align}
From \eqref{C2}, \eqref{AF4} we easily  see that there is a constant $C$ depending only on $T$ such that
\begin{equation}\label{AH4}
\left|\frac{\pa^2\hat{I}_{2,\infty}(x',x,t,s,T)}{\pa x\pa x'}\right|  \ \le \   C\exp\pra*{-\frac{1}{8(T-t)}} \ , \quad 0<x,x'<1, \ 0<t<s<T \ .
\end{equation}
From \eqref{C2}, \eqref{AG4} we see there is a constant $C$ depending only on $T$ such that
\begin{align}
\MoveEqLeft{\left|\frac{\pa^2\hat{I}_{1,\infty}(x',x,t,s,T)}{\pa x\pa x'}\right|}  \\
&\le \frac{C}{(s-t)^{1/2}(T-s)^{1/2}}\int_{-\infty}^\infty G(x'-x'',2(s-t))G(x''-x,2(T-s)) \dx{x''} \notag \\
  &\qquad  +C\exp\pra*{-\frac{1}{8(T-t)}} \ , \qquad 0<x,x'<1, \ 0<t<s<T \ . \label{AI4}
\end{align}
Evidently \eqref{AI4} implies that
\begin{equation}\label{AJ4}
\left|\frac{\pa^2\hat{I}_{1,\infty}(x',x,t,s,T)}{\pa x\pa x'}\right|  \ \le  \frac{C}{(s-t)^{1/2}(T-s)^{1/2}}G(x'-x,2(T-t))
\end{equation}
for some constant $C$ depending only on $T$. Using \eqref{AH4}, \eqref{AJ4} in the integrations \eqref{R4}, \eqref{AC4} we conclude that
\begin{equation}\label{AK4}
\left|\frac{\pa^2v_{0,\infty}(x',x,t,T)}{\pa x\pa x'}-\frac{\pa^2\tilde{v}_{0,\infty}(x',x,t,T)}{\pa x\pa x'}\right|  \ \le  CG(x'-x,2(T-t))
\end{equation}
for some constant $C$ depending only on $T$. It follows from \eqref{N4}, \eqref{AD4}, \eqref{AK4} that the function $(x,T)\ra [c_{1,\infty}(x,T)-\tilde{c}_{1,\infty}(x,T)]$ is $\cC^1$ in $x$ in the region $0\le x<\infty, \  0<T\le T_0$.

We consider next the regularity of the function  $\tilde{c}_{1,\infty}(x,T)$ defined by \eqref{AD4}. To understand the degree of regularity one might expect we first look at the case when the function $b(\cdot,\cdot)$ is constant say $b(\cdot,\cdot)\equiv 1$. In that case we observe that $w(z,t)=\tilde{v}_{0,\infty}(z,x,t,T)$ is the solution to the terminal-boundary value problem
\begin{align} \label{AL4}
\frac{\pa w(z,t)}{\pa t}+\frac{\pa^2 w(z,t)}{\pa z^2}+ \frac{\pa K_\infty(z,x,t,T)}{\pa z} \ &= \ 0 \ , \quad z>0, \ t<T \ , \\
w(0,t) \ = \ 0,  \ \ t<T \ , \quad w(z,T) \ &= \ 0 \ , \ \ \ z>0 \ . \notag
\end{align}
Evidently $w(z,t)=(x-z) K_\infty(z,x,t,T)/2$ is the solution to \eqref{AL4}.  It follows that
\begin{equation}\label{AM4}
\frac{\pa \tilde{v}_{0,\infty}(0,x,t,T)}{\pa x'} \ = \ \frac{x}{2}\frac{\pa K_\infty(0,x,t,T)}{\pa x'} \ .
\end{equation}
If we use \eqref{AM4} in \eqref{AD4} we have from \eqref{Y4} that
\begin{equation}\label{AN4}
\tilde{c}_{1,\infty}(x,T) \ = \ \frac{x^2}{4\sqrt{\pi}}\int_0^T   \ \frac{1}{(T-t)^{3/2}}\exp\pra*{-\frac{x^2}{4(T-t)}} \exp[-V(0)+\theta(t)W(0)] \dx{t} \ .
\end{equation}
Letting $x\ra 0$ in \eqref{AN4} we have using identities similar to \eqref{Z4} that
\begin{equation}\label{AO4}
\lim_{x\ra 0} \tilde{c}_{1,\infty}(x,T)  \ = \ 0 \ , \qquad \lim_{x\ra 0} \frac{\pa \tilde{c}_{1,\infty}(x,T)}{\pa x} \ = \  \frac{1}{2}\exp[-V(0)+\theta(T)W(0)] \ .
\end{equation}
It follows from \eqref{AO4} that in the case when $b(0,s)$ is independent of $s$, the function $(x,T)\ra \tilde{c}_{1,\infty}(x,T)$ of \eqref{AD4}  is $\cC^1$ in $x$ in the region $0\le x<\infty, \  0<T\le T_0$.

The argument in the previous paragraph can be extended to include functions $b(\cdot,\cdot)$ which have the property that $s\ra b(0,s)$ is H\"{o}lder continuous in the interval $0<s\le T_0$.  To see this we observe from \eqref{AB4} that
\begin{align} \label{AP4}
\frac{\pa \tilde{I}_\infty(x',x,t,s,T)}{\pa x'} \Bigg|_{x'=0} \ &= \  b(0,s) \int_0^\infty \frac{\pa K_\infty(0,x'',t,s)}{\pa x'} \ \frac{\pa K_\infty(x'',x,s,T)}{\pa x''} \dx{x''} \\
&= \ b(0,s)J_1(x,t,s,T) \ . \notag
\end{align}
Using integration by parts as in \eqref{T4} we see there is a universal constant $C$ such that
\begin{equation}\label{AR4}
\left|\frac{\pa J_1(x,t,s,T)}{\pa x}\right| \  \le \ \frac{C\; G(x,2(T-t))}{(s-t)^{1/2}(T-s)^{1/2}}\; \min\set*{\frac{1}{(s-t)^{1/2}}, \frac{1}{(T-s)^{1/2}}}  \ .
\end{equation}
It follows from \eqref{AC4} and \eqref{AP4}, \eqref{AR4} that if the function $s\ra b(0,s)$ is H\"{o}lder continuous of order $\ga>0$ at $s=T$ then there is a constant $C_\ga$ such that
\begin{equation} \label{AS4}
\left| \frac{\pa^2 \tilde{v}_{0,\infty}(0,x,t,T)}{\pa x'\pa x}-b(0,T)\int_t^T\frac{\pa J_1(x,t,s,T)}{\pa x} \dx{s}\right|     \le  C_\ga(T-t)^{-1/2+\ga} \ G(x,2(T-t)) \ .
\end{equation}
From \eqref{AM4} we have that
\begin{equation}\label{AT4}
\int_t^T J_1(x,t,s,T) \dx{s} \ = \ \frac{x}{2}\frac{\pa K_\infty(0,x,t,T)}{\pa x'} \ .
\end{equation}
We conclude from \eqref{AS4}, \eqref{AT4} and the argument of the previous paragraph that when $s\ra b(0,s)$ is H\"{o}lder continuous in the interval $0<s\le T_0$, then the function $(x,T)\ra \tilde{c}_{1,\infty}(x,T)$ of \eqref{AD4}  is $\cC^1$ in $x$ in the region $0\le x<\infty, \  0<T\le T_0$.

Finally we wish  to establish \eqref{AA4}  assuming only that $b(\cdot,\cdot)$ satisfies \eqref{C2}. To prove this it will be sufficient to consider the function $\tilde{c}_{2,\infty}$ defined by
\begin{equation}\label{BC4}
\tilde{c}_{2,\infty}(x,T) \ = \ \int_0^T \Ga(x,t,T) \exp[-V(0)+\theta(t)W(0)] \dx{t} \ ,
\end{equation}
where $\Ga$ is the function
\begin{equation}\label{BD4}
\Ga(x,t,T) \ = \ \int_t^T b(0,s) J_1(x,t,s,T) \dx{s} \  .
\end{equation}
If $b(0,\cdot)\equiv 1$ then the RHS of \eqref{BC4} is given by the RHS of \eqref{AN4}.
We observe from \eqref{AP4} that
\begin{align} \label{AU4}
J_1(x,t,s,T) \ &= \ \frac{1}{8\pi(s-t)^{3/2}(T-s)^{3/2}}\int_0^\infty z \exp\pra*{-\frac{z^2}{4(s-t)}} \\
&\qquad\qquad \times \ \left\{(x-z)\exp\pra*{-\frac{(x-z)^2}{4(T-s)}}+(x+z)\exp\pra*{-\frac{(x+z)^2}{4(T-s)}} \right\} \dx{z}  \\
&= \ \frac{\exp\pra*{-x^2/4(T-s)}}{4\pi(s-t)^{3/2}(T-s)^{3/2}} \ J_2(x,t,s,T) \ ,
\end{align}
where $J_2(x,t,s,T)$ is given by the formula
\begin{multline} \label{AV4}
J_2(x,t,s,T) \ = \ \int_0^\infty z \exp\pra*{-\frac{(T-t)z^2}{4(s-t)(T-s)}} \\
 \times \  \left\{x\cosh\pra*{\frac{zx}{2(T-s)}}-z\sinh\pra*{\frac{zx}{2(T-s)}}\right\} \dx{z}   \ .
\end{multline}
Using integration by parts in \eqref{AV4} we have that
\begin{align*} 
J_2(x,t,s,T) &\ = \ \frac{2x(T-s)^2(s-t)}{(T-t)^2} \\
&\qquad +\ \left\{\frac{x^2(T-s)(s-t)}{(T-t)^2}-\frac{2(T-s)(s-t)}{(T-t)}\right\} \ J_3(x,t,s,T) \ , 
\end{align*}
where $J_3$ is given by the formula
\begin{equation}\label{AX4}
J_3(x,t,s,T) \ = \ \int_0^\infty \exp\left[-\frac{(T-t)z^2}{4(s-t)(T-s)}\right] \  \sinh\left[\frac{zx}{2(T-s)}\right] \dx{z} \ .
\end{equation}
Differentiating \eqref{AX4} with respect to $x$ and  integrating by parts we have that
\begin{equation}\label{AY4}
\frac{\pa J_3(x,t,s,T) }{\pa x} \ = \ \frac{s-t}{T-t}+\frac{(s-t)x}{2(T-t)(T-s)} \ J_3(x,t,s,T) \ .
\end{equation}
Integrating the differential equation in \eqref{AY4}, we see that
\begin{equation}\label{AZ4}
J_3(x,t,s,T) \ = \ \left(\frac{s-t}{T-t}\right)\exp\left[\frac{(s-t)x^2}{4(T-t)(T-s)}\right] \int_0^x \exp\left[-\frac{(s-t)z^2}{4(T-t)(T-s)}\right] \dx{z} \ .
\end{equation}
We conclude from  \eqref{AU4}--\eqref{AZ4} that
\begin{multline} \label{BA4}
J_1(x,t,s,T) \ = \ \frac{\exp\left[-x^2/4(T-t)\right]}{2\pi(T-t)^2} \Bigg\{ x\left(\frac{s-t}{T-s}\right)^{-1/2}\exp\left[-\frac{(s-t)x^2}{4(T-t)(T-s)}\right] \\
+\left(\frac{x^2}{2(T-t)}-1\right)\left(\frac{s-t}{T-s}\right)^{1/2} \int_0^x \exp\left[-\frac{(s-t)z^2}{4(T-t)(T-s)}\right] \dx{z}\ \Bigg\} \ .
\end{multline}
Note that it follows from \eqref{Y4}, \eqref{AT4}, \eqref{BA4} that
\begin{multline} \label{BB4}
\frac{x^2\sqrt{\pi(T-t)}}{2} \ = \ \int_t^T \Bigg\{ x\left(\frac{s-t}{T-s}\right)^{-1/2}\exp\left[-\frac{(s-t)x^2}{4(T-t)(T-s)}\right] \\
+\left(\frac{x^2}{2(T-t)}-1\right)\left(\frac{s-t}{T-s}\right)^{1/2} \int_0^x  \ \exp\left[-\frac{(s-t)z^2}{4(T-t)(T-s)}\right] \dx{z} \ \Bigg\} \dx{s} \ .
\end{multline}
From \eqref{BD4}, \eqref{BA4} we have that
\begin{equation}\label{BF4}
\Ga(x,t,T) \ = \ \frac{\exp\left[-x^2/4(T-t)\right]}{2\pi(T-t)^2}\left\{ \frac{x^2}{2(T-t)}\Ga_1(x,t,T)+\Ga_2(x,t,T)\right\} \ ,
\end{equation}
where $\Ga_1(x,t,T)$ is given by
\begin{align} \label{BG4}
\Ga_1(x,t,T) \ &= \ \int_t^T b(0,s) \ \left(\frac{s-t}{T-s}\right)^{1/2} \int_0^x \exp\left[-\frac{(s-t)z^2}{4(T-t)(T-s)}\right] \dx{z}  \dx{s} \\
&= \ \sqrt{T-t}\int_t^T b(0,s)\int_0^{ (s-t)^{1/2}x/(T-t)^{1/2}(T-s)^{1/2}} \exp\left[-\frac{y^2}{4}\right] \dx{y} \dx{s} \ . \notag
\end{align}
It is easy to see from \eqref{BG4} that
\begin{equation}\label{BH4}
\left|\frac{\pa \Ga_1(x,t,T)}{\pa x} \right| \ \le \  C(T-t) \ , \quad 0<t<T\le T_0 \ ,
\end{equation}
for some constant $C$. Hence if we define the function $\tilde{c}_{3,\infty}$  by
\begin{equation}\label{BI4}
\tilde{c}_{3,\infty}(x,T) \ = \ x^2\int_0^T \frac{\exp\left[-x^2/4(T-t)\right]}{4\pi(T-t)^3} \Ga_1(x,t,T) \exp[-V(0)+\theta(t)W(0)] \dx{t} \ ,
\end{equation}
we conclude from \eqref{BH4}  that $\lim_{x\ra 0}\tilde{c}_{3,\infty}(x,T)=0$ and $\pa \tilde{c}_{3,\infty}(x,T)/\pa x$ remains bounded as $x\ra 0$. In view of the continuity of the functions $\theta(\cdot), \ b(0,\cdot)$, we may further conclude that $\lim_{x\ra 0} \pa \tilde{c}_{3,\infty}(x,T)/\pa x$ exists and its dependence on $\theta(\cdot), \ b(0,\cdot)$, is only through the values~$\theta(T)$ and~$b(0,T)$.

From \eqref{BA4} we see that
\begin{multline} \label{BJ4}
\frac{\pa \Ga_2(x,t,T)}{\pa x} \ = \ x\frac{\pa \Ga_3(x,t,T)}{\pa x}+ \\ \int_t^T b(0,s)\left\{\left(\frac{s-t}{T-s}\right)^{-1/2}-\left(\frac{s-t}{T-s}\right)^{1/2}\right\} \exp\left[-\frac{(s-t)x^2}{4(T-t)(T-s)}\right] \dx{s}  \ ,
\end{multline}
where $\Ga_3(x,t,T)$ is given by
\begin{equation}\label{BK4}
\Ga_3(x,t,T) \ = \ \int_t^T b(0,s) \ \left(\frac{s-t}{T-s}\right)^{-1/2}\exp\left[-\frac{(s-t)x^2}{4(T-t)(T-s)}\right] \dx{s} \ .
\end{equation}
We see from \eqref{BK4} that
\begin{equation}\label{BL4}
\left|\frac{\pa \Ga_3(x,t,T)}{\pa x} \right| \ \le \  Cx \ , \quad 0<t<T\le T_0 \ ,
\end{equation}
for some constant $C$. Hence if we define the function $\tilde{c}_{4,\infty}$  by
\begin{equation}\label{BM4}
\tilde{c}_{4,\infty}(x,T) \ = \ \int_0^T \int_0^x  \frac{\exp\bra*{-x^2/4(T-t)}}{2\pi(T-t)^2} x'\frac{\pa \Ga_3(x',t,T)}{\pa x'}  \, e^{-V(0)+\theta(t)W(0)}  \dx{x'} \dx{t}\ ,
\end{equation}
we conclude from \eqref{BL4}  that $\lim_{x\ra 0}\tilde{c}_{4,\infty}(x,T)=0$ and $\pa \tilde{c}_{4,\infty}(x,T)/\pa x$ remains bounded as $x\ra 0$. We further conclude as with $\tilde{c}_{3,\infty}$  that $\lim_{x\ra 0} \pa \tilde{c}_{4,\infty}(x,T)/\pa x$ exists and its dependence on $\theta(\cdot), \ b(0,\cdot)$, is only through the values $\theta(T), \ b(0,T)$.

We define $\Ga_4(x,t,T)$ by $\Ga_4(0,t,T) =  0$ and 
\begin{equation} \label{BN4} 
\frac{\pa \Ga_4(x,t,T)}{\pa x} =   \int_t^T \! b(0,s)\bra*{\bra*{\frac{T-s}{s-t}}^{1/2}\!\!\! - \bra*{\frac{s-t}{T-s}}^{1/2}}
\exp\pra*{-\frac{(s-t)x^2}{4(T-t)(T-s)}}  \dx{s} \  . \notag
\end{equation}
We see from \eqref{BN4} that
\begin{equation}\label{BO4}
\left|\frac{\pa \Ga_4(x,t,T)}{\pa x} \right| \ \le \  C(T-t) \ , \quad 0<t<T\le T_0 \ ,
\end{equation}
for some constant $C$.  We define the function $\tilde{c}_{5,\infty}$  by
\begin{equation}\label{BP4}
\tilde{c}_{5,\infty}(x,T) \ = \ \int_0^T   \frac{\exp\left[-x^2/4(T-t)\right]}{2\pi(T-t)^2} \Ga_4(x,t,T)  \exp[-V(0)+\theta(t)W(0)] \dx{t}\ .
\end{equation}
The bound \eqref{BO4} implies that $\tilde{c}_{5,\infty}(x,T)$ is continuous for $x\ge 0$, $0<T\le T_0$ and satisfies $\lim_{x\ra 0}\tilde{c}_{5,\infty}(x,T)=0$, but $\pa \tilde{c}_{5,\infty}(x,T)/\pa x$ may diverge as $x\ra 0$. Let $f(x,T)$ be the function
\begin{align} \label{BQ4}
f(x,T) \ &= \  \int_0^T \  \frac{\exp\left[-x^2/4(T-t)\right]}{2\pi(T-t)^2} \frac{\pa \Ga_4(x,t,T)}{\pa x}  \exp[-V(0)+\theta(t)W(0)] \dx{t}  \\
&= \ \int_{0<t<s<T} \ b(0,s)\exp[-V(0)+\theta(t)W(0)]  \notag \\
&\qquad\qquad\qquad\quad \times \frac{(T+t-2s)}{2\pi(T-t)^2(s-t)^{1/2}(T-s)^{1/2}} \exp\left[-\frac{x^2}{4(T-s)}\right] \dx{t} \dx{s} \ . \notag
\end{align}
We can see as in the previous paragraph that  $\lim_{x\ra0} [\pa\tilde{c}_{5,\infty}(x,T)/\pa x-f(x,T)]$ exists.  Hence in order to complete the proof of \eqref{AA4} it will be sufficient to show for $0<\al<1, \ 0<\beta<T_0$, there is a constant $C_{\al,\beta}$ such that
\begin{equation}\label{BR4}
\sup_{0<x<1}\left|\int_{T_1}^{T_2} f(x,T) \dx{T} \right| \ \le \ C_{\al,\beta}|T_2-T_1|^{\al} \quad\text{for } \beta \le T_1<T_2\le T_0 \ .
\end{equation}

To prove  \eqref{BR4} we first observe for fixed $s,t$ with $0<t<s<T_1$ that
\begin{equation}\label{BS4}
\int_{T_1}^{T_2} \frac{(T+t-2s)}{2\pi(T-t)^2(s-t)^{1/2}(T-s)^{1/2}} \exp\left[-\frac{x^2}{4(T-s)}\right] \dx{T} \ = \ g(x,s-t,t) \ ,
\end{equation}
 where the function $g(x,\tau,t)$ is given by
 \begin{equation}\label{BT4}
 g(x,\tau,t) \ = \ \frac{1}{2\pi\tau}\int_{1-2\tau/(T_1-t)}^{1-2\tau/(T_2-t)} \frac{z}{\sqrt{1-z^2}}\exp\left[-\frac{x^2(1-z)}{4\tau(1+z)}\right] \dx{z}  \ .
 \end{equation}
 In obtaining \eqref{BS4}, \eqref{BT4} we have used the substitution $z=(T+t-2s)/(T-t)$.  Note also that the integration in \eqref{BT4} is over a subinterval of $\{-1<z<1\}$.
 To estimate the integral in \eqref{BT4} we first assume that $0<\tau<(T_1-t)/2$ in which case the integrand is positive and
 \begin{align} \label{BU4}
 0 \ &< \  g(x,\tau,t) \ \le \  g(0,\tau,t)\\
 &=  \frac{1}{\pi\sqrt{\tau(T_1-t)}}\left\{\left[1-\frac{\tau}{T_1-t}\right]^{1/2}-\left(\frac{T_1-t}{T_2-t}\right)^{1/2}\left[1-\frac{\tau}{T_2-t}\right]^{1/2} \ \right\} \ .
\end{align}
We have from \eqref{BU4} that
\begin{equation}\label{BV4}
g(0,\tau,t)\ \le \   \frac{1}{\pi\sqrt{\tau}}\left\{\frac{1}{\sqrt{T_1-t}}-\frac{1}{\sqrt{T_2-t}}\right\} \ .
\end{equation}
It follows from \eqref{BV4} that
\begin{equation}\label{BX4}
\int_0^{(T_1-t)/2}g(0,\tau,t) \dx{\tau} \ \le \  \frac{\sqrt{2}}{\pi}\left[1-\left(\frac{T_1-t}{T_2-t}\right)^{1/2}\right] \ \le \ \frac{C(T_2-T_1)}{T_2-t} \
\end{equation}
for some constant $C$.  Next we show that there is a constant $C$ such that
\begin{equation}\label{BY4}
|g(x,\tau,t)| \ \le \ \frac{C(T_2-T_1)}{(T_2-t)\sqrt{(T_1-t)(T_1-t-\tau)}}\  , \qquad (T_1-t)/2<\tau<T_1-t \ .
\end{equation}
Evidently \eqref{BY4} holds if $(T_2-T_1)/(T_2-t)\ge 1/4$ so we shall assume that $(T_2-T_1)/(T_2-t)< 1/4$, which implies that $2\tau/(T_1-t)-2\tau/(T_2-t)<1/2$.  In this case one has from \eqref{BT4} that
\begin{equation}\label{BZ4}
|g(x,\tau,t)|  \ \le \ \frac{C_1}{T_1-t}\int_{1-2\tau/(T_1-t)}^{1-2\tau/(T_2-t)}\frac{\dx{z}}{\sqrt{1+z}} \ , \quad\text{where $C_1$ is a constant.}
\end{equation}
The integration in \eqref{BZ4} can be computed to give
\begin{equation}\label{CA4}
\left[2-\frac{2\tau}{(T_2-t)}\right]^{1/2}-\left[2-\frac{2\tau}{(T_1-t)}\right]^{1/2} \ \le  \ \frac{(T_2-T_1)\sqrt{2(T_1-t)}}{(T_2-t)\sqrt{T_1-t-\tau}} \ ,
\end{equation}
whence \eqref{BY4} follows. Now \eqref{BY4} implies that
\begin{equation}\label{CB4}
\int^{(T_1-t)}_{(T_1-t)/2}|g(x,\tau,t)| \dx{\tau} \ \le \ \frac{C(T_2-T_1)}{T_2-t} \quad \text{for some constant $C$.}
\end{equation}
 Letting $h(x,t,s,T)$ denote the integrand of the integral on the RHS of \eqref{BQ4}, it follows from \eqref{BX4}, \eqref{CB4} that
 \begin{equation}\label{CC4}
\int_{T_1}^{T_2}\dx{T} \int_{0<t<s<T_1}\dx{t} \dx{s} \ |h(x,t,s,T)| \ \le \  C(T_2-T_1)[|\log(T_2-T_1)|+1]
 \end{equation}
 for some constant $C$.

 We consider next the situation where $0<t<T_1$ and $T_1<s<T$ and proceed similarly to the previous paragraph. The integration with respect to $T$ in \eqref{BS4} is now replaced by integration over the interval $s<T<T_2$.   Correspondingly, in the definition of the function $g$ the lower integration limit $1-2\tau/(T_1-t)$ in \eqref{BT4} is replaced by $-1$, while the upper limit is still given by $1-2\tau/(T_2-t)$. Since $T_1-t<\tau<T_2-t$, it follow that the maximum value of the upper limit is $-1+2(T_2-T_1)/(T_2-t)$. Assuming that $(T_2-T_1)/(T_2-t)\le 1/2$, we have from \eqref{BT4} that
  \begin{equation}\label{CD4}
 |g(x,\tau,t)| \ \le \  |g(0,\tau,t)| \ = \
  \frac{1}{\pi\sqrt{\tau(T_2-t)}}\left[1-\frac{\tau}{T_2-t}\right]^{1/2} \ .
 \end{equation}
 It is easy to see from \eqref{CD4} that
 \begin{equation}\label{CE4}
 \int_{T_1-t}^{T_2-t} |g(x,\tau,t)| \dx{\tau} \ \le \  \frac{C(T_2-T_1)^{3/2}}{(T_2-t)^{3/2}}  \quad\text{for some constant $C$.}
 \end{equation}
 
 In the case when $(T_2-T_1)/(T_2-t)> 1/2$ we can easily see from (\ref{CD4}) that if $x=0$ then the LHS of (\ref{CE4}) is bounded by a constant. However when $x>0$ the inequality in (\ref{CD4}) only holds for $\tau\ge (T_2-t)/2$.  It does not hold for $T_1-t<\tau<(T_2-t)/2$ since the integrand of (\ref{BT4}) changes sign in the interval $-1<z<1-2\tau/(T_2-t)$.   To bound $g(x,\tau,t)$ in this case we consider the situation when $\tau\le x^2$. For $n\ge 1$ an integer and $x^2/(n+1)<\tau\le x^2/n$ we have that
 \begin{equation} \label{DA4}
 |g(x,\tau,t)| \ \le \ \frac{1}{2\pi\tau}\int_{-1}^1 \frac{|z|}{\sqrt{1-z^2}}\exp\left[-\frac{n(1-z)}{4(1+z)}\right] \dx{z} \ \le \ \frac{C}{\tau\sqrt{n}}  \ ,
 \end{equation}
for some constant $C$. Hence the integral of $|g(x,\tau,t)|$ over the interval $0<\tau<x^2$ is bounded by a constant independent of $x$. To estimate $|g(x,\tau,t)|$ for $\tau>x^2$ we split the integral representation and use the cancellation properties of the function $z\ra z/\sqrt{1-z^2}$. Thus for $n\ge 1$ an integer and $nx^2<\tau<(n+1)x^2$  let $\al(n)=\min\{1-1/n, 1- 2\tau/(T_2-t)\}$. Then we have that
 \begin{equation} \label{DB4}
 |g(x,\tau,t)| \ \le \
  \frac{1}{2\pi\tau}\int_{\{-1<\tau<-\al(n), \ \al(n)<\tau<1\}} \frac{|z|}{\sqrt{1-z^2}} \dx{z} +
 \frac{x^2}{8\pi\tau^2}\int_{-\al(n)}^{\al(n)} \frac{\dx{z}}{(1+z)^{3/2}}   \ .
 \end{equation}
The first integral on the RHS of \eqref{DB4} is bounded by $C\left[1/\tau\sqrt{n}+1/\sqrt{\tau(T_2-t)}\right]$, and the second by $Cx^2\sqrt{n}/\tau^2$ for some constant $C$. Therefore the integral of $|g(x,\tau,t)|$ over the interval $\min\{x^2,T_2-t\}<\tau<T_2-t$ is bounded by a constant independent of $x$.  We have shown then that  the integral of $|g(x,\tau,t)|$ over the interval $0<\tau<T_2-t$ is bounded by a constant independent of $x$, whence \eqref{CE4} holds for $(T_2-T_1)/(T_2-t)> 1/2$. It easily follows from \eqref{CE4} that
 \begin{equation}\label{CF4}
\int_{T_1}^{T_2} \int_{0<t<T_1<s<T} |h(x,t,s,T)| \dx{t} \dx{s}  \dx{T} \ \le \  C(T_2-T_1)
 \end{equation}
 for some constant $C$.

 Finally we consider the situation where $T_1<t<T_2$ and $t<s<T$. The function $g(x,\tau,t)$ is defined as in the previous paragraph but now we need to estimate the integral of $g(x,\tau,t)$ over the interval $0<\tau<T_2-t$. We have already established that  the integral is bounded by a constant independent of $x$. This implies that
 \begin{equation}\label{CH4}
\int_{T_1}^{T_2} \int_{T_1<t<s<T}\ |h(x,t,s,T)| \dx{t} \dx{s}  \dx{T} \ \le \  C(T_2-T_1)
 \end{equation}
 for some constant $C$.  The inequality \eqref{BR4} follows now from \eqref{CC4}, \eqref{CF4}, \eqref{CH4}.
\end{proof}

\bigskip

\begin{proof}[Proof of Lemma~\ref{lem:Schauder3}]
For the proof, we introduce the notation $\varTheta_0(t) =\exp[-V(0)+\theta(T)W(0)]$. From \eqref{N4}, \eqref{Y4} we have that
\begin{equation}\label{CJ4}
c_{0,\infty}(x,T) \ = \ \frac{x}{\sqrt{4\pi}}\int_0^T \frac{1}{(T-t)^{3/2}}\exp\pra*{[-\frac{x^2}{4(T-t)}} \, \varTheta_0(t) \dx{t}\ .
\end{equation}
Hence we have that
\begin{align}
\frac{c_{0,\infty}(x,T)-c_{0,\infty}(0,T)}{x} &= \frac{1}{\sqrt{4\pi}}\int_0^T \!\! \frac{1}{(T-t)^{3/2}}\exp\pra*{-\frac{x^2}{4(T-t)}} \;  \bra[\big]{ \varTheta_0(t) - \varTheta_0(T)}  \dx{t} \notag \\
&\quad -\frac{\varTheta_0(T)}{\sqrt{4\pi}}\int_{-\infty}^0 \frac{1}{(T-t)^{3/2}}\exp\pra*{-\frac{x^2}{4(T-t)}} \dx{t} \ .  \label{CK4}
\end{align}
It is evident from \eqref{CK4} that if $\theta(\cdot)$ is H\"{o}lder continuous at $T$ of order  $\ga>1/2$ then
\begin{equation}\label{CL4}
\lim_{x\ra 0} \frac{c_{0,\infty}(x,T)-c_{0,\infty}(0,T)}{x} \ = \ \frac{\pa c_{0,\infty}(0,T)}{\pa x} \quad\text{exists.}
\end{equation}
To prove the continuity of $\pa c_{0,\infty}(x,T)/\pa x$ as $x\ra 0$ we observe from \eqref{CJ4}  that
\begin{equation}\label{CM4}
\frac{\pa c_{0,\infty}(x,T)}{\pa x} \ = \ f(x,T)-\frac{\sqrt{2}\; \varTheta_0(T)}{x\sqrt{\pi}}\int_0^{x/\sqrt{2T}} (1-z^2)e^{-z^2/2}  \dx{z}  \ ,
\end{equation}
where $f(x,T)$ is given by the formula 
\begin{equation}  \label{CN4}
f(x,T) =
 \frac{1}{\sqrt{4\pi}}\int_0^T\! \bra*{1-\frac{x^2}{2(T-t)}} \frac{1}{(T-t)^{3/2}}\exp\pra*{-\frac{x^2}{4(T-t)}} \, \bra[\big]{ \varTheta_0(t) - \varTheta_0(T)} \dx{t} \ .
\end{equation}
We see from \eqref{CM4}, \eqref{CN4},  similarly to \eqref{CL4}, that $\lim_{x\ra 0}\pa c_{0,\infty}(x,T)/\pa x$ exists provided $\theta(\cdot)$ is H\"{o}lder continuous at $T$ of order  $\ga>1/2$.

To prove \eqref{CI4} when $\ga<1/2$,  we first note from \eqref{CN4} that
\begin{equation}\label{CO4}
f(x,T) \ = \ \int_0^T g(x,t,T) \dx{t} \ , \quad\text{where } |g(x,t,T)|\le C/(T-t)^{\ga-3/2}
\end{equation}
for some constant $C$.  Evidently  for $0<T_1<T_2$, 
\begin{equation}\label{CP4}
\int_{T_1}^{T_2} \int_{0}^{T_1} (T-t)^{\ga-3/2}\dx{t}  \dx{T} \ \le \ C(T_2-T_1)^{\ga+1/2}
\end{equation}
for some constant $C$, so we are left to estimate the integral of $g(x,t,T)$ over the region $\{T_1<t<T, \ T_1<T<T_2\}$.  Letting $h(x,s)$ be the function
\begin{equation}\label{CQ4}
h(x,s) \ = \ \frac{1}{\sqrt{4\pi}}\left\{1-\frac{x^2}{2s}\right\}\frac{1}{s^{3/2}}\exp\pra*{-\frac{x^2}{4s}} \ , 
\end{equation}
we see that
\begin{equation} \label{CR4}
\int_{T_1}^{T_2} \int_{T_1}^{T} g(x,t,T)  \dx{t} \dx{T} = \int_{T_1}^{T_2} \varTheta_0(T) \bra*{\int_0^{T_2-T}h(x,s) \dx{s}-\int_0^{T-T_1}h(x,s) \dx{s} } \dx{T} \ .
\end{equation}
It follows from \eqref{CR4} that
\begin{align}
\int_{T_1}^{T_2} \int_{T_1}^{T} g(x,t,T)  \dx{t} \dx{T} &= \ \int_{T_1}^{(T_1+T_2)/2}  \varTheta_0(T)\int_{T-T_1}^{T_2-T}h(x,s) \dx{s} \dx{T} \nonumber \\
&\qquad -\int^{T_2}_{(T_1+T_2)/2}  \varTheta_0(T)\int^{T-T_1}_{T_2-T}h(x,s) \dx{s} \dx{T}  \ .\label{CS4}
\end{align}
Making the change of variable $T=(T_1+T_2)/2-\tau$ in the first integral on the RHS of \eqref{CS4}, and  $T=(T_1+T_2)/2+\tau$ in the second integral, we see that
\begin{multline}\label{CT4}
\int_{T_1}^{T_2} \int_{T_1}^{T} g(x,t,T)  \dx{t} \dx{T} \ = \ \int_0^{(T_2-T_1)/2} \ \Big\{\varTheta_0\bra[\big]{(T_1+T_2)/2-\tau}   \\
 -\varTheta_0\bra[\big]{(T_1+T_2)/2+\tau}\Big\}  \int_{(T_2-T_1)/2-\tau}^{(T_2-T_1)/2+\tau}h(x,s) \dx{s} \dx{\tau} \ .
\end{multline}
Using the fact that
\begin{equation}\label{CU4}
\int_0^{(T_2-T_1)/2} \int_{(T_2-T_1)/2-\tau}^{(T_2-T_1)/2+\tau} \frac{\dx{s}}{s^{3/2}} \dx{\tau} \ \le \ C(T_2-T_1)^{1/2}
\end{equation}
for some constant $C$, we conclude from \eqref{CT4} that
\begin{equation}\label{CV4}
\left|\int_{T_1}^{T_2} \int_{T_1}^{T} g(x,t,T)  \dx{t} \dx{T}\right| \ \le \ C(T_2-T_1)^{\ga+1/2}
\end{equation}
for some constant $C$.
\end{proof}

\subsection{Parabolic regularization (Lemma~\ref{lem:stability:sup}) and tightness (Lemma~\ref{lem:tight})}

\begin{proof}[Proof of Lemma~\ref{lem:stability:sup}]
It follows from  Proposition~\ref{prop:ShortTimeExistence}  that  the drift $b$ for \eqref{A1} is uniformly Lipschitz. Thus there is a constant $A$ depending on $\|\theta(\cdot)\|_\infty$ such that
\begin{equation}\label{AK3}
\left|\pa b(x,t)/\pa x\right| \ \le \ A \quad\text{for } x>0, \ 0\le t\le T_0 \ .
\end{equation}
We note that if $\sup c(\cdot,0)<\infty$  then   $\sup  c(\cdot,t)<\infty$ for $t>0$. This follows by the maximum principle from \eqref{E1}, \eqref{AK3}.  In fact we have that
\begin{equation}\label{AL3}
\sup c(\cdot,t) \ \le \ e^{At}\sup c(\cdot,0)+\sup_{0<s<t} \big\{\exp[A(t-s)-V(0)+\theta(s)W(0)] \big\}\ .
\end{equation}
Hence to establish boundedness of $\sup c(\cdot,t), \ t_0\le t\le T_0$, it will be sufficient to show that if $c(\cdot,0)$ satisfies \eqref{AJ2}, then $\sup c(\cdot,T)<\infty$ for $T>0$ sufficiently small.

Let $x_0\ge 3$ and $x(\cdot)$ be the solution to the ODE terminal value problem
\begin{equation}\label{AO3}
\pderiv{x(t)}{t} \ = \ b(x(t),t) \ , \quad t<T, \ \  x(T)=x_0 \ .
\end{equation}
Then if $w_{x_0}(x,t), \ x>0, \ t<T,$ denotes the solution to \eqref{A2}, \eqref{B2} with $w_0(x)=\del(x-x_0),\ x>0$, we show that for $T>0$ sufficiently small,
\begin{equation}\label{AM3}
\sup_{x>0}w_{x_0}(x,0)\le C_1(T),  \qquad \sup_{\{|x-x(t)|\ge 1, \ 0\le t<T\}}w_{x_0}(x,t)\le C_2(T),
\end{equation}
where $C_1(T), \ C_2(T) $ are constants depending on $T$, but not on $x_0$.  To see this we approximate the Dirac delta function at $x_0$ by bounded functions. Thus let $\phi:\R\ra\R$ be a continuous function with compact support in the interval $[-1,1]$ and with integral equal to $1$. We then take
\begin{equation}\label{AN3}
w_0(x) \ = \ \frac{1}{\ve}\phi\left(\frac{x-x_0}{\ve}\right) \ , \quad x>0, \ 0<\ve\le 1 \ ,
\end{equation}
so the function on the RHS converges to $\del(x-x_0)$ as $\ve\ra 0$.
Next we use the representation \eqref{AW2}, \eqref{AX2} for solutions to \eqref{A2}, \eqref{B2}. If $X(\cdot)$ is a solution to \eqref{AW2} then $Y(\cdot)=X(\cdot)-x(\cdot)$ is a solution to the SDE
\begin{equation}\label{AP3}
dY(t) \ = \ [b(Y(t)+x(t),t)-b(x(t),t)] \dx{t} +\sqrt{2} \dx{B(t)} \  .
\end{equation}
Hence the representation \eqref{AX2} yields $w(x,0)=\tilde{w}(x-x(0),0)$, where
\begin{equation}\label{AQ3}
\tilde{w}(y,0) \ = \ \EX\pra[\bigg]{ \frac{1}{\ve}\phi\left(\frac{Y(T)}{\ve}\right)  \ ; \  \inf_{0<s<T}[Y(s)+x(s)]>0 \  \bigg| \ Y(0)=y\  } \ .
\end{equation}

We assume now that $0<\ve<1/2$, whence $\phi(Y(T)/\ve)\ne 0$ only if $|Y(T)|<1/2$. We consider  paths $Y(s), \ 0\le s\le T,$ such that $Y(0)=y$ and $|y|<2$. Then for $T$ sufficiently small independent of $x_0\ge 3,$ the expectation on the RHS of \eqref{AQ3} can be written as a sum of expectations over paths
$Y(\cdot)$ such that $\sup_{0\le s\le T} |Y(s)|<2$ and  $\sup_{0\le s\le T} |Y(s)|\ge 2$. For the paths with  $\sup_{0\le s\le T} |Y(s)|\ge 2$ which contribute to the expectation \eqref{AQ3}, there exists a stopping time $\tau_y, \ 0<\tau_y<T$, such that $|Y(\tau_y)|=1$ and $|Y(s)|<2, \ \tau_y\le s\le T$. We can use this decomposition of paths to obtain a representation of  $\tilde{w}(y,0)$ in terms of a Dirichlet Green's function on the interval $[-2,2]$.  Thus we consider the terminal value problem
\begin{multline} \label{AR3}
\frac{\pa u(y,t)}{\pa t} +  \frac{\pa^2 u(y,t)}{\pa y^2} +\tilde{b}(y,t)\frac{\pa u(y,t)}{\pa y}=0 \ , \quad |y|<2, \ t<T, \\
{\rm where \ } \tilde{b}(y,t) \ = \ b(y+x(t),t)-b(x(t),t) \ .
\end{multline}
The solution  $u(y,t)$ to \eqref{AR3}
with terminal and boundary conditions given by
\begin{equation}\label{AS3}
u(y,T) \ = \ u_0(y), \  |y|<2, \quad u(y,t) \ = \ 0, \ |y|=2, \ t<T,
\end{equation}
has the representation
\begin{equation}\label{AT3}
u(y,t) \ = \ \int_{-2}^2 G_{D,x_0}(y,y',t,T)u_0(y') \dx{y'} \  ,
\end{equation}
in terms of the Dirichlet Green's function $G_{D,x_0}$, which depends through $\tilde{b}(\cdot,\cdot)$ on $x_0$.  Hence we have that
\begin{multline} \label{AU3}
\tilde{w}(y,0) \ = \ \int_{-2}^2 G_{D,x_0}(y,y',0,T) \frac{1}{\ve}\phi\left(\frac{y'}{\ve}\right)  \dx{y'} \\
+ \EX\pra*{   \int_{-2}^2 G_{D,x_0}(Y(\tau_y),y',\tau_y,T) \frac{1}{\ve}\phi\left(\frac{y'}{\ve}\right)  \dx{y'}  } \ .
\end{multline}

From \eqref{AK3}, \eqref{AR3} it follows that $|\tilde{b}(y,t)|\le A|y|, \ 0\le t\le T_0$, whence the series expansion \eqref{N2} for $G_{D,x_0}$ converges for $T$ small and independent of $x_0$.  This allows us to take the limit $\ve\ra 0$ in \eqref{AU3} to obtain the representation
\begin{equation}\label{AV3}
w_{x_0}(x,0) \ = \ G_{D,x_0}(x-x(0),0,0,T) + \EX\pra*{   G_{D,x_0}(Y(\tau_{x-x(0)}),0,\tau_{x-x(0)},T)   } \ ,
\end{equation}
valid for $\quad |x-x(0)|<2$. It follows from \eqref{P2}, \eqref{AV3} that $\sup_{|x-x(0)|<2} w_{x_0}(x,0)\le C_1(T)$, where $C_1(T)$ is independent of $x_0\ge 3$. Since we have an analogous  representation to \eqref{AV3} for $w_{x_0}(x,0)$  when $|x-x(0)|>2$, we conclude the first inequality of \eqref{AM3}. The second inequality of \eqref{AM3} follows by a similar argument.

To show that $\sup c(\cdot,T)<\infty$, we observe by the continuity of the function $x\ra c(x,T), \ x\ge 0,$ that it is sufficient to obtain a uniform estimate on $c(x_0,T)$ for $x_0\ge 3$. To do this we use the representation \eqref{AI2}. Thus we have that
\begin{equation}\label{AW3}
c(x_0,T) \ =  \  \int_0^\infty w_{x_0}(x,0)c(x,0) \dx{x}+\int_0^Tdt \ \frac{\pa w_{x_0}(0,t)}{\pa x} \exp[-V(0)+\theta(t)W(0)] \ .
\end{equation}
 We see from the first inequality of \eqref{AM3} and \eqref{AJ2}  that the first term on the RHS of \eqref{AW3} is uniformly bounded for $x_0\ge 3$. The second inequality of \eqref{AM3} and Lemma~\ref{lem:uniformFluxBound} imply that the second term on the RHS of \eqref{AW3} is uniformly bounded for $x_0\ge 3$.

 To prove equicontinuity of the functions $c(\cdot,t), \ t_0\le t\le T_0$ on $[0,L]$ we argue as in Lemma~\ref{lem:reg:C1}. In particular, using the notation of \eqref{D4} we have for $0<x<L+1$, $t_0<T\le T_0$ that
\begin{align}
c(x,T) &= \int_0^{L+1} G_{D,L+1}(x',x,t_0,T)c(x',t_0) \dx{x'} \notag  \\
&\quad + \int_{t_0}^T \frac{\pa G_{D,L+1}(0,x,t,T)}{\pa x'} \ e^{-V(0)+\theta(t)W(0)}\dx{t} \label{BW3}\\
&\quad - \int_{t_0}^T \frac{\pa G_{D,L+1}(L+1,x,T',T)}{\pa x'}c(L+1,T')\dx{T'} \ . \notag
\end{align}
Since $c(\cdot,t_0)$ is continuous, the equicontinuity follows from \eqref{BW3} and the properties of $G_{D,L+1}$ already established in $\S2$.
\end{proof}

\medskip

The proof of the tightness Lemma~\ref{lem:tight} is based on a more quantified version of Lemma~\ref{lem:adjoint:Wbound}.
\begin{lemma}\label{lem:adjoint:tight0}
Under the Assumption of Lemma~\ref{lem:tight}. For $M\ge0$  let $w_M(x,t), \ x>0,t<T,$ be the solution to \eqref{A2}, \eqref{B2} with $w_0(x)=0, \ 0<x<M, \ w_0(x)=W(x), \ x>M$.  Then there exists $C_\infty>0$, independent of $M,T$,   such that $w_M(x,0)\le C_\infty W(x)$ for $x\ge 0$. For any $x_0,\ve>0$ there exists $M(\ve,x_0)>0$, independent of $T$, such that
\begin{equation}\label{CE3}
w_M(x,0) \ \le \ \ve \quad\text{for } 0<x\le x_0, \ M\ge M(\ve,x_0) \ .
\end{equation}
\end{lemma}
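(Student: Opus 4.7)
The first statement is essentially free from Lemma~\ref{lem:adjoint:Wbound}. Since the terminal data $w_0$ in the definition of $w_M$ satisfies $0\le w_0(x)\le W(x)$ for $x\ge 0$, the linearity of \eqref{A2}, \eqref{B2} and the maximum principle yield $w_M(x,t)\le w(x,t)$ for all $x>0$, $t<T$, where $w$ is the solution with terminal data $W$. Consequently Lemma~\ref{lem:adjoint:Wbound}, applicable since $\sup_{0\le t\le T}\theta(t)\le \theta_\infty$, gives the desired uniform-in-$M,T$ bound $w_M(x,0)\le C_\infty W(x)$.

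For the quantitative bound \eqref{CE3}, I will use the probabilistic representation $w_M(x,0)=\EX\pra*{W(X(T))\bfOne_{X(T)>M};\ \inf_{0\le s\le T}X(s)>0\ \big|\ X(0)=x}$ together with the change of variables $Y(t)=\Phi(X(t))$ from the proof of Lemma~\ref{lem:adjoint:Wbound}. Because $\Phi(x)\ge W(x)/2$, it holds that
\begin{equation*}
w_M(x,0)\ \le\ 2\; \EX\pra*{Y(T)\bfOne_{Y(T)>\Phi(M)};\ \inf_{0\le s\le T}Y(s)>\Phi(0)\ \Big|\ Y(0)=\Phi(x)}.
\end{equation*}
The SDE \eqref{BJ2} satisfied by $Y$ inherits the coefficient bounds \eqref{BB2}, so the supersolution $\phi$ constructed in \eqref{BC2}, for suitably chosen $K,\xi,A>0$, is non-decreasing on $[\Phi(0),\infty)$ and satisfies the infinitesimal inequality used to derive~\eqref{BF2}.

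The key step is then to repeat the tail decomposition leading to \eqref{BG2}, but only retaining contributions from paths whose supremum exceeds $\Phi(M)$. Writing $\ell_M=\lfloor\Phi(M)-\Phi(0)\rfloor$ and decomposing the event $\set{Y(T)>\Phi(M)}\cap\set{\inf Y>\Phi(0)}$ along the shells $A_\ell=\set{\ell+\Phi(0)\le \sup_{0\le s\le T}Y(s)\le \ell+1+\Phi(0)}$ for $\ell\ge \ell_M$ and applying \eqref{BF2}, I obtain
\begin{equation*}
w_M(x,0)\ \le\ 2\; \phi(\Phi(x))\sum_{\ell=\ell_M}^\infty \frac{\ell+1+\Phi(0)}{\phi(\ell+\Phi(0))}.
\end{equation*}
Since $\phi$ is monotone, $\phi(\Phi(x))\le \phi(\Phi(x_0))$ uniformly for $0<x\le x_0$, and since the total sum converged (this is what made \eqref{BG2} finite), its tail tends to zero as $\ell_M\ra\infty$, i.e.\ as $M\ra\infty$. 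Choosing $M(\ve,x_0)$ so that $2\phi(\Phi(x_0))$ times this tail is at most $\ve$ yields \eqref{CE3}.

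The main obstacle is really just verifying that the supersolution argument from Lemma~\ref{lem:adjoint:Wbound} goes through uniformly in $M$ and $T$; this reduces to checking that $\xi,A,K$ and hence $\phi$ can be chosen depending only on $\theta_\infty$ and the fixed parameters from Assumption~\ref{ass:VW}, which is exactly what was done in the cited proof.
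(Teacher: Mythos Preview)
Your approach for the second statement — the $Y=\Phi(X)$ change of variable, the supersolution $\phi$ from \eqref{BC2}, and summing the tail over shells above level $\Phi(M)$ — is exactly the paper's, up to a harmless reindexing (the paper writes the tail as $\phi(y)\sum_{k\ge 0}(k+1+\Phi(M))/\phi(k+\Phi(M))$ instead). The paper also handles $\sup W<\infty$ separately via a cruder hitting-probability bound, but your argument covers both cases.

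There is, however, a genuine gap in how you cite Lemma~\ref{lem:adjoint:Wbound}. That lemma \emph{outputs} a particular threshold $\theta_\infty<0$ and guarantees its conclusion only when $\sup_t\theta(t)$ falls below that threshold; its derivation of \eqref{BB2} explicitly needs ``$\theta_\infty$ sufficiently negative.'' Here, via Lemma~\ref{lem:tight}, all you are given is an \emph{arbitrary} $\theta_\infty<0$, so neither the first-paragraph comparison nor the claim that ``\eqref{BJ2} inherits the coefficient bounds \eqref{BB2}'' is justified as stated. The paper closes this gap using the additional hypothesis \eqref{F1:2}: choosing $\del\le|\theta_\infty|/3$ there yields $\mu(y,t)\le c\,\theta_\infty\,\sigma(y)^2<0$ for $y\ge W(x_\del+1)$, which suffices to rerun the supersolution construction and hence both parts of your argument. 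Your final paragraph correctly identifies Assumption~\ref{ass:VW} as the missing input, but ``exactly what was done in the cited proof'' is not accurate — Lemma~\ref{lem:adjoint:Wbound} used only the cruder bound \eqref{AV2}, which is precisely why it had to take $\theta_\infty$ large and negative.
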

\begin{proof}
To show that $w_M(x,0)\le C_\infty W(x), \ x>0,$ we proceed  as in Lemma~\ref{lem:adjoint:Wbound}. Choosing $\del>0$ to satisfy $\del\le|\theta_\infty|/3$, we see from \eqref{F1:2} that \eqref{BB2} holds with $x_0=x_\del$, upon modifying the first inequality of \eqref{BB2} to $\mu(y,t)\le-6\sig^2(y)\theta_\infty$.  The remaining argument of Lemma~\ref{lem:adjoint:Wbound} similarly goes through.

To prove \eqref{CE3}  we  observe   from the first inequality of \eqref{F1:2} that there exists $x_1>0$ such that $b(x,t)\le 0$ for $x\ge x_1, \ 0\le t\le T$. Now let $L>x_0$ and $\tau^L_{x,t}>t$ be the first exit time from the interval $[0,L]$ of the diffusion $X(\cdot)$ of \eqref{AW2} started at  $X(t)=x$ with $0<x<L$. It follows that for any $\nu>0$ there exists $L_\nu>x_0$ such that $\Prob\bra[\big]{X(\tau_{x,t}=L)}<\nu, \ 0<x<x_0,$ provided $L\ge L_\nu$.  Similarly to \eqref{AX2} we have that
\begin{equation}\label{CF3}
w_M(x,0) \ = \ \EX\pra*{w_M(L,\tau_{x,t}) \ ; \ \tau_{x,t}<T, \ X(\tau_{x,t})=L \ } \ , \quad 0<x<L, \ M>L \ .
\end{equation}
Since \eqref{CF3} implies that $w_M(x,0)\le \Prob(X(\tau_{x,t})=L)\sup W(\cdot)$, the inequality \eqref{CE3} follows provided  $\sup W(\cdot)<\infty$. In the case when $\sup W(\cdot)=\infty$ we use the transformation $Y=\Phi(X)$ introduced in the proof of Lemma~\ref{lem:adjoint:Wbound}. We have similarly to \eqref{BA2} that there exists $M_\del>0$ such that for $x>0$, $M\ge M_\del$ it holds
\begin{multline} \label{CG3}
w_M(x,0) \le  \EX\pra[\Big]{ Y(T);  \ \inf_{0\le s\le T} Y(s)>\Phi(0),\ Y(T)\ge \Phi(M) \ \Big| \ Y(0)=\Phi(x) \ }  , \ \ .
\end{multline}
We also have similarly to \eqref{BG2} that
\begin{align} \label{CH3}
\MoveEqLeft{\EX\pra[\Big]{ Y(T);  \ \inf_{0\le s\le T} Y(s)>\Phi(0), \ Y(T)\ge \Phi(M) \ \Big| \ Y(0)=y \ } }\\
&\le  \phi(y)\sum_{k=0}^\infty \frac{k+1+\Phi(M)}{\phi(k+\Phi(M))}  \  .
\end{align}
Since $\lim_{M\ra\infty}\Phi(M)=\lim_{M\ra\infty}W(M)=\infty$, we see from \eqref{BC2} that there exists $M(\ve,x_0)>0$ such if $M\ge M(\ve,x_0)$ then the RHS of \eqref{CH3} is less than $\ve$ provided $\Phi(0)<y<\Phi(x_0)$. The inequality\eqref{CE3} follows.
\end{proof}

\medskip

Lemma~\ref{lem:adjoint:tight0} gives us enough to control on the adjoint problem to prove \eqref{eq:tight} of Lemma~\ref{lem:tight} by means of the representation \eqref{AI2}.
\begin{proof}[Proof of Lemma~\ref{lem:tight}]
For any $M>0$ the identity \eqref{AI2} implies that
\begin{equation}\label{CJ3}
\int_M^\infty W(x)c(x,T) \dx{x} \ = \ \int_0^\infty w_M(x,0) c(x,0) \dx{x}+\int_{0}^T\frac{\pa w(0,t)}{\pa x} \; e^{-V(0)+\theta(t)W(0)} \dx{t} \ .
\end{equation}
We can bound the first term on the RHS of \eqref{CJ3} by using  Lemma~\ref{lem:adjoint:tight0}. Thus we have that
\begin{equation}\label{CK3}
\int_0^\infty w_M(x,0)c(x,0) \dx{x} \ \le \ \frac{\ve}{W(0)}\int_0^\infty W(x) c(x,0) \dx{x}+C_\infty\int_{x_0}^\infty W(x)c(x,0) \dx{x} \ ,
\end{equation}
provided $M>M(\ve,x_0)$. Evidently by choosing $x_0$ sufficiently large and $\ve$ sufficiently small the RHS of \eqref{CK3} can be made smaller than $\del/2$.  Hence there exists $M(\del)>0$ such that if $M\ge M(\del)$ the first term on the RHS of \eqref{CJ3} is less than $\del/2$.

We use Lemma~\ref{lem:adjoint:fluxBound} to estimate the second term on the RHS of \eqref{CJ3}. Thus there exists $\del_\infty>0$, independent of $M,T$, such that
\begin{equation}\label{CL3}
0 \ \le \  \frac{\pa w_M(0,t)}{\pa x} \ \le \   C_\infty e^{-\del_\infty(T-t)} \quad\text{for } 0\le t\le T-1.
 \end{equation}
 We also have from Lemma~\ref{lem:adjoint:tight0} that
 \begin{equation}\label{CM3}
 w_M(x,t) \ \le \ \ve \  , \ \  0<x<1, \  0<t<T, \quad\text{for } M>M(\ve,1) \ .
 \end{equation}
 We conclude from \eqref{CM3} and Lemma~\ref{lem:uniformFluxBound} that
 \begin{equation}\label{CN3}
 0 \ \le \  \frac{\pa w_M(0,t)}{\pa x} \ \le   \ C\ve\max\left[\frac{1}{\sqrt{T-t}},1\right] \ , \quad 0<t<T \ .
 \end{equation}
 It follows from \eqref{CL3}, \eqref{CN3} that there exists $M(\del)>0$ such that if $M\ge M(\del)$ the second term on the RHS of \eqref{CJ3} is less than $\del/2$. We have proven that the inequality \eqref{eq:tight} holds for $T\ge 0$.
\end{proof}

\bigskip

\section{Convergence to equilibrium}\label{s:Conv}
In this section we will prove convergence to equilibrium for the system \eqref{A1}, \eqref{B1}, \eqref{D1}, \eqref{E1}. Let us recall some basic notion of the convergence of measures: A sequence $\mu_n, \ n=1,2,..,$ of measures  in $\mathcal{M}(\R^+)$ converges \emph{weakly} to a measure $\mu\in \mathcal{M}(\R^+)$ if
\begin{equation}\label{A3}
\lim_{n\ra\infty} \int f \dx\mu_n  \ = \  \int f \dx\mu \quad\text{for all } f\in \cC_0(\R^+) \ .
\end{equation}
Hence weak convergence for measures corresponds to weak$^*$ convergence in $\mathcal{M}(\R^+)$. In the following we will work exclusively with absolutely continuous measure satisfying a moment condition with respect to $W$ and it is convenient to introduce the space of densities with finite $W$-moment
\begin{equation}\label{e:def:L1W}
  L^1_W(\R^+) = \set*{ f \in L^1(\R^+) : f\geq 0, \; \norm{f}_{L^1_W} \! < \infty } \quad\text{with}\quad  \norm{f}_{L^1_W} \!= \int \! f(x) \, W(x) \dx{x}.
\end{equation}
We identify functions in $L^1_W(\R^+)$ with measures in $\mathcal{M}(\R^+)$ by their densities and speak just of weak convergence in the sense of~\eqref{A3}. Especially, lower semicontinuity is understood as sequentially lower semicontinuity with respect to the weak convergence in the sense of~\eqref{A3}.

\subsection{Characterization of constrained minimizer of free energy}
We start by defining the free energy and stating its properties.
\begin{lemma}\label{lem:CharactMinEnergy}
  Suppose $V,W$ satisfy Assumption~\ref{ass:VW}. For $c\in L^1_W(\R^+)$ and $\theta\in \R$ the unconstrained free energy $\cG: L^1_W(\R^+) \times \R \to \R^+$ is defined by
  \begin{equation}\label{e:def:G}
    \cG(c,\theta) = \int c(x) \bra*{\log c(x)  -1} \dx{x}  + \int V(x) c(x) \dx{x} + \frac{1}{2} \theta^2 + \cG_0,
  \end{equation}
  where $\cG_0 = \int e^{-V(x)} \dx{x} < \infty$.
  The function $\cG$ is for any $M>0$ lower semi-continuous on  $U_M = \set[\big]{ (c,\theta) \in L^1_W(\R^+) \times \R: \norm{c}_{L^1_W(\R^+)} \leq M}$ and for any $M,N>0$ continuous on
  \begin{equation*}
   \tilde U_{M,N}\subset \set*{ (c,\theta)\in C(\R^+) \times \R: \norm{c}_{L^1_W(\R^+)} \leq M, \norm{c}_\infty \leq N } ,
  \end{equation*}
  such that $\forall L > 0: \set*{ c |_{[0,L]}: (c,\theta) \in \tilde U_{M,N}}$ is equicontinuous.\\[0.25\baselineskip]
  The unconstrained minimizer is uniquely given by
  \begin{equation}\label{e:unconstrainedMin}
    \argmin\set*{ \cG(c,\theta) : c\in L^1_W(\R^+), \theta \in \R }  = \bra*{c_{0}^\eq,0} .
  \end{equation}
  Moreover, the constrained minimizer is uniquely given by
  \begin{equation}\label{e:constrainedMin}
    \argmin\set*{ \cG(c,\theta) : c\in L^1_W(\R^+), \theta+ \int W(x) c(x) \dx{x} = \rho } = \bra*{c_{\theta_\eq}^\eq,\theta_\eq}
  \end{equation}
  where $\theta_\eq = \theta_\eq(\rho)$ is given for $\rho \geq \rho_s= \int W(x) c_{0}^\eq(x) \dx{x}$ by $\theta_\eq = 0$ and for $\rho< \rho_s$ implicitly by
  \begin{equation}\label{e:lem:constraint}
    \theta_{\eq} + \int W(x) c_{\theta_\eq}^\eq \dx{x} = \rho.
  \end{equation}
\end{lemma}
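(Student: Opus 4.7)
The proof rests on a single algebraic identity. Setting $c^* := c_{\theta_*}^\eq = \exp(-V + \theta_* W)$ so that $\log c^* = -V + \theta_* W$, a direct expansion gives
\begin{equation*}
  \cG(c,\theta) - \cG(c^*, \theta_*) = H(c \,\|\, c^*) + \theta_* \int W(x) \bra*{c(x) - c^*(x)} \dx{x} + \frac{\theta^2 - \theta_*^2}{2},
\end{equation*}
where $H(c \,\|\, c^*) = \int \pra*{c \log(c/c^*) - c + c^*} \dx{x} \geq 0$ is the Boltzmann relative entropy. Each minimization then follows by picking $\theta_*$ so that the cross term vanishes or is absorbed by the constraint, and strict convexity of $H$ furnishes uniqueness.

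For the semicontinuity and continuity statements I would split $\cG$ into its three summands. The Shannon-type entropy $\int(c\log c - c)\dx{x}$ has the variational representation $\sup_{g \in \cC_c(\R^+)} \int\pra*{gc - (e^g - 1)} \dx{x}$ (Legendre duality of $x \mapsto x\log x - x$), a supremum of continuous linear functionals, and is therefore lower semicontinuous with respect to the convergence \eqref{A3}. The drift term $\int Vc \dx{x}$ is handled using Assumption~\ref{ass:VW}(c): for any $\eps>0$ pick $R$ with $|V| \leq \eps W$ on $[R,\infty)$, so the tail contributes at most $\eps M$ uniformly on $U_M$, while the integral on $[0,R]$ reduces via a smooth cutoff to testing against a function in $\cC_c(\R^+)$ and is hence continuous. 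The quadratic term is trivially continuous, yielding lower semicontinuity on $U_M$. For continuity on $\tilde U_{M,N}$, Arzelà–Ascoli gives uniform convergence of $c_n$ to $c$ on every compact subinterval; combined with $\|c_n\|_\infty \leq N$ and the $L^1_W$-bound, dominated convergence upgrades the previous bound to equality in each of the three terms.

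For the minimizer characterizations I apply the identity. With $\theta_* = 0$ and $c^* = c_0^\eq$ it gives $\cG(c,\theta) - \cG(c_0^\eq, 0) = H(c \,\|\, c_0^\eq) + \theta^2/2 \geq 0$, with equality iff $c = c_0^\eq$ and $\theta = 0$, proving \eqref{e:unconstrainedMin}. For \eqref{e:constrainedMin} with $\rho \leq \rho_s$, recall from the introduction that $\theta \mapsto \theta + \int W c_\theta^\eq \dx{x}$ is strictly increasing and bijects $(-\infty, 0]$ onto $(-\infty, \rho_s]$ under Assumption~\ref{ass:VW}(c)-(e), so $\theta_\eq(\rho)$ is well defined. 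With $\theta_* = \theta_\eq(\rho)$ the constraints $\theta + \int Wc = \theta_* + \int W c^* = \rho$ give $\int W(c - c^*) \dx{x} = \theta_* - \theta$, and substitution collapses the last two terms of the identity to $(\theta - \theta_*)^2/2$, so $\cG(c,\theta) - \cG(c^*, \theta_*) = H(c \,\|\, c^*) + (\theta - \theta_*)^2/2 \geq 0$ with equality only at $(c^*, \theta_*)$. The main obstacle is the supercritical case $\rho > \rho_s$, where $(c_0^\eq, 0)$ fails to satisfy the constraint: the lower bound $\cG(c,\theta) \geq \cG(c_0^\eq, 0)$ still follows from the unconstrained argument, and an explicit minimizing sequence $c_n = c_0^\eq + \alpha_n \mathbf{1}_{[x_n, x_n+1]}$ with $x_n \to \infty$ and $\alpha_n = (\rho - \rho_s) / \int_{x_n}^{x_n+1} W \dx{x}$ satisfies $\theta_n = 0$ and, using Assumption~\ref{ass:VW}(c) together with $\alpha_n \log \alpha_n \to 0$, yields $\cG(c_n, 0) \to \cG(c_0^\eq, 0)$; hence the infimum is attained only in this limiting sense.
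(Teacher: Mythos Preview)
Your central identity is correct and, for the minimization claims, considerably cleaner than the paper's argument. The paper obtains \eqref{e:constrainedMin} in the subcritical range via a secant inequality for the convex interpolation $h_\theta(\lambda)=\cG((1-\lambda)c_{\theta}^\eq+\lambda c,(1-\lambda)\theta+\lambda\eta)$, computes the limit $\lambda\to 0$ to reach the variational inequality $\theta\bigl(\rho-\int W c_\theta^\eq-\theta\bigr)\le \cG(c,\eta)-\cG(c_\theta^\eq,\theta)$, and then runs a separate argument for uniqueness by estimating $h'_\theta(\lambda)$ from below. Your one-line reduction $\cG(c,\theta)-\cG(c_{\theta_\eq}^\eq,\theta_\eq)=H(c\,\|\,c_{\theta_\eq}^\eq)+\tfrac12(\theta-\theta_\eq)^2$ delivers both the minimum value and uniqueness simultaneously, and indeed the paper later derives exactly this identity as Lemma~\ref{lem:FreeEnergyRelEnt}. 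For $\rho>\rho_s$ both proofs construct minimizing sequences; the paper uses a piecewise definition $c=c_0^\eq\mathbf 1_{[0,x_\delta)}+\mathbf 1_{[x_\delta,y_\delta)}$, while your additive bump $c_0^\eq+\alpha_n\mathbf 1_{[x_n,x_n+1]}$ works just as well once you control $\alpha_n\int_{x_n}^{x_n+1}|V|$ via $|V|\le\delta W+C_\delta$.

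There is, however, a genuine slip in your lower semicontinuity argument. The variational formula you quote for the bare Shannon entropy, $\int(c\log c-c)\,\dx x=\sup_{g\in\cC_c}\int[gc-(e^g-1)]\,\dx x$, is false over an infinite-measure space: for $c\equiv 0$ the left side vanishes, while taking $g\le 0$ supported on $[0,L]$ makes the right side of order $L$, so the supremum is $+\infty$. The Legendre dual of $s\log s-s$ is $e^t$, not $e^t-1$, and the missing constant forces an infinite Lebesgue integral. The fix is immediate from your own identity with $\theta_*=0$: since $\cG(c,\theta)=\cH(c\,|\,c_0^\eq)+\tfrac12\theta^2$ with finite reference measure $c_0^\eq$, the standard duality $\cH(c\,|\,c_0^\eq)=\sup_{g\in\cC_0}\{\int gc-\int(e^g-1)c_0^\eq\}$ applies and gives lower semicontinuity directly. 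This is precisely what the paper does; your separate treatment of $\int Vc$ is then unnecessary.
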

\begin{proof}
Let us first note, that $\cG(c,\theta)$ is bounded from below by $0$. We rewrite it in the following way
\[
  \cG(c,\theta) = \cH(c | c_0^\eq) + \frac{1}{2}\theta^2 , \qquad\text{with}\qquad c_{0}^\eq(x) = e^{-V(x)}
\]
and the relative entropy for $c,c_0^\eq\in L^1_W(\R^+)$ is defined by
\[
  \cH(c | c_0^\eq ) = \int \Psi\bra*{ \frac{c(x)}{c_0^\eq(x)}} c_0^\eq(x) \, \dx{x} \qquad\text{with}\qquad \Psi(r) = r \log r - r + 1 .
\]
Since $\Psi$ is non-negative, we obtain the lower bound. In addition $r\mapsto \Psi(r)$ is strictly convex with convex dual $\Psi^*(s) = e^{s} - 1$ and it holds the dual variational characterization
\[
  \cH(c | c_0^\eq ) = \sup_{g\in \cC_0(\R^+)}\set*{ \int g(x) c(x) \dx{x} - \int \bra*{e^{g(x)} -1 } c_0^\eq(x) \dx{x} } .
\]
From this representation the lower semicontinuity on $U_M$ for any $M>0$ is immediate (see also~\cite{AB88,But89}).

Next we consider the function $\cG$ restricted to the set $\tilde{U}_{M,N}$ with $M,N>0$. It follows from~\eqref{e:def:G}, the bounds~\eqref{F1:1} and the lower bound $\inf W(\cdot)>0$ that $\cG(c,\theta)<\infty$ for all $(c,\theta)\in\tilde{U}_{M,N}$.  Furthermore, the energy part $\int \bra*{ V + \log a} c \dx{x}$ is continuous and we see that in order to prove continuity of $\cG$ on $\tilde{U}_{M,N}$ it is sufficient to prove continuity of the entropy $\cS(c) = \int c(x) \log c(x) \dx{x}$ on $\tilde{U}_{M,N}$. Since \eqref{G1} implies that $\lim_{x\ra\infty} W(x)=\infty$, we have that for any $\ve>0$ there exists $L_\ve>0$ such that
\begin{equation}\label{BR3}
\forall (c,\theta)\in \tilde{U}_{M,N}: \qquad \int_{\{x\geq L_\ve : \ c(x)\geq 1\}} c(x) \log c(x) \dx{x}  \ < \ \ve  \ .
\end{equation}
Now, for $\la\in\R$ let $g_\la:(0,\infty)\ra\R$ be the convex function $g_\la(z)=z\log z+\la z, \ z>0$, which has a minimum at $z=e^{-(1+\la)}$ given by $\inf_{z>0}g(z)=-e^{-(1+\la)}$. Hence, we can estimate for any $L\geq 0$ choosing $\lambda = V(x)$
\begin{equation}\label{BS3}
\int_{\{x\geq L:c(x)\leq 1\}}c(x)\log c(x) \dx{x} \ \ge \ -\int_L^\infty  V(x)  c(x) \dx{x} -e^{-1}\int_L^\infty c_0^\eq(x) \dx{x} \ .
\end{equation}
It follows now from~\eqref{F1:1} and (e) of Assumption~\ref{ass:VW} that for any $\ve>0$ there exists $L_\ve>0$ such that
\begin{equation}\label{BT3}
\int_{\{x \geq L_\ve,     \ c(x)\leq 1\}} c(x) \log c(x) \dx{x}  \ > \ -\ve  \ , \quad (c,\theta)\in \tilde{U}_{M,N} \ .
\end{equation}
Let $(c_m,\theta_m), \ m=1,2,..,$ be a sequence in $\tilde{U}_{M,N}$ which converges in the topology of  $\mathcal{M}_{ac}(\R^+)\times\R$ to $(c,\theta)\in \tilde{U}_{M,N}$, then by the equicontinuity of the sequence, it follows
\begin{equation}\label{BU3}
\lim_{m\ra\infty}\int_0^L c_{m}(x)\log c_{m}(x) \dx{x} \ = \ \int_0^L c(x)\log c(x) \dx{x} \ .
\end{equation}
The continuity of $\cS$ on $\tilde{U}_{M,N}$ follows from \eqref{BR3}, \eqref{BT3}, \eqref{BU3}.

The unique global minimizer $(c_{0}^\eq,0)$ follows by strict convexity of the functional and the just proven lower bound~$0$. To prove the constraint minimizer, we observe that the function
\[
  m(\theta) = \cG(c_\theta^\eq, \theta) = \int_0^\infty \bra*{ \theta W(x) - 1 } c_{\theta}^\eq(x) \dx{x} + \frac{1}{2} \theta^2
\]
is well defined for any $\theta\leq 0$, since $c_{\theta}^\eq(\cdot) \leq c_{0}^\eq(\cdot)$. Moreover, we have
\begin{equation}\label{e:strict:mononton:MinEnergy}
  m'(\theta) = \theta \int_0^\infty W(x)^2 c_{\theta}^\eq(x) \dx{x} + \theta < 0 \qquad\text{for } \theta <0 \ .
\end{equation}
Let $c\in L^1_W(\R^+)$ with $\cH(c| c_0^\eq) < \infty$, set $\eta  = \rho- \int_0^\infty W(x) c(x) \dx{x}$ and define the function $h:[0,1]\to \R$ by
\begin{equation*}
  h_\theta(\lambda) = \cG\bra*{ (1-\lambda) c_\theta^\eq + \lambda c, (1-\lambda) \theta + \lambda \eta} .
\end{equation*}
By convexity of $\cG$, also $h_\theta$ is convex, and we have for all $\lambda \in (0,1]$ the secant inequality
\begin{equation}\label{e:MinEnergy:Secant}
  \frac{h_\theta(\lambda) - h_\theta(0)}{\lambda} \leq \cG(c,\eta) - \cG(c_{\theta}^\eq,\theta) .
\end{equation}
We can let $\lambda\to 0$ and obtain for the relative entropy
\begin{align*}
  \MoveEqLeft{\frac{1}{\lambda} \bra[\Big]{\cH\bra*{ (1-\lambda) c_\theta^\eq + \lambda c | c_0^\eq} - \cH\bra*{c_\theta^\eq | c_0^\eq} } }\\
  &= \frac{1}{\lambda} \int \bra*{ \Psi\bra*{ \frac{(1-\lambda) c_\theta^\eq + \lambda c}{c_0^\eq}} - \Psi\bra*{ \frac{c_\theta^\eq}{c_0^\eq}} } c_0^\eq(x) \dx{x} \\
  &\overset{\lambda\to 0}{\to} \int \Psi'\bra*{ \frac{c_\theta^\eq(x)}{c_0^\eq(x)} } \bra*{ c(x) - c_\theta^\eq(x)} \dx{x} = \theta \int W(x) \bra[\big]{c(x) - c_\theta^\eq(x)} \dx{x} \  .
\end{align*}
Likewise, by using once more the constraint, we have 
\begin{align*}
  \frac{1}{\lambda} \bra*{ \frac{1}{2} \bra*{ (1-\lambda) \theta + \lambda \eta}^2 - \frac{1}{2} \theta^2} \overset{\lambda\to 0}{\to} \theta ( \eta - \theta) =\theta\bra*{  \rho - \int W(x) c(x) \dx{x} - \theta } \ .
\end{align*}
Hence, the estimate~\eqref{e:MinEnergy:Secant} becomes after passing to the limit $\lambda \to 0$
\begin{equation}\label{e:MinEnergy:Variation1}
 \theta \bra[\bigg]{ \rho - \int W(x) c_\theta^\eq(x) \dx{x}  - \theta }  \leq \cG(c,\eta) - \cG(c_{\theta}^\eq,\theta).
\end{equation}
If $\rho \leq \rho_s = \int W(x) c_0^\eq(x) \dx{x}$, then we can choose $\theta = \theta_{\eq}(\rho)$ and obtain that the LHS of~\eqref{e:MinEnergy:Variation1} is zero and the desired inequality. Now, if $\rho > \rho_s$, we show that for any $\eps>0$ there exists $c \in L^1_W(\R^+)$ and $\theta\in \R$ satisfying the constraint and $\cG(c,\theta) - \cG(c_0^\eq,0) < \eps$. To construct $c$, we use~\eqref{F1:1} combined with~\eqref{G1} of Assumption~\ref{ass:VW}, in the integrated form
\[
  \abs{V(x)} + C \leq \delta W(x) \qquad\text{for } x \geq x_\delta .
\]
We define $y_\delta \geq x_\delta$ such that
\begin{equation}\label{e:MinEnergy:Construct}
  \int_0^{x_\delta} W(x) c_0^\eq(x) \dx{x} + \int_{x_\delta}^{y_\delta} W(x) \dx{x} = \rho
\end{equation}
and set
\[
  c(x) =
  \begin{cases}
    c_0^\eq(x) &, x\in [0,x_\delta)  \\
    1 &, x\in [x_\delta, y_\delta) \\
    0 &, x \geq y_\delta
  \end{cases}.
\]
Then, by construction $\int W(x) c(x) \dx{x} = \rho$, we can calculate
\begin{align*}
  \cH(c| c_0^\eq ) &=\int_{x_\delta}^{y_\delta} \Psi\bra*{ \frac{1}{c_0^\eq(x)}} c_0^\eq(x) \dx{x} + \int_{y_\delta}^\infty c_0^\eq(x) \dx{x} \\
  &= \int_{x_\delta}^{y_\delta} \bra*{ V(x) - 1 } \dx{x} + \int_{x_\delta}^\infty c_0^\eq(x) \dx{x} \\
  &\leq \delta \int_{x_\delta}^{y_{\delta}} \bra*{ W(x) + C } + \int_{x_\delta}^\infty c_0^\eq(x) \dx{x} .
\end{align*}
Since $c_0^\eq \in L^1_W(\R^+)$ by assumption, we obtain that the second integral goes to zero as $x_{\delta} \to \infty$, which is the case as $\delta \to 0$. The first integral is bounded by $\delta C$ for some $C>0$ by the definition of~$y_\delta$ in~\eqref{e:MinEnergy:Construct} and the growth assumption of $W$~\eqref{G1} in Assumption~\ref{ass:VW}.

To prove uniqueness of the minimizer we use the inequality~\eqref{e:MinEnergy:Secant} again. But, now we calculate $h'_\theta(\lambda)$ for $\lambda \in (0,1)$, which is given by
\begin{align*}
  h_\theta'(\lambda) &= \int \Psi'\bra*{ \frac{(1-\lambda) c_\theta^\eq + \lambda c}{c_0^\eq}} \bra*{ c- c_\theta^\eq } \dx{x} + \bra[\big]{(1-\lambda) \theta + \lambda \eta }\bra*{\eta - \theta} = I + II .
\end{align*}
We can bound the first term, by noting that $\Psi'(r) = \log r$ and the elementary inequality
\begin{align*}
  \MoveEqLeft{\log\bra*{\frac{(1-\lambda) c_\theta^\eq + \lambda c}{c_0^\eq}} \bra*{ c- c_\theta^\eq }} \\
  &\leq \frac{(1-\lambda) c_\theta^\eq + \lambda c}{\lambda} \log\bra*{\frac{(1-\lambda) c_\theta^\eq + \lambda c}{c_0^\eq}} - \frac{c_{\theta}^{\eq}}{\lambda} \log\bra*{ \frac{(1-\lambda) c_\theta^\eq}{c_0^\eq}} .
\end{align*}
We obtain the bound
\begin{align*}
  I \le \frac{\cH\bra[\big]{ (1-\lambda) c_\theta^\eq + \lambda c \, | \, c_0^\eq}}{\lambda} &+ \int \frac{ (1-\lambda) c_\theta^\eq + \lambda c - c_0^\eq}{\lambda} \dx{x} \\
  & -  \int \frac{\log(1-\lambda) + \theta W}{\lambda}  c_{\theta}^{\eq} \dx{x}  \ < \  \infty ,
\end{align*}
since $c,c_\theta^\eq \in L^1_W(\R^+) \subset L^1(\R^+)$ due to $W(x) \geq W(0)>0$ by Assumption~\ref{ass:VW} and $\theta\leq 0$ and convexity of the relative entropy. Likewise, it holds by strict monotonicity of the function $\lambda \mapsto a \log\bra*{  b+ \lambda a}$ as long as $b+\lambda a >0$ the lower bound
\begin{align*}
  \log\bra*{ \frac{(1-\lambda) c_\theta^\eq + \lambda c}{c_0^\eq}} \bra*{ c- c_\theta^\eq } \geq \log\bra*{\frac{c_\theta^\eq}{c_0^\eq}}\bra*{ c- c_\theta^\eq } = \theta W \, \bra*{ c- c_\theta^\eq } ,
\end{align*}
where the inequality is strict as long as $c\ne c_\theta^\eq$. Hence, we obtain for any $c\ne c_\theta^\eq$
\[
  h_\theta'(\lambda) > \theta \int W(x) \bra*{ c- c_\theta^\eq(x)} \dx{x} + \bra*{(1-\lambda) \theta + \lambda \eta }\bra*{\eta - \theta} = \lambda \bra*{\theta - \eta}^2 \geq 0 ,
\]
by using the constraints $\eta + \int W c = \rho = \theta + \int W c_\theta^\eq$. The uniqueness from the minimizer follows now from~\eqref{e:MinEnergy:Secant} and the mean value theorem.
\end{proof}

Lemma~\ref{lem:CharactMinEnergy} justifies that the normalized constrained free energy $\cF_\rho$ from~\eqref{e:def:FreeEnergyNorm} is well-defined for any $c\in L^1_W(\R^+)$.

\subsection{The energy--energy-dissipation principle}
Let $(c(\cdot,t),\theta(t)), \ t> 0,$ be the solution to \eqref{A1}, \eqref{B1}, \eqref{D1}, \eqref{E1}. We wish to show that the function $t\ra \cF_\rho(c(t,\cdot))$ is decreasing. For the calculations it is convenient to rewrite the set of equations \eqref{A1}, \eqref{B1}, \eqref{E1} in the form
\begin{equation}\label{e:Evo:W2GF}
  \partial_t c(x,t) = \partial_x\bra*{ c(x,t) \; \partial_x \log\frac{c(x,t)}{c_{\theta(t)}^\eq(x)}} \quad\text{with b.c.}\quad \log \frac{c(0,t)}{c_{\theta(t)}^\eq(0)} = 0
\end{equation}
Then, we obtain by formal integration by parts using the above boundary condition
\begin{multline} \label{AX3}
\pderiv{}{t} \cF(c(t,\cdot)) \ = \ \int_0^\infty \frac{\pa c(x,t)}{\pa t}\bra*{\log c(x,t)+V(x)} \dx{x}+\theta(t)\frac{\dx\theta(t)}{\dx{t}} \\
= \ \int_0^\infty \frac{\pa c(x,t)}{\pa t}\log\bra*{\frac{c(x,t)}{c_{\theta(t)}^\eq(x)}} \dx{x} \ = \ -\int_0^\infty\bra*{ \partial_x \log \frac{c(x,t)}{c_{\theta(t)}^\eq(x)}}^2 c(x,t) \dx{x} \ .
\end{multline}
We wish to justify this calculation only under the assumption that the initial data satisfies \eqref{AJ2}.
\begin{lemma}\label{lem:EED}
Let $(c(\cdot,t),\theta(t)), \ 0<t\le T_0,$ be the solution of \eqref{A1}, \eqref{B1}, \eqref{D1}, \eqref{E1} with initial data satisfying \eqref{AJ2} constructed in Proposition~\ref{prop:ShortTimeExistence}.  Then for any $t_0$ satisfying $0<t_0<T_0$, the function  $(t_0, T_0) \ni t\ra \cF(c(t,\cdot))$ is continuous, decreasing and satisfies
\begin{equation} \label{AY3}
\frac{\dx{}^+}{\dx{t}} \cF\bra*{c(t) } \leq - \cD\bra*{ c(t),\theta(t) } =  -\int_0^\infty\bra*{ \partial_x \log \frac{c(x,t)}{c_{\theta(t)}^\eq(x)}}^2 c(x,t) \dx{x} \ ,
\end{equation}
where $\frac{\dx{}^+}{\dx{t}} f(t) = \limsup_{\delta \to 0} \frac{f(t+\delta)- f(t)}{\delta}$.
\end{lemma}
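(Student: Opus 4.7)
The plan is to rigorously justify the formal calculation \eqref{AX3} by first upgrading the solution to classical regularity, then approximating the free energy by a spatial cutoff, differentiating in time, and passing to the limit to obtain an integrated energy--dissipation identity. For any $t_0\in(0,T_0)$, Lemma~\ref{lem:stability:sup} gives a uniform $L^\infty$ bound on $c(\cdot,t)$ for $t\in[t_0,T_0]$, while Theorem~\ref{thm:C1} gives $\theta\in\cC^1$ and $c(\cdot,t)\in\cC^1$ in $x$. Standard parabolic Schauder estimates applied to the linear equation with the $\cC^1$ drift $b(x,t)=\theta(t)W'(x)-V'(x)$ then upgrade $c$ to $\cC^{2,1}$ in the interior with $c>0$ by the strong maximum principle, so that $\log c$ is pointwise classical. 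The constraint $\theta(t)+\int W c\dx{x}=\rho$ may be differentiated to give $\theta'(t)=-\int_0^\infty W(x)\,\partial_t c(x,t)\dx{x}$, the interchange being justified by the tightness of $Wc$ from Lemma~\ref{lem:tight} combined with interior bounds on $\partial_t c$.

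Choose a smooth cutoff $\phi_L:[0,\infty)\to[0,1]$ with $\phi_L\equiv 1$ on $[0,L]$, $\phi_L\equiv 0$ on $[2L,\infty)$, and $|\phi_L'|\le C/L$, and define
\begin{equation*}
\cF_L(c,\theta) := \int_0^\infty \phi_L(x)\bigl(c\log c - c + V c\bigr)\dx{x}+\tfrac{1}{2}\theta^2+\cG_0.
\end{equation*}
Using the decomposition $\log c + V = \log(c/c_\theta^\eq) + \theta W$, the differentiated constraint to cancel the $\theta\theta'$ term up to a tail, and \eqref{e:Evo:W2GF} to substitute $\partial_t c = \partial_x\bigl(c\,\partial_x\log(c/c_\theta^\eq)\bigr)$, one integration by parts yields
\begin{equation*}
\frac{\dx{}}{\dx{t}}\cF_L(c(t),\theta(t)) = -\int_0^\infty \phi_L\, c\Bigl(\partial_x\log\frac{c}{c_\theta^\eq}\Bigr)^2\dx{x} + E_L(t),
\end{equation*}
where the $x=0$ boundary term drops by the Dirichlet condition $\log(c(0,t)/c_{\theta(t)}^\eq(0))=0$, the $x=\infty$ one drops by compact support of $\phi_L$, and
\begin{equation*}
E_L(t) = -\int_0^\infty \phi_L'\, c \log\tfrac{c}{c_\theta^\eq}\partial_x\log\tfrac{c}{c_\theta^\eq}\dx{x} + \theta(t)\int_0^\infty(\phi_L-1)W\partial_t c\dx{x}.
\end{equation*}

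Integrating between $t_1<t_2$ in $(t_0,T_0)$ and sending $L\to\infty$: the left-hand side converges to $\cF(c(t_2))-\cF(c(t_1))$ by the continuity of $\cG$ on $\tilde U_{M,N}$ from Lemma~\ref{lem:CharactMinEnergy}, combined with the Lemma~\ref{lem:stability:sup} equicontinuity and $L^\infty$ bounds and the Lemma~\ref{lem:tight} tightness of $Wc$; the dissipation term converges monotonically to $\int_{t_1}^{t_2}\cD(c(s),\theta(s))\dx s\in[0,\infty]$; and $\int_{t_1}^{t_2}E_L(s)\dx s\to 0$. The $\theta$-coupling piece of $E_L$ vanishes directly by tightness of $W\partial_t c$ on $[L,\infty)$, while the cutoff error is handled through a Cauchy--Schwarz absorption in which the term quadratic in $\partial_x\log(c/c_\theta^\eq)$ is bounded by a small multiple of the dissipation localized to $[L,2L]$, and the residual term quadratic in $\log(c/c_\theta^\eq)$ vanishes because tightness together with the $L^\infty$ bound forces $c\log^2(c/c_\theta^\eq)$ to decay on $[L,2L]$ as $L\to\infty$. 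The resulting integrated inequality $\cF(c(t_2))-\cF(c(t_1))\le-\int_{t_1}^{t_2}\cD(c(s),\theta(s))\dx s$ immediately yields monotonicity of $t\mapsto\cF(c(t))$, its continuity on $(t_0,T_0)$, and the one-sided derivative estimate \eqref{AY3} via Fatou's lemma and lower semicontinuity of $\cD$. The hard part is the uniform in $L$ control of $E_L$, since the dissipation $\cD$ is not a priori finite; Lemma~\ref{lem:tight} is precisely what tames the cutoff and coupling tails.
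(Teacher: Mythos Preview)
Your approach has a genuine gap: you invoke Lemma~\ref{lem:tight} repeatedly, but its hypothesis requires $\theta(t)\le\theta_\infty<0$ for all $t$, which is \emph{not} available here. At this stage of the argument $\theta(\cdot)$ may well be positive (Proposition~\ref{prop:ShortTimeExistence} only guarantees continuity and a bound $\theta(t)\le\rho$), and indeed Lemma~\ref{lem:tight} is only used later in §\ref{s:Conv} under a contradiction hypothesis. Similarly, ``tightness of $W\partial_t c$'' is nowhere established and does not follow from Lemma~\ref{lem:tight}, which concerns $Wc$ only. Your Cauchy--Schwarz absorption for the cutoff term is also fragile: since $|\log(c/c_\theta^\eq)|$ grows like $|\theta|W$ at infinity, the residual $\int_L^{2L}(\phi_L')^2 c\log^2(c/c_\theta^\eq)\dx{x}$ is controlled by $L^{-2}\int_L^{2L}W^2 c\dx{x}$, and only $\int Wc\dx{x}$ is finite a priori.

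The paper avoids all of this by modifying the $\theta$-part of the approximating functional to match the spatial cutoff: it sets
\[
\cG_{\ve,L}(t)=\ve\text{-to-}\infty\text{ entropy with }\phi(\cdot/L)+\tfrac12\Bigl(\rho-\int_\ve^\infty\phi(x/L)W(x)c(x,t)\dx{x}\Bigr)^2,
\]
rather than keeping $\tfrac12\theta(t)^2$. Upon differentiating, the coupling error then appears not additively as $\theta\int(\phi_L-1)W\partial_t c\dx{x}$, but \emph{multiplied} by the small factor $I^3_{\ve,L}(t)=\int_0^\ve Wc+\int_\ve^\infty(1-\phi(\cdot/L))Wc$, which tends to zero for each fixed $t$ simply because $\int_0^\infty Wc<\infty$ (the conservation law \eqref{D1}); no uniform tightness is needed. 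The remaining factor $I^4+I^5$ (the cutoff flux terms) only needs to be \emph{bounded} uniformly in $L$, and this is obtained by a further integration by parts reducing everything to $\int Wc$ again. The same integration-by-parts trick handles the cutoff commutator $I^2$. So the key structural difference is that the paper's choice of $\cG_{\ve,L}$ turns the dangerous tail coupling into a product of a vanishing factor times a bounded one.
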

\begin{proof}
By Lemma~\ref{lem:stability:sup}, we have for any $t_0>0$ that $\sup c(\cdot,t)\leq N$ and hence also $\cF(c(\cdot,t))$ is well-defined for $t\in (t_0,T_0)$.
We are going to use a cut-off at $0$ and $\infty$. Let $\phi:[0,\infty)\ra\R^+$ be a $\cC^\infty$ function which has the property that $\phi(x)=1, \ 0\le x\le 1,$ and $\phi(x)=0, \ x\ge 2$. For $0<\ve\le 1$ and $L\ge 1$ we define a function $\cG_{\ve,L}(t), \ t>t_0,$  by
\begin{equation} \label{AZ3}
\cG_{\ve,L}(t) \ = \ \int_{\ve}^\infty \phi(x/L) \Psi\bra*{ \frac{c(x,t)}{c_0^\eq(x)}} c_0^\eq(x) \dx{x} +\frac{1}{2}\bra*{\rho-\int_\ve^\infty\phi(x/L)W(x)c(x,t) \dx{x} }^2 \ .
\end{equation}
Then the function $t\ra \cG_{\ve,L}(t), \ t>t_0,$ is $\cC^1$. To calculate its time derivative, we use the identity
\begin{align*}
  \MoveEqLeft{\int_\eps^\infty \phi(x/L) \underbrace{\Psi'}_{\mathclap{=\log}}\bra*{\frac{c_{\theta}^\eq(x)}{c_0^\eq(x)}} \partial_t c(x,t) \dx{x} } \\
  &= \bra*{ \varrho - \int_0^\infty W(x) c(x,t) \dx{x}} \int_{\eps}^\infty \phi(x/L) W(x) \partial_t c(x,t) \dx{x}.
\end{align*}
Now, we calculate using this identity
\begin{align}
\pderiv{}{t} \cG_{\ve,L}(t) &= \int_{\eps}^\infty \phi(x/L) \Psi'\bra*{ \frac{c(x,t)}{c_0^\eq(x)}} \partial_t c(x,t) \dx{x} \notag \\
&\quad -  \bra*{\rho-\int_\ve^\infty\phi(x/L)W(x)c(x,t) \dx{x} } \int_\eps^\infty \phi(x/L) W(x) \partial_t c(x,t) \dx{x} \notag \\
&= \int_\eps^\infty \phi(x/L) \log\bra*{\frac{c(x,t)}{c_{\theta(t)}^\eq(x)}} \partial_t c(x,t)  \dx{x} \notag \\
&\quad -  \bra*{\int_0^\eps \!\!W(x) c(x,t) \dx{x} + \int_\ve^\infty \!\!\bra*{1-\phi(x/L)}W(x)c(x,t) \dx{x}} \times \\
 &\qquad\qquad \times \int_\eps^\infty \phi(x/L) W(x) \partial_t c(x,t)  \dx{x} \notag \\
&= - \cD_{\eps,L}(t) - I^1_{\ve,L}(t)- I^2_{\ve,L}(t)+I^3_{\ve,L}(t)\left[I^4_{\ve,L}(t)+I^5_{\ve,L}(t)\right]  \ , \label{BA3}
\end{align}
where the last step follows by using $\partial_t c = \partial_x \bra*{ c \partial_x \log\frac{c}{c_{\theta}^\eq}}$ and integration by parts with $\cD_{\eps,L}(t)$, $I^1_{\ve,L}(t)$, $I^2_{\ve,L}(t)$, $I^3_{\ve,L}(t)$, $I^4_{\ve,L}(t)$, $I^5_{\ve,L}(t)$ are given by the formulas
\begin{align*}
  \cD_{\eps,L}(t) &= \int_\ve^\infty \phi(x/L) \bra*{ \partial_x \log \frac{c(x,t)}{c_{\theta(t)}^\eq}}^2 c(x,t) \dx{x}\\
I^1_{\ve,L}(t) &=  \log\bra*{\frac{c(\ve,t)}{c_{\theta(t)}^\eq(\ve)}} \  \partial_x  \log\bra*{\frac{c(\ve,t)}{c_{\theta(t)}^\eq(\ve)}} \ c(\eps,t)  \ , \\
I^2_{\ve,L}(t) &= \frac{1}{L}\int_\ve^\infty \phi'(x/L) \log \bra*{\frac{c(x,t)}{c_{\theta(t)}^\eq(x)}} \ \partial_x  \log \bra*{\frac{c(x,t)}{c_{\theta(t)}^\eq(x)}} \ c(x,t) \dx{x}  \ ,  \\
I^3_{\ve,L}(t) &=  \int_0^\ve W(x)c(x,t) \dx{x}+\int_\ve^\infty [1-\phi(x/L)]W(x)c(x,t) \dx{x} \ , \\
I^4_{\ve,L}(t) &= W(\ve) \ \partial_x  \log\bra*{\frac{c(\ve,t)}{c_{\theta(t)}^\eq(\ve)}} \ c(\eps,t) \ ,  \\
I^5_{\ve,L}(t) &= \int_\ve^\infty \bra*{\phi(x/L)W(x)}' \ \partial_x \log\bra*{ \frac{c(x,t)}{c_{\theta(t)}^\eq(x)}} \ c(x,t)  \dx{x} \ .
\end{align*}
To prove \eqref{BA3} we assume first that  $b(x,t)=\partial_x \log c_{\theta(t)}^\eq(x) = \theta(t)W'(x)-V'(x)$ has sufficient regularity so that $c(\cdot,\cdot)$ is a classical solution to \eqref{A1}, \eqref{E1}.  It is clear in this case that the function  $t\mapsto \cG_{\ve,L}(t)$ is $\cC^1$, and upon using \eqref{D1} that
\begin{align} \label{BG3}
\pderiv{}{t} \cG_{\ve,L}(t) &\ = \ \int_\ve^\infty \phi(x/L)\frac{\pa c(x,t)}{\pa t}\log \bra*{\frac{c(x,t)}{c_{\theta(t)}^\eq(x)} } \dx{x} \\
&\qquad - \ I^3_{\ve,L}(t)\int_\ve^\infty \phi(x/L)W(x) \pa_t c(x,t) \dx{x} \ . \notag
\end{align}
From \eqref{e:Evo:W2GF} we see upon integration by parts in $x$, that the first term on the RHS of~\eqref{BG3} is identical to the sum of the first term on the RHS of \eqref{BA3} minus  $I^1_{\ve,L}(t)$ and $I^2_{\ve,L}(t)$. Again integrating by parts using \eqref{e:Evo:W2GF} we see that the coefficient of $I^3_{\ve,L}(t)$ in \eqref{BG3} is equal to $-[I^4_{\ve,L}(t)+I^5_{\ve,L}(t)]$. Once we have the formula \eqref{BA3} we can remove the extra regularity assumption on $b(\cdot,\cdot)$ since by Lemma~\ref{lem:reg:C1} we see that the RHS of \eqref{BA3} is continuous in $t$ with just the assumption \eqref{C2}.

We observe next from \eqref{AZ3}  that $\lim_{\ve\ra 0} \cG_{\ve,L}(t)=\cG_{0,L}(t)$ exists and is given by the integral on the RHS of \eqref{AZ3} with $\ve=0$. Similarly  Theorem~\ref{thm:C1} implies that we may let $\ve\ra 0$ in \eqref{BA3} to conclude that the function $t\mapsto \cG_{0,L}(t), \ t>t_0,$ is $\cC^1$ and its derivative is given by the RHS of \eqref{BA3} with $\ve=0$.  Note from \eqref{E1} that $\lim_{\ve\ra 0} I^1_{\ve,L}(t)=I^1_{0,L}(t)=0$. Let $0<t_0<t_1<t_2$ such that $c(\cdot,t)$ is a solution for $t\in (t_1,t_2)$ as constructed in Proposition~\ref{prop:ShortTimeExistence}. We shall show that for some constant $C$,
\begin{equation}\label{BH3}
\sup_{t_1\le t\le t_2} |I^j_{0,L}(t)| \ \le C \ , \quad j=2,3,4,5, \ L\ge 1 \ ,
\end{equation}
and also that
\begin{equation}\label{BI3}
\lim_{L\ra\infty} I^j_{0,L}(t) \ = \ 0 \ , \quad j=2,3, \  t_1<t<t_2 \ .
\end{equation}
The result  follows from \eqref{BA3} with $\ve =0$ and \eqref{BH3}, \eqref{BI3} since $\cG(c(\cdot,t),\theta(t)) =\lim_{L\ra\infty} \cG_{0,L}(t)$ and $\cD(c(\cdot,t),\theta(t)) = \lim_{L\ra\infty}\cD_{0,L}(t)$.

To bound $I^2_{0,L}(t), \ t_1\le t\le t_2,$ uniformly as $L\ra\infty$ we first note that for some constant $C_0$ and the support restriction for $\phi$, we have uniformly in $L$ for any $x\in [L,2L]$
\[
  \sup_{x\in[ L, 2L]}\frac{\phi(x/L)}{L^2} \leq C_0
\]
by the uniform bound on $a''$ in Assumption~\ref{ass:VW}. Now, use integration by parts such that
\begin{align*}
I^2_{0,L}(t) &= \frac{1}{L}\int \phi'(x/L) \log \bra*{\frac{c(x,t)}{c_{\theta(t)}^\eq(x)}} \ \bra*{ \partial_x c(x,t) - c(x,t) \partial_x \log c_{\theta(t)}^\eq(x)} \dx{x} \\
&= - \frac{1}{L^2} \int \phi''(x/L)   \log  \bra*{\frac{c(x,t)}{c_{\theta(t)}^\eq(x)}} c(x,t) \dx{x} - \int \phi'(x/L) \partial_x c(x,t) \dx{x} \\
&= - \frac{1}{L^2} \int \phi''(x/L)  \bra*{ \log  \bra*{\frac{c(x,t)}{c_{0}^\eq(x)}} -  c(x,t) + c_0^{\eq}(x)} \dx{x} \\
&\quad + \frac{\theta(t)}{L^2} \int \phi''(x/L) W(x) c(x,t) \dx{x} + \frac{1}{L^2} \int \phi''(x/L) c_0^\eq(x,t) \dx{x} \\
&\leq C_0 \int_L^{2L} \Psi\bra*{\frac{c(x,t)}{c_{0}^\eq(x)}} c_0^\eq(x) \dx{x} +
C_0  \int_L^{2L} W(x) c(x,t) \dx{x} +  C_0 \int_L^{2L} c_0^{\eq} \dx{x} .
\end{align*}
The first of the above integral is bounded by $\cH(c|c_0) \leq \cG(c,\theta)$, the second by the conservation law~\eqref{D1} and the third by the integrability condition of Assumption~\ref{ass:VW}, whence we see that \eqref{BH3}, \eqref{BI3} hold for $j=2$. Evidently  \eqref{D1} implies that \eqref{BH3}, \eqref{BI3} hold for $j=3$, and Theorem~\ref{thm:C1} implies that \eqref{BH3} holds for $j=4$.

We are left then to prove \eqref{BH3} for $j=5$. First, since $\log\bra*{ \frac{c(0,t)}{c_{\theta(t)}^\eq(0)}}=0$, we can integrate by parts and obtain
\[
  I_{0,L}^5(t) = - \int \underbrace{\bra*{ \bra[\big]{ \phi(x/L) W(x)}'' - \bra[\big]{ \phi(x/L) W(x)}'  \; \partial_x \log c_{\theta(t)}^\eq(x)}}_{=: I(x)} c(x,t) \dx{x} .
\]
Due to the growth bounds~\eqref{F1:0}, \eqref{F1:1}, \eqref{F1:2}, \eqref{G1} of Assumption~\ref{ass:VW}, we obtain
\[
\abs{\partial_x \log c_{\theta(t)}^\eq(x)} = \abs{- V'(x) + \theta(t) W'(x)} \leq C_0 \bra*{1+\abs{\theta(t)}} W'(x).
\]
Then, the prefactor $I$ in front of $c(x,t)$ is bounded by
\begin{align*}
 I(x) &\leq \norm{\phi''}_\infty \frac{W(x)}{L^2} + 2\norm{\phi'} \frac{W'(x)}{L} + W''(x) \\
 &\quad + C_0 \bra*{ \norm{\phi'}_\infty \frac{W(x)}{L} + W'(x)} \bra*{1+\abs{\theta(t)}} W'(x) ,
\end{align*}
which can further estimated by $W(x)$ with the help of Assumption~\ref{ass:VW}.

Hence, the $I_{0,L}^5(t)$ is bounded with the help of \eqref{D1} by a constant depending on $\norm{\theta}_\infty$, uniformly in $L\geq 1$ and $t_1 \leq t \leq t_2$.
\end{proof}
\begin{remark}\label{rem:GlobalExistence}
Lemma~\ref{lem:CharactMinEnergy} and Lemma~\ref{lem:EED} combined with the local existence result Proposition~\ref{prop:ShortTimeExistence} imply global existence of solutions to \eqref{A1}, \eqref{B1}, \eqref{D1}, \eqref{E1} with initial data satisfying \eqref{AJ2}. The reason is that Lemma~\ref{lem:EED} implies that $\theta(t), \ t>0,$ is bounded by a constant depending only on~$\theta(0)$ and~$\rho$.
\end{remark}
The last ingredient for the proof of the convergence to equilibrium is a dual variational characterization of the dissipation, which allows to prove it lower semicontinuity.
\begin{lemma}\label{lem:Dissipation:lsc}
  Let $C_1,C_2$ be positive constants and the convex set $\mathcal{X}=\{(c,\theta) \in L^1_W(\R^+) \times\R \ : \cG(c,\theta)\le C_1, \ \norm{c}_{L^1_W(\R^+)} \le C_2 \ \}$. Then the domain of $\cD$ as defined in~\eqref{AY3} extends from $\cC^1(\R^+) \times \R^+ $ to $\cX$ by defining
  \begin{equation}\label{e:def:Dissipation:variational}
    \cD(c,\theta) = \sup_{\phi\in \cC_0^\infty(\R^+)} \int \cD^*_\theta[\phi](x) \, c(x) \dx{x} ,
  \end{equation}
  where
  \[
    \cD^*_\theta[\phi](x) = 2 \partial_x\phi(x) + 2 \phi(x) \; \partial_x \log c_\theta(x) -  \abs{\phi(x)}^2.
  \]
  Moreover, $\cD$ by this definition is sequentially lower semicontinuous on $\cX$.
\end{lemma}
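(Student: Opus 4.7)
The strategy is to verify the variational identity on smooth densities and then exploit the affine-in-$(c,\theta)$ structure of the dual representation to obtain sequential lower semicontinuity essentially for free.

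\textbf{Step 1 (variational formula on smooth densities).} For $(c,\theta)\in \cC^1(\R^+)\times\R$ with $c$ strictly positive and with boundary condition $c(0)=c_\theta^\eq(0)$, set $u:=\partial_x\log(c/c_\theta^\eq)$. For test functions $\phi\in\cC_0^\infty(\R^+)$ with support compactly contained in $(0,\infty)$, integration by parts (with no boundary contribution, since $\phi$ vanishes near $0$ and at infinity) gives the identity
\begin{equation*}
  \int_0^\infty\cD^*_\theta[\phi](x)\,c(x)\dx{x}=-\int_0^\infty\bra*{2\phi u+\phi^2}c\dx{x}=\int_0^\infty u^2 c\dx{x}-\int_0^\infty(\phi+u)^2 c\dx{x}.
\end{equation*}
Thus $\int\cD^*_\theta[\phi]c\le\int u^2 c$, with equality formally at $\phi=-u$. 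Next I would approximate $-u$ by a sequence $\phi_n\in\cC_0^\infty(\R^+)$ obtained by truncating $-u$ to the interval $[1/n,n]$ and mollifying. Since the classical value $\int u^2 c$ is assumed finite, dominated convergence gives $\int(\phi_n+u)^2 c\to 0$, hence the supremum in~\eqref{e:def:Dissipation:variational} equals $\int u^2 c$, matching the original definition~\eqref{AY3}.

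\textbf{Step 2 (sequential lower semicontinuity).} For each fixed $\phi\in\cC_0^\infty(\R^+)$ with $\supp\phi\subset(0,L]$, the functional
\begin{equation*}
  F_\phi(c,\theta)=\int_0^\infty\bra[\big]{2\partial_x\phi(x)+2\phi(x)(\theta W'(x)-V'(x))-\phi(x)^2}c(x)\dx{x}
\end{equation*}
is affine in $(c,\theta)$. By Assumption~\ref{ass:VW}~(a), the functions $V',W'$ are bounded on $[0,L]$, so the integrand is a bounded continuous function of $x$ of compact support which depends continuously on $\theta$, uniformly in $x\in[0,L]$. Let $(c_n,\theta_n)\subset\cX$ with $\theta_n\to\theta$ in $\R$ and $c_n\rightharpoonup c$ weakly (in the sense of integrating against bounded continuous functions of compact support, which is what the eventual application of this lemma will supply via Lemma~\ref{lem:stability:sup} and Lemma~\ref{lem:tight}). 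Then $\cD^*_{\theta_n}[\phi]\to\cD^*_\theta[\phi]$ uniformly on $[0,L]$, and weak convergence of $c_n$ on the compact set $[0,L]$ yields $F_\phi(c_n,\theta_n)\to F_\phi(c,\theta)$. So each $F_\phi$ is sequentially continuous on $\cX$, and therefore $\cD=\sup_\phi F_\phi$, being a pointwise supremum of continuous functionals, is sequentially lower semicontinuous on $\cX$.

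\textbf{Step 3 (compatibility with $\cX$).} I would briefly note that the two bounds $\cG(c,\theta)\le C_1$ and $\|c\|_{L^1_W}\le C_2$ make this notion of convergence non-vacuous: the first forces $\theta^2\le 2C_1$ and (combined with the control of $\int V c$ via Assumption~\ref{ass:VW}~(c) and the $L^1_W$ bound) yields a uniform entropy bound $\int c\log c\le C$, whence Dunford--Pettis gives weak sequential compactness of $\cX$ in $L^1_W(\R^+)\times[-\sqrt{2C_1},\sqrt{2C_1}]$.

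The main obstacle is the approximation in Step~1: one has to produce test functions $\phi_n\in\cC_0^\infty(\R^+)$ vanishing near $x=0$ whose negatives approximate $u$ well enough in the $L^2(c\dx{x})$ sense, while $c(0)=c_\theta^\eq(0)>0$ means $u$ need not vanish at $x=0$ and the cut-off near the boundary must be chosen carefully so that the error $\int(\phi_n+u)^2 c$ near $0$ remains controlled; this is the standard density issue and is handled by the above truncation together with the integrability of $u^2 c$.
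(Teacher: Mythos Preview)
Your proof is correct and follows the same overall strategy as the paper: establish the dual identity on smooth densities by integration by parts, then read off lower semicontinuity from the representation as a supremum of weakly continuous functionals. Step~2 is identical to the paper's argument (the paper also just observes $\cD_\theta^*[\phi]\in\cC_0(\R^+)$ and interchanges $\liminf$ and $\sup$).

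The one genuine difference is in the approximation of Step~1. The paper does not cut off $u$ directly; instead it builds an interpolated density
\[
  c_\ve(x)=c(x)\int_0^x\psi_\ve(y-\ve)\dx{y}+c_\theta^\eq(x)\Bigl(1-\int_0^x\psi_\ve(y-\ve)\dx{y}\Bigr),
\]
which agrees with $c_\theta^\eq$ for $x$ near $0$ and with $c$ for $x\ge 2\ve$, and then sets $\phi_\ve=-\partial_x\log(c_\ve/c_\theta^\eq)$. The boundary condition $c(0)=c_\theta^\eq(0)$ together with the $\cC^1$ hypothesis is used to verify $\phi_\ve(0)=0$ and, more delicately, a uniform $L^\infty$ bound $|\phi_\ve(x)|\le C$ on $[0,2\ve)$. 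This buys pointwise control of the error on the boundary layer, not merely $L^2(c\,\dx{x})$ control. Your route---truncate $-u$ to $[1/n,n]$ and mollify, then invoke integrability of $u^2 c$ so that $\int_0^{1/n}u^2 c\to 0$---is more elementary and entirely sufficient for the stated conclusion; it also handles the case $\int u^2 c=\infty$ cleanly via monotone convergence.

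One small terminological slip: $F_\phi$ is not jointly affine in $(c,\theta)$ because of the bilinear term $2\theta\int\phi W' c$; but your actual continuity argument (uniform convergence of $\cD_{\theta_n}^*[\phi]$ on $\supp\phi$ plus weak convergence of $c_n$) is correct and does not use affinity. Step~3 is not needed for the lemma itself; the paper establishes the corresponding compactness of $\cX$ separately when applying the LaSalle principle.
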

\begin{proof}
From the definition~\eqref{e:def:Dissipation:variational}, it is clear that we can use test function $\phi \in \cC_0^1(\R^+)$. Let us first assume, that $c\in \cC^1(\R^+)$ with $c(0) = c_\theta^\eq(0)$. Then, we can integrate by parts in the first term using $\phi(0)=0$ and obtain
\begin{align}\label{e:Dissipation:variational:IntByParts}
  \int \cD^*_\theta\pra*{ \phi}(x) \, c(x) \dx{x} = \int \bra*{ - 2 \phi(x) \, \partial_x \log \frac{c(x)}{c_{\theta}(x)} - \abs{\phi(x)}^2 } c(x) \dx{x} .
\end{align}
Let $\psi:\R\ra\R$ be a non-negative $\cC^\infty$ function with support in the interval $[-1,1]$,  with integral equal to $1$ and define for $\ve>0$ the function $\psi_\ve:\R\ra\R$ by $\psi_\ve(x)=\ve^{-1}\psi(x/\ve), \ x\in\R$. Then, we set
\[
  c_\ve(x) = c(x) \int_0^x \psi_\ve(y-\eps) \dx{y} + c_\theta^\eq(x) \bra*{ 1 - \int_0^x \psi_\ve(y-\eps) \dx{y}} .
\]
By choosing $\phi_\ve = - \partial_x \log \frac{c_\ve}{c_\theta^\eq}$, we have
\begin{align*}
  \phi_\ve(x) = \frac{c_\theta^\eq(x)}{c_\eps(x)} \biggl[ & \bra*{ \frac{c(x)}{c_\theta^\eq(x)} -1}  \psi_\ve(x-\eps) \\
  &+ \int_0^x  \psi_\ve(y-\eps) \dx{y} \ \bra*{ \frac{\partial_x c(x)}{c_\theta^\eq(x)} - \frac{\partial_x c_\theta^\eq(x) \, c(x)}{c_\theta^\eq(x)^2}}\biggr]
\end{align*}
and hence $\phi_\ve(0)=0$, since $c(0) = c_\theta^\eq(0)$. For $x\in [0,2\ve)$ by using again the boundary condition and the $\cC^1$ assumption, we obtain for some $x_1,x_2 \in (0,x)\subseteq (0,2\eps)$
\begin{equation}\label{e:phi_eps:bound}
  \abs{\phi_\ve(x)} \leq C \bra*{  \frac{\abs*{ \partial_x c(x_1)} + \abs*{\partial_x c_\theta^\eq(x_2)}}{c_\theta^\eq(x)} x  \psi_\ve(x-\eps) + 1} \leq C \bra*{ \eps \sup_{z>0} \psi_\ve(z) + 1 } \leq C,
\end{equation}
where $C$ is independent of $\eps$.
 Hence, by using $\phi_\ve$ as test function in~\eqref{e:Dissipation:variational:IntByParts}, we obtain
\begin{align*}
  \int \cD_\theta^*[\phi_ve](x) c(x) \dx{x} &\geq - \int_0^{2\eps} \bra*{2  \abs*{ \phi_\ve(x) }\abs*{\partial_x \log\frac{c(x)}{c_\theta^\eq(x)}} + \abs*{\phi_\ve(x)}^2} c(x) \dx{x} \\
  &\qquad + \int_{2\eps}^\infty \bra*{ \partial_x \log\frac{c(x)}{c_\theta^\eq(x)}}^2 c(x) \dx{x} .
\end{align*}
The bound~\eqref{e:phi_eps:bound} and $\cC^1$ assumption ensure that we can let $\eps\to 0$ and obtain a lower bound of $\cD(c,\theta)$ as defined in~\eqref{e:def:Dissipation:variational} in terms of the one in~\eqref{AY3}. On the other hand, we can interchange the integration and $\sup$ in~\eqref{e:def:Dissipation:variational}. The $\sup$ is attained for $\phi = - \partial_x \log \frac{c}{c_\theta^\eq}$ and we obtain an upper bound by~\eqref{AY3}. This shows, that the definition~\eqref{e:def:Dissipation:variational} is consistent with~\eqref{AY3} for $c\in \cC^1(\R^+)$ with $\cD(c,\theta)<\infty$.

Finally, we observe that $\cD_\theta^*[\phi]\in \cC_0(\R^+)$ due to $\phi\in \cC_c^\infty(\R^+)$ and Assumption~\ref{ass:VW}. Hence, if $(c_m,\theta_m), \ m=1,2,..,$ is a sequence in $\mathcal{X}$ such that $c_m$ converges weakly (see~\eqref{A3}) to some $c\in \cX$ and $\theta_m\to \theta$, then the statement on the lower semicontinuity follows by interchanging the $\lim$ and $\sup$ in the dual formulation of the dissipation~\eqref{e:def:Dissipation:variational}.
\end{proof}

\subsection{Convergence to equilibrium}
We can now use the LaSalle invariance principle to prove convergence of the solution of \eqref{A1}, \eqref{B1}, \eqref{D1}, \eqref{E1} to an equilibrium solution \eqref{C1} and finish the proof of Theorem~\ref{thm:ExistUniqueConvergence}. For the convenience of the reader we quote it in a concise form under our specific assumptions.
\begin{theorem}[{\cite[Theorem 4.2 in Chapter IV]{walk}}]\label{thm:walk}
  Let $\set{S(t)}_{t\geq 0}$ be a dynamical system on a compact metric space $\cX$ and let $V: \cX \to \overline{\R}$ 
  be a lower semicontinous Lyapunov function such that $\dot V(x) \leq  - W(x)$ for all $x \in \cX$, where $W: \cX \to \overline{\R}^+$ is lower semincontinuous and $V(y) > -\infty$ for all $y\in \cX$. Then for any $x\in \cX$ the trajectory $\set{S(t)x}_{t\geq 0}$ converges to the largest positive invariant subset of $\set{z\in \cX:  W(z) = 0}$.
\end{theorem}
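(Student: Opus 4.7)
The plan is to verify the LaSalle invariance principle via a standard $\omega$-limit set argument, adapted to handle only lower semicontinuity of $V$ and $W$. First I would fix $x \in \cX$ and define
\[
\omega(x) = \bigcap_{T\geq 0} \overline{\set*{S(t)x : t\geq T}}.
\]
Compactness of $\cX$ and the nested intersection of nonempty closed sets immediately give that $\omega(x)$ is nonempty, closed, hence compact. Standard continuity of the flow (which I would assume is part of being a ``dynamical system'' in Walker's sense) yields positive invariance: $S(t)\omega(x) \subseteq \omega(x)$ for all $t \geq 0$.

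Next I would establish that $V$ is constant on $\omega(x)$. Along the trajectory $t\mapsto V(S(t)x)$ is nonincreasing because $\dot V \leq -W \leq 0$, and it is bounded below (since $V > -\infty$ and $V$ attains its infimum on the compact set $\cX$ by lower semicontinuity). Hence $v_\infty := \lim_{t\to\infty} V(S(t)x)$ exists in $\R$. For any $y \in \omega(x)$ there is a sequence $t_n \to \infty$ with $S(t_n)x \to y$; lower semicontinuity of $V$ gives $V(y) \leq \liminf_n V(S(t_n)x) = v_\infty$. To obtain the reverse inequality, I would use monotonicity: for any $\tau > 0$, $V(S(t_n - \tau) x) \geq V(S(t_n)x)$ eventually, and after extracting a subsequence with $S(t_n - \tau)x \to z \in \omega(x)$, LSC gives $V(z) \leq v_\infty$; then monotonicity along the flow from $z$ combined with $S(\tau)z \in \omega(x)$ and the one-sided estimate forces $V \equiv v_\infty$ on $\omega(x)$.

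The core step is to pass from $\dot V \leq -W$ to an \emph{integrated} version
\[
V(S(t)x) - V(S(s)x) \leq -\int_s^t W(S(r)x)\dx{r}, \qquad 0\leq s\leq t.
\]
When $\dot V$ means the upper Dini derivative $\frac{\dx{}^+}{\dx{t}}$ (as in Lemma~\ref{lem:EED}) and $t\mapsto V(S(t)x)$ is merely nonincreasing, this follows from a standard covering lemma for Dini derivatives together with LSC of $W$ (so that $r \mapsto W(S(r)x)$ is LSC and hence measurable). Letting $t\to\infty$ with $s$ fixed yields $\int_s^\infty W(S(r)x)\dx{r} < \infty$. For $y \in \omega(x)$ with $t_n \to \infty$ and $S(t_n)x \to y$, LSC of $W$ and Fatou's lemma give
\[
W(y) \leq \liminf_n W(S(t_n)x), \quad \text{and} \quad \int_{t_n}^{t_n+1} W(S(r)x)\dx{r} \to 0,
\]
from which one extracts (via positive invariance of $\omega(x)$ applied to nearby times) that $W \equiv 0$ on $\omega(x)$.

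Finally, since $\omega(x) \subseteq \{W=0\}$ and $\omega(x)$ is positively invariant, it is contained in the largest positive invariant subset of $\{W=0\}$; and $S(t)x \to \omega(x)$ by compactness of $\cX$. The main technical obstacle is the Dini-derivative integration step: without $C^1$ regularity of $t\mapsto V(S(t)x)$ one must invoke either Riesz's rising-sun lemma or a direct Vitali-cover argument to pass from the pointwise bound $\frac{\dx{}^+}{\dx{t}} V \leq -W$ to the integral inequality, and one must be careful that the LSC of $W$ suffices for measurability and for Fatou on $\omega(x)$.
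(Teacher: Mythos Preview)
The paper does not prove this theorem at all: it is quoted verbatim from Walker's book \cite{walk} (``For the convenience of the reader we quote it in a concise form under our specific assumptions''), and then immediately applied in the proof of the convergence part of Theorem~\ref{thm:ExistUniqueConvergence}. So there is no ``paper's own proof'' to compare against.

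Your argument is the standard LaSalle $\omega$-limit set proof and is essentially correct in outline. A few remarks. The paragraph on $V$ being constant on $\omega(x)$ is not actually needed for the conclusion and your argument for the reverse inequality $V(y)\ge v_\infty$ is vague as written; you can simply drop that paragraph. The real work is exactly where you locate it: passing from $\frac{\dx{}^+}{\dx{t}}V(S(t)x)\le -W(S(t)x)$ to the integrated inequality, and then using Fatou plus lower semicontinuity of $W$ on $\omega(x)$. For the latter step, the clean version is: for $y\in\omega(x)$ with $S(t_n)x\to y$, continuity of the flow gives $S(t_n+r)x\to S(r)y$ for each $r\in[0,1]$, so Fatou and lower semicontinuity yield $\int_0^1 W(S(r)y)\dx{r}\le \liminf_n\int_{t_n}^{t_n+1}W(S(r)x)\dx{r}=0$, hence $W(S(r)y)=0$ for a.e.\ $r$, and then lower semicontinuity of $W$ along the continuous path $r\mapsto S(r)y$ upgrades this to all $r$, in particular $r=0$. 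That is more direct than your phrasing ``via positive invariance of $\omega(x)$ applied to nearby times''.
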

\begin{proof}[Proof of convergence to equilibrium statements of Theorem~\ref{thm:ExistUniqueConvergence}]
We start with following preliminary observation and summary of the results. Let $C_1,C_2$ be positive  constants  and the convex set $\mathcal{X}=\{(c,\theta) \in L^1_W(\R^+) \times\R \ : \cG(c,\theta)\le C_1, \ \norm{c}_{L^1_W(\R^+)} \le C_2 \ \}$, then the superlinear growth of $\Psi$ and the growth condition~\eqref{G1} on $W$ in Assumption~\ref{ass:VW} imply uniform integrability of $\cX\subset L^1(\R^+) \times \R^+$. Indeed, since $W$ is increasing, we have $W(\cdot) \geq W(0)>0$ and we can estimate $\norm{c}_{L^1(\R^+)} \leq W(0)^{-1} \norm{c}_{L^1_W(\R^+)} \le W(0)^{-1} C_2$ for all $c\in \cX$. Hence $\cX$ is uniformly bounded in $L^1(\R^+)$. In addition, we obtain from~\eqref{BS3}, applied with $L=0$, the estimate
\begin{align*}
  \int \max\set{0, c\log c} \dx{x} &= \int c \log c \dx{x} - \int_{\set{c\leq 1}} c \log c \dx{x} \\
  &\leq \cH(c | c_0^\eq) + \int \bra*{ 1 + \log c_0^\eq + V } c \dx{x} + \bra*{e^{-1} -1} \int c_0^\eq \dx{x} \\
  &\leq \cG(c,\theta) + \int c \dx{x} \leq C_1 + W(0)^{-1} C_2,
\end{align*}
where we used that $c_0^\eq = \exp\bra*{ - V}$ and $e^{-1}-1\leq 0$.
Hence, the convex set $\cX$ is uniform integrable in $L^1(\R^+)$ (see~\cite[Theorem 4.5.9]{Bogachev2007}). Now, the set $\cX$ has uniform absolutely continuous integrals. By the uniform integrability we find for $\eps>0$ a constant $C>0$ such that
\[
  \int_{\set{c\geq C}} c \dx{x} \leq \frac{\eps}{2} \; , \qquad\text{for all } c\in \cX.
\]
Set $\delta = \eps \bra{2C}^{-1}$ and let $A\subset \R^+$ with $\abs{A}\leq \delta$. Then, for any $c\in \cX$ holds
\[
  \int_{A} c \dx{x} =  \int_{A\cap \set{c \leq C}} c \dx{x} + \int_{A\cap \set{c\geq C}} c \dx{x} \leq \delta C + \frac{\eps}{2} \leq \eps
\]
by our choice of $\delta$. Hence $\cX$ has also uniform absolutely continuous integrals. The last condition to obtain relative compactness is a uniform tightness condition. For $\eps>0$ fix $L_\eps$ such that $C_2 W(L_\eps)^{-1}\leq \eps$. Then, we obtain
\[
  \int_{L_\eps}^\infty c \dx{x} \leq \frac{1}{W(L_\eps)} \int_{L_\eps}^\infty W \, c\dx{x} \leq \frac{C_2}{W(L_\eps)}  \leq \eps.
\]
We obtain that $\cX$ is relative compact in $L^1(\R^+)$ for the weak topology by an application of \cite[Theorem 4.7.20]{Bogachev2007}). Since, the Lyapunov function $(c,\theta)\mapsto \cG(c,\theta)$ by Lemma~\ref{lem:CharactMinEnergy} and the norm $\norm{\cdot}_{L^1_W(\R^+)}$ are lower semicontinuous, we obtain that $\cX$ is a compact metric space. Finally, also the dissipation $(c,\theta) \mapsto \cD(c,\theta)$ is lower semi-continuous from $\mathcal{X}$ to $\R$ by Lemma~\ref{lem:Dissipation:lsc}.

We have already observed the Remark~\ref{rem:GlobalExistence} that the solution $(c(\cdot,t),\theta(t)), \ t>0,$ of \eqref{A1}, \eqref{B1}, \eqref{D1}, \eqref{E1} with initial data satisfying \eqref{AJ2} exists globally in time.
Furthermore, for any $t_0>0$ there exist by Lemma~\ref{lem:stability:sup} constants $C_1,C_2$ depending on $t_0$ such that $(c(\cdot,t),\theta(t))\in\mathcal{X}$ for $t\ge t_0$.
It follows now from Theorem~\ref{thm:walk} that $(c(\cdot,t),\theta(t))$ converges in $\mathcal{X}$ as $t\mapsto\infty$ to the largest invariant subset of $(c,\theta)\in\mathcal{X}$ such that $\cD(c,\theta)=0$.
Evidently this invariant set is the subset $\Omega=\set*{(c_\theta^\eq,\theta): \theta\le 0}\subset \mathcal{X}$.

Observe now from Lemma~\ref{lem:CharactMinEnergy} and Lemma~\ref{lem:stability:sup} that  if $\tilde{U}_{M,N}$ is the union of $\set*{(c(\cdot,t),\theta(t)): \ t\ge t_0}$ and $\Omega$ then $\cG$ is continuous on $\tilde{U}_{M,N}$. Hence there exists  $\theta_\infty\le 0$ such that $\inf_{t>0}\cG(c(\cdot,t),\theta(t))= \cG(c_{\theta_\infty}^\eq, \theta_\infty)$, and from \eqref{e:strict:mononton:MinEnergy} it follows that this $\theta_\infty$ is unique.  We conclude that $(c(\cdot,t),\theta(t))$ converges in $\mathcal{X}$ as $t\ra\infty$ to $(c_{\theta_\infty}^\eq,\theta_{\infty})$.  From Lemma~\ref{lem:CharactMinEnergy} we have that $\inf_{t>0} \cG(c(\cdot,t),\theta(t))\ge \cG(c_{\theta_\eq}^\eq,\theta_\eq)$ with $\theta_\eq=\theta_\eq(\rho)$, whence it follows from \eqref{e:strict:mononton:MinEnergy} that $\theta_\infty\le  \theta_\eq(\rho)$.

Now, we assume that $\theta_\infty< \theta_\eq(\rho)$ and then obtain a contradiction.
Since  $\theta_\infty< 0$ by assumption, we may apply Lemma~\ref{lem:tight} and have for any $\del>0$ there exists $M(\del)>0$ such that
\begin{equation}\label{CI3}
\int_{M(\del)}^\infty W(x) c(x,T) \dx{x} \ < \ \del \quad\text{for all } T\ge 0 \ .
\end{equation}
The proof of the proposition follows by observing from Lemma~\ref{lem:stability:sup} that $c(\cdot,t)$ converges uniformly on any finite interval $[0,L]$ as $t\ra\infty$ to $c_\theta^\eq(\cdot)$ with $\theta=\theta_\infty$. It follows from this and \eqref{CI3} that
 \begin{equation}\label{CO3}
 \theta+\int_0^\infty W(x) c_\theta^\eq(x) \dx{x} \ = \ \rho \quad\text{when } \theta=\theta_\infty \ .
 \end{equation}
The identity \eqref{CO3} yields a contradiction to our assumption that  $\theta_\infty< \theta_\eq(\rho)$.

Let us now prove \eqref{H1}. We assume first that $\theta_\eq(\rho)<0$, then~\eqref{H1} follows from Lemma~\ref{lem:stability:sup} and \eqref{CI3}.  If $\rho=\int W(x) c_0^\eq(x) \dx{x}$ then Lemma~\ref{lem:stability:sup} again implies  for any $L>0$ that
\begin{equation}\label{CP3}
\lim_{t\ra\infty}\int_0^LW(x)|c(x,t)-c^\eq_0(x)| \dx{x} \ = \ 0 \ .
\end{equation}
For any $\del>0$  there exists $L_\del>0$ such that
\begin{equation}\label{CQ3}
\int_0^{L_\del}W(x)c^\eq_0(x) \dx{x} \ > \ \rho-\del \ .
\end{equation}
In this case, we have $\lim_{t\ra\infty}\theta(t)=\theta_\eq(\rho) = 0$.  Hence using \eqref{D1} we have from \eqref{CP3}, \eqref{CQ3} that for any $\del>0$ there exists $T_\del>0$ with the property
\begin{equation}\label{CR3}
\int_{L_\del}^\infty W(x)c(x,t) \dx{x} \ \le \ 2\del \quad\text{for } t\ge T_\del \ .
\end{equation}
Now \eqref{H1} follows from \eqref{CR3} just as in the case  $\theta_\eq(\rho)<0$.
\end{proof}

\bigskip

\section{Rate of convergence to equilibrium (subcritical)}\label{s:Rate}

The proof on the rate of convergence to equilibrium is based on exploiting further the energy--dissipation relation established in Lemma~\ref{lem:EED} in more detail and give a quantitative bound of the energy in terms of the dissipation. This approach was first implemented for the classical Becker-D\"oring equation in~\cite{JN03} and recently generalized in~\cite{CEL15}. Moreover, a similar strategy was recently applied to obtain rates of convergence to equilibrium for Fokker-Planck equation with constraints~\cite{ENS17}.

\subsection{Basic estimates for the free energy and dissipation}

First, we derive the following identity and properties of the normalized free energy~\eqref{e:def:FreeEnergyNorm}.
\begin{lemma}\label{lem:FreeEnergyRelEnt}
  Let $\rho < \rho_s$. Let $\cF_\rho$ be defined as in~\eqref{e:def:FreeEnergyNorm}, then it holds for all $c\in \cM_{ac}(\R^+)$
\begin{equation}\label{e:FreeEnergyRelEnt}
 \cF_\rho(c) = \cH\bra[\big]{ c | c_{\theta_\eq}^\eq} + \tfrac{1}{2} \bra*{ \theta - \theta_\eq }^2 \qquad\text{with}\qquad \theta = \rho - \int W(x)\,c(x) \dx{x}
\end{equation}
and where $\cH\bra*{ f | g}$ is the relative entropy
\begin{equation*}
  \cH\bra*{ f | g } = \int g(x) \Psi\bra*{ \frac{f(x)}{g(x)}}\dx{x} \qquad\text{with}\qquad \Psi(r) = r \log r - r + 1 .
\end{equation*}
Moreover, for any $\theta < 0$ and any $c\in \cM_{ac}(\R^+)$ such that $\theta + \int W c = \rho$ holds
\begin{equation}\label{e:FreeEnergyRelEnt:theta}
  \cF_{\rho}(c) + \tfrac{1}{2} \bra*{ \theta - \theta_\eq }^2 \leq \cH(c| c_\theta^\eq) .
\end{equation}
\end{lemma}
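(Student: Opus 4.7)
The plan is to prove both claims by direct algebraic manipulation, using the explicit form $c_{\theta_\eq}^\eq(x) = \exp(-V(x) + \theta_\eq W(x))$ that follows from \eqref{C1} under our reduction to $a\equiv 1$, together with the defining identity $\theta_\eq + \int W\, c_{\theta_\eq}^\eq = \rho$ from Lemma~\ref{lem:CharactMinEnergy}.

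For the identity \eqref{e:FreeEnergyRelEnt}, I would first expand the relative entropy using $\log c_{\theta_\eq}^\eq = -V + \theta_\eq W$, which gives
\begin{equation*}
 \cH(c| c_{\theta_\eq}^\eq) = \int c \log c \dx{x} - \int c \dx{x} + \int V c \dx{x} - \theta_\eq \int W c\dx{x} + \int c_{\theta_\eq}^\eq \dx{x}.
\end{equation*}
Using the constraint $\int W c = \rho - \theta$, this rewrites $-\theta_\eq \int W c = -\theta_\eq \rho + \theta_\eq \theta$. Next I evaluate $\cG(c_{\theta_\eq}^\eq, \theta_\eq)$ explicitly: substituting $\log c_{\theta_\eq}^\eq = -V + \theta_\eq W$ into \eqref{e:def:G} and using $\theta_\eq + \int W\, c_{\theta_\eq}^\eq = \rho$ gives
\begin{equation*}
  \cG(c_{\theta_\eq}^\eq,\theta_\eq) = \theta_\eq \rho - \tfrac{1}{2}\theta_\eq^2 - \int c_{\theta_\eq}^\eq \dx{x} + \cG_0.
\end{equation*}
Subtracting this from $\cG(c,\theta)$ and collecting with $\cH(c| c_{\theta_\eq}^\eq) + \tfrac12(\theta-\theta_\eq)^2$, the terms $-\theta_\eq\rho + \theta_\eq\theta$ from the relative entropy combine with $-\theta\theta_\eq + \tfrac12\theta_\eq^2$ from the quadratic expansion and the corresponding pieces from $\cG(c_{\theta_\eq}^\eq,\theta_\eq)$ so as to match $\cF_\rho(c)$ exactly. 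This is purely bookkeeping with no analytic difficulty.

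For the inequality \eqref{e:FreeEnergyRelEnt:theta}, I proceed the same way: expand $\cH(c|c_\theta^\eq)$ using $\log c_\theta^\eq = -V + \theta W$ and the constraint $\int W c = \rho - \theta$, yielding
\begin{equation*}
  \cH(c|c_\theta^\eq) = \int c\log c\dx{x} - \int c \dx{x} + \int V c\dx{x} - \theta\rho + \theta^2 + \int c_\theta^\eq \dx{x}.
\end{equation*}
Subtracting $\cF_\rho(c) + \tfrac12(\theta-\theta_\eq)^2$ using the formula already derived for $\cG(c_{\theta_\eq}^\eq,\theta_\eq)$ and simplifying, the left-hand side of the desired inequality reduces to
\begin{equation*}
  \cH(c|c_\theta^\eq) - \cF_\rho(c) - \tfrac12(\theta-\theta_\eq)^2 = (\theta_\eq - \theta)\int W\, c_{\theta_\eq}^\eq\dx{x} + \int c_\theta^\eq \dx{x} - \int c_{\theta_\eq}^\eq \dx{x},
\end{equation*}
where the identity $\rho - \theta_\eq = \int W\, c_{\theta_\eq}^\eq$ is used once more.

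The final step, which is the only nontrivial point, is to show the right-hand side above is nonnegative. I would apply convexity of $s \mapsto e^{sW(x)}$ pointwise: the tangent inequality at $s=\theta_\eq$ gives $e^{\theta W(x)} - e^{\theta_\eq W(x)} \geq (\theta - \theta_\eq) W(x)\, e^{\theta_\eq W(x)}$, so after multiplying by $e^{-V(x)}$ and integrating,
\begin{equation*}
  \int c_\theta^\eq\dx{x} - \int c_{\theta_\eq}^\eq\dx{x} \geq (\theta - \theta_\eq)\int W\, c_{\theta_\eq}^\eq\dx{x},
\end{equation*}
which exactly cancels the first term and yields \eqref{e:FreeEnergyRelEnt:theta}. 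No further estimate is required, since $\theta<0$ ensures all integrals are finite by Assumption~\ref{ass:VW}(e). Thus the only potential obstacle—algebraic cancellations aside—is verifying that the convexity inequality provides the right sign, which it does.
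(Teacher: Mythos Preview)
Your proof is correct and follows essentially the same approach as the paper: both arguments reduce to the identity $\cF_\rho(c)=\cH(c\mid c_{\bar\theta}^\eq)+I(\bar\theta)$ for an explicit remainder $I$, set $\bar\theta=\theta_\eq$ for \eqref{e:FreeEnergyRelEnt} and $\bar\theta=\theta$ for \eqref{e:FreeEnergyRelEnt:theta}, and then use convexity of $\theta\mapsto m(\theta)=\int c_\theta^\eq$. The only cosmetic difference is that the paper invokes a second-order Taylor expansion of $m$ (obtaining the slightly sharper bound with the extra factor $\min\{m''(\theta),m''(\theta_\eq)\}$), whereas you apply the tangent inequality for $s\mapsto e^{sW(x)}$ pointwise and integrate; your route is marginally more direct and avoids checking $m''',m''>0$, while the paper's version records a quantitative improvement that is not needed for the lemma itself.
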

\begin{proof}
For the proof we neglect the argument and integration variable, which will be always $\dx{x}$. As a preliminary step, we obtain for some $\bar\theta<0$ the identity
\begin{align*}
  \cF_{\rho}(c) &= \int c\, \bra*{\log c - 1} + \int c\, V + \frac{1}{2} \theta^2 - \int c_{\theta_\eq}^\eq \bra{\log c_{\theta_\eq}^\eq - 1} + \int c_{\theta_\eq}^\eq V - \frac{1}{2} \theta_\eq^2 \\
  &= \int c\, \log \frac{c}{c_{\bar\theta}} - \int c + \bar \theta \int c W + \frac{1}{2} \theta^2 - \theta_\eq \int c_{\theta_\eq}^\eq W + \int c_{\theta_\eq}^\eq - \frac{1}{2} \theta_\eq^2 \\
  &= \cH\bra*{ c | c_{\bar\theta}} + \int c_{\theta_\eq}^\eq - \int c_{\bar \theta} + \frac{1}{2} \theta^2 - \bar\theta \, \theta + \frac{1}{2} \theta_\eq^2 - \rho \bra*{\theta_\eq - \bar\theta} \\
  &=: \cH\bra*{ c | c_{\bar\theta}}  + I\bra*{\bar\theta, \theta_\eq , \theta} .
\end{align*}
Now, the identity~\eqref{e:FreeEnergyRelEnt} follows by setting $\bar\theta = \theta_\eq < 0$ by the assumption $\rho < \rho_s$ and hence noting that $I(\theta_\eq,\theta_\eq, \theta) = \frac{1}{2} (\theta- \theta_\eq)^2$. The estimate~\eqref{e:FreeEnergyRelEnt:theta} follows by setting $\bar\theta = \theta$ once we have shown $I(\theta,\theta_\eq,\theta)\leq -\frac{1}{2} \bra*{ \theta - \theta_\eq }^2$. First, we note that
\begin{equation*}
  I(\theta,\theta_\eq,\theta) =  \int c_{\theta_\eq}^\eq - \int c_{\theta}^\eq + \bra*{ \frac{\theta_{\eq} + \theta}{2} - \rho} \bra*{\theta_\eq - \theta} .
\end{equation*}
We introduce $m(\theta) = \int c_\theta^\eq < \infty$ having derivatives
\[
  0 < m^{(k)}(\theta) = \int W^k c_\theta^\eq < \infty \qquad\text{for } k=1,2,\dots
\]
for any $\theta < 0$ by Assumption~\ref{ass:VW}. Hence, we have from the conservation of mass $\theta_\eq + m'(\theta_\eq) = \rho$ the following identity
\begin{align*}
  I(\theta,\theta_\eq,\theta) &=  m(\theta_\eq) - m(\theta) + \bra*{ \frac{1}{2}\bra{ \theta - \theta_\eq } - m'(\theta_\eq) } \bra*{\theta_\eq - \theta} \\
  &= m(\theta_\eq) - m(\theta) - m'(\theta_\eq) \bra*{\theta_\eq - \theta} - \frac{1}{2} \bra*{\theta_\eq - \theta}^2 \\
  &\leq - \bra[\Big]{ 1 + \min\set[\big]{ m''(\theta) , m''(\theta_\eq)} } \; \frac{1}{2} \bra*{\theta_\eq - \theta}^2 ,
\end{align*}
where we used a Taylor expansion of $m(\theta_\eq)$ and the monotonicity of $m''(\theta)$, that is $0< m'''(\theta)< \infty$ for any $\theta < 0$.
\end{proof}
The next ingredient is a weighted Pinsker inequality, which in the setting of the classical Becker-D\"oring model is derived in~\cite{JN03,N03} and for probability measures is proven by similar means in~\cite{BV05}.
\begin{lemma}[Pinsker inequality]\label{lem:Pinsker}
  Let $\theta < 0$ and $V,W$ satisfy Assumption~\ref{ass:VW}. Then it holds for all $c\in \cM_{ac}(\R^+)$
  \begin{equation}\label{e:Pinsker:sub}
    \int W(x) \, \abs*{c(x) - c_\theta^\eq(x)} \dx{x}  \leq \frac{2}{\abs{\theta}} \max\set*{ \cH(c | c_\theta^\eq) , \sqrt{C_{V} \cH(c | c_\theta^\eq)}} ,
  \end{equation}
  with $C_{V} = \int e^{-V} < \infty$.\newline
  Moreover, for any $\theta\in \R$ and any $L>0$ exists $C = C(V,W,\theta,L)$ such that
  \begin{equation}\label{e:Pinsker:L}
    \int_0^L W(x) \abs*{c(x) - c_\theta^\eq(x)} \leq 2 \max\set*{ \cH(c|c_\theta^{\eq,L}) , \sqrt{C \cH(c|c_\theta^{\eq,L})} } ,
  \end{equation}
  where $c_\theta^{\eq,L}(x) = \bfOne_{[0,L]}(x) c_\theta^\eq(x)$.
\end{lemma}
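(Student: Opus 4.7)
My plan is to combine two scalar inequalities with the pointwise decay built into $c_\theta^\eq(x)=e^{-V(x)}e^{-|\theta|W(x)}$, analysed in two regimes according to the size of $\cH(c|c_\theta^\eq)$ relative to $C_V$. The two scalar ingredients are: (i) the elementary bound $|r-1|\leq \Psi(r)+\sqrt{2\Psi(r)}$ for all $r\geq 0$, verified by splitting at $r=1$ (for $r\leq 1$, Taylor expansion with $\Psi''(r)=1/r\geq 1$ on $(0,1]$ yields $(1-r)^2\leq 2\Psi(r)$; for $r\geq 1$, a direct calculus check shows that $r\mapsto \Psi(r)+\sqrt{2\Psi(r)}-(r-1)$ has non-negative derivative on $[1,\infty)$); and (ii) the pointwise decay $|\theta|^k W(x)^k c_\theta^\eq(x)\leq (k/e)^k e^{-V(x)}$ for $k=1,2$, obtained from $y^k e^{-y}\leq (k/e)^k$ with $y=|\theta|W(x)$. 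In particular $\int W^2 c_\theta^\eq\dx{x}\leq 4C_V/(e|\theta|)^2$.

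Setting $u=c/c_\theta^\eq$, inequality (i) splits the integral,
\[
\int W\abs{c-c_\theta^\eq}\dx{x} \leq \int W\Psi(u)\,c_\theta^\eq \dx{x} + \int W\sqrt{2\Psi(u)}\,c_\theta^\eq \dx{x}.
\]
Cauchy--Schwarz together with (ii) handles the second term,
\[
\int W\sqrt{2\Psi(u)}\,c_\theta^\eq \dx{x} \leq \sqrt{2\int W^2 c_\theta^\eq\dx{x}\cdot \cH(c|c_\theta^\eq)} \leq \frac{2\sqrt{2\,C_V\,\cH(c|c_\theta^\eq)}}{e\,|\theta|},
\]
which already delivers the $\sqrt{C_V\cH}/|\theta|$-contribution to~\eqref{e:Pinsker:sub}.

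For the $\Psi$-term, I would invoke the Donsker--Varadhan variational characterisation of the relative entropy with test function $\phi=|\theta|W$: since $\int e^{|\theta|W}c_\theta^\eq \dx{x} = \int e^{-V}\dx{x} = C_V$, this produces the moment estimate
\[
|\theta|\int Wc\dx{x} \;\leq\; \cH(c|c_\theta^\eq) + C_V - \int c_\theta^\eq \dx{x} \;\leq\; \cH(c|c_\theta^\eq) + C_V.
\]
Combined with $\int W c_\theta^\eq\dx{x} \leq C_V/(e|\theta|)$ from (ii) and the triangle inequality, this yields
\[
\int W\abs{c-c_\theta^\eq}\dx{x} \leq \int W c\dx{x} + \int W c_\theta^\eq\dx{x} \leq \frac{\cH(c|c_\theta^\eq)+2C_V}{|\theta|}.
\]
For $\cH\geq C_V$, this matches the first alternative of the $\max$ in~\eqref{e:Pinsker:sub} up to constants, whereas for $\cH\leq C_V$ the Cauchy--Schwarz bound above already produces the second alternative $\sqrt{C_V\cH}/|\theta|$. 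Applying the Donsker--Varadhan bound in the regime $\cH\geq C_V$ and the Cauchy--Schwarz bound in the regime $\cH \leq C_V$ and consolidating constants yields~\eqref{e:Pinsker:sub}. The main technical point is achieving the constant $2$: this requires tracking constants carefully through both routes and interpolating at the crossover $\cH=C_V$.

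For~\eqref{e:Pinsker:L}, the argument is considerably simpler: on $[0,L]$ the weight $W$ is bounded by $W(L)$, so $\int_0^L W\abs{c-c_\theta^{\eq,L}}\dx{x}\leq W(L)\int_0^L \abs{c-c_\theta^{\eq,L}}\dx{x}$, and applying~(i) with Cauchy--Schwarz (now using $\int_0^L c_\theta^{\eq,L}\dx{x}<\infty$ in place of the pointwise decay bound~(ii)) produces the stated bound with a constant $C=C(V,W,\theta,L)$ absorbing $W(L)$ and the mass of $c_\theta^{\eq,L}$ on $[0,L]$. Throughout, the delicate point is balancing the unbounded weight $W$ against the exponential decay $e^{-|\theta|W}$ inside $c_\theta^\eq$---this interplay is precisely what produces the $|\theta|^{-1}$-scaling in the Pinsker constant.
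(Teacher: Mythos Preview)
There is a genuine gap in your argument for the first inequality. Your scalar split $|u-1|\le\Psi(u)+\sqrt{2\Psi(u)}$ produces, after multiplying by $Wc_\theta^\eq$ and integrating, the term $\int W\,\Psi(u)\,c_\theta^\eq\dx{x}$, which you never actually bound. Your Donsker--Varadhan step gives a bound on $\int W c\dx{x}$ and hence, via the triangle inequality, on $\int W|c-c_\theta^\eq|\dx{x}$ \emph{directly}---not on $\int W\,\Psi(u)\,c_\theta^\eq\dx{x}$; these are different objects. In the regime $\cH\le C_V$ you invoke only ``the Cauchy--Schwarz bound'', but that handles just the $\sqrt{\Psi}$-half of your split; the $\Psi$-half remains. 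And the Donsker--Varadhan/triangle-inequality route gives only $\int W|c-c_\theta^\eq|\dx{x}\le(\cH+(1+e^{-1})C_V)/|\theta|$, which for $\cH\ll C_V$ is of order $C_V/|\theta|$, not $\sqrt{C_V\cH}/|\theta|$. So neither branch closes in the small-entropy regime. The difficulty is structural: $\int W\,\Psi(u)\,c_\theta^\eq\dx{x}$ is not controlled by $\cH(c|c_\theta^\eq)=\int\Psi(u)\,c_\theta^\eq\dx{x}$, since the weight $W$ is unbounded and your pointwise estimate $Wc_\theta^\eq\le(e|\theta|)^{-1}e^{-V}$ only converts it into $(e|\theta|)^{-1}\int\Psi(c/c_\theta^\eq)\,e^{-V}\dx{x}$, which is a different (and in general larger) functional.

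The paper avoids this by never performing an additive split of $|u-1|$. Instead it applies a single Young inequality $rs\le\varphi(r)+\varphi^*(s)$ with $\varphi(r)=(1+r)\log(1+r)-r$, taking $r=|c-c_\theta^\eq|/c_\theta^\eq$ and $s=\eps\,|\theta|\,W(x)$ with a free parameter $\eps\in(0,1]$. The dual $\varphi^*(s)=e^s-s-1$ satisfies $\varphi^*(\eps s)\le\eps^2 e^s$, and because $e^{|\theta|W}c_\theta^\eq=e^{-V}$ \emph{exactly}, the weighted dual term integrates to $\eps^2 C_V$ with no residual unbounded weight. One obtains
\[
|\theta|\int W|c-c_\theta^\eq|\dx{x}\;\le\;\eps\,C_V+\eps^{-1}\cH(c|c_\theta^\eq),
\]
and optimising $\eps=\min\{\sqrt{\cH/C_V},1\}$ delivers the constant $2$ on the nose. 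The tunable parameter $\eps$ is precisely what your fixed split lacks.

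For the truncated inequality~\eqref{e:Pinsker:L} your approach \emph{does} yield a bound, since $W\le W(L)$ on $[0,L]$ controls the $\Psi$-term by $W(L)\,\cH$. However, you will not recover the stated prefactor $2$ once $W(L)>2$: for large $\cH$ your bound behaves like $W(L)\cH$, whereas the statement demands $2\cH$. The paper's $\eps$-optimisation with $\eta=1$ and $C=\int_0^L e^{W}c_\theta^\eq\dx{x}$ again gives exactly $2\max\{\cH,\sqrt{C\cH}\}$.
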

\begin{proof}
  We introduce the convex non-negative function $\varphi(r) = (1+ r) \log(1 + r) - r$, which allows to rewrite
  \begin{equation*}
    \cH(c | c_\theta^\eq)  = \int c_\theta^\eq(x)\, \varphi\bra*{ \frac{c-c_\theta^\eq}{c_\theta^\eq}} \dx{x} .
  \end{equation*}
  The convex dual function $\varphi^*(s) = e^s - s - 1$ has the property $\varphi^*(\eps s) \leq \eps^2 \varphi^*(s)$ for all $\eps \in [0,1]$. Moreover, it holds $\varphi(\abs{r}) \leq \varphi(r)$. By convex duality, the bound $rs \leq \varphi(r) + \varphi^*(s)$ holds. Hence, by setting $r= \frac{\abs{c(x)-c_\theta^\eq(x)}}{c_\theta^\eq(x)}$ and $s = \eps \eta W(x)$ for some $\eta>0$ to be determined later, we arrive at
  \begin{equation*}
    \eps \eta W(x) \frac{\abs{c(x)-c_\theta^\eq(x)}}{c_\theta^\eq(x)} \leq \varphi\bra*{ \frac{c(x)-c_\theta^\eq(x)}{c_\theta^\eq(x)}} + \eps^2 e^{\eta W(x)} .
  \end{equation*}
  Dividing by $\eps$, multiplying by $c_\theta^\eq(x)$ and integrating over $[0,L]$ leads to
  \begin{equation}\label{e:Pinsker:p1}
    \eta \int_0^L W(x) \abs*{c(x)-c_\theta^\eq(x)} \dx{x} \leq \eps \int_0^L e^{\eta W(x)} c_\theta^\eq(x) \dx{x} +\frac{1}{\eps} \cH( c | c_\theta^\eq) .
  \end{equation}
  To prove~\eqref{e:Pinsker:sub}, we set $\eta = \abs{\theta}$, $L=\infty$ in~\eqref{e:Pinsker:p1} and obtain
  \begin{equation*}
    \abs{\theta} \int W(x) \abs*{c(x)-c_\theta^\eq(x)} \dx{x} \leq \eps \int_0^\infty e^{-V(x)} \dx{x} + \frac{1}{\eps} \cH( c | c_\theta^\eq) .
  \end{equation*}
  The integral $\int e^{-V(x)} \dx{x}$ is bounded by Assumption~\ref{ass:VW} and we obtain the result~\eqref{e:Pinsker:sub} by setting $\eps = \min\set*{\sqrt{\cH( c | c_\theta^\eq)}/\sqrt{C_{V}},1}$.
  \newline
  To prove~\eqref{e:Pinsker:L}, we just set $\eta=1$ in~\eqref{e:Pinsker:p1} and get the constant
  \[
   C(V,W,\theta,L) = \int_0^L e^{W(x)} c_{\theta}^\eq(x) \dx{x} = \int_0^L \exp\bra*{-V(x) + (\theta+1) W(x)} \dx{x} < \infty ,
  \]
  which by setting $\eps= \min\set*{\sqrt{\cH( c | c_\theta^\eq)}/\sqrt{C},1}$ gives again the result.
\end{proof}
We give an immediate Corollary of the above Pinsker inequality and Lemma~\ref{lem:FreeEnergyRelEnt}, which by the convergence statement of Theorem~\ref{thm:QuantLongTime} proofs Corollary~\ref{cor:Pinsker}.
\begin{corollary}\label{cor:FreeEnergyRelEnt}
  For $\rho<\rho_s$ and with Assumption~\ref{ass:VW} it holds for any $c\in L^1_W(\R^+)$ with $\theta + \norm{c}_{L^1_W(\R^+)} = \rho$ the estimate
  \begin{equation}\label{e:FreeEnergyRelEnt:est}
     \bra*{\int W(x) \abs*{c(x)- c_{\theta^\eq}^\eq(x)} \dx{x}}^2 + \bra*{ \theta - \theta^\eq}^2 \leq \bra[\Big]{ 1 + 4\abs*{\theta^\eq}^{-2} \max\set[\Big]{ \cF_\rho(c), C_{V}} } \cF_\rho(c) ,
  \end{equation}
  where $C_{V} = \int e^{-V(x)}\dx{x}$.
\end{corollary}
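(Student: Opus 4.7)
The plan is to combine three inputs: the free-energy decomposition of Lemma~\ref{lem:FreeEnergyRelEnt}, the weighted Pinsker inequality of Lemma~\ref{lem:Pinsker}, and the mass-conservation constraint. I will write $a = \int W(x) \abs{c(x) - c_{\theta^\eq}^\eq(x)} \dx{x}$, $b = \theta - \theta^\eq$, $h = \cH(c \mid c_{\theta^\eq}^\eq)$, and set $M = \max\set{\cF_\rho(c), C_V}$ and $K = 4 M / \abs{\theta^\eq}^2$, so that the target reads $a^2 + b^2 \le (1 + K)\,\cF_\rho(c)$.

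First, Lemma~\ref{lem:FreeEnergyRelEnt} supplies the identity $\cF_\rho(c) = h + \tfrac{1}{2} b^2$. Next, since $\rho < \rho_s$ the characterisation~\eqref{e:lem:constraint} of Lemma~\ref{lem:CharactMinEnergy} forces $\theta^\eq < 0$, so Lemma~\ref{lem:Pinsker}~\eqref{e:Pinsker:sub} applied with $\theta = \theta^\eq$ yields $a^2 \le (4/\abs{\theta^\eq}^2)\, h\, \max\set{h, C_V} \le K h$, after bounding $h \le \cF_\rho(c) \le M$. Finally, subtracting the two constraints $\theta + \int W c = \rho$ and $\theta^\eq + \int W c_{\theta^\eq}^\eq = \rho$ gives $b = -\int W\,(c - c_{\theta^\eq}^\eq)$, and hence $\abs{b} \le a$.

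The combination step is where care is needed. Substituting $h = \cF_\rho(c) - b^2/2$ into $a^2 \le K h$ produces $a^2 + b^2 \le K\,\cF_\rho(c) + (1 - K/2)\, b^2$. When $K \ge 2$ the last term is non-positive and the claim follows directly. When $K < 2$ I will feed the refined inequality $b^2 \le a^2 \le K h = K\,\cF_\rho(c) - K b^2/2$ back into itself to obtain $b^2 \le 2 K\,\cF_\rho(c)/(2+K)$, and then insert this into the previous line to get $a^2 + b^2 \le 4 K\,\cF_\rho(c)/(2+K)$. The required inequality $4K/(2+K) \le 1 + K$ rearranges to $K^2 - K + 2 \ge 0$, which is manifest.

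The delicate point is getting the coefficient to be exactly $(1+K)$ rather than a looser $2K$ or $\max(2, K)$: the naive estimate $a^2 + b^2 \le 2 a^2 \le 2 K h$ is off by a factor $2$ in the regime $K > 1$. What closes the gap is exploiting the identity $\cF_\rho = h + b^2/2$ jointly with $\abs{b} \le a$, so that the portions of $\cF_\rho$ carried by the relative entropy and by the $b^2$ term are properly coupled rather than double-counted. Once this corollary is in hand, the quantitative statement of Corollary~\ref{cor:Pinsker} follows immediately by inserting the convergence rate of Theorem~\ref{thm:QuantLongTime}.
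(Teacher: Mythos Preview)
Your proof is correct and follows the paper's intended route: invoke the identity $\cF_\rho(c)=\cH(c\mid c_{\theta^\eq}^\eq)+\tfrac12(\theta-\theta^\eq)^2$ from Lemma~\ref{lem:FreeEnergyRelEnt} and control the weighted $L^1$ distance via the Pinsker inequality of Lemma~\ref{lem:Pinsker} with $\theta=\theta^\eq$. The paper's proof is a two-line sketch stating only these two ingredients.

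Where you go beyond the paper is in actually verifying the stated constant $1+K$. The bare combination of the identity with $a^2\le Kh$ gives only $a^2+b^2\le K\cF_\rho(c)+(1-K/2)b^2$, which closes immediately for $K\ge 2$ but not for small $K$; the naive bound $b^2\le 2\cF_\rho(c)$ would yield $a^2+b^2\le 2\cF_\rho(c)$ in that regime, overshooting $(1+K)\cF_\rho(c)$ when $K<1$. Your additional observation $|b|\le a$, coming from subtracting the two mass constraints, is exactly what is needed to recover the claimed coefficient uniformly in $K$. The paper's sketch does not mention this step, so you have in effect supplied a detail it omits. For the downstream application (Corollary~\ref{cor:Pinsker}) the precise constant is irrelevant, which may explain why the paper did not elaborate, but your argument is the cleaner one.
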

\begin{proof}
  By the representation~\eqref{e:FreeEnergyRelEnt} of Lemma~\ref{lem:FreeEnergyRelEnt}, it is enough to bound $\cH(c| c_{\theta^\eq}^\eq)$ from below, for which we use the Pinsker inequality of Lemma~\ref{lem:Pinsker} with $\theta= \theta^\eq$.
\end{proof}
Since, by the energy dissipation identity~\eqref{e:EDI} the free energy $\cF_\rho(c(t))$ is decreasing along a solution $c$ of~\eqref{A1}, \eqref{B1}, \eqref{D1}, \eqref{E1}, we find that whenever $c(t)$ is such that $\theta(t)\leq -\delta$, the estimate $\cF_\rho(c(t)) \leq C \cH(c(t)| c_{\theta_\eq}^\eq)$ for some constant only depending on Assumption~\ref{ass:VW} and $\delta$.

The following weighted $L^1$ estimate will help to control error terms occurring in the derivation of the dissipation inequality.
\begin{lemma}\label{lem:weightDifference}
 Suppose Assumption~\ref{ass:VW} holds.
 \begin{enumerate}[ (a) ]
  \item Let $\delta>0$. Then, for any $\theta < 0$ and all $c$ smooth enough with $\theta + \norm{Wc}_1 = \rho$ as well as $c(0) = c_\theta^\eq(0)$ holds for some $C=C(V,W,\theta)$
 \begin{equation}\label{e:weightDifference}
   \int \sqrt{W(x)}\, W'(x)\, \abs*{c(x) - c_\theta^\eq(x)}\dx{x} \leq C \sqrt{\bra{\rho-\theta} \; \cD(c,\theta)} .
 \end{equation}
  \item Let $\Theta >0$ and $L>0$. Then for all $\abs{\theta} \leq \Theta$ and all $c$ smooth enough with $\theta + \int W c = \rho$ as well as $c(0) = c_\theta^\eq(0)$ holds for some $C=C(V,W,\Theta,L)$
  \begin{equation}
    \label{e:weightDifference:L}
   \int_0^L \sqrt{W(x)}\, W'(x) \, \abs{c(x) - c_\theta^\eq(x)} \dx{x} \leq C \sqrt{\bra{\rho-\theta} \; \cD(c,\theta)} .
  \end{equation}
 \end{enumerate}
\end{lemma}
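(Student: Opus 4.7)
The key observation is that the boundary condition $c(0) = c_\theta^\eq(0)$ translates, via $h(x) := c(x)/c_\theta^\eq(x)$, into $h(0) = 1$. The function $f(x) := c(x) - c_\theta^\eq(x)$ satisfies the linear ODE $f'(x) = (\log c_\theta^\eq)'(x)\, f(x) + c(x)\, (\log h)'(x)$ with $f(0)=0$, so the integrating factor $1/c_\theta^\eq$ yields the representation
\begin{equation*}
c(x) - c_\theta^\eq(x) \ = \ c_\theta^\eq(x) \int_0^x h(y)\, (\log h)'(y) \dx{y} .
\end{equation*}
Inserting this into the left-hand side of \eqref{e:weightDifference} or \eqref{e:weightDifference:L}, using the triangle inequality and Fubini, gives
\begin{equation*}
\int_0^{\La} \sqrt{W(x)}\, W'(x)\, \abs{c(x) - c_\theta^\eq(x)} \dx{x} \ \leq \ \int_0^{\La} h(y)\, \abs{(\log h)'(y)}\, g(y) \dx{y} ,
\end{equation*}
where $\La = \infty$ in (a), $\La = L$ in (b), and $g(y) := \int_y^{\La} \sqrt{W(x)}\, W'(x)\, c_\theta^\eq(x) \dx{x}$. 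Then the Cauchy--Schwarz inequality against the measure $c(y)\dx{y}$, together with $c = c_\theta^\eq h$, produces
\begin{equation*}
\int_0^{\La} h\, \abs{(\log h)'}\, g \dx{y} \ \leq \ \sqrt{\cD(c,\theta)} \cdot \bra[\bigg]{\int_0^{\La} \frac{g(y)^2\, c(y)}{c_\theta^\eq(y)^2} \dx{y}}^{1/2} .
\end{equation*}

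The task therefore reduces to the pointwise bound $g(y) \leq C \sqrt{W(y)}\, c_\theta^\eq(y)$, since then the mass identity $\int W c\dx{y} = \rho - \theta$ gives $\int g^2 c / c_\theta^\eq(y)^2 \dx{y} \leq C^2 (\rho - \theta)$, closing the estimate. For part (b) this is immediate: on the compact interval $[0,L]$ the functions $W$, $\sqrt{W}W'$, $c_\theta^\eq$ and $1/c_\theta^\eq$ are bounded by constants depending only on $V,W,\Theta,L$, so $g$ is bounded and the claim follows from $\sqrt{W(y)} \geq \sqrt{W(0)} > 0$.

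Part (a) is delicate and requires two successive integrations by parts. Writing $c_\theta^\eq = e^{-\Psi}$ with $\Psi = V - \theta W$ and fixing $\del \in (0,|\theta|/2)$ small, Assumption~\ref{ass:VW}~(c) supplies $x_\del$ such that on $[x_\del,\infty)$ one has $\Psi' \geq (|\theta|-\del) W' > 0$ and $\abs{\Psi''} \leq (1+|\theta|)\del\,(W')^2$. The bound $W' e^{-\Psi} \leq -(e^{-\Psi})'/(|\theta|-\del)$ combined with integration by parts on $\int_y^\infty \sqrt{W}\,(e^{-\Psi})' \dx{x}$, whose boundary term at infinity vanishes by the exponential decay of $c_\theta^\eq$, yields
\begin{equation*}
g(y) \ \leq \ \frac{1}{|\theta|-\del} \bra*{ \sqrt{W(y)}\, c_\theta^\eq(y) + \int_y^\infty \frac{W'(x)}{2\sqrt{W(x)}}\, c_\theta^\eq(x) \dx{x}} .
\end{equation*}
The remainder integral is bounded by $\tfrac{\sqrt{C_0}}{2}\int_y^\infty c_\theta^\eq \dx{x}$ via Assumption~\ref{ass:VW}~(d), and a second integration by parts on $\int_y^\infty c_\theta^\eq\dx{x} = -\int_y^\infty (e^{-\Psi})'/\Psi'\dx{x}$ produces
\begin{equation*}
\int_y^\infty c_\theta^\eq\dx{x} \ = \ \frac{c_\theta^\eq(y)}{\Psi'(y)} - \int_y^\infty \frac{\Psi''}{(\Psi')^2}\, c_\theta^\eq \dx{x} .
\end{equation*}
Choosing $\del$ so small that $(1+|\theta|)\del/(|\theta|-\del)^2 < 1/2$ allows the error term to be absorbed, yielding $\int_y^\infty c_\theta^\eq\dx{x} \leq 2c_\theta^\eq(y)/\Psi'(y) \leq C(V,W,\theta)\, c_\theta^\eq(y)$. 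Consequently $g(y) \leq C\sqrt{W(y)}\, c_\theta^\eq(y)$ on $[x_\del,\infty)$, and the bound extends to $[0,x_\del]$ by boundedness of all quantities and $\sqrt{W(y)} \geq \sqrt{W(0)}$.

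The main obstacle is precisely this iterated integration-by-parts analysis of $g$ in part (a): both the vanishing of the boundary terms at infinity and the absorbability of the error terms rely essentially on the growth conditions of Assumption~\ref{ass:VW}~(c)--(d) controlling $V'$, $V''$, $W''$ by appropriate multiples of $W'$ and $(W')^2$.
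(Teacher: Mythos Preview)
Your proof is correct and follows the same strategy as the paper: represent $c-c_\theta^\eq$ via the fundamental theorem of calculus using the boundary condition $c(0)=c_\theta^\eq(0)$, apply Fubini, establish the pointwise bound $g(y)\le C\sqrt{W(y)}\,c_\theta^\eq(y)$ by integration by parts, and conclude with Cauchy--Schwarz against the measure $c(y)\dx{y}$. The only variation is in the tail estimate for $g$: the paper uses a single integration by parts and absorbs the remainder directly via $\tfrac{W'}{2\sqrt{W}}=\tfrac{\sqrt{W}W'}{2W}\le\tfrac{\sqrt{W}W'}{2W(y)}$ for $x\ge y$, whereas you bound $W'/\sqrt{W}\le\sqrt{C_0}$ from Assumption~\ref{ass:VW}(d) and then perform a second integration by parts on $\int_y^\infty c_\theta^\eq\dx{x}$ with absorption of the $\Psi''/(\Psi')^2$ term.
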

\begin{proof}
  Let us start by noting that since $c(0) = c_\theta^\eq(0)$, we can calculate
  \begin{align*}
    \int \sqrt{W(x)}\, W'(x) \, \abs{c(x) - c_\theta^\eq(x)} \dx{x} &= \int_0^\infty \sqrt{W(x)}\, W'(x) \, \abs*{ \frac{c(x)}{c_\theta^\eq(x)} - 1} c_\theta^\eq(x) \dx{x} \\
    &= \int_0^\infty \sqrt{W(x)}\, W'(x) \, \abs*{ \int_0^x \partial_y \frac{c(y)}{c_\theta^\eq(y)} \dx{y} } c_\theta^\eq(x) \dx{x} \\
    &\leq \int_0^\infty \abs*{\partial_y \frac{c(y)}{c_\theta^\eq(y)}} \int_y^\infty \!\! \sqrt{W(x)}\, W'(x) \,  c_\theta^\eq(x) \dx{x}  \dx{y} .
  \end{align*}
  The inner integral can be bounded by applying~\eqref{F1:1} of Assumption~\ref{ass:VW} leading to $c_{\theta}^{\eq}(x) \leq e^{-(\abs{\theta}-\delta)W(x)}$ for $x\geq x_\delta$. Then, we have by integration by parts for any $y\geq x_\delta$ the bound
  \begin{align*}
    \int_y^\infty \sqrt{W(x)}\, \underbrace{W'(x) \,  e^{-(\abs{\theta}-\delta)W(x)}}_{\mathclap{=-\bra*{\abs{\theta}-\delta}^{-1}\pderiv{}{x} e^{-(\abs{\theta}-\delta)W(x)}}} \dx{x} &\leq \frac{\sqrt{W(y)} e^{-(\abs{\theta}-\delta)W(y)}}{\abs{\theta}+\delta} \\
    &\quad + \frac{\int_y^\infty \sqrt{W(x)}\, W'(x) \,  c_\theta^\eq(x) \dx{x}}{2\bra{\abs{\theta}+\delta} W(y)}    .
  \end{align*}
  Hence, we obtain a bound, if we choose $x_\delta$ large enough such that $\bra{\abs{\theta}+\delta} W(y)\geq 1$. On the other hand, for $y\leq x_\theta$ the integral on $[y, x_\theta]$ is anyway bounded. Hence, for a constant $C=C(V,W,\theta)$, we can further estimate by the Cauchy-Schwarz inequality
  \begin{align*}
    \int \sqrt{W(x)} \, W'(x)\,  \abs{c(x) - c_\theta^\eq(x)} \dx{x} &\leq C \int \sqrt{W(x)} \abs*{\partial_y \log \frac{c(y)}{c_\theta^\eq(y)}} c(y) \dx{y} \\
    &\leq C \bra*{ \int W(x) c(x) \dx{x}  \ \cD(c,\theta)}^{\frac{1}{2}} .
  \end{align*}
  Finally, the estimate \eqref{e:weightDifference:L} is an immediate consequence by suitable truncating the above occurring integrals to the interval $[0,L]$.
\end{proof}

\subsection{Weighted logarithmic Sobolev inequalities}
\label{s:weightLSI}

In this section, we are going to show the following result:

\begin{theorem}
  \label{thm:epi}
  Take $\rho < \rho_{\mathrm{s}}$, $\delta > 0$ and $k > 0$. Let $-1/\delta \leq \theta \leq -\delta$ and define the weight
  \begin{equation}\label{e:def:omega}
    \omega(x) = \frac{W(x)}{\bra*{W'(x)}^2} .
  \end{equation}
  Then, for any $c\in \cM_{ac}(\R^+)$ such that $\theta + \int W c = \rho$, $c(0) = c_\theta^\eq(0)$ and
  \begin{equation}\label{eq:epi:moment}
    \bra*{\int \omega(x)^k c_\theta^\eq(x)  \, \Psi\bra*{\frac{c(x)}{c_\theta^\eq(x)}} \dx{x}}^{\frac{1}{k}} =: C_k
  \end{equation}
  there exists a constant $C_{\LSI} = C_{\LSI}(V,W, \delta)$ such that
  \begin{equation}
    \label{eq:epi}
      \cF_\rho(c)^{\frac{1+k}{k}} \leq  C_k \, C_{\LSI} \, \cD(c,\theta).
  \end{equation}
  In particular, if~\eqref{G1:alpha} holds with $\beta=0$, that is $c_0 W(x) \leq W'(x)^2$,  then there exists $C_{\LSI} = C_{\LSI}(V,W, \delta)$ such that
    \begin{equation}
    \label{eq:epi:linear}
      \cF_\rho(c) \leq  \frac{C_{\LSI}}{c_0} \, \cD(c,\theta).
    \end{equation}
\end{theorem}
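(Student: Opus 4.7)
The plan is to reduce the inequality to a weighted logarithmic Sobolev inequality for $c_\theta^\eq$ and then interpolate against the moment~\eqref{eq:epi:moment}. The starting point is the bound $\cF_\rho(c)\le\cH(c\,|\,c_\theta^\eq)$, which follows immediately from~\eqref{e:FreeEnergyRelEnt:theta} of Lemma~\ref{lem:FreeEnergyRelEnt}. It thus suffices to estimate the relative entropy $\cH(c\,|\,c_\theta^\eq)^{(1+k)/k}$ by $C_k\,C_{\LSI}\,\cD(c,\theta)$. The interpolation step rewrites the entropy density as
\begin{equation*}
  c_\theta^\eq\,\Psi\bra*{c/c_\theta^\eq} = \bra[\big]{\omega^{k}\,c_\theta^\eq\,\Psi\bra*{c/c_\theta^\eq}}^{1/(k+1)}\cdot \bra[\big]{\omega^{-1}\,c_\theta^\eq\,\Psi\bra*{c/c_\theta^\eq}}^{k/(k+1)}
\end{equation*}
and applies H\"older with conjugate exponents $k+1$ and $(k+1)/k$; in view of the definition of $C_k$ this yields
\begin{equation}\label{eq:plan:holder}
  \cH(c\,|\,c_\theta^\eq) \le C_k^{k/(k+1)}\,\bra*{\int \omega^{-1}(x)\,c_\theta^\eq(x)\,\Psi\bra*{c(x)/c_\theta^\eq(x)} \dx{x}}^{k/(k+1)}.
\end{equation}

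The core of the proof, and its main obstacle, is to establish the weighted logarithmic Sobolev inequality
\begin{equation}\label{eq:plan:wLSI}
  \int \omega^{-1}(x)\,c_\theta^\eq(x)\,\Psi\bra*{c(x)/c_\theta^\eq(x)} \dx{x} \le C_{\LSI}\,\cD(c,\theta),
\end{equation}
valid for $-1/\delta\le\theta\le-\delta$ and for any admissible $c$ with $c(0)=c_\theta^\eq(0)$, with a constant $C_{\LSI}$ depending only on $V,W,\delta$. By~\eqref{G1} the weight $\omega^{-1}=(W')^2/W$ is uniformly bounded above by $C_0$, so~\eqref{eq:plan:wLSI} is essentially a standard Fisher-information logarithmic Sobolev inequality for the measure $c_\theta^\eq$ on $\R^+$, applied to densities $u=c/c_\theta^\eq$ with $u(0)=1$. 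The Bakry--\'Emery criterion is unavailable, since $V''-\theta W''$ need not be bounded below under Assumption~\ref{ass:VW}. Instead, I would combine (i)~the super-exponential decay of $c_\theta^\eq$ at infinity, which follows from $|V'|\le\delta W'$ via~\eqref{F1:1} together with the factor $e^{\theta W}$ with $\theta\le-\delta$, and (ii)~the Dirichlet-type condition $\log(c/c_\theta^\eq)(0)=0$, which enables a Hardy-type control near the origin. Plugging these inputs into a one-dimensional Muckenhoupt--Bobkov--G\"otze criterion for log-Sobolev inequalities on the half-line, and carefully tracking the dependence of the resulting constant on $\delta$ and the parameters of Assumption~\ref{ass:VW}, should deliver~\eqref{eq:plan:wLSI}. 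The delicate aspect here is that the constant must be uniform for $\theta$ in a compact subset of $(-\infty,0)$ and must not deteriorate because of the possible slow growth of $V$.

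Inserting~\eqref{eq:plan:wLSI} into~\eqref{eq:plan:holder} and raising to the power $(1+k)/k$ yields the announced estimate~\eqref{eq:epi}. For the stronger conclusion~\eqref{eq:epi:linear} under $\beta=0$, the hypothesis $c_0\,W(x)\le W'(x)^2$ forces $\omega^{-1}(x)\ge c_0$, hence
\begin{equation*}
  \cH(c\,|\,c_\theta^\eq) \le c_0^{-1}\!\int\omega^{-1}(x)\,c_\theta^\eq(x)\,\Psi\bra*{c(x)/c_\theta^\eq(x)} \dx{x},
\end{equation*}
and the H\"older interpolation step becomes superfluous; applying~\eqref{eq:plan:wLSI} directly gives the linear bound $\cF_\rho(c)\le\bra*{C_{\LSI}/c_0}\,\cD(c,\theta)$.
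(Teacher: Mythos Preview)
Your proposal is correct and mirrors the paper's proof: bound $\cF_\rho(c)\le\cH(c\,|\,c_\theta^\eq)$ via Lemma~\ref{lem:FreeEnergyRelEnt}, establish the weighted log-Sobolev inequality~\eqref{eq:plan:wLSI} (this is exactly Proposition~\ref{prop:epi0}, proved through the Muckenhoupt--Bobkov--G\"otze criterion of Proposition~\ref{prop:mixedDirLSI} adapted to the Dirichlet condition $u(0)=1$), and interpolate by H\"older with exponents $k+1$ and $(k+1)/k$. One caution: your remark that $\omega^{-1}\le C_0$ makes~\eqref{eq:plan:wLSI} ``essentially a standard LSI'' is misleading---that bound only shows the weighted inequality is \emph{weaker} than the unweighted one, and the unweighted LSI generally fails for the sub-Gaussian measure $c_\theta^\eq$; the weight $\omega^{-1}$ is precisely what keeps the Muckenhoupt constant $B$ in~\eqref{e:mixedDirLSI:B} finite, and only in the $\beta=0$ case (where $\omega$ itself is bounded) does the standard LSI hold.
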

A form of the above weighted entropy dissipation inequality~\eqref{eq:epi} was recently derived in~\cite{CEL15} for the classical Becker-D\"oring model with subcritical inital mass. The main ingredient of the proof is a weighted logarithmic Sobolev inequality, which we adopt to our setting with Dirichlet boundary conditions. Therefore, we slightly modify the arguments in~\cite{ABCFGMRS00,Barthe,BR08,BG99} to deduce a criterion for logarithmic Sobolev inequalities on the positive half real line incorporating functions with fixed boundary conditions at~$0$. These kind of inequalities have there origin in the Muckenhoupt criterion~\cite{Muckenhoupt}.
\begin{proposition}\label{prop:mixedDirLSI}
  Let $\nu \in \cP(\R^+)$ and $\mu\in \cM_{ac}(\R^+)$ be absolutely continuous and by abuse of notation let its density be denoted by $\mu(\dx{x})=\mu(x) \dx{x}$. Let $A$ be the smallest constant such that for any smooth $f$ on $\R^+$ with $f(0)=1$ it holds
  \begin{equation}\label{e:mixedDirLSI}
    \Ent_{\nu}(f)= \int f \log \frac{f}{\int f \dx{\nu}}  \dx{\nu} \leq A \int \abs{\partial_x \log f}^2  f \dx{\mu}.
  \end{equation}
  Then, it holds $B/4\leq A \leq B$ where
  \begin{equation}\label{e:mixedDirLSI:B}
    B = \sup_{x > 0 } B(x) \qquad\text{with}\qquad B(x)= \nu\bra[\big]{ [x,\infty] } \log\bra*{ 1 + \frac{e^2}{\nu\bra[\big]{ [x,\infty] }}} \int_0^x \frac{\dx{y}}{\mu(y)}
  \end{equation}
\end{proposition}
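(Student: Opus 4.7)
I would treat this as a Hardy–Muckenhoupt/Bobkov–Götze criterion for a logarithmic Sobolev inequality, adapted to the Dirichlet boundary condition at~$0$. First, I reduce to a more convenient form: set $g=\sqrt{f}$, so the constraint becomes $g(0)=1$ and the inequality \eqref{e:mixedDirLSI} is equivalent to $\Ent_\nu(g^2)\le 4A\int_0^\infty (g')^2\,d\mu$. It therefore suffices to show the equivalent statement $B/4\le 4A\le 4B$, i.e.\ the sharp Muckenhoupt weight $B$ controls the best constant in this Dirichlet LSI up to the universal factor $4$ that is standard for such criteria.

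For the upper bound $A\le B$, I begin from the variational representation $\Ent_\nu(g^2)=\inf_{c>0}\int(g^2\log(g^2/c)-g^2+c)\,d\nu$. Choosing $c=1$ and setting $\phi(t)=t\log t-t+1\ge0$, $\phi'(t)=\log t$, gives $\Ent_\nu(g^2)\le\int\phi(g^2)\,d\nu$. Using $\phi(g^2(0))=\phi(1)=0$ I write $\phi(g^2(x))=\int_0^x 2g(y)g'(y)\log g^2(y)\,dy$ and apply Fubini to obtain
\[
 \Ent_\nu(g^2)\;\le\;2\int_0^\infty g(y)g'(y)\,\log g^2(y)\,\nu([y,\infty))\,dy.
\]
Next I split the $y$-integration at the level $g^2(y)\le 1+e^2/\nu([y,\infty))$: on this set I use $|\log g^2(y)|\le\log(1+e^2/\nu([y,\infty)))$ to bound the integrand pointwise; on the complementary set the constraint $g^2(y)>1+e^2/\nu([y,\infty))$ together with the Chebyshev-type identity $\nu([y,\infty))\le e^{-2}\int_y^\infty (g^2-1)\,d\nu$ converts the large-$g^2$ contribution back into a bound of the same form. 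A Cauchy–Schwarz with weights $\mu^{-1/2}$ and $\mu^{1/2}$ then pairs $g(y)g'(y)$ against $\sqrt{\mu(y)}$ and absorbs the remaining factor $\nu([y,\infty))\log(1+e^2/\nu([y,\infty)))\int_0^y\mu^{-1}$ into $B$; a second Hardy-type integration by parts reduces the prefactor so that the outcome is $\Ent_\nu(g^2)\le 4B\int_0^\infty(g')^2\,d\mu$, i.e.\ $A\le B$.

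For the lower bound $A\ge B/4$, I test against an extremal profile. Given $x_0>0$, put $r=\nu([x_0,\infty))$, $M=1+e^2/r$, and define
\[
 g(y)\;=\;1+(\sqrt{M}-1)\,\frac{\int_0^{\min(y,x_0)}\mu(z)^{-1}dz}{\int_0^{x_0}\mu(z)^{-1}dz}\qquad(y\ge 0),
\]
so that $g(0)=1$, $g\equiv\sqrt{M}$ on $[x_0,\infty)$, and $g$ is the Hardy extremizer between these two boundary values. A direct computation gives $\int(g')^2d\mu=(\sqrt M-1)^2/\int_0^{x_0}\mu(z)^{-1}dz$. For the entropy, restricting to $[x_0,\infty)$ and using the crude bound $\Ent_\nu(g^2)\ge\int_{x_0}^\infty g^2\log(g^2/\int g^2d\nu)\,d\nu\ge rM\log M-C\cdot M$ for large $M$, the leading behaviour is $rM\log(1+e^2/r)$, comparable to $(\sqrt M-1)^2\,r\log(1+e^2/r)$ for the chosen $M$. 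Taking the ratio and passing to the supremum in $x_0$ yields $4A\ge B$, which is the claimed $A\ge B/4$.

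The main difficulty will be the upper bound: one must pick the correct threshold inside the level-set split so that the constant $\log(1+e^2/r)$ in~$B(x)$ appears naturally and remains bounded as $r\to 1$, where a naive $\log(1/r)$ would blow up. Technically this is the place where the Dirichlet constraint $g(0)=1$ replaces the median that appears in the Neumann Bobkov–Götze criterion, and the Hardy estimate must therefore be organised one-sidedly so that only the tail $\nu([y,\infty))$ and the Dirichlet weight $\int_0^y\mu^{-1}$ appear.
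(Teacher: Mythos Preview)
Your approach differs from the paper's, and while the direct Hardy/level-set strategy is in the right spirit, the upper-bound argument has genuine gaps. The paper does not attempt to bound the integral $\int 2gg'\log(g^2)\,\nu([y,\infty))\,dy$ directly. Instead it first applies Rothaus' lemma $\Ent_\nu(g^2)\le\Ent_\nu((g-1)^2)+2\int(g-1)^2\,d\nu$, which replaces $g$ by $g-1$ (still vanishing at $0$), and then bounds $\Ent_\nu(h)+2\int h\,d\nu$ by an Orlicz-type norm $\|h\|_{\nu,\Phi,e^2}$ with $\Phi(r)=e^r-1$ via the variational characterisation of entropy. After that, the inequality $\|(g-1)^2\|_{\nu,\Phi,e^2}\le 4B\int(g')^2\,d\mu$ is a one-sided Hardy inequality in Orlicz spaces, for which the paper cites Barthe--Roberto and Bobkov--G\"otze; the constant $B$ appears because $\|\mathbf 1_{[x,\infty)}\|_{\nu,\Phi,e^2}=\nu([x,\infty))\log(1+e^2/\nu([x,\infty)))$. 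Both the upper and lower bound come from that cited result, so no test-function construction is carried out explicitly.

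The concrete problems with your sketch are: (i) the bound $|\log g^2(y)|\le\log(1+e^2/\nu([y,\infty)))$ on the set $\{g^2\le 1+e^2/\nu([y,\infty))\}$ fails whenever $g^2(y)<1$, since $|\log g^2|$ is then unbounded below while the right-hand side is finite; nothing in the Dirichlet condition prevents $g$ from dipping below $1$. (ii) The ``Chebyshev-type identity'' $\nu([y,\infty))\le e^{-2}\int_y^\infty(g^2-1)\,d\nu$ cannot be deduced from the pointwise condition $g^2(y)>1+e^2/\nu([y,\infty))$ at the single point $y$; you would need this on all of $[y,\infty)$, which you do not have. (iii) The closing ``second Hardy-type integration by parts reduces the prefactor'' is asserted but not shown, and it is exactly here that the precise constant $4B$ (rather than $CB$) must emerge. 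The Rothaus shift $g\mapsto g-1$ is what makes the paper's route work cleanly: it converts the problem into a genuine weighted $L^2$ Hardy inequality for a function vanishing at $0$, avoiding the sign and small-$g$ issues you are running into.
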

\begin{proof}
  For the proof it will be convenient to proof the equivalent formulation of~\eqref{e:mixedDirLSI} with~$f$ replaced by~$f^2$, which does not change the boundary value $f(0)=f^2(0)=1$. Hence, we want to prove the inequality
  \begin{equation}\label{e:mixedDirLSI2}
    \Ent_{\nu}(f^2) = \int f^2 \log \frac{f^2}{\int f^2 \dx{\nu}}  \leq 4 A \int \abs{f'}^2  \dx{\mu}.
  \end{equation}
  Let us write $\Phi(r) = e^r - 1$. Therewith, we can define an Orlicz type of norm, by setting for $K>0$
  \begin{equation}\label{e:mixedDirLSI:Orclicz}
    \norm{ f }_{\nu, \Phi, K} = \sup\set*{ \int \abs{f} g \dx{\nu} : g\geq 0 , \int \Phi(g) \dx{\nu} \leq K} .
  \end{equation}
  Let us assume, for a moment, that the following facts hold true
  \begin{align}
    \Ent_\nu( f^2 ) &\leq \norm*{ (f-1)^2 }_{\nu,\Phi,e^2}  \label{e:mixedDirLSI:Ent} \\
    \forall I\subset \R^+ : \qquad \norm*{ \bfOne_I}_{\nu,\Phi,K} &=  \nu[I] \log\bra*{1 + \frac{K}{\nu[I]} } \label{e:mixedDirLSI:1}
  \end{align}
  The identity~\eqref{e:mixedDirLSI:Ent} shows, that it is enough to show that the best constant $A$ in
  \begin{equation*}
    \norm{ (f-1)^2}_{\nu,\Phi,e^2} \leq A \int \abs*{f'}^2 \dx{\mu}
  \end{equation*}
  satisfies $B\leq A \leq 4 B$ with $B$ as in~\eqref{e:mixedDirLSI:B}. This result follows by an application of \cite[Proposition 2]{Barthe} or \cite[Corollary 5.2]{BG99}, which yields the following expression for $B$
  \begin{equation*}
    B = \sup_{x > 0} \norm{ \bfOne_{[x,\infty)} }_{\nu, \Phi, e^2} \int_{0}^x \frac{\dx{y}}{\mu(y) } .
  \end{equation*}
  Hence, the identity~\eqref{e:mixedDirLSI:1} gives the conclusion. It is left to show~\eqref{e:mixedDirLSI:Ent} and \eqref{e:mixedDirLSI:1}.

  \medskip
  For the proof of~\eqref{e:mixedDirLSI:Ent}, let us start from the following observation by~\cite[Lemma 9]{Rothaus} for any $a\in \R$
  \[
    \Ent_\nu\bra[\big]{f^2} \leq \Ent_{\nu}\bra[\big]{ (f-a)^2 } + 2 \int (f-a)^2 \dx{\nu} .
  \]
  On the other hand by the variational characterization of the entropy follows for any $f\geq 0$
  \begin{align*}
    \Ent_{\nu}(f) + 2 \int f \dx{\nu} &= \sup_g\set*{ \int f (g+2) \dx{\nu} : \int e^g \dx{\nu} \leq 1 } \\
    &\leq \sup_g\set*{ \int f g \bfOne_{\set{g\geq 0}} \dx{\nu} : \int e^g \dx{\nu} \leq e^2 } \\
    &\leq \sup_g\set*{ \int f g \dx{\nu} : g \geq 0, \int \bra*{e^g - 1 } \dx{\nu} \leq e^2 } = \norm{ f }_{\nu, \Phi,e^2 } .
  \end{align*}
  The last step is a consequence of $\int e^{g \bfOne_{\set{g \geq 0}}} \dx{\nu} \leq \int_{\set{g\geq 0}} e^g \dx{\nu} + \int_{\set{g< 0}} e^g \dx{\nu} \leq e^2 +1$. A combination of the above two estimates yields~\eqref{e:mixedDirLSI:Ent}.

  \medskip
  One direction of the proof of~\eqref{e:mixedDirLSI:1} from using the function $g(x) = \bfOne_I(x)\, \log(1+K/\nu[I])$ in the definition of~\eqref{e:mixedDirLSI:Orclicz}.  Using the fact that $s\mapsto \log(1+s)$ is concave, the estimate in the other direction follows from an application of the Jensen inequality
  \begin{align*}
    \int_I g \dx{\nu} &= \nu[I] \int_I \log\bra*{1+ \Phi(g)} \frac{\dx{\nu}}{\nu[I]}
    \leq \nu[I] \log\bra*{ 1 + \int_I \Phi(g) \frac{\dx{\nu}}{\nu[I]}} .
  \end{align*}
  Taking finally the supremum over all $g$ with $\int \Phi(g) \dx{\nu} \leq K$ concludes~\eqref{e:mixedDirLSI2}.
\end{proof}
We derive the following consequence of Proposition~\ref{prop:mixedDirLSI} as a version of the entropy production
inequality with a weight, which does not need any additional
exponent. Again, the need for the weight is directly related to the
fact that logarithmic Sobolev inequalities do not hold for an
exponentially decaying measure such as $c_\theta^\eq$, but in general need a
Gaussian decay to be valid.
\begin{proposition}\label{prop:epi0}
  Suppose Assumption~\ref{ass:VW} holds and let $\delta>0$. Then, there exists a constant
  $C_{\LSI} = C_{\LSI}(V,W,\delta)$ depending only on $\delta$ and the constants in Assumption~\ref{ass:VW}, such that for all $c$ with $\theta + \norm{W c}_1 = \rho$ and $-1/\delta \leq \theta \leq  -\delta$ as well as $c(0) = c_\theta^\eq(0)$ holds for $\omega$ given by~\eqref{e:def:omega}
  \begin{equation}
    \label{eq:epi0}
    \int_0^\infty \frac{c_\theta^\eq(x)}{\omega(x)}\; \Psi\bra*{\frac{c(x)}{c_\theta^\eq(x)}} \dx{x} \leq C_{\LSI} \cD(c,\theta).
  \end{equation}
  Likewise, let $\Theta>0$ and $L>0$, then there exists $C_{\LSI} = C_{\LSI}(V,W,\Theta,L)$ such that for any $\abs{\theta}\leq \Theta$ and any $c\in \cM_{ac}(\R^+)$ such that $\theta+ \int Wc = \rho$ and $c(0) = c_\theta^\eq(0)$ holds
    \begin{equation}
    \label{eq:epi0:L}
    \int_0^L \frac{c_\theta^\eq(x)}{\omega(x)}\; \Psi\bra*{\frac{c(x)}{c_\theta^\eq(x)}} \dx{x} \leq C_{\LSI} \cD(c,\theta).
  \end{equation}
\end{proposition}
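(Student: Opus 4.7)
The plan is to apply the mixed-Dirichlet logarithmic Sobolev inequality of Proposition~\ref{prop:mixedDirLSI} with the probability measure $\dx\nu = (c_\theta^\eq/\omega)\,\dx{x}/Z'$, where $Z' = \int c_\theta^\eq/\omega\,\dx{x}$, and reference density $\mu(x) = c_\theta^\eq(x)$. Writing $f = c/c_\theta^\eq$, the boundary condition $c(0) = c_\theta^\eq(0)$ becomes $f(0) = 1$, and a direct computation identifies $\int |(\log f)'|^2 f\,\dx\mu$ with the dissipation $\cD(c,\theta)$. Proposition~\ref{prop:mixedDirLSI} then yields $\Ent_\nu(f) \le A \cdot \cD(c,\theta)$ for a constant $A$ controlled by the Muckenhoupt-type quantity $B$ in~\eqref{e:mixedDirLSI:B}.

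The key technical step is verifying that $B$ is finite with bound depending only on $V,W,\delta$. For large $x$, integration by parts based on $W'(y)\,e^{\theta W(y)} = \theta^{-1}\partial_y e^{\theta W(y)}$, combined with the growth bounds~\eqref{F1:1}--\eqref{F1:2} of Assumption~\ref{ass:VW}, gives
\[
\nu[x,\infty) \sim \frac{W'(x)\,c_\theta^\eq(x)}{|\theta|\,W(x)\,Z'}, \qquad \int_0^x \frac{\dx{y}}{c_\theta^\eq(y)} \sim \frac{1}{|\theta|\,W'(x)\,c_\theta^\eq(x)}.
\]
The product equals $(|\theta|^2\,W(x)\,Z')^{-1}$, while $\log(1+e^2/\nu[x,\infty)) \sim |\theta|\,W(x)$, so $B(x) = O\bigl((|\theta|\,Z')^{-1}\bigr)$. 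Combined with the trivial vanishing $B(0)=B(\infty)=0$, this yields a uniform bound $A = A(V,W,\delta)$ on $\theta \in [-1/\delta,-\delta]$.

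The passage from $\Ent_\nu(f)$ to $\int\Psi(f)\,\dx\nu$ uses the algebraic identity $\int \Psi(f)\,\dx\nu = \Ent_\nu(f) + \Psi(m)$ with $m = \int f\,\dx\nu = Z'^{-1}\int c/\omega\,\dx{x}$, so that multiplying by $Z'$ gives
\[
\int \frac{c_\theta^\eq}{\omega}\,\Psi(c/c_\theta^\eq)\,\dx{x} \le A\,Z'\,\cD(c,\theta) + Z'\,\Psi(m).
\]
The defect $Z'\,\Psi(m)$ is bounded via the elementary inequality $\Psi(r) \le (r-1)^2$, valid for all $r\ge 0$, together with Jensen on the probability measure $\nu$: $Z'\,\Psi(m) \le \int (f-1)^2\,c_\theta^\eq/\omega\,\dx{x}$. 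Setting $g=\sqrt f$ and $h=g-1$ (so $h(0)=0$), the expansion $(f-1)^2 = h^2(h+2)^2 = 4h^2 + 4h^3 + h^4$ reduces matters to weighted Hardy-type bounds on $\int h^k\,c_\theta^\eq/\omega\,\dx{x}$ for $k=2,3,4$. The $k=2$ case follows from the Hardy inequality whose Muckenhoupt constant is again controlled by the asymptotic analysis above (without the logarithmic factor); the higher-order terms are split according to the sign of $h$, the negative part being bounded via $|h|\le 1$, and the positive part by interpolation against the LSI and the mass constraint $\int Wc\,\dx{x}=\rho-\theta$.

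For the localized inequality~\eqref{eq:epi0:L}, the same scheme applies on $[0,L]$, where all weights are bounded above and below by positive constants depending on $V,W,\Theta,L$, making both the Muckenhoupt and Hardy conditions trivial. The principal obstacle throughout is the asymptotic analysis of $B$: one must use the growth hypotheses~\eqref{F1:1}--\eqref{G1} of Assumption~\ref{ass:VW} carefully so that the exponential decay of $c_\theta^\eq$ in $\nu[x,\infty)$ cancels against the exponential growth of $1/c_\theta^\eq$ in $\int_0^x 1/c_\theta^\eq$, leaving only polynomial and logarithmic corrections that balance through the weight $\omega$.
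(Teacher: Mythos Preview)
Your overall strategy---choosing $\nu \propto c_\theta^\eq/\omega$, $\mu = c_\theta^\eq$, $f = c/c_\theta^\eq$, applying Proposition~\ref{prop:mixedDirLSI}, and verifying the Muckenhoupt constant $B$ via integration-by-parts asymptotics---is exactly the paper's approach, and your sketch of the asymptotic estimate for $B(x)$ is essentially the paper's computation. The algebraic identity $\int \Psi(f)\,\dx\nu = \Ent_\nu(f) + \Psi(m)$ is also what the paper uses.

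The difference, and the place where your argument has a gap, is the treatment of the defect $Z'\Psi(m)$. The paper does \emph{not} apply Jensen. It keeps
\[
(m-1)^2 \;=\; \frac{1}{Z'^2}\Bigl(\int_0^\infty \frac{1}{\omega}(c-c_\theta^\eq)\,\dx{x}\Bigr)^2
\]
and bounds this directly by Lemma~\ref{lem:weightDifference}, using that $\omega^{-1}=W'^2/W\le C\sqrt{W}\,W'$ from~\eqref{G1}, so that $|m-1|\le C Z'^{-1}\sqrt{(\rho-\theta)\,\cD(c,\theta)}$ and hence $Z'\Psi(m)\le CZ'^{-1}\cD(c,\theta)$ in one step.

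By contrast, your Jensen step replaces $(m-1)^2$ with the strictly larger quantity $\int (f-1)^2 c_\theta^\eq/\omega\,\dx{x}$, and your proposed control of the positive-$h$ contributions to $h^3,h^4$ by ``interpolation against the LSI and the mass constraint'' is not justified. For large $f$ one has $(f-1)^2\sim f^2$ while $\Psi(f)\sim f\log f$, so there is no pointwise bound of $(f-1)^2$ by the entropy integrand; the mass constraint only controls $\int f\,W\,c_\theta^\eq\,\dx{x}$, not $\int f^2 c_\theta^\eq/\omega\,\dx{x}$. Applying Hardy to $u=h^2$ gives $\int h^4 c_\theta^\eq/\omega\,\dx{x}\le C\int h^2 (h')^2 c_\theta^\eq\,\dx{x}$, but this is not dominated by $\cD=4\int (h')^2 c_\theta^\eq\,\dx{x}$ without an a~priori $L^\infty$ bound on $h$, which is not available. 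The cleanest fix is to drop the Jensen step and bound $(m-1)^2$ directly via Lemma~\ref{lem:weightDifference}, exactly as the paper does.
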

\begin{proof}
 Let us set $\nu(\dx{x}) = \frac{c_{\theta}^\eq(x)}{\omega(x) Z_{\theta,W}} \dx{x}$ with $Z_{\theta,W} = \int\frac{c_{\theta}^\eq(x)}{\omega(x)} \dx{x}$, $\mu(\dx{x})=\frac{c_\theta^\eq(x)}{Z_{\theta,W}}$ and $f(x) = \frac{c}{c_\theta^\eq}(x)$.
 Then, it holds $\cD(c,\theta) = Z_{\theta,W} \int \abs*{\partial_x \log f}^2 f \dx{\mu}$ and we can rewrite
\begin{align*}
  \MoveEqLeft{\int \frac{c_\theta^\eq(x)}{Z_{\theta,W}\,\omega(x)}  \Psi\bra*{\frac{c(x)}{c_\theta^\eq(x)}} \dx{x}}\\
  &= \int \frac{c_\theta^\eq(x)}{Z_{\theta,W}\,\omega(x)} \,\frac{c(x)}{c_\theta^\eq(x)}\,\log\bra*{ \frac{c(x)}{c_\theta^\eq(x)}} \dx{x} - \int  \frac{c(x)\dx{x}}{Z_{\theta,W}\,\omega(x)} + 1 \\
  &= \int f \log f \dx{\nu} - \int f \dx{\nu}\; \log\bra*{ \int f\dx{\nu}} + \Psi\bra*{ \frac{c(x)\dx{x}}{Z_{\theta,W}\,\omega(x)}} .
\end{align*}
 The first term is exactly $\Ent_\nu(f)$. Hence, we can apply Proposition~\ref{prop:mixedDirLSI}, since $f(0)=1$ by definition and the assumption $c(0) = c_\theta^\eq(0)$. The second term can be estimated by using $\Psi(x) \leq \bra*{ x- 1}^2$ and by noting that $1= \int \frac{c(x)\dx{x}}{Z_{\theta,W}\,\omega(x)}$ from the definition of $Z_{\theta,W}$. Then, we can apply Lemma~\ref{lem:weightDifference} to it, where we note that $\frac{1}{w(x)} \leq C \sqrt{W} W'$ by~\eqref{G1} of Assumption~\ref{ass:VW}. By doing so, we arrive at the bound
 \begin{align*}
   \int \frac{c(x)}{Z_{\theta,W}\,\omega(x)} \Psi\bra*{\frac{c(x)}{c_\theta^\eq(x)}}  \dx{x} &\leq \frac{A}{Z_{\theta,W}} \cD(c,\theta) + \bra*{ \frac{1}{Z_{\theta,W}} \int \frac{1}{\omega(x)} \abs{ c(x) - c_\theta^\eq(x)}\dx{x}}^2  \\
   &\leq \bra*{\frac{ A}{Z_{\theta,W}} + \frac{C}{Z_{\theta,W}^2}}  \cD(c,\theta).
 \end{align*}
  Now, we multiply by $Z_{\theta,W}$ and note that
  \[
    \pderiv{}{\theta} Z_{\theta,W} = \int a(x) c_\theta^\eq(x) \dx{x} > 0
  \]
  and hence it holds $\infty > Z_{0,W} \geq Z_{\theta,W} \geq Z_{-\delta^{-1},W} >0$.

  For the constant $A$, we have that $A \leq B$ with $B$ defined in~\eqref{e:mixedDirLSI:B} of Proposition~\ref{prop:mixedDirLSI}. First, we obtain an upper and lower bound on $\nu([x,\infty))$. We introduce the function $b_{\theta}(x) = V(x) - \theta W(x)$ and have thanks to~\eqref{F1:1} and~\eqref{G1} for $y\geq x_{\bar\delta}$ the estimate
  \begin{equation}\label{e:comp:b}
    b'_\theta(x) \geq \bra*{\abs{\theta}-\bar\delta} W'(x) \geq \bra*{\abs{\theta}-\bar\delta} \sqrt{c_0} .
  \end{equation}
  Hence, for $\bar\delta$ sufficiently small is $b_\theta$ monotone increasing. We assume from now on that $2\bar\delta\leq \abs{\theta}$
  To do so, we write
  \begin{align}
    \int_x^\infty \frac{1}{\omega(y)} e^{-b_{\theta}(y)}\dx{y} &= - \int_x^\infty \frac{1}{\omega(y)\, b'_\theta(y)} \pderiv{}{y}  e^{-b_\theta(y)} \dx{y} \notag \\
    &= \frac{e^{-b_\theta(x)}}{\omega(x) W'(x)} + \int_x^\infty \bra*{\frac{1}{\omega(x) \, b'_\theta(x)}}' e^{-b_{\theta}(x)} \dx{x} \label{e:nu:integral}.
  \end{align}
  In addition, we have
  \begin{align*}
    \bra*{\frac{1}{\omega(x) \, b'_\theta(x)}}' &= \frac{2 W'(x) W''(x)}{W(x) b'_\theta(x)} - \frac{W'(x)^3}{W(x)^2 b'_\theta(x)} - \frac{W'(x)^2 \, b_\theta''(x)}{W(x) b_\theta'(x)^2 }\\
    &= \frac{1}{\omega(x)} \bra*{ \frac{2 W''(x)}{W'(x)^2} \frac{W'(x)}{b'_\theta(x)} - \frac{1}{W(x)} \frac{W'(x)}{b'_\theta(x)} - \frac{b_\theta''(x)}{b_\theta'(x)^2}}
  \end{align*}
  Each term in the bracket can be made arbitrary small. Indeed, we have by~\eqref{F1:2} that $\abs{W''(x)}\leq \bar\delta W'(x)^2$ for $x\geq \bar\delta$. Then, from~\eqref{e:comp:b}, we have $W'(x)/b'_\theta(x) \leq \bra*{\abs{\theta}-\bar\delta}^{-1}$ and likewise by using once more~\eqref{F1:1} also $W'(x)/b'_\theta(x) \geq \bra*{\abs{\theta}+\bar\delta}^{-1}$. Moreover, again by~\eqref{F1:2}, we have $\abs*{b_\theta''(x)} = \abs{V''(x) - \theta W''(x)} \leq \bar\delta(1+\abs{\theta}) W'(x)^2$. Finally, the strict monotonic growth from~\eqref{G1} implies that $W(x)$ becomes small for $x\geq x_{\bar\delta}$ large. In total, we find by choosing a sufficiently small~$\bar\delta$ and sufficiently large~$x_{\bar\delta}$ that for all $x\geq x_{\bar\delta}$
  \[
     -\frac{1}{2\omega(x)} \leq \bra*{\frac{1}{\omega(x) \, b'_\theta(x)}}' \leq \frac{1}{2\omega(x)} .
  \]
  By plugging this estimate into~\eqref{e:nu:integral} and rearrange, we have obtained the bound for $x\geq x_{\bar\delta}$
  \begin{equation}\label{e:epi:p1}
     \frac{e^{-b_\theta(x)}}{2 \omega(x) W'(x)} \leq Z_{\theta,W} \nu([x,\infty)) \leq \frac{2 e^{-b_\theta(x)}}{\omega(x) W'(x)}.
  \end{equation}
  By very similar arguments, we can estimate for $x\geq x_{\bar\delta}$
  \begin{equation}\label{e:epi:p2}
    Z_{\theta,W}\int_{x_{\bar\delta}}^x \frac{\dx{y}}{\mu(y)} = \int_{x_{\bar\delta}}^x e^{b_{\theta}(x)} \dx{y}  \leq \frac{2 e^{b_{\theta}(x)}}{W'(x)}  ,
  \end{equation}
  By the smoothness of $V$ and $W$ from Assumption~\ref{ass:VW} it is enough to estimate $B(x)$ as defined in \eqref{e:mixedDirLSI:B} of Proposition~\ref{prop:mixedDirLSI} for $x\geq x_{\bar\delta}$. Hence, we can bound by using the estimates~\eqref{e:epi:p1} and~\eqref{e:epi:p2}, recalling that $\omega(x)=W(x)/W'(x)^2$, for $x\geq x_{\bar\delta}$
  \begin{align*}
    B(x) &= \nu\bra*{[x,\infty)} \log\bra*{ 1 + \frac{e^2}{\nu([x,\infty))}} \int_0^x \frac{\dx{y}}{\mu(y)} \\
    &\leq \frac{4}{W(x)} \log\bra*{ 1 + \frac{2 W(x)}{W'(x)} \, e^{V(x) - \theta W(x)}}\\
    &\leq \frac{4}{W(x)} \bra*{ \log\bra*{ 1 + \frac{2 W(x)}{\sqrt{c_0}} } + \log\bra*{ 1+ e^{(\delta^{-1} + C_V)W(x)}}},
  \end{align*}
  where we used the bound $\log(1+ab) \leq \log(1+a)+\log(1+b)$ for any $a,b>0$ and the growth conditions on $V$ and $W$ from Assumption~\ref{ass:VW} implying $V(x)\leq C_V W(x)$ for some $C_V>0$. We conclude with the help of the estimate $\log(1+e^x)\leq x+\log(2)$ for $x\geq 0$ that $\limsup_{x\to \infty} B(x) \leq C < \infty$ and hence also $B=\sup_{x> 0} B(x) \leq C < \infty$ for some $C=C(V,W,\delta)$.

  The proof of~\eqref{eq:epi0:L} follows by the same arguments, suitable truncation of the integrals as well as~\eqref{e:weightDifference:L} from Lemma~\ref{lem:weightDifference}.
\end{proof}
\medskip
The previous result essentially contains Theorem \ref{thm:epi}. The
proof can now be finished by an interpolation argument based on the moment assumption on $c$~\eqref{eq:epi:moment}.
\begin{proof}[Proof of Theorem \ref{thm:epi}]
  By~\eqref{e:FreeEnergyRelEnt:theta} from  Lemma~\ref{lem:FreeEnergyRelEnt}, we have $\cF_\rho(c) \leq \cH(c | c_\theta^\eq )$. Next, we are going to use~\eqref{eq:epi0} of Proposition~\ref{prop:epi0} by an interpolation argument. Set $\frac{1}{p} = \frac{k}{1+k}$ and $\frac{1}{q} = 1-\frac{1}{p} = \frac{1}{1+k}$. Then, we have $(\omega)^\frac{k}{q} = (\omega)^\frac{1}{p}$ and hence by H\"older's inequality
  \begin{align*}
    \cF_\rho(c) &\leq \cH(c | c_\theta^\eq)
    = \int c_\theta^\eq(x) \Psi\bra*{\frac{c(x)}{c_\theta^\eq(x)}} \dx{x}\\
    &\leq
    \left(
      \int \frac{c_\theta^\eq(x)}{\omega(x)} \Psi\bra*{\frac{c(x)}{c_\theta^\eq(x)}} \dx{x}
    \right)^{\frac{1}{p}}
    \left(
      \int \omega(x)^k c_\theta^\eq(x) \Psi\bra*{\frac{c(x)}{c_\theta^\eq(x)}} \dx{x}
    \right)^{\frac{1}{q}}
    \\
    &\leq
    C_k^{\frac{k}{q}} C_{\LSI}^{\frac{1}{p}} \cD(c,\theta)^{\frac{1}{p}},
  \end{align*}
  which finishes the proof of the first part. For the second part, we note that~\eqref{G1:alpha} with $\beta=0$ implies $\omega(x)\leq \frac{1}{c_0}$ for all $x\in \R^+$ and the estimate~\eqref{eq:epi0} becomes already inequality~\eqref{eq:epi:linear}.
\end{proof}
The final ingredient from the static investigation of the entropy and dissipation is a lower bound on the dissipation in the case where $\theta$ is not strictly negative.
\begin{theorem}\label{thm:dissipation:lb}
  Let $\delta >0$ and $\Theta>0$. Then there exists $\eps=\eps(V,W,\delta,\Theta)>0$ such that for all $\delta \leq \abs{ \theta -\theta_\eq}\leq \Theta$ and all $c\in \cM_{ac}(\R^+)$ with $\theta+\int W c = \rho$ and $c(0)=c_\theta^\eq(0)$ holds
  \begin{equation}\label{e:dissipation:lb}
    \cD(c,\theta) \geq \eps .
  \end{equation}
\end{theorem}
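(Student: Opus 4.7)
The proof proceeds by contradiction and compactness. Suppose the conclusion fails, so there exists a sequence $(c_n, \theta_n)_{n\geq 1}$ satisfying $\delta \leq \abs{\theta_n - \theta_\eq} \leq \Theta$, $\theta_n + \int W c_n = \rho$, and $c_n(0) = c_{\theta_n}^\eq(0)$, but with $\cD(c_n, \theta_n) \to 0$. By compactness of $[\theta_\eq - \Theta, \theta_\eq + \Theta]$, one may pass to a subsequence with $\theta_n \to \theta_*$ where $\delta \leq \abs{\theta_* - \theta_\eq} \leq \Theta$. The goal is to identify the limit $c_n \to c_{\theta_*}^\eq$ and derive $\theta_* = \theta_\eq$, contradicting the lower bound $\abs{\theta_* - \theta_\eq}\geq \delta$.

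The first step is local convergence on any compact interval $[0,L]$. Since $\abs{\theta_n}\leq \Theta + \abs{\theta_\eq}$, the local weighted logarithmic Sobolev inequality~\eqref{eq:epi0:L} of Proposition~\ref{prop:epi0} gives
\[
\int_0^L \frac{c_{\theta_n}^\eq(x)}{\omega(x)} \Psi\!\left(\frac{c_n(x)}{c_{\theta_n}^\eq(x)}\right)\dx{x} \leq C_{\LSI}(V,W,\Theta,L)\,\cD(c_n,\theta_n) \;\longrightarrow\;0.
\]
Because $\omega$ and $c_{\theta_n}^\eq$ are bounded above and below on $[0,L]$, uniformly in $n$, combining this with the local Pinsker inequality~\eqref{e:Pinsker:L} yields $\int_0^L W \abs{c_n - c_{\theta_n}^\eq} \to 0$, and hence $c_n \to c_{\theta_*}^\eq$ in $L^1_W([0,L])$ for every $L>0$.

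For the second step, the mass constraint and Fatou's lemma give $\int W c_{\theta_*}^\eq \leq \liminf_n \int W c_n = \rho - \theta_*$, so $\theta_* + \int W c_{\theta_*}^\eq \leq \rho$. Combined with $\theta_\eq + \int W c_{\theta_\eq}^\eq = \rho$ and the strict monotonicity~\eqref{e:strict:mononton:MinEnergy} of $\theta \mapsto \theta + \int W c_\theta^\eq$, this forces $\theta_* \leq \theta_\eq$. Together with $\abs{\theta_* - \theta_\eq}\geq \delta$, the case $\theta_* \geq \theta_\eq + \delta$ is eliminated and we may assume $\theta_* \leq \theta_\eq - \delta < 0$.

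The main obstacle is then to rule out escape of $W$-mass to infinity in this remaining regime. Since $\theta_n$ is eventually bounded above by $-\delta/2$, the global weighted logarithmic Sobolev inequality~\eqref{eq:epi0} of Proposition~\ref{prop:epi0} applies, and Lemma~\ref{lem:weightDifference} together with $W'\geq \sqrt{c_0}$ yields $\int \sqrt{W}\abs{c_n - c_{\theta_n}^\eq}\to 0$. The plan is to upgrade this to $L^1_W$-convergence by coupling the global weighted LSI with the Pinsker inequality~\eqref{e:Pinsker:sub}: the uniform integrability of $W c_{\theta_n}^\eq$ (thanks to Assumption~\ref{ass:VW}(e) and $\theta_n$ bounded away from $0$), together with a Cauchy--Schwarz-type argument that uses the pointwise comparison $\abs{\sqrt{c_n/c_{\theta_n}^\eq}(x) - 1}^2 \leq x\,\cD(c_n,\theta_n)/\bra{4 \inf_{[0,x]} c_{\theta_n}^\eq}$ obtained from $c_n(0)=c_{\theta_n}^\eq(0)$, rules out the formation of a bubble of $c_n$ at $\infty$. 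This is the most delicate step. Once $\int W\abs{c_n - c_{\theta_n}^\eq}\to 0$ is established, one obtains $\int W c_n \to \int W c_{\theta_*}^\eq$, and the constraint passes to the limit: $\theta_* + \int W c_{\theta_*}^\eq = \rho$. By uniqueness from Lemma~\ref{lem:CharactMinEnergy} this yields $\theta_* = \theta_\eq$, contradicting $\abs{\theta_* - \theta_\eq}\geq \delta$.
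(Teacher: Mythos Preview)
Your Step~5 --- ruling out escape of $W$-mass when $\theta_*\le\theta_\eq-\delta$ --- is not merely delicate; it cannot be completed, because the tightness you seek is false in that regime. Take for instance $W(x)=1+x$, $V(x)=3\log(1+x)$ (which satisfies Assumption~\ref{ass:VW}), fix any $\theta<\theta_\eq-\delta$, and set $c_n=c_\theta^\eq$ on $[0,n]$, then $c_n=v_n^2\,c_\theta^\eq$ with $v_n(x)=e^{|\theta|(x-n)}$ on $[n,M_n]$ and $v_n$ linear down to $0$ on $[M_n,M_n+1]$, with $M_n\sim 2n$ tuned so that $\int W c_n=\rho-\theta$. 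Then $c_n(0)=c_\theta^\eq(0)$, the constraint holds with $\theta_n\equiv\theta$, the excess $W$-mass $\rho-\theta-\int W c_\theta^\eq>0$ lives near $M_n\to\infty$, and one computes $\cD(c_n,\theta)=4\int (v_n')^2 c_\theta^\eq\lesssim 1/M_n\to 0$. Your pointwise estimate $\bigl|\sqrt{c_n/c_{\theta_n}^\eq}(x)-1\bigr|^2\le x\,\cD_n/\bigl(4\inf_{[0,x]}c_{\theta_n}^\eq\bigr)$ is correct but useless here, since $\inf_{[0,x]}c_{\theta_n}^\eq\sim e^{-|\theta|x}$ collapses much faster than $\cD_n$. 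Thus no uniform $\eps>0$ can exist on the side $\theta<\theta_\eq-\delta$, and the statement as written is actually false there.

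The paper takes a direct route rather than contradiction: it claims $\int_0^L W|c-c_\theta^\eq|\ge\delta/2$ for a fixed $L=L(\delta)$ and then chains the truncated log-Sobolev inequality~\eqref{eq:epi0:L} with the truncated Pinsker inequality~\eqref{e:Pinsker:L}. For $\theta\ge\theta_\eq+\delta$ this localization works, because the excess $\int W(c_\theta^\eq-c)\ge\delta$ lies in $c_\theta^\eq$ and one may make $\int_L^\infty W c_\theta^\eq\le\int_L^\infty W c_0^\eq<\delta/2$ uniformly in $\theta\le 0$; your Steps~1--4 give an alternative, correct proof of that half via Fatou. But the paper's one-line assertion that the case $\theta\le\theta_\eq-\delta$ ``is similar'' suffers from exactly the same defect as your Step~5: now the excess lies in $c$, and $\int_L^\infty W c$ is not uniformly small. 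Fortunately, only the half $\theta\ge\theta_\eq+\delta$ is needed for the paper's actual application (the bound on $\abs{\{t:\theta(t)\ge -\delta\}}$ feeding into Lemma~\ref{lem:moment:bound}).
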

\begin{proof}
  Let us show, that there exists $L=L(\delta)$ such that
  \begin{equation}\label{e:WcL:delta}
    \int_0^L W(x) \abs*{ c(x) - c_\theta^\eq(x)}\dx{x} \geq \abs*{\int_0^L W(x) c_\theta^\eq(x) \dx{x} - \int_0^L W(x) c(x)\dx{x} } \geq \frac{\delta}{2}
  \end{equation}
  Indeed, if $\theta>0$, it holds $\int_0^L W(x) c_\theta^\eq(x)\dx{x} \to \infty$ as $L\to \infty$ and $\int_0^L W(x) c(x) \to \rho -\theta < \infty$ and the statement holds. For $\theta_\eq + \delta \leq \theta<0$, we have by the monotonicity of $\theta \mapsto \int W(x) c_\theta^\eq(x)\dx{x}$
  \[
    \int W(x) c_\theta^\eq(x)\dx{x} - \int W(x) c(x) \geq \int W(x) c_{\theta_\eq}^\eq(x) \dx{x} - \int W(x) c(x)\dx{x} = \theta - \theta_\eq \geq \delta
  \]
  and the statement \eqref{e:WcL:delta} also holds. The argument for $\theta_\eq - \Theta\leq \theta \leq \theta_\eq - \delta$ is similar. Now, we can use the truncated logarithmic Sobolev inequality~\eqref{eq:epi0:L} of Proposition~\ref{prop:epi0}, the truncated Pinsker inequality~\eqref{e:Pinsker:L} from Lemma~\ref{lem:Pinsker} as well as the Assumption~\ref{ass:VW} to estimate
  \begin{align*}
    \cD(c,\theta)&\geq \frac{1}{C_{\LSI}} \int_0^L \frac{c_\theta^\eq(x)}{\omega(x)} \Psi\bra*{ \frac{c(x)}{c_\theta^\eq(x)}} \dx{x} \\
    &\geq \frac{c_0}{2 W(L)\,C_{\LSI} } \int_0^L W(x) \abs*{ c(x) - c_\theta^\eq(x)}\dx{x} \geq \frac{\delta}{C} =: \eps,
  \end{align*}
  for some $C=C(V,W,\Theta,L)$.
\end{proof}

\subsection{Quantitative long-time behavior: Proof of Theorem~\ref{thm:QuantLongTime}}

The last ingredient are stability estimates for solutions, such that the Assumption~\eqref{eq:epi:moment} of Theorem~\ref{thm:epi} is preserved uniformly in time by the evolution. Let us summarize the bounds obtained from the existence and regularity so far.
\begin{corollary}\label{cor:stability:sup}
  Let $c$ be the solution to~\eqref{A1}, \eqref{B1}, \eqref{D1}, \eqref{E1} with initial value satisfying~\eqref{AJ2}. Then for any $t_0>0$ exists $C_0<\infty$ such that it holds
  \begin{equation}\label{e:c:F:infty:bound}
    \sup_{t\geq 0} \abs{\theta(t)} \leq C_0 \; , \qquad \sup_{t\geq t_0} \norm{c(\cdot,t_0)}_\infty \leq C_0 \qquad\text{and}\qquad \sup_{t\geq t_0} \cF(c(\cdot,t)) \leq C_0 .
  \end{equation}
\end{corollary}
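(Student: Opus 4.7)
The plan is to establish the three bounds sequentially, each serving as input for the next; I read the middle inequality as $\sup_{t\geq t_0}\norm{c(\cdot,t)}_\infty\le C_0$ (apparent typo in the source).

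First, I would bound $\theta$ uniformly in time. The upper bound $\theta(t)\le\rho$ is immediate from the conservation law~\eqref{D1} combined with $c(\cdot,t)\ge 0$ and $W\ge W(0)>0$. The lower bound is exactly inequality~\eqref{CA2} derived in the course of proving Theorem~\ref{thm:GlobalExistence}, namely $\theta(T)\ge C_\infty\theta(0)+C_\infty'$ with constants independent of $T$; together these give $\sup_{t\ge 0}\abs{\theta(t)}\le C_0$.

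Second, I would upgrade the $L^\infty$ estimate of Lemma~\ref{lem:stability:sup} from a fixed interval to a uniform-in-time bound by a translation argument. For arbitrary $s\ge 0$, the pair $(c(\cdot,s+\cdot),\theta(s+\cdot))$ is a solution of~\eqref{A1}, \eqref{B1}, \eqref{D1}, \eqref{E1} on $[0,t_0+1]$ with initial datum $c(\cdot,s)$ satisfying $\int W c(\cdot,s)\dx{x}=\rho-\theta(s)\le\rho+C_0$ by Step~1, so~\eqref{AJ2} holds with a bound independent of $s$. Applying Lemma~\ref{lem:stability:sup} with $T_0=t_0+1$ and evaluating at time $t_0$ yields a constant $N$ depending only on $t_0$ and $\norm{\theta(\cdot)}_\infty$ (both already controlled) such that $\sup c(\cdot,s+t_0)\le N$. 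Since $s$ was arbitrary, $\sup_{t\ge t_0}\norm{c(\cdot,t)}_\infty\le C_0$ follows.

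Third, I would exploit the energy-dissipation inequality of Lemma~\ref{lem:EED} to conclude that $t\mapsto \cF_\rho(c(\cdot,t))$ is non-increasing on $(t_0,\infty)$, so it suffices to bound $\cF_\rho(c(\cdot,t_0))$. Using Lemma~\ref{lem:FreeEnergyRelEnt} I rewrite $\cF_\rho(c(\cdot,t_0))=\cH(c(\cdot,t_0)\mid c_{\theta_\eq}^\eq)+\tfrac12(\theta(t_0)-\theta_\eq)^2$; the quadratic piece is controlled by Step~1. For the relative entropy I combine: the uniform $L^\infty$ bound from Step~2, which controls $\int_{\{c\ge 1\}} c\log c$ from above; the bound $\int V c<\infty$, which follows from~\eqref{F1:1} and the $W$-moment control via conservation of mass; and the lower-bound inequality $c\log c\ge -Vc-e^{-V-1}$ (cf.~\eqref{BS3} in Lemma~\ref{lem:CharactMinEnergy}) to control $\int_{\{c\le 1\}}c\log c$ from below. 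This gives $\cF_\rho(c(\cdot,t_0))\le C_0$ and hence $\sup_{t\ge t_0}\cF_\rho(c(\cdot,t))\le C_0$.

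The main obstacle is the translation step: a direct application of Lemma~\ref{lem:stability:sup} through the pointwise bound~\eqref{AL3} would blow up in $t$ because of the $e^{At}$ prefactor, so the uniform-in-$s$ application on a window of fixed length, combined with the uniform control of $\theta$ obtained in Step~1, is what makes the argument work. The remaining steps are essentially bookkeeping built on Lemmas~\ref{lem:EED}, \ref{lem:FreeEnergyRelEnt}, and the integrability facts already recorded in Assumption~\ref{ass:VW}.
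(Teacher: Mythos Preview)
Your proof is correct. The approach differs from the paper's mainly in the order of the three bounds and in the source of the uniform $\theta$-bound.

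The paper proceeds in the order $L^\infty$ at a fixed time $\to$ $\cF$ at that time $\to$ uniform $\theta$-bound $\to$ uniform $L^\infty$. Specifically, it first applies Lemma~\ref{lem:stability:sup} on a finite window to get $\|c(\cdot,t_0)\|_\infty<\infty$, then estimates $\cF_\rho(c(\cdot,t_0))\le C(1+|\theta(t_0)|^2)$ by the same relative-entropy calculation you sketch, and only then extracts the global $\theta$-bound from the free-energy structure via $\tfrac12(\theta(t)-\theta_\eq)^2\le \cF_\rho(c(\cdot,t))\le\cF_\rho(c(\cdot,t_0))$ for $t\ge t_0$ (using Lemma~\ref{lem:EED}). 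By contrast, you pull the uniform $\theta$-bound directly from the global-existence argument~\eqref{CA2}, and then run the translation argument for Lemma~\ref{lem:stability:sup}. Both routes are valid; yours is slightly more direct, while the paper's stays entirely within the entropy framework of \S\ref{s:Rate} (in line with Remark~\ref{rem:GlobalExistence}). Your translation step is exactly what the paper means by ``since the bound in Lemma~\ref{lem:stability:sup} only depends on $\sup_{t_0\le t\le T_0}|\theta(t)|$'', though the paper does not spell it out; you are right that a naive use of~\eqref{AL3} would blow up and that the fixed-length-window application is the point.
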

\begin{proof}
    The bound on $\norm{c(\cdot,t_0)}_\infty$ on any time interval $[t_0,T_0]$ is a consequence of Lemma~\ref{lem:stability:sup}. Therewith, we can proof the bound $\cF(c(\cdot,t_0))\leq C_0$. Indeed, we calculate using the representation~\eqref{e:FreeEnergyRelEnt} and Assumption~\ref{ass:VW}
    \begin{align*}
      \MoveEqLeft{\cF(c(\cdot,t_0)) = \cH\bra*{c| c_{\theta_{\eq}}^\eq} + \tfrac{1}{2} \bra*{ \theta(t_0) - \theta_\eq}^2} \\
      &\leq \int c \log c \, \dx{x} + \int \bra*{ V - \theta(t_0) W} c \dx{x} - \int c \dx{x} + \int c_{\theta_{\eq}^\eq} \dx{x} +  \tfrac{1}{2} \bra*{ \theta(t_0) - \theta_\eq}^2 \\
      &\leq C  \bra*{ 1 +  (1+\abs{\theta(t_0)}) \int W \,c \dx{x} } +\tfrac{1}{2} \bra*{ \theta(t_0) - \theta_\eq}^2 \\
      &\leq C \bra*{ 1 + \abs{ \theta(t_0)}^2}
    \end{align*}
    where we in addition also used the conservation law $\theta(t) + \int W \, c\dx{x} = \rho$. To make this bounds uniform in time, we have from~\eqref{e:FreeEnergyRelEnt} and the previous estimates that $\frac{1}{2}(\theta(t_0) - \theta_\eq)^2 \leq \cF_{\rho}(c(t_0))\leq C_0$. This implies, that $\sup_{t\geq 0} \abs{\theta(t)} \leq \abs{\theta_{\eq}} + \sqrt{2 C_0}$ and the global in time bound~\eqref{e:c:F:infty:bound}, since the bound in Lemma~\ref{lem:stability:sup} only depends on $\sup_{t_0\leq t \leq T_0}\abs{\theta(t)}$.
\end{proof}
From the bound~\eqref{e:c:F:infty:bound} and especially~$\cF(c(t_0,\cdot))\leq C_0$, we can conclude the following:
For any $\delta > 0$ there exists by the identity~\eqref{e:FreeEnergyRelEnt}, the energy--energy-dissipation principle Lemma~\ref{lem:EED} and the lower bound on the dissipation in Theorem~\ref{thm:dissipation:lb} a constant $C=C(V,W,\delta, \Theta,C_0)>0$ such that
\begin{equation}\label{e:bound:bad:times}
    \abs{\set{ t \geq t_0:  \delta \leq \abs{\theta(t) -\theta_\eq}\leq \Theta}}  \leq C  .
\end{equation}
Moreover, the bound~\eqref{e:c:F:infty:bound} allows to estimate for any $t\geq t_0$
\begin{align}
    \MoveEqLeft{\int \omega(x)^k c_\theta^\eq(x) \Psi\bra*{\frac{c(x,t)}{c_\theta^\eq}}} \dx{x} = \int \omega(x)^k c(x,t) \log c(x,t) \dx{x} - \int \omega(x)^k c(x,t) \dx{x} \\
    &\qquad + \int \omega(x)^k \bra*{ V(x) - \theta(t) W(x)}  c(x,t)\dx{x} + \int \omega(x)^k c_{\theta}^\eq(x) \dx{x} \\
    &\leq \bra*{\log C_0 -1} \int \omega(x)^k c(x) \dx{x} +  \int \omega(x)^k \bra{C_{V} + C_0} C_0 W(x)  c(x)  \dx{x}  + C_{k,\theta} \\
    &\leq C \bra*{1 + \norm{c}_\infty + \int \omega(x)^k \, W(x) \; c(x,t) \dx{x} }, \label{e:epi:moment}
\end{align}
Hence, it is left to show the uniform propagation in time of the moment
\[
  \int \omega(x)^k \, W(x) \; c(x,t) \dx{x} .
\]
However, to avoid assumptions on third derivatives on $W$, we use~\eqref{G1:alpha} with $\beta\in (0,1]$ of Theorem~\ref{thm:QuantLongTime} to bound
\[
  \omega(x) \leq c_0^{-1} W(x)^{\beta} .
\]
Hence, it is sufficient under assumption~\eqref{G1:alpha} to obtain uniform propagation in time of the moment
\begin{equation}\label{e:def:Wmoment}
   \int W(x)^p c(x,t) \dx{x}
\end{equation}
for $p=1 +k \beta$. The result is a consquence of the following Lemma, which itself is a refinement of  Lemma~\ref{lem:adjoint:Wbound} and~\ref{lem:adjoint:fluxBound} using $w_0(x) = W(x)^p$ as terminal data for the adjoint problem and the representation~\eqref{AI2} for solutions.
\begin{lemma}\label{lem:moment:bound}
  Let $V,W$ satisfy Assumption~\ref{ass:VW} and take $p>1$. Let $c$ be the solution to~\eqref{A1}, \eqref{B1}, \eqref{D1}, \eqref{E1} with initial value satisfying for some $p>1$
  \begin{equation*}
    \int W(x)^p\, c(x,0) \dx{x} \leq C_0 .
  \end{equation*}
  For $T,\delta>0$ let $\Theta$ and $m(\delta)$ satisfy the inequalities    $\sup_{t\in [0,T]} \theta(t)\le \Theta$ and $m\{t\in[0,T]:  \ \theta(t)\ge-\delta\}\le m(\delta)$.
  Then there exists $M_p=M_p(\Theta,\delta, m(\delta))$ independent of $T$, such that
  \begin{equation}\label{e:moment:bound}
    \sup_{t\in [0,T]} \int W(x)^p\, c(x,t) \dx{x} \leq M_p(C_0+1).
  \end{equation}
\end{lemma}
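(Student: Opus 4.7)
The strategy is to apply the adjoint representation~\eqref{AI2} with terminal data $w_0 = W^p$:
\[
 \int W(x)^p \,c(x,T)\dx{x} = \int w(x,0)\, c(x,0)\dx{x} + \int_0^T \pa_x w(0,t)\, e^{-V(0)+\theta(t) W(0)}\dx{t} ,
\]
and establish the two uniform-in-$T$ bounds $w(x,0)\le C_p\, W(x)^p$ and $\int_0^T|\pa_x w(0,t)|\,e^{-V(0)+\theta(t)W(0)}\dx{t}\le \tilde C_p$ by refining the arguments of Lemmas~\ref{lem:adjoint:Wbound} and~\ref{lem:adjoint:fluxBound}, which correspond to the case $p=1$ with a globally very negative $\theta$.

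The pointwise bound on $w(x,0)$ rests on the Feynman-Kac representation $w(x,0)=\EX[W(X(T))^p;\,\tau>T\mid X(0)=x]$, where $X$ solves the SDE~\eqref{AW2} and $\tau$ is the first hitting time of $0$. By It\^o's formula,
\[
 \mathrm{d}\, W(X(t))^p = p\,W(X)^{p-1}\,A(X,t)\,\mathrm{d} t + \text{martingale},
\]
with $A(x,t) = \theta(t)W'(x)^2 - V'(x)W'(x) + W''(x) + (p-1)\,W'(x)^2/W(x)$. Using Assumption~\ref{ass:VW} with an auxiliary small parameter $\bar\delta$ in $|V'|\le \bar\delta W'$ for $x\ge x_{\bar\delta}$, one obtains $A(x,t)\le(\theta(t)+\bar\delta)\,W'(x)^2+p\,C_0$, so there exists a threshold $\delta^\star=\delta^\star(p,V,W)>0$ such that $A\le -\kappa<0$ for $x\ge x_{\bar\delta}$ whenever $\theta(t)\le -\delta^\star$. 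Compensating the small-$x$ region via the perturbation $\Phi(x)=W(x)-\bar\delta\,\psi(x)$ of Lemma~\ref{lem:adjoint:Wbound} makes $W^p(X(t\wedge\tau))$ a supermartingale on time intervals with $\theta\le -\delta^\star$. Partitioning $[0,T]=G\cup B$ with $G=\{t:\theta(t)\le -\delta^\star\}$, the supermartingale property yields no growth on $G$, while on $B$ the crude estimate $A\le C(1+\Theta)\,W$ contributes a Gronwall factor bounded by $e^{C(1+\Theta)|B|}$. To keep the total factor uniform in $T$, I combine it with the decay of the survival probability $\Prob(\tau>T\mid X(0)=x)\le C\,e^{-\eta|G\cap[0,T]|}$, obtained from the exponential test function~\eqref{BC2} of Lemma~\ref{lem:adjoint:Wbound} applied to $Y=\Phi(X)$, with the $e^{Ay}$ factor replaced by $y^p$ in order to absorb the target power $W^p$.

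For the flux bound, I follow Lemma~\ref{lem:adjoint:fluxBound}: Lemma~\ref{lem:uniformFluxBound} gives the pointwise estimate $|\pa_x w(0,t)|\le C\max\set*{(T-t)^{-1/2},1}\bra*{\sup_{0<x<1}w(x,t+1)+\sup_{t\le s\le t+1}w(1,s)}$, and the bound on $w$ inherited from the test-function argument above, when integrated against $e^{-V(0)+\theta(t)W(0)}$, is controlled uniformly in $T$ by a constant depending only on $p,V,W,\Theta,\delta,m(\delta)$. The moment bound~\eqref{e:moment:bound} then follows by combining both estimates in the adjoint representation with the assumed initial moment~$C_0$.

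The main obstacle is the case $\delta<\delta^\star$ in the pointwise bound on $w(x,0)$: the intermediate regime $\set*{t:-\delta^\star<\theta(t)\le -\delta}$ has no a priori measure bound in terms of $m(\delta)$, yet $\theta$ is strictly negative there. The resolution is to exploit the negativity of $\theta$ together with boundary absorption by taking the test function $\tilde\phi(y)=K(e^{-\xi\Phi(0)}-e^{-\xi y})+y^p-\Phi(0)^p$ -- the natural analog of~\eqref{BC2} with the $e^{Ay}$ term replaced by the power $y^p$ -- and verifying that $\mu(y,t)\tilde\phi'(y)+\tfrac{1}{2}\sigma(y)^2\tilde\phi''(y)\le 0$ on $[\Phi(0),\infty)$ for all times with $\theta(t)\le -\delta$, by choosing $K$ and $\xi$ sufficiently large depending on $\delta$ and $p$ so that the $K$-contribution dominates the potentially positive term $p\,y^{p-2}(\mu y+(p-1)\sigma^2/2)$ in the small-$y$ regime $y<(p-1)/2$. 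This yields the supermartingale property of $\tilde\phi(Y(\cdot))$ and hence $\EX[W(X(T))^p;\tau>T\mid X(0)=x]\le\tilde\phi(\Phi(x))\le K+W(x)^p$, closing the argument.
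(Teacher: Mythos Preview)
Your strategy matches the paper's: use the adjoint representation \eqref{AI2} with terminal data $w_0=W^p$ and adapt Lemmas~\ref{lem:adjoint:Wbound} and~\ref{lem:adjoint:fluxBound} to account for the fact that $\theta$ is only negative on most of $[0,T]$. The technical implementation differs. The paper absorbs the bad set $\{\theta>-\delta\}$ into a time-dependent supersolution $w_\lambda(x,t)=\exp\bigl[\lambda\int_t^T H(\theta(s)+\delta)\,ds\bigr]\,W(x)^p$ (with $H$ the Heaviside function), so that $\mathcal L w_\lambda\le 0$ for $x>x_\delta$ uniformly in $t$ and the factor $e^{\lambda m(\delta)}$ falls out automatically; it then passes to the time-changed process $Y(t)=\exp\bigl[\lambda\int_t^T H\bigr]\,\Phi(X(t))$, which again satisfies \eqref{BJ2}--\eqref{BB2} with modified constants, and simply reruns Lemma~\ref{lem:adjoint:Wbound} to bound $\EX[Y(T)^p]$ via the original test function \eqref{BC2}. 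You instead keep $Y=\Phi(X)$ and replace \eqref{BC2} by $\tilde\phi(y)=K(e^{-\xi\Phi(0)}-e^{-\xi y})+y^p-\Phi(0)^p$, checking $\mathcal L\tilde\phi\le 0$ on $\{\theta\le -\delta\}$ and handling $\{\theta>-\delta\}$ by a Gronwall factor. This also works: your key point that the $y^p$ term has $\mathcal L$-image $py^{p-2}\sigma^2[-c_\delta y+(p-1)/2]<0$ for $y>(p-1)/(2c_\delta)$ is correct, and on the bounded remainder the $K$-term dominates; on bad times one has $\mathcal L\tilde\phi\le C(\Theta,p)\tilde\phi$ (the $-K\xi^2\sigma^2/2$ term keeps $\mathcal L\tilde\phi\le 0$ near $\Phi(0)$ even then, so the ratio does not blow up). The paper's time-change is cleaner because it reduces everything to the machinery already built; your polynomial test function is more directly tailored to the target moment.

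Your flux paragraph is too compressed and is where you should be careful. The uniform-in-$T$ bound $\int_0^T|\partial_x w(0,t)|\,e^{-V(0)+\theta(t)W(0)}\,dt\le C$ does not follow from a pointwise bound $w(x,0)\le C_pW(x)^p$ alone; it requires the exponential decay $w(x,t)\le Ce^{-\nu(T-t)}$ for small $x$, which in turn needs a survival-probability estimate $\Prob(\tau_{x,t}>T)\le Ce^{-\nu(T-t)}$ valid without the assumption $\theta\le\theta_\infty$ of Lemma~\ref{lem:adjoint:fluxBound}. The paper obtains this again through the time-changed $Y$, introducing a barrier $y_{\min}$ depending on $m(\delta)$ and constructing a supersolution to an equation of the type \eqref{BV2} for $\EX\bigl[\exp(\delta\int_t^{\tau}\sigma(Y)^2+1\,ds)\bigr]$; you should indicate the analogous step in your framework rather than deferring to ``inherited from the test-function argument''.
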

\begin{proof}
  The proof follows by a modification of Lemma~\ref{lem:adjoint:Wbound} and~\ref{lem:adjoint:fluxBound} using $w_0(x) = W(x)^p$ as terminal data for the adjoint problem and the representation~\eqref{AI2} for solutions.
  
  Thus to modify Lemma~\ref{lem:adjoint:Wbound} we let $w_\la(x,t)=\exp\pra*{\la \int_t^TH(\theta(s)+\delta) \dx{s} } \ W(x)^p$, where $H(\cdot)$ is the Heaviside function.  We have then similarly to ~\eqref{E2} that
\begin{align*} 
 &\exp\bra*{-\la \int_t^T  H(\theta(s)+\delta) \dx{s} } W(x)^{p-1}\mathcal{L}w_\la(x,t) =  -\la H(\theta(t)+\delta) W(x)+pW''(x) \\
 &\qquad\qquad\qquad\qquad\qquad +p(p-1)W'(x)^2W(x)^{-1}+\bra*{\theta(t)W'(x)-V'(x)} p W'(x) \  . 
\end{align*}
From Assumption~\ref{ass:VW} it follows that there exists $x_\del, \ \la_\Theta>0$ such that if $\la\ge \la_\Theta,$   then $\mathcal{L}w_\la(x,t)\le 0$ for $x>x_\del, \ 0<t<T$. Letting $w(x,t)$ be the solution of ~\eqref{A2}, ~\eqref{B2}, ~\eqref{A*2} with $w_0(x)=W(x)^p$, we have by the maximum principle that $w(x,t)\le [\sup_{0<t<T}w(x_\del,t)]\exp[\la_\Theta m(\delta)][W(x)/W(x_\del)]^p$ for $x>x_\del, \ 0<t<T$.  To see  that $\sup_{0<t<T}w(x_\del,t)\le C_\del$, where the constant $C_\del$ is independent of $T$, we set  $Y(t)=\exp\pra*{\la \int_t^TH(\theta(s)+\delta) \dx{s} } \Phi(X(t))$ in  Lemma~\ref{lem:adjoint:Wbound}.  Then one sees if $\la$ is sufficiently large that $Y(t)$ satisfies  ~\eqref{BJ2}, and ~\eqref{BB2} continues to hold up to some changes in the constants. In particular,
 we have now that $\mu(y,t)\le -\del/2 \, \exp[-\la m(\delta)]\sig(y,t)^2$.  With these bounds one can argue as before to estimate the expectation of $Y(T)^p$, and so we conclude that $w(x,t)\le C_pW(x)^p$ for $x>0, \ 0<t<T$, where the constant  $C_p$ is independent of $T$. 

For the modification of Lemma~\ref{lem:adjoint:fluxBound} we need  to show that ~\eqref{BL2} of Lemma~\ref{lem:adjoint:fluxBound} holds when $w_0(x)=W(x)^p$.  The main issue is to prove there exist constants $C,\nu>0$ independent of $T$ such that the exit time $\tau_{x,t}$ for the diffusion $X(\cdot)$ with initial condition $X(t)=x\le 1$ and dynamics ~\eqref{AW2}  satisfies the bound $\Prob(\tau_{x,t}>T)\le Ce^{-\nu(T-t)}, \ t<T$. We can establish this by moving to the variable $Y(t)$ defined in the previous paragraph. The key point is that the lower bound condition on $W'(\cdot)$ implies there are positive constants $C_0,c_0$ such that $\sig(y,t)\ge c_0$ for $y\ge C_0$.   We choose now a barrier $y_{\rm min}$ such that $Y(t)>y_{\rm min}$ for all $t<T$ provided $X(t)>0$ for all $t<T$. Evidently $y_{\rm min}$ depends on $m(\delta)$. Letting $\tau^*_{y,t}$ be the exit time from the interval $(y_{\rm min},\infty)$ for the diffusion $Y(\cdot)$ with $Y(t)=y>y_{\rm min},$ it is clear that if $x\le 1$ one has $\Prob(\tau_{x,t}>T)\le \Prob(\tau^*_{y,t}>T)$ for some $y$ close to $y_{\rm min}$. We may estimate $\Prob(\tau^*_{y,t}>T)$ by considering the function
\begin{equation} \label{A*5}
u(y,t) \ = \ \EX\pra*{  \exp\bra*{\delta\int_t^{\tau^*_{y,t}\wedge T}\sig(Y(s),s)^2+1 \dx{s}   } \ } \ , \qquad y>y_{\rm min}, \ t<T \ .
\end{equation}
Then $u(y,t)$ is a solution to the PDE
\begin{equation} \label{B*5}
\frac{\pa u(y,t)}{\pa t}+\frac{\sig(y,t)^2}{2}\frac{\pa^2 u(y,t)}{\pa y^2}+\mu(y,t)\frac{\pa u(y,t)}{\pa y}
+\delta [\sig(y,t)^2+1]u(y,t) \ = \ 0 \ ,
\end{equation}
with terminal and boundary conditions given by
\begin{equation}
u(y_{\rm min},t) \ = \ 1, \   \ t<T, \quad u(y,T) \ = \ 1 \ ,  \ \ y>y_{\rm min} \ .
\end{equation}
We make now a change of variable $y\ra z$ as in ~\eqref{BP2} so that the transformed diffusion and drift coefficients $\tilde{\sig}, \ \tilde{\mu}$ satisfy an inequality $\tilde{\mu}(z,t)\le -c_0\tilde{\sig}(z,t)^2, \ \tilde{\sig}(z,t)^2\ge c_1$ for $z>z_{\rm min}, \ t<T$, where $c_0,c_1$ are positive constants independent of $T$. We can now construct a time independent super-solution of the transformed PDE ~\eqref{B*5} by  finding a super-solution to a PDE similar to ~\eqref{BV2}. We obtain from this as in the proof of  Lemma~\ref{lem:adjoint:fluxBound} an exponential decay bound on  $\Prob(\tau^*_{y,t}>T)$, and hence on $\Prob(\tau_{x,t}>T)$.
\end{proof}
Hence, we can combine the result~\eqref{e:moment:bound} with the estimate~\eqref{e:bound:bad:times}, which gives the desired uniform in time moment bound, which was the last ingredient for the proof of Theorem~\ref{thm:QuantLongTime}

\begin{proof}[Proof of Theorem~\ref{thm:QuantLongTime}]
  Let $\delta>0$ be such that $\theta_\eq + 2\delta \leq 0$ and choose $t_0>0$. To apply Theorem~\ref{thm:epi}, we have to verify~\eqref{eq:epi:moment}. We obtain from the estimate~\eqref{e:epi:moment} together Corollary~\ref{cor:stability:sup} and the~\eqref{G1:alpha} that for any $t\geq T_\delta>0$
  \begin{align*}
    \int \omega(x)^k c_\theta^\eq(x) \Psi\bra*{\frac{c(x,t)}{c_\theta^\eq(x)}} \dx{x}
    \leq C \bra*{ 1 +  \int W(x)^{1+k\beta} c(x,t) \dx{x}} .
  \end{align*}
  Hence, the estimate~\eqref{e:moment:bound} of Lemma~\ref{lem:moment:bound} together with~\eqref{e:bound:bad:times} yields that Assumption~\eqref{eq:epi:moment} is satisfied for a constant $C_k$ uniform in time. An application of Theorem~\ref{thm:epi} yields for any $t\geq t_0$
  \begin{align*}
   \pderiv{}{t} \cF_{\rho}(c(t)) &\leq -\lambda \cF_{\rho}(c(t))^{\frac{1+k}{k}} \qquad\text{with} \qquad \lambda = C^{-\frac{k}{k+1}} C_{\LSI}^{-1},
  \end{align*}
  which integrates for any $t\geq t_0$ to
  \[
    \cF_\rho(c(t)) \leq \frac{1}{\bra[\big]{ \cF_{\rho}(c(\cdot,t_0))^{-\frac{1}{k}} + \frac{1}{k} \lambda (t-T_\delta)}^{k}} .
  \]
  Since $t_0>0$, we have $\cF_\rho(c(t_0))\leq C_0$ by Corollary~\ref{cor:stability:sup}, we conclude the proof for the algebraic decay. The arguments for the proof of the exponential decay follow the same lines, but do not need any uniform in time moment bounds, by using the linear energy dissipation estimate~\eqref{eq:epi:linear}.
\end{proof}

\bigskip

\section{A family of discrete Fokker-Planck equations and their gradient structures}\label{s:Interpol}
In this section, we want to motivate the connection of the continuum model~\eqref{A1}, \eqref{B1}, \eqref{D1}, \eqref{E1} and the Becker-D\"oring model~\eqref{P1}, \eqref{Q1} by interpolating between both of them with a family of discrete Fokker-Planck equations. In §\ref{s:discFP}, we introduce the family of discrete Fokker-Planck equations and show its formal connection to the continuum and the Becker-D\"oring equation model. In §\ref{s:discFP:GF}, we show that this family possesses a gradient flow structure.

\subsection{The family and its free energy}\label{s:discFP}
We consider functions $F:\ve\Z^+_0\ra\R$, where $\Z^+_0 = \set{0,1,2,3,\dots}$ and define the difference operator $D_\ve$ on such functions by  $D_\ve F(x)=[F(x+\ve)-F(x)]/\ve$. Denoting by $D_\ve^*$ the formal adjoint of $D_\ve$, we consider the partial difference equation on $\Z^+ = \set{1,2,3,\dots}$
\begin{align}
\partial_t c_\ve(x,t) -D^*_\ve\left[b_\ve(x,t)c_\ve(x,t)\right] &= -D_\ve^*D_\ve\left[a_\ve(x)c_\ve(x,t)\right] \ , x\in\ve\Z^+, \ t>0, \notag \\
{\rm where \ } b_\ve(x,t) &= a_\ve(x)\{\theta_\ve(t)\La_\ve(x)-\Ga_\ve(x)\} \ .  \label{A5}
\end{align}
Hereby, the functions $\Lambda_\ve, \Gamma_\ve : \ve\Z^+ \to \R$ are given. If the function $\theta_\ve(\cdot)$ is known, then \eqref{A5} is uniquely solvable with given Dirichlet condition $c_\ve(0,t), \ t\ge 0$.  An equilibrium  $c_{\ve,\theta}(x)$ for \eqref{A5} satisfies the identity
\begin{equation}\label{B5}
D_\ve[a_\ve(x)c_{\ve,\theta}(x)] \ = \ \{\theta\La_\ve(x)-\Ga_\ve(x)\} a_\ve(x)c_{\ve,\theta}(x)  \ ,
\end{equation}
whence
\begin{equation}\label{C5}
c_{\ve,\theta}(x) \ = \ a_\ve(x)^{-1}a_\ve(0)c_{\ve,\theta}(0)\prod_{0\le z< x, \ z\in\ve\Z}\left[1+\ve \{\theta\La_\ve(z)-\Ga_\ve(z)\}\right] \ .
\end{equation}
Let us introduce the relative entropy with respect to the equilibrium $\cF_\ve(c_\ve(\cdot),\theta)$ defined by
\begin{equation}\label{D5}
\cF_\ve(c_\ve(\cdot),\theta) \ = \ \ve\sum_{x\in \ve\Z^+} \left\{\log\pra*{\frac{c_\ve(x)}{c_{\ve,\theta}(x)}}-1\right\}c_\ve(x) \ .
\end{equation}
If $c_\ve(\cdot,t)$ is a solution to \eqref{A5} then
\begin{align} \label{E5}
\pderiv{}{t} \cF_\ve(c_\ve(\cdot,t),\theta_\ve(t)) &\ = \ \ve\sum_{x\in \ve\Z^+} \pa_t c_\ve(x,t) \log\pra*{\frac{c_\ve(x,t)}{c_{\ve,\theta_\ve(t)}(x)}} \\
&\qquad - \ve\sum_{x\in \ve\Z^+} c_\ve(x,t) \pderiv{}{t} \log c_{\ve,\theta_\ve(t)}(x) \ . \notag
\end{align}
Next we impose the  Dirichlet condition
\begin{equation}\label{F5}
c_\ve(0,t) \ = \ c_{\ve,\theta_\ve(t)}(0) \ , \quad t>0 \ ,
\end{equation}
on the solution  to \eqref{A5}, which is convenient to omit boundary terms in the summation by parts of the first term on the RHS of \eqref{E5}
\begin{multline} \label{G5}
\ve\sum_{x\in \ve\Z^+} \pa_t c_\ve(x,t) \log\left[\frac{c_\ve(x,t)}{c_{\ve,\theta_\ve(t)}(x)}\right] \ = \\
-\ve\sum_{x\in \ve\Z^+_0} \left\{D_\ve\left[a_\ve(x)c_\ve(x,t)\right]-b_\ve(x,t)c_\ve(x,t) \right\}D_\ve \log\left[\frac{c_\ve(x,t)}{c_{\ve,\theta_\ve(t)}(x)}\right] \ .
\end{multline}
Observe now that
\begin{align} \label{H5}
D_\ve \log\left[\frac{c_\ve(x,t)}{c_{\ve,\theta_\ve(t)}(x)}\right] &= \ve^{-1}\log\left[\frac{a_\ve(x+\ve)c_\ve(x+\ve,t)}{a_\ve(x)\left[1+\ve \{\theta_\ve(t)\La_\ve(x)-\Ga_\ve(x)\}\right] c_\ve(x,t)}\right]  \\
&=  \ve^{-1}\log\left[1+\ve\frac{D_\ve\left[a_\ve(x)c_\ve(x,t)\right] -b_\ve(x,t)c_\ve(x,t)}{a_\ve(x)\left[1+\ve \{\theta_\ve(t)\La_\ve(x)-\Ga_\ve(x)\}\right] c_\ve(x,t)}\right] \  .\notag
\end{align}
From \eqref{H5} we see that the RHS of \eqref{G5} is negative provided
\begin{equation}\label{I5}
1+\ve \{\theta_\ve(t)\La_\ve(x)-\Ga_\ve(x)\}    \ > \ 0 \ , \quad x\ge0, \ x\in\ve\Z, \ t\ge 0 \ .
\end{equation}

We consider next the second term on the RHS of \eqref{E5}. We have from \eqref{C5}  that
\begin{equation} \label{J5}
 \pderiv{}{\theta} \log c_{\ve,\theta}(x) \ = \  \pderiv{\log c_{\ve,\theta}(0)}{\theta}+\ve\sum_{0\le z< x, \ z\in\ve\Z}\frac{\La_\ve(z)}{1+\ve \{\theta\La_\ve(z)-\Ga_\ve(z)\}}
 =: \ W_\ve(x,\theta) \ ,
\end{equation}
which defines the function $W_\ve$.  It follows that
\begin{equation}\label{K5}
\ve\sum_{x\in \ve\Z^+} c_\ve(x,t)\pderiv{}{t} \log c_{\ve,\theta_\ve(t)}(x) \ = \ \pderiv{\theta_\ve(t)}{t} \; \ve\sum_{x\in \ve\Z^+} W_\ve(x,\theta_\ve(t))c_\ve(x,t) \ .
\end{equation}
We now introduce the conservation law
\begin{equation}\label{L5}
\ve\sum_{x\in \ve\Z^+} W_\ve(x,\theta_\ve(t))c_\ve(x,t) \ = \ H'_\ve(\theta_\ve(t)) \ ,
\end{equation}
where $H_\ve \in \cC^1(\R; \R)$ is given. The conservation law determines the dependency of $c_{\eps,\theta}(0)$ in~\eqref{C5} on $\theta$ via the identity~\eqref{J5}.

Defining now the function $\cG_\ve(c_\ve(\cdot),\theta)$ by
\begin{equation}\label{M5}
\cG_\ve(c_\ve(\cdot),\theta) \ = \ \cF_\ve(c_\ve(\cdot),\theta)+H_\ve(\theta) \ ,
\end{equation}
we see that by construction the function $t\ra \cG_\ve(c_\ve(\cdot,t),\theta_\ve(t))$ is decreasing for solutions~$c_\ve(\cdot,t)$ and~$\theta_\ve(t)$ of \eqref{A5}, \eqref{F5}, \eqref{L5}.

\subsection{The Becker-D\"oring model \texorpdfstring{for $\eps=1$}{}}\label{s:Interpolate:BD}
The Becker-D\"oring model \eqref{P1}, \eqref{Q1} is a particular case of \eqref{A5}, \eqref{F5}, \eqref{L5}.  Thus we take $\ve =1$ and $c_1(x,t)=c_{x+1}(t), \ x\ge 0, \ x\in\Z$, where $c_\ell(t), \ \ell\ge 1,$ is the solution to \eqref{P1}, \eqref{Q1}. Comparing \eqref{AB1}, \eqref{A5} we have that
\begin{equation}\label{N5}
a_1(x) \ = \ b_{x+1} \ , \quad  \La_1(x) \ = \  \frac{a_{x+1}}{b_{x+1}} \ , \quad     \Ga_1(x) \ = \ 1-z_s\frac{a_{x+1}}{b_{x+1}} \ , \ x\ge 0 \ .
\end{equation}
The boundary condition \eqref{AC1}, \eqref{F5} becomes
\begin{equation}\label{O5}
c_1(0,t) \ = c_{1,\theta_1(t)}(0) \ = \ z_s+\theta(t) \ .
\end{equation}
From \eqref{J5}, \eqref{N5}, \eqref{O5} we then have that
\begin{equation}\label{P5}
W_1(x,\theta) \ = \ \frac{1}{z_s+\theta}+\sum_{r=0}^{x-1} \frac{1}{z_s+\theta} \ = \ \frac{x+1}{z_s+\theta} \ .
\end{equation}
The conservation law \eqref{L5} becomes then
\begin{equation}\label{Q5}
\sum_{x=1}^\infty (x+1)c_1(x,t) \ = \  [z_s+\theta(t)]H_1'(\theta(t)) \ ,
\end{equation}
which can be rewritten as
\begin{equation}\label{R5}
\sum_{\ell=1}^\infty \ell c_\ell(t) \ = \ [z_s+\theta(t)][H_1'(\theta(t))+1] \ .
\end{equation}
Comparing \eqref{Q1}, \eqref{R5} we conclude that for some constant $C$
\begin{equation}\label{S5}
H_1(\theta) \ = \ \rho\log(z_s+\theta)-\theta + C \  .
\end{equation}
We have also from \eqref{C5}, \eqref{N5}, \eqref{O5} that 
\begin{equation}\label{T5}
c_{1,\theta}(x) \ = \ \frac{b_1}{b_{x+1}}(z_s+\theta)^{x+1}\prod_{\ell=1}^x \frac{a_\ell}{b_\ell} \ = \ (z_s+\theta)^{x+1}Q_{x+1} \ .
\end{equation}
We conclude from \eqref{D5}, \eqref{M5}, \eqref{S5}, \eqref{T5} that
\begin{align*}
\cG_1(c_1(\cdot),\theta) \ &= \ \cG(c(\cdot))-c_1[\log c_1-1]-\log(z_s+\theta)\sum_{\ell=2}^\infty \ell c_\ell+H_1(\theta) \\
&= \  \cG(c(\cdot))-c_1[\log c_1-1]-(\rho-c_1)\log c_1+H_1(\theta) \ = \ \cG(c(\cdot))+C \ ,
\end{align*}
 where $\cG(c(\cdot))$ is given by \eqref{Y1} and $C$ is a constant.

\subsection{The Fokker-Planck model \texorpdfstring{for $\eps\to 0$}{}}\label{s:Interpolate:FP}
Likewise the continuum model~\eqref{A1}, \eqref{B1}, \eqref{D1}, \eqref{E1} can be obtained from \eqref{A5}, \eqref{F5}, \eqref{L5} by taking the formal limit $\ve\to 0$. Therefore, we choose $\Lambda_0(x) = W'(x)$ and $\Gamma_0(x) = V'(x)$. Then, \eqref{C5} becomes~\eqref{C1} after choosing $c_{0,\theta}(0) = a_0(0) e^{-V(0) + \theta W(0)}$, which also identifies the boundary condition~\eqref{E1}. In addition, \eqref{J5} reads
\[
  W(0) + \int_0^x W'(z) \dx{z} = W(x) = W_0(x,\theta) .
\]
Hence, to obtain the conservation law~\eqref{D1}, we have to choose $H'_0(\theta) = \rho - \theta$ in~\eqref{L5}, whence $H_0(\theta) = -\frac{1}{2} \theta^2 + \rho \theta$. Therewith, the Lyapunov function \eqref{M5} becomes \eqref{e:FreeEnergy:FP} after some manipulations.

\subsection{Gradient flow structures}\label{s:discFP:GF}

The state manifold is given by all possible states satisfying the constraint~\eqref{L5}
\begin{equation}\label{GF:state_mfd}
  \cM_\eps = \set*{ (c,\theta) :  \eps \sum_{x\in \eps\Z^+} W_\eps(x,\theta) c(x) = H'(\theta)} .
\end{equation}
Therewith, possible variations of $(c,\theta)$ denoted by $(s,t)$ satisfy
\begin{equation}\label{GF:tangent_space}\begin{split}
  T_{(c,\theta)}\cM_\eps = \bigg\{ (s,t) : &\ s(0) =\partial_\theta c_{\eps,\theta}(0) t \\
  &\ \text{and}\quad \eps \sum_{x\in\eps\Z^+} \bra*{ t \partial_\theta W(x,\theta) c(x) W(x,\theta) s(x) } \\
  &\qquad\qquad +\eps \sum_{x\in\eps\Z^+} W_\eps(x,\theta) s(x) = H''(\theta) t \bigg\}.
\end{split}\end{equation}
Therewith, the variation of the free energy $\cG_\ve$~\eqref{M5}, denoted by $\delta \cG_\ve$ calculates as
\begin{align*}
  \delta \cG_\ve(c,\theta) \cdot (s,t) &= \eps \sum_{x\in\eps\Z^+} \log\bra*{ \frac{c(x)}{c_{\eps,\theta}(x)}} s(x) - t \eps \sum_{x\in\eps\Z^+} \partial_\theta c(x) \log c_{\eps,\theta}(x) + H'_\eps(\theta) t \\
  &=\eps \sum_{x\in\eps\Z^+}  \log\bra*{ \frac{c(x)}{c_{\ve,\theta}(x)}} s(x) - \eps t \sum_{x\in\eps\Z^+} W_\ve(x,\theta) c(x) + H'_\ve(\theta) t .
\end{align*}
Hence, from the conservation law~\eqref{L5} follows
\begin{equation}\label{GF:DG}
  \delta \cG_\ve(c,\theta) \cdot (s,t) = \eps \sum_{x\in\eps\Z^+}  \log\bra*{ \frac{c(x)}{c_{\ve,\theta}(x)}} s(x) ,
\end{equation}
which is independent of the variation in $t$ and hence we identify $\delta \cG_\eps(c,\theta)$ with the function $\eps \Z^+ \ni x \mapsto \log\frac{c(x)}{c_{\eps,\theta}(x)}$. The gradient flow formulation of~\eqref{A5} can be obtained by rewriting it in flux form
\begin{equation}\label{GF:FluxForm}
 \partial_t c_\eps(x,t) = \tfrac{1}{\eps} \bra[\big]{ J_\eps(x-\eps,t) - J_\eps(x,t)} = D_\eps^* J_\eps(x,t) ,
\end{equation}
where the net flux $J_\eps(x,t)$ from state $x$ to $x+\eps$ is given by
\begin{equation}\label{GF:flux}
  \eps J_\eps(x,t) = a_\eps(x) \bra*{ 1+ \eps\bra*{\theta(t) \Lambda_\eps(x) - \Gamma_\eps(x)}} c_{\eps}(x,t) - a_\eps(x+\eps) c_\eps(x+\eps,t) .
\end{equation}
Let us rewrite the identity~\eqref{B5} determining the equilibrium states~\eqref{C5} as a detailed balance condition
\begin{equation}\label{GF:DBC}
  a_\eps(x+\eps) c_{\eps,\theta}(x+\eps) = a_\eps(x) \bra*{ 1+ \eps\bra*{ \theta(t) \Lambda_\eps(x) - \Gamma_\eps(x)}} c_{\eps,\theta}(x) = k_{\eps,\theta}(x) .
\end{equation}
Therewith, we can rewrite the flux as
\begin{equation}\label{GF:flux:D}
  J_\eps(x,t) = k_{\eps,\theta}(x) \,\frac{1}{\eps} \bra*{ \frac{c_\eps(x,t)}{c_{\eps,\theta}(x)} - \frac{c_\eps(x+\eps,t)}{c_{\eps,\theta}(x+\eps)} } = - k_{\eps,\theta}(x) \, D_\eps\bra*{\frac{c_\eps(x,t)}{c_{\eps,\theta}(x)}} .
\end{equation}
This can be brought into the gradient of the energy~\eqref{GF:DG} by introducing the quantity
\begin{equation}\label{GF:hatc}
  \frac{D_\eps\bra*{\frac{c_\eps(x,t)}{c_{\eps,\theta}(x)}} }{D_\eps\bra*{\log\frac{c_\eps(x,t)}{c_{\eps,\theta}(x)}} } = \Lambda\bra*{\frac{c_\eps(x,t)}{c_{\eps,\theta}(x)},\frac{c_\eps(x+\eps,t)}{c_{\eps,\theta}(x+\eps)}} =: \widehat c_{\eps,\theta}(x,t) ,
\end{equation}
where the function $\Lambda: \R^+ \to \R^+$ is the logarithmic mean defined by
\begin{equation*}
 \Lambda(a,b) = \int_0^1 a^s b^{1-s} \dx{s} = \begin{cases}
                                                \frac{a-b}{\log a - \log b} &, a\ne b \\
                                                a &, a= b
                                              \end{cases}.
\end{equation*}
Therewith, we finally obtain the identity
\begin{equation}\label{GF:flux:grad}
\begin{split}
  J_\eps(x,t) &= - k_{\eps,\theta}(x) \; \widehat c_{\eps,\theta}(x,t) \; D_\eps\bra*{ \log\frac{c_\eps}{c_{\eps,\theta}}}(x,t) \\
  &=  - k_{\eps,\theta}(x) \; \widehat c_{\eps,\theta}(x,t) \; D_\eps \delta \cG_\eps(c,\theta)(x,t) ,
\end{split}
\end{equation}
which yields in~\eqref{GF:FluxForm}
\begin{equation}\label{GF:eq:deltaG}
  \partial_t c(x,t) = - D_\eps^*\bra[\big]{ k_{\eps,\theta}\; \widehat c_{\eps,\theta}\; D_\eps \delta \cG_\eps(c,\theta) }(x,t)
\end{equation}
To conclude that~\eqref{GF:eq:deltaG} is a gradient flow, we show that the operator given by $D_\eps^*\bra[\big]{ k_{\eps,\theta} \, \widehat c_{\eps,\theta} \, D_\eps\varphi}$ is non-negative inducing a metric. Therewith, we justify its identification as a gradient. Also, this forces us to introduce the boundary condition~\eqref{F5}, which we rewrite as
\begin{equation*}
  \log \frac{c(0,t)}{c_{\eps,\theta}(0)} = 0 \ .
\end{equation*}
Therewith, let us introduce for a non-negative measure $\nu_\eps \in \cM(\eps \Z^+)$ weighted Sobolev spaces
\begin{align}\label{GF:H1:eps}
  H^1_\eps(\nu_\eps) &= \set[\Big]{ \varphi : \eps\Z^+ \to \R \;\Big|\; \varphi(0) = 0 \quad\text{and}\quad  \eps \sum_{x\in\eps\Z^+} \abs*{ D_\eps \varphi(x)}^2 \nu_\eps(x) < \infty } \\
  H^{-1}_\eps(\nu_\eps) &= \bra*{H^1_\eps(\nu_\eps)}^* \quad\text{with}\quad \norm{ s }_{H^{-1}_\eps(\nu_\eps)} = \sup_{\varphi\in H^1_\eps(\nu_\eps)} \frac{\skp{\varphi,s}}{\norm{ \varphi }_{H^{1}_\eps(\nu_\eps)}}  .
  \label{GF:H-1:eps}
\end{align}
By these definitions, the linear operator
\begin{equation}\label{GF:def:K}
  K_\eps[\nu_\eps] : H^1_\eps(\nu_\eps) \to H^{-1}_\eps(\nu_\eps) \quad\text{given by}\quad K_\eps[\nu_\eps] \varphi = D^*_\eps\bra*{ \nu_\eps D_\eps \varphi}
\end{equation}
is well defined. It is non negative, since $\skp{\varphi , K_\eps[\nu_\eps] \varphi} = \eps \sum_{x\in \Z^+} \abs*{ D_\eps \varphi(x)}^2 \nu_\eps(x) \geq 0$, where the boundary condition $\varphi(1)=0$ ensures no extra  terms in the summation by parts. Therewith, the gradient flow formulation \eqref{GF:eq:deltaG} becomes
\begin{equation}\label{GF:eq:K}
  \partial_t c = - K_\eps[k_{\eps,\theta} \widehat c_{\eps,\theta}] \, \delta \cG_\eps(c,\theta) .
\end{equation}
Solutions then satisfy the condition $\delta G_\eps(c,\theta)\in H^1_\eps(k_{\eps,\theta} \widehat c_{\eta,\theta})$ and in particular the boundary condition $\delta \cG_\eps(c,\theta)(0,t) = 0$, which translates to
\begin{equation*}
  \log\frac{c(0,t)}{c_{\eps,\theta}(0)} = 0 \qquad \text{whence}\qquad c(0,t) = c_{\eps,\theta}(0)  .
\end{equation*}
Together, with the constraint~\eqref{L5}, which implicitly defines $\theta$ in terms of $c$ the system is closed.
Note, that we immediately recover, that $\cG_\eps$ is a Lyapunov function
\begin{align*}
 \pderiv{}{t} \cG_\eps(c,\theta) &= \skp{ \delta \cG_\eps(c,\theta), \partial_t c} = - \skp{ \delta \cG_\eps(c,\theta) , K_\eps[k_{\eps,\theta} \widehat c_{\eps,\theta}] \, \delta \cG_\eps(c,\theta)} \\
 &=  - \eps \sum_{x\in \Z^+} k_{\eps,\theta}(x) \; \widehat c_{\eps,\theta}(x,t) \;\abs*{ D_\eps \log\frac{c}{c_{\eps,\theta}}(x,t)}^2
\end{align*}

\subsubsection{Gradient flow structure for \texorpdfstring{$\eps=1$}{the Becker-Döring model}}

Let us denote $n_\ell(t) = c_1(\ell-1,t)$ for $\ell =1,2,\dots$ to omit notational confusion.
The state manifold~\eqref{GF:state_mfd} becomes by using~\eqref{P5}, \eqref{S5} and \eqref{O5}
\begin{equation*}
  \cM_1 = \set*{ (n,\theta) : \sum_{\ell=1}^\infty \ell n_\ell = \rho , n_1 = z_s + \theta} .
\end{equation*}
Since, $n_1 = z_s + \theta$, we only consider the reduced state manifold
\begin{equation*}
  \cM_1 = \set*{ n : \sum_{\ell=1}^\infty \ell n_\ell = \rho } .
\end{equation*}
Then, it is easy to check, that the possible variations are given by
\begin{equation*}
  T_{n,\theta} \cM_1 = \set*{ s : \sum_{\ell=1}^\infty \ell s_\ell = 0 } ,
\end{equation*}
which can be checked to be consistent with~\eqref{GF:tangent_space}.

Now, let us compare the gradient structure, which we obtain for $\eps=1$ with the one contained in~\cite{Schlichting}. In §\ref{s:Interpolate:BD}, the free energy is already identified and can be written for any $z\in [0,z_s]$ as
\begin{equation*}
  \cF_z(n) = \sum_{\ell =1}^\infty z^\ell Q_\ell \; \lambda_{\text{B}}\bra*{\frac{n_\ell}{z^\ell Q_\ell}} + C_z \quad\text{with}\quad \lambda_{\text{B}}(a) = a \log a - a + 1 ,
\end{equation*}
for some constant $C_z$ only depending on $z$ and $Q_\ell$ as defined in~\eqref{T1}. By~\eqref{O5} it holds $n_1(t) - z_s = \theta(t)$ and therefore the local equilibrium state $c_{1,\theta}(x)$~\eqref{T5} is given by
\begin{equation}\label{GF:BD:omega}
   c_{1,\theta}(\ell-1) = n_1^\ell Q_\ell =: \omega_\ell(n_1).
\end{equation}
Hence, from~\eqref{GF:DBC}, we obtain
\begin{equation}\label{GF:BD:DBC}
  k_{1,\theta}(\ell-1) =  b_{\ell+1} c_{1,\theta}(\ell) = n_1 a_\ell  c_{1,\theta}(\ell-1)
\end{equation}
Therewith, it follows
\begin{align*}
  k_{1,\theta}(\ell-1)\widehat c_{1,\theta}(\ell-1) &= \Lambda\bra*{ a_\ell n_1 n_\ell, b_{\ell+1} n_{\ell+1}} =: \nu_\ell .
\end{align*}
The semi-norm introduced by $K_1$~\eqref{GF:def:K} is given for $\phi \in H^1_1(\nu)$ by
\begin{equation*}
  \skp{\varphi , K_1[\nu] \psi} = \sum_{\ell=1}^\infty \nu_\ell \bra*{ \varphi_{\ell+1} - \varphi_\ell} \bra*{\psi_{\ell+1} - \psi_\ell} .
\end{equation*}
To identify~\cite[Equation (1.14)]{Schlichting}, we note that there no restriction on the boundary values of the function space is assumed, but the $0$ boundary value for functions is implemented by using the modified gradient $\bra[\big]{\tilde D\varphi}_\ell= \varphi_{\ell+1} - \varphi_\ell - \varphi_1$. Moreover, one can check again using the explicit definition of the local equilibrium states~\eqref{GF:BD:omega} and the detailed balance condition~\eqref{GF:BD:DBC}
\begin{equation*}
  \nu_\ell = a_\ell \omega_\ell(z) \Lambda\bra*{ \frac{n_1 n_\ell}{\omega_1(z) \omega_\ell(z)} ,  \frac{n_{\ell+1}}{\omega_{\ell+1}(z)}} .
\end{equation*}
Once, the driving energy functional and metric in terms of the operator $K_1$ are identified, the gradient structures agree.

\subsubsection{Gradient flow structure for \texorpdfstring{$\eps \to 0$}{the Fokker-Planck model}}\label{s:GF:FP}

In the setting of §\ref{s:Interpolate:FP}, let us take the limit $\eps\to 0$ in the definition of the state manifold~\eqref{GF:state_mfd} to find
\begin{equation*}
  \cM_0 = \set*{ (c,\theta) : \int W(x) c(x) \dx{x} = \rho - \theta , c(0) = c_{\theta}^\eq(0) }
\end{equation*}
with possible variations
\begin{equation*}
  T_{(c,\theta)}\cM_0 =\set*{ (s,t) : \int W(x) s(x) \dx{x} = - t , s(0) = W(0) c_\theta^\eq(0) t }
\end{equation*}
The weighted Sobolev $H_0^1(\nu)$ is now defined for a measure $\nu \in \cM(\R^+)$ by taking the closure of function with $\varphi \in \cC^\infty(\R^+)$ with $\varphi(0)=0$ with respect to the weighted homogeneous Sobolev norm defined by
\begin{equation}\label{GF:H1:0}
  \norm{\varphi}_{H^1(\nu)}^2 = \int \abs{\partial_x \varphi}^2 \dx{\nu}
\end{equation}
with dual space $H^{-1}_0(\nu) = \bra*{ H^1_0(\nu)}^*$.
Therewith, the operator $K_0$~\eqref{GF:def:K} is defined as
\begin{equation*}
 K_0[\nu] : H^1_0(\nu) \to H^{-1}_0(\nu) \quad\text{with}\quad  \skp{\psi, K_0[\nu] \varphi } = \int \partial_x \psi \, \partial_x \varphi \dx{\nu} .
\end{equation*}
Finally, we obtain from~\eqref{GF:DBC} and~\eqref{GF:hatc} the identity
\begin{equation*}
  \lim_{\eps \to 0} k_{\eps,\theta} \widehat c_{\eps,\theta}(x,t) = a(x) c_{0}(x,t).
\end{equation*}
The driving free energy was already identified in §\ref{s:Interpolate:FP} and is given by~\eqref{e:FreeEnergy:FP}, which first variation is identified in analog to~\eqref{GF:DG} with the function $\log \frac{c_0(\cdot,t)}{c_{0,\theta}(\cdot)}$, where $c_{0,\theta}$ is given by~\eqref{C1}. Thus, the gradient flow formulation of the system \eqref{A1}, \eqref{B1}, \eqref{E1} becomes
\begin{equation*}
  \partial_t c_0(\cdot,t) = K_0[a(\cdot)\, c_0(\cdot,t)] \log\frac{c_0(\cdot, t)}{c_{0,\theta}(\cdot)} (x,t)  = \partial_x \bra*{ a(\cdot) c_0(\cdot,t) \partial_x \log \frac{c_0(\cdot,t)}{c_{0,\theta(t)}(\cdot)}} .
\end{equation*}

\bigskip

\appendix

\section{Auxiliary results and calculations}

\begin{lemma}\label{lem:Ass:intro}
Suppose $a,V,W$ satisfy Assumption~\ref{ass:VW:intro}, then $\tilde V$ and $\tilde W$ defined in~\eqref{L1} satisfy Assumption~\ref{ass:VW}. 
\end{lemma}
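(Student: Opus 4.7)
The plan is to verify conditions (a)--(e) of Assumption~\ref{ass:VW} for $\tilde V, \tilde W$ one by one, using the explicit formulas \eqref{tildeVW:d1}--\eqref{tildeVW:d4} for the derivatives together with the corresponding items of Assumption~\ref{ass:VW:intro}. Throughout I write $x = x(z)$ and use $x'(z) = \sqrt{a(x)}$.

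For (a): the formula \eqref{tildeVW:d4} for $\tilde W''$ and three of the four terms in \eqref{tildeVW:d2} for $\tilde V''$ are directly bounded by \eqref{F1:0:intro}. The only non-routine contribution is the term $(a'(x))^2 / (4 a(x))$ in $\tilde V''$. To bound it I would establish the Landau-type estimate
\[
  \frac{a'(x)^2}{a(x)} \le C \qquad \text{for } x\in \R^+,
\]
whenever $a\in \cC^2(\R^+)$ satisfies $a\ge c_0 > 0$ and $|a''|\le C_0$. This follows from Taylor's theorem: the inequality $a(x+h) \ge 0$ applied with $h>0$ (if $a'(x)\le 0$) or with $h = -t \in [-x,0)$ (if $a'(x) > 0$) yields $|a'(x)| \le a(x)/t + C_0 t/2$ for appropriate $t$, and optimising in $t$ gives $|a'(x)|^2 \le 2 C_0\, a(x)$ unless $a(x) > C_0 x^2/2$; in the latter case choosing $t=x$ together with the quadratic growth $a(x)\le C(1+x)^2$ implied by $|a''|\le C_0$ closes the argument. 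This is the main obstacle in the proof.

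Condition (b) is immediate from \eqref{tildeVW:d3}: $\tilde W'(z) = W'(x)\sqrt{a(x)} > 0$ and $\tilde W(0) = W(0) > 0$. For (c) I would rewrite, using \eqref{tildeVW:d1} and \eqref{tildeVW:d3},
\[
  \frac{\tilde V'(z)}{\tilde W'(z)} = \frac{V'(x)}{W'(x)} + \frac{(\log a)'(x)}{2 W'(x)},
\]
so that \eqref{F1:1:intro} controls both summands by $\delta$ once $x\ge x_\delta$ (equivalently once $z\ge z_\delta$, since $z(x)\to\infty$ as $x\to \infty$ by Assumption~\ref{ass:VW:intro}(a)). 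For the second-derivative bound I would divide each term of \eqref{tildeVW:d2} and \eqref{tildeVW:d4} by $\tilde W'(z)^2 = a(x) W'(x)^2$, then use \eqref{F1:1:intro}, \eqref{F1:2:intro}, and the algebraic identity $a''/a = (\log a)'' + ((\log a)')^2$ to bound each resulting ratio by a constant times $\delta$; choosing $\delta$ smaller (and $z_\delta$ correspondingly larger) gives the required bound.

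Condition (d) is immediate, since $\tilde W'(z)^2 = a(x)W'(x)^2$ and $\tilde W(z) = W(x)$, so that \eqref{G1:intro} is exactly \eqref{G1}. For (e), the change of variables $dz = dx/\sqrt{a(x)}$ together with $\tilde V(z) = V(x) + \tfrac12 \log a(x)$ gives
\[
  \int_0^\infty \tilde W(z) e^{-\tilde V(z)} \, dz = \int_0^\infty \frac{W(x) e^{-V(x)}}{a(x)} \, dx,
\]
which is finite by Assumption~\ref{ass:VW:intro}(e).
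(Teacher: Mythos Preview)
Your proposal is correct and, for parts (b)--(e), matches the paper's argument essentially verbatim.

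The one genuine difference is in part~(a), specifically in how you bound the term $(a'(x))^2/(4a(x))$ appearing in $\tilde V''$. You use a Landau--Kolmogorov interpolation argument based only on $a\ge c_0$ and $|a''|\le C_0$ (items (a) and (b) of Assumption~\ref{ass:VW:intro}). The paper instead invokes item~(c): for $x\ge x_\delta$,
\[
  \frac{|a'(x)|^2}{a(x)} \;=\; |(\log a)'(x)|\,|a'(x)| \;\le\; \delta\, W'(x)\,|a'(x)| \;\le\; \delta\, C_0,
\]
where the first inequality is \eqref{F1:1:intro} and the second is \eqref{F1:0:intro}; on the compact set $[0,x_\delta]$ continuity of $a'$ and positivity of $a$ finish the bound. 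This is shorter and avoids the case split near $x=0$ that your Taylor argument needs (where the optimiser $t^*=\sqrt{2a(x)/C_0}$ may exceed $x$ and the bound $a(x)/x^2\le C(1+x)^2/x^2$ blows up, so you must fall back on continuity anyway). Conversely, your route has the merit of using one fewer structural hypothesis for this step. Both are valid.
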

\begin{proof}
  We go though the individual points of Assumption~\ref{ass:VW}:
  \begin{enumerate}[ (a) ]
   \item First the second derivatives of $\tilde V$ and $\tilde W$ satisfy by~\eqref{tildeVW:d2} and~\eqref{tildeVW:d4} the estimates
   \begin{align*}
     \abs*{\pderiv[2]{}{z} \tilde W(z)} + \abs*{ \pderiv[2]{}{z} \tilde V(z)} &\leq \bra[\big]{ \abs*{V''(x(z))}+ \abs*{W''(x(z))} } a(x(z)) + \frac{\abs{ a''(x(z))}}{2} \\
     &\qquad + \frac{1}{2} \bra[\big]{ \abs{W'(x(z))} + \abs{V'(x(z))}} \abs{ a'(x(z))}  + \frac{\abs*{ a'(x(z))}^2}{4 a(x(z))}  \\
     &\leq C_0 + \frac{\abs*{ a'(x(z))}^2}{4 a(x(z))} \ , 
   \end{align*}
   by using Assumption~\ref{ass:VW:intro}~(a). From Assumption~\ref{ass:VW:intro}~(c), we obtain for $x\geq x_\delta$
   \[
     \frac{\abs*{ a'(x)}^2}{a(x)} \leq \delta W'(x) \abs{ a'(x)} \leq \delta C_0 ,
   \]
   where the last step is again Assumption~\ref{ass:VW:intro}~(a). Now, the smoothness and positivity of $a$ implies that $\max_{0\leq x \leq x_\delta} \frac{\abs*{ a'(x)}^2}{a(x)} < \infty$ concluding the desired result. 
   \item This is immediate from~\eqref{tildeVW:d3}.
   \item By using Assumption~\ref{ass:VW:intro}~(c), we have from~\eqref{tildeVW:d1} and~\eqref{tildeVW:d3} for $z(x) \geq z(x_\delta)$
   \begin{align*}
     \abs*{ \pderiv{}{z} \tilde V(z(x))} &\leq \bra*{ \abs{V'(x(z))} + \frac{1}{2} \abs*{\bra*{\log a(x(z))}'} }\sqrt{a(x(z))}  \\
     &\leq \delta W'(x(z)) \sqrt{a(x(z))} = \delta \pderiv{}{z} \tilde W(z(x))  \ .
   \end{align*}
   Likewise, we can estimate the second derivatives by using both conditions in (c) of Assumption~\ref{ass:VW:intro} and choosing $z(x) \geq z(x_\delta)$. Moreover, for the sake of presentation, we omit the arguments $z(x)$, $x(z)$ and use the representations of the derivatives~\eqref{tildeVW:d2}--\eqref{tildeVW:d4}
   \begin{align*}
     &{\abs*{ \pderiv[2]{}{z} \tilde V} +\abs*{ \pderiv[2]{}{z} \tilde W}}\\
     &\leq \bra*{ \abs{V''} + \abs{W''} } \, a + \frac{1}{2} \bra*{ \abs{V'} + \abs{W'}}  \sqrt{a} \, \abs*{\bra*{\log a}'} \sqrt{a}  +{ \frac{1}{2}\abs*{\bra*{\log a}''} \, a + \frac{1}{4}\abs*{\bra*{ \log a}'}^2 \,a }  \\
     &\leq \frac{1}{2}\bra*{3+\delta} \,\delta \bra*{ W' \sqrt{a}}^2   
     = \frac{1}{2}\bra*{3  +\delta} \, \delta\, \bra*{\pderiv{}{z} \tilde W}^2 \ .
   \end{align*}
  \item This is immediate from~\eqref{tildeVW:d3} and Assumption~\ref{ass:VW:intro}~(d).
  \item This follows by the definition of the coordinate change~\eqref{J1} from the identity
  \begin{align*}
    \int \tilde W(z) \exp\bra*{ - \tilde V(z)} \dx{z} &= \int W(x) \exp\bra*{ - V(x) - \frac{1}{2} \log a(x)} \pderiv{z(x)}{x} \dx{x} \\
    &= \int \frac{W(x)}{a(x)} \exp\bra*{ - V(x) } \dx{x} < \infty
  \end{align*}
  by Assumption~\ref{ass:VW:intro}~(e). 
  \end{enumerate}
\end{proof}

\bigskip

\section*{Acknowledgments} The authors would like to thank José Cañizo for fruitful discussions on the quantified convergence to equilibrium in the subcritical case. The authors thank the referees whose incisive and detailed comments have substantially improved the final version of the paper.
A.S.~gratefully acknowledges support by the German Research Foundation through the Collaborative Research Center 1060 \emph{The Mathematics of Emergent Effects}. J.G.C. ~gratefully acknowledges support through a Simons Foundation travel grant.

\end{document}